\DeclarePairedDelimiter\bra{\langle}{\rvert}
\DeclarePairedDelimiter\ket{\lvert}{\rangle}
\DeclarePairedDelimiterX\braket[2]{\langle}{\rangle}{#1\,\delimsize\vert\,\mathopen{}#2}
\theoremstyle{plain}
\newtheorem{theorem}{Theorem}[section]
\newtheorem{lemma}[theorem]{Lemma}
\newtheorem{proposition}[theorem]{Proposition}
\theoremstyle{definition}
\newtheorem{definition}[theorem]{Definition}
\theoremstyle{remark}
\newtheorem{remark}[theorem]{Remark}
\numberwithin{equation}{section}
\def \bP {\mathbb P}
\def \bS {\mathbb S}
\def \bU {\mathbb U}
\def \cA {\mathcal A}
\def \cB {\mathcal B}
\def \cC {\mathcal C}
\def \cD {\mathcal D}
\def \cE {\mathcal E}
\def \cF {\mathcal F}
\def \cG {\mathcal G}
\def \cH {\mathcal H}
\def \cI {\mathcal I}
\def \cK {\mathcal K}
\def \cL {\mathcal L}
\def \cM {\mathcal M}
\def \cO {\mathcal O}
\def \cR {\mathcal R}
\def \cS {\mathcal S}
\def \cR {\mathcal R}
\def \cR {\mathcal R}
\def \fg {\mathfrak g}
\def \fh {\mathfrak h}
\def \fr {\mathfrak r}
\def \fM {\mathfrak M}
\def \fU {\mathfrak U}
\def\R{{\mathbb R}}% real numbers
\def\C{{\mathbb C}}% complex numbers
\def\N{{\mathbb N}}% nonnegative integers
\def\Eng{{\mathbb E}}
\def\Heis{{\mathbb H}}
\mathchardef\mhyphen="2D % math hypehn
\def \fghat{\widehat{G}}
\def \Hhat{\widehat{\Heis}}
\def \Enghat{\widehat{\Eng}}
\def \Op  {{\rm Op}}
\def \vect{\rm Vect}
\def \Exp{{\rm Exp}}
\def \Hpi{\cH_{\pi}}
\def \eps{\epsilon}
\def\restriction#1#2{\mathchoice
              {\setbox1\hbox{${\displaystyle #1}_{\scriptstyle #2}$}
              \restrictionaux{#1}{#2}}
              {\setbox1\hbox{${\textstyle #1}_{\scriptstyle #2}$}
              \restrictionaux{#1}{#2}}
              {\setbox1\hbox{${\scriptstyle #1}_{\scriptscriptstyle #2}$}
              \restrictionaux{#1}{#2}}
              {\setbox1\hbox{${\scriptscriptstyle #1}_{\scriptscriptstyle #2}$}
              \restrictionaux{#1}{#2}}}
\def\restrictionaux#1#2{{#1\,\smash{\vrule height .8\ht1 depth .85\dp1}}_{\,#2}}
\def\Tend#1#2{\mathop{\longrightarrow}\limits_{#1\rightarrow#2}}
\numberwithin{equation}{section}
\begin{document}
\title[]{Quantum-classical correspondence and obstruction to dispersion on the Engel group}
\author[L. Benedetto]{Lino Benedetto}
\address[L. Benedetto]{DMA, École normale supérieure, Université PSL, CNRS, 75005 Paris, France \& Univ Angers, CNRS, LAREMA, SFR MATHSTIC, F-49000 Angers, France} 
\email{lbenedetto@dma.ens.fr}

\begin{abstract} 
 In this paper, we develop the semiclassical analysis of the lowest dimensional simply connected nilpotent Lie group of step 3, called the Engel group and denoted by $\Eng$, in the continuity of works for groups of step 2 realized in \cite{FF2} and \cite{FFF}, while 
 exhibiting new features specific to groups of higher step. 
 We are interested in the propagation of the semiclassical measures associated to solutions of the Schrödinger equation for the canonical subLaplacian $\Delta_\Eng$ at different time-scales $\tau\in\R_{>0}^+$.
 In particular, for $\tau=1$ we recover a quantum-classical correspondence where we observe the fundamental role of abnormal extremal lifts of the Engel group and are able to discuss the 
 speed of the propagation of the singularities.
 Furthermore, in order to understand the dispersive nature of the subLaplacian, we are led to develop a second-microlocal analysis on particular cones in our phase space. This is done by relating the 
 quasi-contact structure of the group $\Eng$ to its semiclassical analysis via harmonic analysis.
 As a consequence, we are able to prove obstruction to local smoothness-type estimates and Strichartz estimates for the Engel subLaplacian.
\end{abstract}

%\subjclass[2010]{43A80, 47G30, 58J40}
\keywords{Abstract harmonic analysis, 
Semiclassical analysis on nilpotent Lie groups, Second-microlocal semiclassical measures}

\maketitle

\makeatletter
\renewcommand\l@subsection{\@tocline{2}{0pt}{3pc}{5pc}{}}
\makeatother

\tableofcontents

\section{Introduction}

\subsection{The Engel group and its subLaplacian} 
We consider the lowest dimensional connected and simply connected nilpotent Lie group of step 3, called the Engel group and denoted by $\Eng$. 
Its Lie algebra $\fg_{\Eng}$ is the four-dimensional Lie algebra linearly spanned by the vectors $X_1,X_2,X_3$ and $X_4$
satisfying the following bracket relations:
\begin{equation}
    \label{eq:bracketrel}
    [X_1,X_2] = X_3,\ [X_1,X_3] = X_4.
\end{equation}
The Lie algebra $\fg_\Eng$ is stratitified when one considers the following decomposition:
\begin{equation*}
    \fg_{\Eng} = \fg_{1} \oplus \fg_{2} \oplus \fg_{3},
\end{equation*}
where $\fg_{1} = \vect(X_1,X_2)$, $\fg_{2}$ is spanned by $X_3$ and $\fg_{3}$ is the center of the Lie algebra, spanned by $X_4$. 
We also introduce the notation $$\fr_2 = \vect(X_2).$$

The Engel group is, up to isomorphism, the only connected and simply connected Lie group admitting $\fg_{\Eng}$ as its Lie algebra. 
In what follows we endow the vector space $\fg_\Eng$ with the scalar product for which the basis $(X_1,X_2,X_3,X_4)$ is orthonormal.
We thus give the Engel group $\Eng$ the structure of a Carnot group, see \cite{Mon3}.

As a connected and simply connected nilpotent Lie group, the exponential map denoted by
\begin{equation*}
    \Exp_{\Eng}: \fg_{\Eng} \longrightarrow \Eng,
\end{equation*}
defines a diffeomorphism between the Lie algebra and $\Eng$. The pushforward of the Lebesgue measure on $\fg_\Eng$ via $\Exp_\Eng$ defines a bi-invariant Haar measure on $\Eng$ that we 
will simply denote by $dx$ when a variable $x\in\Eng$ is considered.

\subsubsection*{Homogeneous structure} The stratification of $\fg_{\Eng}$ leads us to consider the family of dilations defined by $r \cdot X = r^{i} X$ for every $X\in\fg_i$, with $i\in\{1,2,3\}$ and $r>0$ (see \cite{FR},\cite{FS}).
The integers $i\in\{1,2,3\}$ are called the weights and will sometimes be denoted by $\upsilon_i$.

The associated group dilations are defined by
\begin{equation*}
    r\cdot x := \Exp_\Eng(r \cdot X),\quad x=\Exp_\Eng(X)\in\Eng,\ r>0.
\end{equation*}
These families of dilations on $\fg_{\Eng}$ and $\Eng$ form one-parameter groups of automorphisms of the Lie algebra $\fg_{\Eng}$ and of the group $\Eng$.
The Jacobian of the dilation associated to $r>0$ is $r^Q$ where
\begin{equation*}
    Q:={\rm dim}\, \fg_{1} +2{\rm dim}\, \fg_{2} +3{\rm dim}\, \fg_{3} = 7
\end{equation*}
is called the {\it homogeneous dimension} of $\Eng$.

\subsubsection*{SubLaplacian and Schrödinger equation}
We are interested in the study of the following differential operator 
\begin{equation*}
    \Delta_{\Eng} = X_{1}^2 + X_{2}^2,
\end{equation*}
called the \textit{Engel subLaplacian}, and more precisely in the associated semiclassical Schrödinger equations
\begin{equation}
    \label{eq:SchEngel}
    \begin{cases}
        & i\hbar^{\tau}\partial_{t}\psi^{\hbar} = -\hbar^2 \Delta_{\Eng}\psi^{\hbar},\\
        & \psi_{|t=0}^{\hbar} = \psi_{0}^{\hbar}\in L^2(\Eng),
    \end{cases}
\end{equation}
where we have introduced a small semiclassical parameter $\hbar>0$ and a time-scale $\tau\in\R_{>0}$. As $\Delta_{\Eng}$ is essentially self-adjoint on $C_{0}^{\infty}(\Eng)$ (see \cite{FR}), 
using Stone's theorem we have existence and unicity of the solution to \eqref{eq:SchEngel} for any initial datum in $L^2(\Eng)$. We will denote by $e^{it\Delta_{\Eng}}$, $t\in\R$, the 
associated propagator.

The use of a semiclassical parameter $\hbar$ can be understood as a way of measuring the scale of the oscillations of our initial data: indeed, we will often assume that the family of initial data $(\psi_{0}^\hbar)_{\hbar>0}$
is \textit{$\hbar$-oscillating}, i.e satisfies the following property:
\begin{equation}
    \label{eq:assumptionoscillation}
        \limsup_{\hbar\rightarrow 0}\|{\bm 1}_{-\hbar^2\Delta_\Eng >R}\,\psi_{0}^\hbar\|_{L^2(\Eng)}\Tend{R}{+\infty} 0,
\end{equation}
meaning the oscillations are of the scale $1/\hbar$ when measured with respect to the subLaplacian $\Delta_\Eng$. We highlight the fact that measuring oscillations of a family $(\psi_{0}^\hbar)_{\hbar>0}$
with respect to the subLaplacian is compatible with the homogeneous structure of $\Eng$ and is thus anisotropic: we allow for higher scales of oscillations 
in directions transverse to the first stratum.

\subsubsection*{Time-scales}
As said before, the parameter $\tau$ corresponds to a choice of time-scale. The first natural one is $\tau = 1$, called the {\it semiclassical time-scale}, through which we recover 
a quantum-classical correspondence. More precisely, we wish to find the relevant dynamic underlying the quantum evolution as $\hbar$ goes to 0. For example, in the Riemannian setting and considering the Laplace-Beltrami operator,
this dynamic is the one of the geodesic flow associated to the metric. On the Engel group, we expect to observe flows related to its subRiemannian structure, about which more will be discussed below.
 
The other natural choice is $\tau=2$, where Equation \eqref{eq:SchEngel} is then equivalent to the standard Schrödinger equation.
We expect this time-scale, along with assumption \ref{eq:assumptionoscillation} on the family of initial data, to give us insights on the dispersive nature of the subLaplacian.

We will study the Schrödinger equation for both of these time-scales by the introduction of an adapted phase space different from the cotangent bundle $T^* G$ and of operator-valued measures capturing the concentration loci of our solutions in the asymptotics
$\hbar\rightarrow 0$.

\subsection{Phase space analysis on nilpotent Lie groups}
Our approach relies on the introduction of a particular phase space on the Engel group based on its harmonic analysis
and on the use of an associated pseudodifferential calculus.
Such a strategy has already been implemented on several nilpotent Lie groups: one can refer to \cite{FF2},\cite{FF3} for the study of the quantum evolution of the subLaplacian on H-type groups, to \cite{FL} for the case of Heisenberg nilmanifolds with a view towards observability
and to \cite{FFF} for the study of quantum limits on general 2-step nilmanifolds.
For a systematic development of this approach for general graded nilpotent Lie groups, we refer to the monograph \cite{FR} and to the articles \cite{FF},\cite{BFF}. However, this article is 
the first one implementing this approach for a group of step 3.

\subsubsection*{Unitary dual set of a nilpotent Lie group}
We will consider the unitary dual of a nilpotent Lie group $G$, denoted by $\widehat{G}$, as the natural replacement for impulsion space.

First, recall that a \emph{unitary representation} $(\pi,\mathcal H_\pi)$ of a group $G$ is a pair consisting in a Hilbert space~$\mathcal H_\pi$  and a group morphism~$\pi$ from~$G$ to the set of unitary operators on $\mathcal H_\pi$.
In this paper, the representations will always be assumed strongly continuous, and their associated Hilbert spaces separable. 
A representation is said to be {\it irreducible} if the only closed subspaces of $\mathcal H_\pi$ that are stable under~$\pi$ are $\{0\}$ and $\mathcal H_\pi$ itself. 
Two representations $\pi_1$ and $\pi_2$ are equivalent  if there exists a unitary transform $\cI$ called an {\it intertwining map} that sends $\mathcal H_{\pi_1}$ on $\mathcal H_{\pi_2}$ with 
$$\pi_1=\cI^{-1}\circ  \pi_2 \circ \cI.$$ 
The {\it dual set} $\widehat G$ is obtained by taking the quotient of the set of irreducible representations by this equivalence relation.

\begin{definition}[\cite{CG}]
    The set of all irreducible unitary representations, up to equivalence, is called the unitary dual and denoted by $\widehat{G}$:
    \[
        \widehat{G} := \{ \left[\pi\right]\,:\, \pi: G \rightarrow \cL(\Hpi) ~\text{irreducible s.c. unitary representation of}~ G \},
    \]
    where $\left[\pi\right]$ denotes the equivalence class of the representation $\pi$. This set is naturally endowed with a topology, called the Fell topology, and a standard Borel space structure.
    Furthermore, there exists a unique (up to a scalar) canonical measure $\mu_{\widehat{G}}$ on $\widehat{G}$, called the Plancherel measure.
\end{definition}
We can then define the Fourier transform $\cF_G$ on the group $G$, sending integrable functions on $G$ to fields of operators on the dual $\widehat{G}$ acting on the representations' spaces $\cH_\pi$, see Section \ref{sect:preliEngel}.

\subsubsection*{Semiclassical quantization and semiclassical measures}
We will work with the measured space $G\times\widehat{G}$ as our natural phase space. The observables associated to this phase space are the measurable fields of operators 
\begin{equation*}
    \sigma = \{\sigma(x,\pi)\in\cL(\cH_\pi)\,:\, (x,\pi)\in G \times\widehat{G}\}.
\end{equation*}
For a certain class of observables regular enough that we will denote by $\cA_0$, we define a quantization procedure based on the Fourier transform on the group $G$ turning these observables into operators on $L^2(G)$:
\begin{equation*}
    \Op_{\hbar,G}(\sigma): L^2(G)\rightarrow L^2(G),
\end{equation*} 
(see Section \ref{sect:semiclassEng}). 

For a bounded family $(\psi_{0}^\hbar)_{\hbar>0}$ of functions in $L^2(G)$, we can consider the following quadratic quantities
\begin{equation*}
    \sigma\in\cA_0 \mapsto \left(\Op_{\hbar,G}(\sigma)\psi_{0}^\hbar,\psi_{0}^\hbar\right)_{L^2(G)}\in \C,
\end{equation*}
and more precisely, the limit of such quantities as $\hbar \rightarrow 0$. Such a limit allows us to understand the concentration loci of 
the family $(\psi_{0}^\hbar)_{\hbar>0}$ in phase space, for the scale of oscillations $\hbar^{-1}$. 
In a similar way as in the Euclidean setting, such limits can be described by
positive measures on the phase space $G\times\widehat{G}$, or more precisely, in order to take into account the operator nature of our phase space, by operator-valued measures $\Gamma d\gamma$ (see Section \ref{sect:semiclassEng}):
such an object is the data of a positive finite measure $\gamma$ on $G\times\widehat{G}$ and of a field of trace-class operators $(x,\pi)\in G\times\widehat{G}\mapsto \Gamma(x,\pi)\in \cL^1(\cH_\pi)$ integrable with respect to $d\gamma$.
Indeed, up to an extraction $\hbar_k \Tend{k}{+\infty} 0$, we can write
\begin{equation*}
    \left(\Op_{\hbar_k,G}(\sigma)\psi_{0}^{\hbar_k},\psi_{0}^{\hbar_k}\right)_{L^2(G)} \Tend{\hbar_k}{0} \int_{G\times\widehat{G}}{\rm Tr}_{\cH_\pi}\left(\sigma(x,\pi)\Gamma(x,\pi)\right)d\gamma(x,\pi).
\end{equation*}

\subsection{Generic representations of the Engel group and Montgomery operators}
The unitary dual set $\Enghat$ of the Engel group can be explicity computed, similarly as in any other nilpotent Lie group. We refer to Section \ref{sect:preliEngel} for a complete description.
We describe here only the support of the Plancherel measure, called the set of generic representations and denoted by $\Enghat_{\rm gen}$. It can be parametrized by two parameters
$(\delta,\beta)$ in $\fg_{3}^*\setminus\{0\}\times \fr_{2}^*$, and is homeomorphic to $\R\setminus\{0\}\times\R$. In the equivalence class given by $(\delta,\beta)$, we can exhibit a particular representation that we denote by $\pi^{\delta,\beta}$ (see Proposition \ref{prop:dualset}).

As the subLaplacian $\Delta_\Eng$ is invariant by translations of the group $\Eng$, it can be seen as a Fourier multiplier for the Fourier transform of the group $\Eng$.
For a generic representation $\pi^{\delta,\beta}\in\Enghat_{\rm gen}$, we introduce the following notation for the Fourier transform of $-\Delta_{\Eng}$.
\begin{equation*}
    H(\pi^{\delta,\beta}) = \cF_{\Eng}(-\Delta_\Eng)(\pi^{\delta,\beta})= -\partial_{\xi}^2 + \left(\beta + \frac{\delta}{2} \xi^2 \right)^2.
\end{equation*}

Following \cite{HL}, the resolvents of these differential operators are compact and as such their spectrums are discrete:
we will denote by $\mu_{n}(\delta,\beta)$, $n\in\N_{>0}$, their eigenvalues in increasing order. We also introduce the following notation: 
for $n\in\N_{>0}$, we write $\Pi_n$ the field of eigenprojectors on $\Enghat_{\rm gen}$ satisfying
\begin{equation*}
    \forall \pi^{\delta,\beta}\in\Enghat_{\rm gen},\ H(\pi^{\delta,\beta})\Pi_{n}(\pi^{\delta,\beta}) = \mu_{n}(\delta,\beta)\Pi_{n}(\pi^{\delta,\beta}).
\end{equation*}

This family of operators can actually be rescaled to a one-parameter family of differential operators: for $\nu\in\R$, we introduce the operator
\begin{equation*}
    \Tilde{H}(\nu) := -\partial_{\Tilde{\xi}}^2 + \left(\nu + \frac{1}{2} \Tilde{\xi}^2 \right)^2.
\end{equation*}
This family of differential operators, called the \textit{Montgomery operators}, has been studied by numerous authors (see \cite{HL}, \cite{HP},\cite{Mon}) and
is related to the Fourier transform of $-\Delta_\Eng$ in the following way: for $(\delta,\beta)\in\fg_{3}^{*}\setminus\{0\}\times\fr_{2}^*$, through the rescaling $\xi = \delta^{-1/3}\Tilde{\xi}$, $H(\pi^{\delta,\beta})$ is unitarily equivalent to the operator
\begin{equation*}
    \delta^{2/3}\Tilde{H}(\beta\delta^{-1/3}) = \delta^{2/3}\left(\partial_{\Tilde{\xi}}^2 + \left(\beta\delta^{-1/3} + \frac{1}{2} \Tilde{\xi}^2 \right)^2\right).
\end{equation*}
Similarly as above, we can consider the ordered discrete spectrum $\{\Tilde{\mu}_{n}(\nu)\,:\,n\in\N_{>0}\}$ of $\Tilde{H}(\nu)$, and we deduce from the previous rescaling 
the following relation between the eigenvalues of these operators: for all $n\in\N_{>0}$,
\begin{equation}
    \label{eq:releigenvalues}
    \mu_n(\delta,\beta) = \delta^{2/3}\Tilde{\mu}_{n}(\beta\delta^{-1/3}).
\end{equation}

We observe that the family $(\Tilde{H}(\nu))_{\nu\in\R}$ is an analytic family of type B (see \cite{Kato}). We deduce that for each $n\in\N_{>0}$, $\nu\in\R\mapsto \Tilde{\mu}_{n}(\nu)$ is analytic and thus 
the maps $\mu_n$ are smooth with respect to the parameters $(\delta,\beta) \in \fg_{3}^*\setminus\{0\}\times \fr_{2}^*$. 

\subsection{Semiclassical time-scale and quantum-classical correspondence}
Our first result concerns the propagation properties of the semiclassical measures associated to a solution 
$(\psi^{\hbar}(t))_{\hbar>0}$ of the Schrödinger equation \eqref{eq:SchEngel} for the choice $\tau=1$ and for a bounded family of initial data $(\psi_{0}^\hbar)_{\hbar>0}\in L^2(\Eng)$.
We are then able to relate the semiclassical measures of the solution at time $t$ with the one of the initial condition through simple propagation law.

\begin{theorem}
    \label{theo:propagintro}
    Let $(\psi_0^\hbar)_{\hbar>0}$ be a bounded family in $L^2(\Eng)$ and $\psi^\hbar(t) = {\rm e}^{i \hbar t \Delta_{\Eng}} \psi^\hbar_0$ the solution to \eqref{eq:SchEngel} for $\tau=1$.
    Then any time-averaged semiclassical measure 
    $t\mapsto \Gamma_t d\gamma_t\in L^\infty(\R_t, {\mathcal M}_{ov}^+(\Eng\times \Enghat))$ for the family $(\psi^\hbar)_{\hbar>0}$ satisfies the following additional properties:
    \begin{itemize}
    \item[(i)] For $d\gamma_t$-almost all generic representations $ \Enghat_{\rm gen} = \{\pi^{\delta,\beta}\,:\,(\delta,\beta) \in \fg_{3}^*\setminus\{0\} \times \fr_{2}^{*}\}$, we have the following decomposition 
    \begin{equation*}
    \Gamma_t(x,\pi^{\delta,\beta})=\sum_{n\in\N_{>0}} \Gamma_{n,t}(x,\pi^{\delta,\beta})\;\;{ with}\;\; \Gamma_{n,t}(x,\pi^{\delta,\beta}):= \Pi_n(\pi^{\delta,\beta})\Gamma_t(x,\pi^{\delta,\beta}) \Pi_n(\pi^{\delta,\beta}) \in \cL(\cH_{\pi^{\delta,\beta}}),
    \end{equation*}
    where the $\Pi_n$ are the measurable symbols given by the spectral projections of $H$ for the eigenvalues $\mu_{n}$, $n\in\N_{>0}$. 
    \item[(ii)] For $n\in \N_{>0}$, the distribution $d\gamma_{n,t}(x,\pi^{\delta,\beta})dt = {\rm Tr}\left(\Gamma_{n,t}(x,\pi^{\delta,\beta})\right) d\gamma_t(x,\pi^{\delta,\beta})dt$ on $\R_t\times \Eng\times (\fg_{3}^*\setminus\{0\}\times \fr_{2}^*)$ is a weak solution of the following transport equation:
    \begin{equation*}
    \left(\partial_t -\partial_{\beta}\mu_{n}(\delta,\beta)X_{2}\right)\left(d\gamma_{n,t}(x,\pi^{\delta,\beta})\right)=0.
    \end{equation*}
    \end{itemize}
\end{theorem}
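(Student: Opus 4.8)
The plan is to obtain the transport equation by differentiating in time the quadratic quantities defining the semiclassical measure and exploiting the commutation of the propagator with the subLaplacian. Concretely, I would pick a test symbol $\sigma\in\cA_0$, an auxiliary $\theta\in C_c^\infty(\R_t)$, and examine
\begin{equation*}
\frac{d}{dt}\left(\Op_{\hbar,\Eng}(\sigma)\,\psi^\hbar(t),\psi^\hbar(t)\right)_{L^2(\Eng)} = \frac{i}{\hbar}\left([\,\hbar^2\Delta_\Eng,\Op_{\hbar,\Eng}(\sigma)]\,\psi^\hbar(t),\psi^\hbar(t)\right)_{L^2(\Eng)},
\end{equation*}
since for $\tau=1$ the evolution is generated by $\hbar\Delta_\Eng$ acting as $-i\hbar^{-1}(\hbar^2\Delta_\Eng)$. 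The right-hand side is the key term: one needs the symbolic calculus developed in Section~\ref{sect:semiclassEng} to identify, in the limit $\hbar\to 0$, the bracket $\frac{i}{\hbar}[\hbar^2\Delta_\Eng,\Op_{\hbar,\Eng}(\sigma)]$ with the quantization of a suitable ``commutator symbol'', namely $\frac{i}{\hbar}[\,\hbar^2\Delta_\Eng\,]$ paired against $\sigma$ producing at leading order an expression involving $[H(\pi^{\delta,\beta}),\sigma(x,\pi^{\delta,\beta})]$ together with transport terms coming from the noncommutativity of the group (the $x$-derivatives along the vector fields $X_i$). Integrating against $\theta(t)$ and passing to the limit along the extracting subsequence gives an equation satisfied by $\Gamma_t d\gamma_t$.

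The next step is the spectral decomposition, item~(i). Here I would use that $H(\pi^{\delta,\beta})$ has compact resolvent (stated in the excerpt, following \cite{HL}), so $\mathrm{Id} = \sum_{n} \Pi_n(\pi^{\delta,\beta})$ on $\cH_{\pi^{\delta,\beta}}$ for $d\gamma_t$-a.e.\ generic representation. The limiting equation from the previous paragraph, restricted to test symbols of the form $\Pi_m \sigma \Pi_m$ or more cleverly tested against commutators $[H,\cdot]$, forces $\Gamma_t$ to commute with $H(\pi^{\delta,\beta})$: the off-diagonal blocks $\Pi_n \Gamma_t \Pi_m$ for $n\neq m$ must vanish because the term $\frac{i}{\hbar}[\hbar^2\Delta_\Eng,\cdot]$ does not degenerate on those blocks (the eigenvalue gap $\mu_n-\mu_m\neq 0$ survives the limit after rescaling), whereas it does degenerate — becomes $O(\hbar)$ — on the diagonal. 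This is the standard ``second-microlocalization at the characteristic variety'' mechanism and yields $\Gamma_t = \sum_n \Pi_n \Gamma_t \Pi_n$.

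For item~(ii), I would project the limiting equation onto the $n$-th eigenspace, i.e.\ test with symbols of the form $\Pi_n(\pi^{\delta,\beta})\,a(x,\delta,\beta)\,\Pi_n(\pi^{\delta,\beta})$ with $a$ scalar. After taking traces, the commutator term $[H,\sigma]$ disappears on the diagonal block, and what remains is precisely the Hamiltonian transport generated by the symbol $\mu_n(\delta,\beta)$: the group-noncommutativity contributes, through the structure relations \eqref{eq:bracketrel} and the explicit form $H(\pi^{\delta,\beta}) = -\partial_\xi^2 + (\beta + \tfrac{\delta}{2}\xi^2)^2$, the vector field $\partial_\beta \mu_n(\delta,\beta)\, X_2$ acting on the $x$-variable — the factor $\partial_\beta\mu_n$ arising by a Feynman–Hellmann / first-order perturbation computation $\partial_\beta\mu_n = \langle \partial_\beta H\,\Pi_n\rangle$ and the direction $X_2$ coming from the fact that $\beta$ is the $\fr_2^* = \vect(X_2)^*$-coordinate. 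Smoothness of $\mu_n$ in $(\delta,\beta)$, already noted via the analytic-family-of-type-B argument, is what makes $\partial_\beta\mu_n\, X_2$ a legitimate (smooth-coefficient) transport operator and lets us speak of weak solutions.

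The main obstacle I anticipate is the second step fused with the first: rigorously justifying the symbolic calculus identity for $\frac{i}{\hbar}[\hbar^2\Delta_\Eng,\Op_{\hbar,\Eng}(\sigma)]$ on a \emph{step-3} group, controlling remainder terms uniformly in $\hbar$, and in particular keeping track of the anisotropic scaling (weights $1,2,3$) so that the commutator with $X_3,X_4$ directions — which on a step-2 group would be lower order — does not spoil the leading transport term. This is exactly where the novelty ``first implementation for a group of step 3'' bites, and where one must invoke, or extend, the pseudodifferential estimates of Section~\ref{sect:semiclassEng}; the passage to the limit itself (boundedness, extraction of $\Gamma_t d\gamma_t$, weak-$*$ continuity in $t$ after time-averaging) is then routine once those estimates are in hand.
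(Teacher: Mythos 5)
Your strategy is the one the paper follows: differentiate the quadratic quantities, expand the commutator $[\Op_\hbar(\sigma),-\hbar^2\Delta_\Eng]=\Op_\hbar([\sigma,H])+2\hbar\,\Op_\hbar\bigl((\pi(V)\cdot V)\sigma\bigr)+\hbar^2\Op_\hbar(\Delta_\Eng\sigma)$, deduce $\ell_\infty(\theta,[\sigma,H])=0$ to kill the off-diagonal blocks of $\Gamma_t$ (item (i)), then test with $H$-diagonal symbols and extract the transport from the $O(\hbar)$ term (item (ii)). Item (i) as you describe it is fine, modulo the technical point that the $\Pi_n$ are \emph{not} smoothing symbols on the Engel group (the eigenvalue branches $\mu_n$ are not uniformly controlled at infinity), so testing against $\Pi_n\sigma\Pi_{n'}$ requires the regularization $\chi_\eps(H)\Pi_n\sigma\Pi_{n'}\chi_\eps(H)$ of Propositions \ref{prop:Pin}--\ref{prop:approxsymPi}; your remark that the passage to the limit is "routine" hides exactly this.

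The genuine gap is in item (ii). The first-order term is $2\hbar\bigl(\pi(X_1)X_1\sigma+\pi(X_2)X_2\sigma\bigr)$, and both $X_1$ and $X_2$ span the first stratum; a priori this produces transport along \emph{both} horizontal directions (and indeed it does for the non-generic finite-dimensional representations, where the flow is $\alpha\cdot(X_1,X_2)$). Your Feynman--Hellmann computation $\Pi_n\,\pi^{\delta,\beta}(X_2)\,\Pi_n=\tfrac{i}{2}\partial_\beta\mu_n(\delta,\beta)\Pi_n$ (from $\partial_\beta H=-2i\,\pi^{\delta,\beta}(X_2)$) correctly accounts for the $X_2$ contribution, but you never show that the $\pi(X_1)X_1\sigma$ term contributes nothing — and without that the theorem's striking conclusion (propagation \emph{only} along the characteristic direction $X_2$) is not established. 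The missing ingredient is the enveloping-algebra identity $[X_3,X_1^2+X_2^2]=2X_1X_4$, which gives
\begin{equation*}
\pi^{\delta,\beta}(X_1)=\Bigl[\tfrac{1}{2i\delta}\,\pi^{\delta,\beta}(X_3),\,H(\pi^{\delta,\beta})\Bigr],
\end{equation*}
so that $\pi^{\delta,\beta}(X_1)$ is entirely $H$-off-diagonal and its contribution is absorbed into a commutator $[\sigma_1,H]$ that vanishes in the limit (one can alternatively check $\Pi_n\partial_\xi\Pi_n=0$ by realness of the eigenfunctions of the quartic oscillator, but the commutator form is what lets you recycle the $\ell_\infty(\theta,[\cdot,H])=0$ mechanism for the full off-diagonal remainder). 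This identity, specific to the Engel bracket relations, is the key lemma (Lemma \ref{lem:first_diag_part} in the paper) that your outline omits.
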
 

\begin{remark}
The last transport equation allows us to determine the time-averaged semiclassical measure at any time $t\in\R_t$ as one can prove 
the continuity of each map $t \mapsto \gamma_{n,t}$, $n\in\N_{>0}$, valued in the space of distributions (see Proposition \ref{prop:continuitytimesm} in Section \ref{sect:semiclassEng}). 
The initial condition at time $t=0$ is then given by any semiclassical measure of the family $(\psi_{0}^{\hbar_k})_{k\in\N}$, where $(\hbar_k)_{k\in\N}$ is the sequence 
converging towards 0 associated to the time-averaged semiclassical measure $t \mapsto \Gamma_t d\gamma_t$.
\end{remark}

\subsection{Engel manifolds, quasi-contact structure and singular curves}
We introduce here the relevant material to understand the geometrical meaning of the objects we have met so far.
Let $M$ denote a manifold. By a $k$-distribution we mean a smooth subbundle $\cD\subset TM$ of rank $k$. We use the notation 
$[\cD,\cD]$ for the sheaf generated by all Lie brackets $[X,Y]$ of sections $X,Y$ of the distribution $\cD$. This way,  
setting $\cD^1 := \cD$, for $j\geq 1$ we set
\begin{equation*}
    \cD^{j+1} := \cD^j + [\cD,\cD^j].
\end{equation*}
\begin{definition}
    A four-dimensional manifold $M$ is an \textit{Engel manifold} when it is endowed with an \textit{Engel distribution}, meaning a 
    2-distribution $\cD$ with the property that $\cD^2$ is a distribution of rank 3 and $\cD^3$ is the whole tangent bundle $TM$.
\end{definition}

The Engel group is then trivially an Engel manifold when endowed with the distribution $\cD_\Eng$ given by the first stratum $\fg_1$ when seen as the subbundle obtained by pushforwards of translations. 

The interest for Engel manifolds comes from the following result.

\begin{proposition}[\cite{Mon3}]
    Let $k$ and $n$ be the rank of a distribution and the dimension of the manifold on which it lives. Then the only stable distributions (in the sense of singularity theory)
    occur for $(k,n) = (1,n), (n-1,n)$ or $(2,4)$. Any stable regular 2-distribution on a manifold of dimension 4 defines an Engel distribution.
\end{proposition}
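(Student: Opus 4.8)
The plan is to treat this as two separate assertions --- a classification of the pairs $(k,n)$ admitting stable distributions, and a rigidity statement for rank-$2$ distributions on $4$-manifolds --- using the standard machinery of the singularity theory of differential systems: jet transversality together with the classical local normal form theorems.

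For the classification I would first rephrase a $k$-distribution on $M^n$ as a section of the Grassmann bundle $\mathrm{Gr}_k(TM)\to M$, whose fiber has dimension $k(n-k)$, and take ``stable'' to mean that the germ of $\cD$ at a generic point is diffeomorphic to the germ of any sufficiently small perturbation --- equivalently, that the action of the jet group $J^r\mathrm{Diff}(M,0)$ on $r$-jets of such sections has an open orbit for every $r$. An asymptotic dimension count then compares the dimension of the space of $r$-jets of sections, which grows like $k(n-k)\binom{n+r}{r}$, with $\dim J^r\mathrm{Diff}(M,0)\sim n\binom{n+r}{r}$; stability forces $k(n-k)\le n$, and an elementary computation shows that for $1\le k\le n-1$ this holds precisely when $k=1$, or $k=n-1$, or $(k,n)=(2,4)$. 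For these three families I would then establish stability via explicit normal forms: the flow-box theorem for $k=1$; the Darboux normal form for $k=n-1$ with $n$ odd and the even-contact (quasi-contact) normal form for $k=n-1$ with $n$ even; and the Engel normal form theorem for $(2,4)$. For every remaining pair --- the most instructive being $(2,5)$, where Cartan's curvature tensor of a $(2,3,5)$-distribution is a genuine functional invariant --- I would argue from the dimension count that a generic distribution carries continuous moduli, hence is not stable.

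For the second assertion, let $\cD$ be a stable regular $2$-distribution on $M^4$, so that the growth flag $\cD\subset\cD^2\subset\cD^3\subset\cdots$ has locally constant ranks. Since $[\cD,\cD]$ can enlarge $\cD$ by at most $\binom{2}{2}=1$ dimension, $\mathrm{rank}\,\cD^2\in\{2,3\}$. If $\mathrm{rank}\,\cD^2\equiv 2$ then $\cD$ is integrable; but the Frobenius condition is not open among $2$-distributions, so a generic perturbation of $\cD$ fails to be integrable, contradicting stability. Hence $\mathrm{rank}\,\cD^2\equiv 3$; likewise $\mathrm{rank}\,\cD^3\in\{3,4\}$, and if $\mathrm{rank}\,\cD^3\equiv 3$ then $\cD^2$ is an integrable rank-$3$ distribution, a condition again destroyed by generic perturbation. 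Therefore $\mathrm{rank}\,\cD^3\equiv 4=\dim M$, i.e.\ $\cD$ is an Engel distribution.

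The main obstacle is the exclusion half of the classification: the dimension count yields only a necessary condition, so one must separately verify stability in the three surviving families through the normal form theorems above, and --- in all other cases, the boundary case $(2,5)$ requiring Cartan's explicit invariant rather than a bare count --- exhibit genuine moduli and check they are not removed by higher-order normalization. By contrast, the rigidity argument for rank-$2$ distributions on $M^4$ is comparatively soft, resting only on the non-openness of the two degenerate growth conditions.
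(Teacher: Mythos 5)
The paper does not prove this proposition; it is quoted from Montgomery \cite{Mon3} without argument, so there is no in-paper proof to compare against. Your sketch is the standard one from that source: the jet-dimension count forcing $k(n-k)\le n$ (which an elementary computation reduces to $k=1$, $k=n-1$, or $(k,n)=(2,4)$), local normal forms (flow-box, Darboux/quasi-contact, Engel) to confirm stability of the surviving families, and openness-plus-density of the growth vector $(2,3,4)$ to exclude the degenerate flags in the second assertion — and it is sound as a sketch. The one step I would flag is the inference from $\cD^3=\cD^2$ to integrability of $\cD^2$: involutivity of $\cD^2$ is not the hypothesis $[\cD,\cD^2]\subset\cD^2$ itself but follows from it via Jacobi and Leibniz (the stabilized flag is the Lie hull of $\cD$), after which Frobenius applies and the growth vector, being a diffeomorphism invariant, separates the germ from its generic perturbations.
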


As this proposition shows, Engel manifolds are a particularity of dimension 4. The other stable manifolds correspond to line fields, contact manifolds and their 
even-dimensional counterpart, quasi-contact manifolds. In this article, we will refer to a four dimensional manifold $M$ endowed with a 
non-integrable distribution of rank 3 as a quasi-contact structure.

Another interesting feature of Engel manifolds is the existence of a Darboux-like theorem.

\begin{theorem}[Engel normal form,\cite{Mon2}]
    Any two Engel manifolds are locally diffeomorphic in the following sense: for any point of an Engel manifold $(M,\cD)$, there exist a neighborhood $U\subset M$, 
    an open set $V\subset \Eng$ and a diffeomorphism $\Phi:U\rightarrow V$ such that 
    \begin{equation*}
        \Phi^*(\cD) = \cD_\Eng.
    \end{equation*}
\end{theorem}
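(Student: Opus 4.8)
The statement to prove is the Engel normal form theorem: any two Engel manifolds are locally diffeomorphic, and in fact locally diffeomorphic to the Engel group with its canonical distribution. My plan is to establish this by constructing adapted local coordinates directly from the filtration $\cD \subset \cD^2 \subset \cD^3 = TM$, exploiting that the growth vector $(2,3,4)$ is constant and that each step is obtained from the previous one by a single bracket.

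First, I would fix a point $p \in M$ and choose local vector fields. Since $\cD$ has rank $2$, pick sections $X, Y$ spanning $\cD$ near $p$. By the Engel condition, $\cD^2 = \cD + [\cD,\cD]$ has rank $3$, so (shrinking the neighborhood) $X, Y, [X,Y]$ are linearly independent; set $Z := [X,Y]$. Again by the Engel condition $\cD^3 = TM$, so one of $[X,Z], [Y,Z]$ completes a frame; after possibly swapping the roles of $X$ and $Y$ (or taking a linear combination) I would arrange that $W := [X, Z]$ makes $X, Y, Z, W$ a local frame of $TM$. The key structural point, which I would verify, is that $[Y,Z]$ and $[X,W]$ etc. then automatically lie in $\cD^3 = TM$ (trivially) but more importantly that $Y$ can be normalized so that $[Y,Z] \in \cD^2$ — this is the crucial reduction that distinguishes the Engel case and uses that $\cD^2$ is genuinely a rank-$3$ distribution (involutivity would fail, but we only need $[Y,Z] \in \operatorname{span}(X,Y,Z)$, which follows because $\cD^2$ has corank $1$ and $W$ is the unique "new" direction).

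Second, with such a normalized frame in hand, I would integrate the filtration to build coordinates. The idea is the standard one for bringing a flag of distributions to a model: use the flow of $X$ last. Concretely, I would find a local hypersurface $\Sigma$ transverse to $X$, inductively coordinatize $\Sigma$ using the restricted flags, and then flow by $X$ to produce coordinates $(x_1, x_2, x_3, x_4)$ on $M$ in which $X = \partial_{x_1}$, $\cD = \operatorname{span}(\partial_{x_1}, \text{one more field})$, and the brackets reproduce \eqref{eq:bracketrel}. Equivalently — and this is perhaps cleaner to write — I would invoke the Frobenius theorem on the rank-$3$ involutive-up-to-one-step structure: $\cD^2$ being a corank-$1$ distribution, locally $\cD^2 = \ker dx_4$ for a submersion $x_4$; then inside each leaf-like slice, $\cD$ sits as a contact-type rank-$2$ distribution and one reduces to a lower-dimensional Darboux argument. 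Matching the resulting structure functions with the Engel group's via the exponential coordinates of the first subsection gives the diffeomorphism $\Phi$ with $\Phi^*(\cD) = \cD_{\Eng}$.

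\textbf{Main obstacle.} The delicate step is the normalization of the frame so that the bracket relations close up exactly as in \eqref{eq:bracketrel} rather than merely modulo lower-order terms; i.e., killing the "error" structure functions by successive changes of frame $X \mapsto a X + b Y$, $Y \mapsto \ldots$ and changes of coordinates. One must check that the Engel condition (constancy of the growth vector $(2,3,4)$, and in particular that $\cD^2$ is a bona fide subbundle, not just a sheaf with jumps) provides exactly enough freedom to solve the relevant ODEs at each stage — this is where a generic rank-$2$ distribution in dimension $4$ would fail and where the hypothesis is used in full. Since this is a classical result, I would either reproduce the frame-normalization argument in the spirit of the Engel/von Weber treatment, or simply cite \cite{Mon2} for the detailed computation, noting that the only input needed downstream in this paper is the existence of local coordinates realizing $\cD_\Eng$, which the statement already provides.
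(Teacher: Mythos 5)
The paper does not actually prove this statement: it is quoted as a known result from \cite{Mon2}, so deferring the detailed computation to that reference, as you do at the end, is consistent with what the paper itself does. Still, two points in your sketch need correction. The step you call crucial --- normalizing $Y$ so that $[Y,Z]\in\cD^2$ --- is indeed the heart of the matter (it amounts to choosing $Y$ along the characteristic line field $L$ that the paper introduces just after this theorem), but your justification is not a proof. The correct argument is linear-algebraic: the bracket induces a map $\cD\to TM/\cD^2$, $v\mapsto [v,Z]\bmod\cD^2$, whose target is one-dimensional and which is nonzero by the Engel condition $\cD^3=TM$; its kernel is therefore a line field, and $Y$ is chosen to span it. Saying that ``$\cD^2$ has corank $1$ and $W$ is the unique new direction'' does not by itself force $[Y,Z]$ into $\cD^2$ for an arbitrary choice of $Y$; both $[X,Z]$ and $[Y,Z]$ could a priori have nonzero $W$-components.

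More seriously, the route you describe as ``cleaner'' is wrong: $\cD^2$ is the induced quasi-contact (even-contact) distribution and is maximally non-integrable, so the Frobenius theorem does not apply and $\cD^2$ is not of the form $\ker dx_4$ for any submersion $x_4$. A corank-one distribution is locally the kernel of a $1$-form, not of an exact one, precisely when it fails to be involutive --- which is the case here by the Engel hypothesis. The workable classical argument instead quotients by the characteristic foliation tangent to $L$: since $[L,\cD^2]\subset\cD^2$, the distribution $\cD^2$ descends to a contact structure on the local three-dimensional leaf space, $\cD/L$ descends to a Legendrian line field there, and one concludes with the contact Darboux theorem together with a normalization of that line field (equivalently, the original Engel/von Weber frame normalization, which is what \cite{Mon2} carries out). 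If you keep a write-up rather than a citation, give that argument in full and delete the Frobenius detour.
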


\subsubsection*{Charateristic line field} 
A central object in the study of an Engel manifold $(M,\cD)$ is its characteristic line field $L$, that one can define by the property 
\begin{equation*}
    [L,\cD^2] \equiv 0, \mod \cD^2.
\end{equation*}
Any Engel manifold is then endowed with a canonical flag 
\begin{equation*}
    L\subset \cD \subset \cD^2 \subset TM.
\end{equation*}

Observe that the line field $L$ is a feature of the quasi-contact manifold $(M,\cD^2)$ rather than the Engel structure directly.
It is not difficult to check that on the Engel group $\Eng$, the characteristic line field corresponds to the subbundle $\fr_2$, i.e spanned by the vector field $X_2$.
We write 
\begin{equation*}
    \R_2 = \Exp_\Eng(\fr_2),
\end{equation*}    
for the associated one-dimensional subgroup.

\subsubsection*{Singular curves and comments on Theorem \ref{theo:propagintro}}
One striking property of the characteristic line field is the following: any sufficiently short integral curve $c$ of $L$ is 
a minimizing subRiemannian geodesic between its endpoints, and this holds independently of the choice of the metric put on the 2-distribution $\cD$. Moreover, for a generic 
metric, $c$ is not the projection of any solution to Hamilton's equations for the subRiemannian Hamiltonian (see \cite{Mon3}). The curve $c$ is called 
a \textit{singular curve} on $(M,\cD)$.

Singular curves are a particularity of subRiemannian geometry. These curves admit multiple characterizations but the following microlocal one we give is
straightforward: we use the notation $\cD^\perp$ for the annihilator of $\cD$ (thus a subbundle of the cotangent bundle $T^* M$) and $\Bar{\omega}$ for the restriction
to $\cD^\perp$ of the canonical symplectic form $\omega$ on $T^* M$. 

\begin{definition}
    A characteristic curve for $\cD^\perp$ is an absolutely continuous curve $\lambda(\cdot)\in\cD^\perp$ that never intersects the zero section and that satisfies
    $\dot{\lambda}(t) \in \ker(\Bar{\omega}(\lambda(t)))$ at every point $t$ for which the derivative $\dot{\lambda}$ exists.
\end{definition}

\begin{theorem}[\cite{Mon3}]
    Let $(M,\cD)$ be a subRiemannian manifold. A curve in $M$ is singular if and only if it is the projection of a characteristic $\lambda$ for $\cD^\perp$ with square-integrable derivative; $\lambda$ is then called an 
    abnormal extremal lift of the singular curve.
\end{theorem}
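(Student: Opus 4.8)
This is the classical characterization of abnormal extremals, and the plan is to obtain it from the standard description of singular curves as critical points of the endpoint map, reformulated symplectically by the Lagrange multiplier rule (equivalently, the abnormal case of the Pontryagin Maximum Principle). Fix $q_0=\gamma(0)$, choose a local orthonormal frame $X_1,\dots,X_k$ of $\cD$, and represent a horizontal curve as the solution of $\dot\gamma(t)=\sum_i u_i(t)\,X_i(\gamma(t))$ with control $u\in L^2([0,T],\R^k)$; let $\mathrm{End}\colon u\mapsto\gamma(T)$ be the endpoint map. By definition $\gamma$ is singular exactly when $d\,\mathrm{End}_u$ is not onto $T_{\gamma(T)}M$.

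First I would compute $d\,\mathrm{End}_u$ via the variation-of-constants formula: writing $\Phi_{t,T}$ for the flow from time $t$ to time $T$ of the non-autonomous field $X_{u(\cdot)}=\sum_i u_i(\cdot)X_i$, one has
\[
d\,\mathrm{End}_u(v)=\int_0^T(\Phi_{t,T})_*\Big(\sum_i v_i(t)\,X_i(\gamma(t))\Big)\,dt\;\in\;T_{\gamma(T)}M.
\]
Hence $\gamma$ is singular iff there is a nonzero $\lambda_T\in T^*_{\gamma(T)}M$ annihilating this image for every $v$, i.e.\ $\langle\lambda_T,(\Phi_{t,T})_*X_i(\gamma(t))\rangle=0$ for all $i$ and a.e.\ $t$. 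Setting $\lambda(t):=(\Phi_{t,T})^*\lambda_T\in T^*_{\gamma(t)}M$, the curve $\lambda$ is absolutely continuous, never meets the zero section (cotangent lifts of diffeomorphisms are fiberwise isomorphisms and $\lambda_T\neq0$), and the annihilation condition says precisely $\langle\lambda(t),X_i(\gamma(t))\rangle=0$ for all $i$, that is $\lambda(t)\in\cD^\perp$.

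It then remains to match the evolution of $\lambda$ with the characteristic condition. Locally $\cD^\perp=\{h_1=\dots=h_k=0\}$ for the momentum functions $h_i(q,p)=\langle p,X_i(q)\rangle$, so $T_\lambda\cD^\perp=\bigcap_i\ker dh_i(\lambda)=\mathrm{span}\big(\vec h_1(\lambda),\dots,\vec h_k(\lambda)\big)^{\perp_\omega}$, whence $\big(T_\lambda\cD^\perp\big)^{\perp_\omega}=\mathrm{span}\big(\vec h_1(\lambda),\dots,\vec h_k(\lambda)\big)$ and
\[
\ker\big(\Bar{\omega}(\lambda)\big)=T_\lambda\cD^\perp\cap\mathrm{span}\big(\vec h_1(\lambda),\dots,\vec h_k(\lambda)\big).
\]
Since the cotangent lift of $\Phi_{\cdot,T}$ is the Hamiltonian flow of the time-dependent Hamiltonian $h_{u(t)}=\sum_i u_i(t)h_i$, one has $\dot\lambda(t)=\pm\vec h_{u(t)}(\lambda(t))=\pm\sum_i u_i(t)\vec h_i(\lambda(t))$ for a.e.\ $t$, which already lies in $\mathrm{span}(\vec h_i(\lambda(t)))$; and because $h_j(\lambda(t))\equiv0$ along $\lambda$ (as $\lambda(t)\in\cD^\perp$) we get $dh_j(\dot\lambda(t))=\tfrac{d}{dt}h_j(\lambda(t))=0$, so $\dot\lambda(t)\in T_{\lambda(t)}\cD^\perp$. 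Therefore $\dot\lambda(t)\in\ker\big(\Bar{\omega}(\lambda(t))\big)$: $\lambda$ is a characteristic curve for $\cD^\perp$, and its derivative is square-integrable because $u\in L^2$ and the $\vec h_i$ are locally bounded. Conversely, given a characteristic curve $\lambda$ for $\cD^\perp$ with square-integrable derivative, one writes $\dot\lambda(t)=\sum_i c_i(t)\vec h_i(\lambda(t))$ with $c\in L^2$ (using $\ker\Bar{\omega}(\lambda)\subseteq\mathrm{span}(\vec h_i(\lambda))$ and the pointwise independence of the $\vec h_i$), projects by $d\pi$, using $d\pi(\vec h_i)=X_i$, to see that $\gamma=\pi\circ\lambda$ is horizontal with control $c$, and then reverses the two previous steps to exhibit the nonzero annihilating covector $\lambda(T)$; thus $\gamma$ is singular.

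\emph{The main obstacle} is the low regularity. Since controls are only $L^2$ — equivalently $\lambda$ is only $H^1$ in time — the flow $\Phi_{t,T}$, the differentiability of the endpoint map and the Hamiltonian description of its cotangent lift must all be set up in the absolutely continuous (Carath\'{e}odory) framework rather than the smooth one; this is exactly where the hypothesis of a square-integrable derivative is needed, and it is also what makes the Lagrange multiplier rule applicable. A secondary point worth stressing is that the final description — $\lambda(t)\in\cD^\perp$ together with $\dot\lambda(t)\in\ker\Bar{\omega}(\lambda(t))$ — is manifestly independent of the metric on $\cD$ and of the chosen frame, which re-proves that the class of singular curves is a feature of the distribution alone; specializing to $\cD_\Eng=\fg_1$, one checks that the characteristic curves of $\cD_\Eng^\perp$ project exactly onto the integral curves of the characteristic line field $L$ of $\Eng$ (namely $\fr_2$), recovering the singular curves of $\Eng$ discussed above.
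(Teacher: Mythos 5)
Your proposal is correct and is essentially the standard argument from the cited source \cite{Mon3} (the paper itself states this theorem without proof, as a quoted result): singular curves as critical points of the endpoint map, the annihilating covector transported by the cotangent lift of the flow, and the identification $\ker(\Bar\omega(\lambda))=T_\lambda\cD^\perp\cap\mathrm{span}(\vec h_1,\dots,\vec h_k)$ matching the adjoint/Hamiltonian evolution. The low-regularity points you flag (Carath\'eodory flows, differentiability of the endpoint map on $L^2$ controls, and the $L^2$ bound on the coefficients $c_i$ in the converse via pointwise independence of the $\vec h_i$ along the compact image of $\lambda$) are exactly the routine technicalities that need to be checked, and your treatment of them is adequate.
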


In the setting of rank 2 distribution, one can show that any abnormal extremal lift is in fact a section of $\cD^{2,\perp}\setminus\{0\}$ (see \cite{Mon3}). On the Engel group, this would correspond to a characteristic
valued in the subbundle $\cD_{\Eng}^{2,\perp}\setminus\{0\} = \fg_{3}^* \setminus\{0\}$. 

We observe that the propagation of the semiclassical measures as described in Theorem \ref{theo:propagintro} takes place along 
the abnormal extremal lifts of the characteristic line field $\fr_{2}$ that are constant-valued in $\fg_{3}^*\setminus\{0\}$. 
Moreover, the speed of propagation is entirely determined and depends also of the impulsion variable $\beta\in \fr_{2}^*$. This description 
contrasts with the results obtained so far concerning propagation of singularities for subelliptic operators: in \cite{L22}, it is indeed shown that for the wave 
equation associated to subLaplacians on a subRiemannian manifolds, singularities (in the usual sense of wave front set) can propagate along abnormal extremal lifts but 
nothing has been said in a general setting about their speed of propagation. In \cite{CvDL}, it has been shown that for the special case of flat Martinet distribution, which 
is closely related to the Engel group, there exists a continuum of such speeds. It is remarkable that our analysis, taking advantage of the homogeneous structure of the Engel group and 
by consequence fundamentally anisotropic, is able to isolate the different speeds of propagation with the natural addition of the variable $\beta\in\fr_{2}^*$.

\subsection{Time evolution of microlocally concentrated wave packets}
\label{subsect:wpevolution}

Theorem \ref{theo:propagintro} has shown the importance of particular abnormal extremal lifts of the singular curves in $(\Eng,\cD_\Eng)$ for the quantum evolution. We present here particular solutions to 
Equation \eqref{eq:SchEngel} that are microlocally concentrated on such abnormal extremal lifts up to finite time. This construction is based on the notion of wave packets for Lie groups and is to be 
compared with the particular solutions to the Martinet subelliptic wave equation exhibited in \cite{CvDL}.

Let us first briefly recall the definition of classical Euclidean wave packets. Given $(x_0,\xi_0)\in\R^d\times\R^d$ and $a\in\mathcal S(\R^d)$, we consider the family (indexed by $\hbar$) of functions
\begin{equation*}
    \psi_{\rm eucl}^\hbar(x)= \hbar^{-d/4} a\left(\frac{x-x_0}{\sqrt\hbar}\right) {\rm e}^{\frac i\hbar \xi_0\cdot (x-x_0)}, \;\; x\in\R^d.
\end{equation*}
Such a family is called a Euclidean wave packet.

The oscillation along $\xi_0$ is forced by the term ${\rm e}^{\frac i\hbar \xi_0\cdot(x-x_0)}$ and the concentration on $x_0$ is performed at the scale $\sqrt\hbar$ for symmetry reasons : the $\hbar$-Fourier transform of $\psi^\hbar_{\rm eucl}$,
$\hbar^{-d/2}\widehat \psi^\hbar_{\rm eucl} (\xi/\hbar)$ presents a concentration on $\xi_0$ at the scale $\sqrt\hbar$. 

The following definition takes inspiration from the Euclidean setting, and has already been introduced in previous articles (see \cite{FF2},\cite{FL} and \cite{BFF} where a Wick-calculus on general compact
and graded nilpotent Lie groups is developed thanks to these adapted wave packets).

\begin{definition}
    \label{def:wp}
    Let $a_0 \in C^{\infty}(\Eng)$, $(x_0,\pi^{\delta_0,\beta_0}) \in \Eng \times \Enghat_{\rm gen}$ and $\Phi_1$,$\Phi_2 \in \mathcal{H}_{\pi^{\delta_0,\beta_0}}^{\infty}$, i.e the space of smooth vectors in $\mathcal{H}_{\pi^{\delta_0,\beta_0}}$. 
    We call a wave packet the family of maps $\left(WP_{x_0,\pi^{\beta_0,\delta_0}}^{\hbar}(a,\Phi_1,\Phi_2)\right)_{\hbar >0}$, defined by 
    \begin{equation*}
        \forall x \in \Eng,\ WP_{x_0,\pi^{\delta_0,\beta_0}}^{\hbar}(a_0,\Phi_1,\Phi_2)(x) = \hbar^{-Q/4} a_0(\hbar^{-1/2}\cdot (x_{0}^{-1}x)) \left(\pi^{\delta_0,\beta_0}(\hbar^{-1}\cdot (x_{0}^{-1}x)) \Phi_1, \Phi_2\right).
    \end{equation*}
\end{definition}

In order for a wave packet to be in $L^2(\Eng)$, following \cite{FF}, \cite{P}, it is enough for the profile $a$ to be square-integrable only with respect to the variables that do not belong 
to the jump set $e\subset\{1,2,3,4\}$ associated to the representation $\pi^{\delta_0,\beta_0}$ and the basis $(X_1,X_2,X_3,X_4)$. In our case, $e$ is equal to the subset of indices $\{1,3\}$ and for this reason, we introduce $\fg_{e^c}$ the Lie algebra 
spanned by $X_2$ and $X_4$, and $\R^{2}_{e^c} = \Exp_\Eng(\fg_{e^c})$ the associated Lie subgroup. It is thus enough to take a profile $a$ in $\cS(\R^{2}_{e^c})$ for the associated wave packet to define a bounded family 
in $L^2(\Eng)$.

\begin{theorem}
\label{thm:prop_wave_packet}
    Let $\psi^{\hbar}(t)$ be the solution to the Schrödinger equation \eqref{eq:SchEngel} for $\tau=1$ with initial data being a wave packet:
    \begin{equation*}
        \psi_{0}^{\hbar} = WP_{x_0,\pi^{\delta_0,\beta_0}}^{\hbar}(a_0,\Phi_1,\Phi_2),
    \end{equation*}
    where $(x_0,\pi^{\delta_0,\beta_0})$, $\Phi_1$,$\Phi_2$ as introduced in Definition \ref{def:wp} and $a_0\in \cS(\R^{2}_{e^c})$. 
    Suppose that $\Phi_{1}$ is an eigenvector of $H(\pi^{\delta_0,\beta_0})$ for the $n$-th eigenvalue ($n\in\N_{>0}$):
    \begin{equation*}
        H(\pi^{\delta_0,\beta_0})\Phi_1 = \mu_{n}(\delta_0,\beta_0)\Phi_1.
    \end{equation*}
    Then there exists a time-varying profile $(t,y) \mapsto a(t,y)$ in $C^{1}(\R,\cS(\R^{2}_{e^c}))$ satisfying $a(0,\cdot) = a_0(\cdot)$ and such that 
    \begin{equation*}
        \psi^{\hbar}(t) =  \hbar^{-Q/4}e^{-\frac{i}{\hbar}\mu_{n}(\delta_0,\beta_0)t} a(\hbar^{-1/2}\cdot (x(t)^{-1}x)) \left(\pi^{\delta_0,\beta_0}(\hbar^{-1}\cdot (x_{0}^{-1}x)) \Phi_1, \Phi_2\right) + O_{L^2(\Eng)}(\hbar^{1/2}),
    \end{equation*}
    where
    \begin{equation*}
        x(t) = x_{0}\Exp_{\Eng}(\partial_{\beta}\mu_{n}(\delta_0,\beta_0)t X_{2}),\  t\in\R.
    \end{equation*}
\end{theorem}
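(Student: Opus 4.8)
### Proof plan for Theorem~\ref{thm:prop_wave_packet}

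The plan is to track the exact solution $\psi^\hbar(t) = e^{i\hbar t \Delta_\Eng}\psi_0^\hbar$ through the Fourier transform of the group, reducing the Schrödinger equation to a family (indexed by $(\delta,\beta)$) of fiberwise evolutions generated by the Fourier multiplier $H(\pi^{\delta,\beta})$. The essential mechanism is that a wave packet, on the Fourier side, is concentrated near the fixed generic representation $\pi^{\delta_0,\beta_0}$, and the eigenvector hypothesis $H(\pi^{\delta_0,\beta_0})\Phi_1 = \mu_n(\delta_0,\beta_0)\Phi_1$ forces, to leading order, the phase $e^{-\frac{i}{\hbar}\mu_n(\delta_0,\beta_0)t}$ and a transport in the group variable governed by the linearization of $(\delta,\beta)\mapsto \mu_n(\delta,\beta)$. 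First I would set up the group Fourier transform of $\psi_0^\hbar$ and extract the scaling: by the definition of $WP$, rescaling $x\mapsto \hbar^{1/2}\cdot x$ and using the homogeneity of $\Delta_\Eng$ (which has homogeneous degree $2$), the problem is conjugated to studying $e^{i t \Delta_\Eng}$ applied to $a_0(x_0^{-1}x)\,(\pi^{\delta_0,\beta_0}(\hbar^{-1/2}\cdot(x_0^{-1}x))\Phi_1,\Phi_2)$; the representation argument $\hbar^{-1/2}\cdot(\cdot)$ is the key source of concentration on the dual near $\pi^{\delta_0,\beta_0}$, because under the dilation automorphisms one has $\pi^{\delta,\beta}\circ(r\cdot) \simeq \pi^{r^3\delta, r^2\beta}$ (the weights of $X_4$ and $X_2$ being $3$ and $2$), so that $\hbar^{-1/2}$-dilation sends a neighborhood of $(\delta_0,\beta_0)$ of size $O(\sqrt\hbar)$ to an $O(1)$ neighborhood after rescaling.

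Next I would perform the WKB/ansatz substitution. Writing the candidate approximate solution
\begin{equation*}
    \psi_{\rm app}^\hbar(t,x) = \hbar^{-Q/4} e^{-\frac{i}{\hbar}\mu_n(\delta_0,\beta_0) t}\, a(t,\hbar^{-1/2}\cdot(x(t)^{-1}x))\, \bigl(\pi^{\delta_0,\beta_0}(\hbar^{-1}\cdot(x_0^{-1}x))\Phi_1,\Phi_2\bigr),
\end{equation*}
I would plug it into $i\hbar\partial_t \psi = -\hbar^2\Delta_\Eng\psi$ and expand in powers of $\hbar^{1/2}$. The leading order $\hbar^0$ term is automatically satisfied by the eigenvalue relation and fixes nothing on $a$. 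The order-$\hbar^{1/2}$ term is where the transport equation for $a$ and the trajectory $x(t)$ appear: the computation of $\hbar^2\Delta_\Eng$ acting on the product (profile times matrix coefficient) produces, via the Leibniz rule for the vector fields $X_1,X_2$ and the scaling $\hbar^{-1/2}$ in the profile, a cross term that is linear in the derivatives of $a$; matching it against $i\hbar\partial_t$ forces the group velocity to be $\partial_\beta\mu_n(\delta_0,\beta_0)X_2$ (the derivative in the $X_1$-direction of the relevant phase function being absent because the $\fg_1$-component generating the oscillation in a wave packet pairs with the spectral parameter only through $\beta$, which lives in $\fr_2^* = \vect(X_2)^*$). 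This is exactly the linearization appearing in Theorem~\ref{theo:propagintro}, and solving the resulting linear transport PDE on $\R^2_{e^c}$ yields $a\in C^1(\R,\cS(\R^2_{e^c}))$ with $a(0,\cdot)=a_0$.

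Finally I would close the argument by an energy estimate: set $r^\hbar(t) = \psi^\hbar(t)-\psi_{\rm app}^\hbar(t)$, show $r^\hbar(0) = O_{L^2}(\hbar^{1/2})$ (the wave packet is exactly $\psi_{\rm app}^\hbar(0)$ up to the profile being replaced by $a_0$, so this term vanishes, and the $O(\hbar^{1/2})$ comes from the discarded higher-order ansatz corrections), use that $r^\hbar$ solves the Schrödinger equation with a source $f^\hbar = (i\hbar\partial_t + \hbar^2\Delta_\Eng)\psi_{\rm app}^\hbar = O_{L^2}(\hbar)$ by construction, and conclude by Duhamel and unitarity of $e^{i\hbar t\Delta_\Eng}$ that $\|r^\hbar(t)\|_{L^2} \le \|r^\hbar(0)\|_{L^2} + \hbar^{-1}\int_0^t\|f^\hbar(s)\|_{L^2}ds = O(\hbar^{1/2})$ locally uniformly in $t$. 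The main obstacle I anticipate is the second step: carefully computing the action of the scaled subLaplacian on the product of a slowly-varying profile with a rapidly-oscillating matrix coefficient of $\pi^{\delta_0,\beta_0}$, keeping track of which brackets in $\fg_\Eng$ contribute at which order in $\hbar^{1/2}$ (the step-$3$ nature means $X_4$ enters at a different scale than $X_3$, and one must verify the cross terms organize into a clean first-order transport operator rather than leaving uncontrolled $O(1)$ remainders). A secondary technical point is justifying that $a(t,\cdot)$ stays Schwartz in the $e^c$-variables and merely $C^1$ in $t$ — here one invokes that the transport vector field $\partial_\beta\mu_n X_2$ has smooth coefficients and the analyticity of $\nu\mapsto\tilde\mu_n(\nu)$ from the Montgomery operator discussion, together with the regularity of the projector $\Pi_n$.
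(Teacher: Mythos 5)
Your overall skeleton — WKB ansatz centred at a moving point $x(t)$, cascade in powers of $\hbar^{1/2}$, closure by Duhamel and unitarity — is the same as the paper's. However, there are two genuine gaps in the execution.

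First, the evolution law you assign to the profile is wrong, and it is misplaced in the cascade. The term $i\hbar\partial_t a$ is of order $\hbar$ relative to the leading term, so no evolution equation for $a$ can appear at order $\hbar^{1/2}$. What the order-$\hbar^{1/2}$ equation actually gives is an algebraic solvability condition: the right-hand side $\bigl(i\partial_\beta\mu_n\,\Tilde X_2 a - 2(\pi^0(V)\cdot V)a\bigr)\Pi_n$ must have vanishing $H$-diagonal part, which is exactly what the choice $\dot x(t)=\partial_\beta\mu_n(\delta_0,\beta_0)X_2$ guarantees (via Lemma \ref{lem:first_diag_part}: $\pi^0(X_1)$ is purely off-diagonal and the diagonal part of $\pi^0(X_2)$ is $\tfrac i2\partial_\beta\mu_n\Pi_n$). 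The remaining off-diagonal part is nonzero and must be absorbed by an \emph{operator-valued} corrector $\sigma_1$ through the commutator $[\sigma_1,H(\pi^0)]$; your ansatz has no such corrector. The equation for $a$ then arises as the solvability condition at order $\hbar$, and it is not a transport equation but the free Schrödinger equation
\begin{equation*}
  i\partial_t a + \tfrac12\,\partial_\beta^2\mu_n(\delta_0,\beta_0)\,X_2^2 a = 0 ,
\end{equation*}
whose derivation requires computing the diagonal parts of $2(\pi^0(V)\cdot V)\sigma_1$ (Lemmata \ref{lem:diagpartsigma1_1} and \ref{lem:diagpartsigma1_2}, which rest on second-order Feynman--Hellmann identities). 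A transport equation for $a$ on top of the already-moving centre $x(t)$ would double-count the group velocity and would not make the diagonal part at order $\hbar$ vanish.

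Second, your Duhamel estimate does not close as written: with a source $f^\hbar=O_{L^2}(\hbar)$ you get $\|r^\hbar(t)\|\le \|r^\hbar(0)\|+\hbar^{-1}\int_0^t\|f^\hbar\| = O(1)$, not $O(\hbar^{1/2})$. One needs the approximate solution to satisfy $i\hbar\partial_t\varphi^\hbar+\hbar^2\Delta_\Eng\varphi^\hbar=O_{L^2}(\hbar^{3/2})$, which is precisely why the correctors $\sigma_1$ and $\sigma_2$ and the order-$\hbar$ solvability condition above are indispensable. (A minor further slip: under the dilations one has $[r\cdot\pi^{\delta,\beta}]=[\pi^{r^3\delta,\,r\beta}]$, since $X_2$ lies in the first stratum and $\beta$ has weight $1$, not $2$.)
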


\begin{remark}
    This is of course coherent with Theorem \ref{theo:propagintro} as the semiclassical measure of $(\psi_{0}^\hbar)_{\hbar>0}$ is given by the Dirac measure on $(x_0,\pi^{\delta_0,\beta_0})$ and by the projector $\ket{\Phi_1}\bra{\Phi_1}$, see 
    \cite{FF2}, \cite{FF}.
\end{remark}

\subsection{Second microlocalization on the Engel group}
\label{subsect:secondmircointro}

We now investigate the Schrödinger equation for the time-scale $\tau=2$, once again through the use of time-averaged semiclassical measures. 

\subsubsection*{Support condition}
With a similar analysis as for the previous results, we obtain the following proposition. 

\begin{proposition}
    \label{prop:dispersionsupp}
    Let $(\psi^{\hbar}_0)_{\hbar>0}$ be a bounded family in $L^2(\Eng)$ and $\psi^\hbar(t)= e^{it\Delta_\Eng}\psi_{0}^\hbar$ the solution to the Schrödinger equation \eqref{eq:Sch_engel} for $\tau=2$.
    Then any time-averaged semiclassical measure $t\mapsto \Gamma_t d\gamma_t\in L^\infty(\R, {\mathcal M}_{ov}^+(\Eng\times \Enghat))$ for the family $(\psi^\hbar(t))_{\hbar>0}$ and the sequence $(\hbar_k)_{k\in\N}$ satisfies the 
    following additional properties:
    \begin{itemize}
        \item[(i)] Similarly as in Theorem \ref{theo:propagintro}, for $d\gamma_t$-almost all generic representations $\pi^{\delta,\beta}$, the field of operators $\Gamma$ decomposes along the spectral projectors of $H$:
        \begin{equation*}
        \Gamma_t(x,\pi^{\delta,\beta})=\sum_{n\in\N_{>0}} \Gamma_{n,t}(x,\pi^{\delta,\beta})\;\;{ with}\;\; \Gamma_{n,t}(x,\pi^{\delta,\beta}):= \Pi_n(\pi^{\delta,\beta})\Gamma_t(x,\pi^{\delta,\beta}) \Pi_n(\pi^{\delta,\beta}) \in \cL(\cH_{\pi^{\delta,\beta}}).
        \end{equation*}
        \item[(ii)] For $n\in\N_{>0}$ and almost all $t\in\R_t$, the distribution $d\gamma_{n,t}(x,\pi^{\delta,\beta}) = {\rm Tr}\left(\Gamma_{n,t}(x,\pi^{\delta,\beta})\right) d\gamma_t(x,\pi^{\delta,\beta})$ on $\Eng\times (\fg_{3}^*\setminus\{0\}\times \fr_{2}^*)$, we now have the following support condition:
        \begin{equation}
            \label{eq:suppgammaintro}
            {\rm supp}(\gamma_{n,t}) \subset \Eng \times \{\pi^{\delta,\beta}\in\Enghat_{\rm gen}\,:\,\partial_{\beta}\mu_{n}(\delta,\beta) = 0\}.
        \end{equation}
    \end{itemize}
\end{proposition}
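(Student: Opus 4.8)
The plan is to run the same machinery that yields Theorem \ref{theo:propagintro}, but now with the extra factor $\hbar^{-2}$ in the propagator forcing a stronger conclusion. I would start from the Heisenberg--type evolution equation satisfied by the family of operators $\sigma \mapsto (\Op_{\hbar,\Eng}(\sigma)\psi^\hbar(t),\psi^\hbar(t))$, obtained by differentiating in $t$ and using Equation \eqref{eq:SchEngel} with $\tau=2$: this produces a commutator term of the form $\tfrac{i}{\hbar^2}\big([\Op_{\hbar,\Eng}(\sigma),-\hbar^2\Delta_\Eng]\psi^\hbar(t),\psi^\hbar(t)\big)$, i.e. $\tfrac{i}{\hbar^0}\big([\Op_{\hbar,\Eng}(\sigma),\hbar^2\Delta_\Eng/\hbar^2]\ldots\big)$. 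The point is that the leading symbol of $-\hbar^2\Delta_\Eng$ under the group Fourier transform is exactly $H(\pi^{\delta,\beta})$, which is $\hbar$-independent after the quantization is set up (the subelliptic scaling is already built into $\Op_{\hbar,\Eng}$). First I would therefore test against symbols of the form $\sigma(x,\pi) = \Pi_n(\pi)\,\theta(x,\pi)\,\Pi_n(\pi)$, so that the principal commutator $[\,H(\pi),\sigma\,]$ vanishes identically on the $n$-th eigenspace; this is what gives part (i), the block-diagonal decomposition $\Gamma_t = \sum_n \Gamma_{n,t}$, exactly as in the proof of Theorem \ref{theo:propagintro}(i).

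Next, for part (ii), the key is to extract the subprincipal (order-$\hbar$) term of the commutator. After projecting onto the $n$-th block the surviving contribution is, up to $o(1)$, of the form $\tfrac{1}{\hbar}\,\partial_\beta\mu_n(\delta,\beta)\, X_2\,\sigma$ plus an $x$-transport piece — the same vector field that appears in Theorem \ref{theo:propagintro}(ii) — but now it is multiplied by $\hbar^{-1}$ rather than $\hbar^0$ because of the $\tau=2$ scaling. Passing to the limit $\hbar=\hbar_k\to 0$ in the identity, the $\hbar^{-1}$ prefactor forces the corresponding bracket against $\Gamma_t d\gamma_t$ to vanish \emph{identically}: concretely, for every test function one gets
\begin{equation*}
    \frac{1}{\hbar_k}\int \partial_\beta\mu_n(\delta,\beta)\,\mathrm{Tr}\big((X_2\theta)\,\Gamma_{n,t}\big)\,d\gamma_t\,dt \;=\; O(1),
\end{equation*}
which, since the left-hand side would blow up unless the integrand is zero, yields $\partial_\beta\mu_n(\delta,\beta)\,\Gamma_{n,t} = 0$ for $d\gamma_t$-a.e.\ $\pi^{\delta,\beta}$ and a.e.\ $t$. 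Taking traces gives precisely $\partial_\beta\mu_n(\delta,\beta) = 0$ on $\mathrm{supp}(\gamma_{n,t})$, which is \eqref{eq:suppgammaintro}. The time-averaging and the a.e.-in-$t$ qualifier come in exactly as in the $\tau=1$ case, since we only integrate against $L^1_t$ test functions.

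The main obstacle I expect is the rigorous control of the remainder terms in the symbolic expansion of the commutator at the relevant order. Because we are dividing by $\hbar$ (or $\hbar^2$ before projection), it is no longer enough that the error be $O(\hbar)$ in operator norm; one needs the expansion $\tfrac{i}{\hbar}[\Op_{\hbar,\Eng}(\sigma),\hbar^2\Delta_\Eng] = \Op_{\hbar,\Eng}(\{\!\{H,\sigma\}\!\}) + \hbar\,\Op_{\hbar,\Eng}(r_\hbar)$ with the remainder symbol $r_\hbar$ \emph{uniformly bounded} in the relevant symbol class (here the adapted pseudodifferential calculus on $\Eng\times\Enghat$ from the references \cite{FR},\cite{BFF}), and one must make sense of the first-order operator $X_2$ acting on operator-valued symbols in the non-commutative variable $\pi$. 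This requires the full strength of the symbolic calculus developed in Section \ref{sect:semiclassEng}, together with the smoothness of $(\delta,\beta)\mapsto\mu_n(\delta,\beta)$ and of the eigenprojectors $\Pi_n$ away from eigenvalue crossings (guaranteed by the analytic-family-of-type-B property of the Montgomery operators). Handling possible crossings of the $\mu_n$, and justifying that the decomposition in (i) is $d\gamma_t$-measurable and summable, is the only genuinely delicate point; everything else is a careful but routine adaptation of the $\tau=1$ argument.
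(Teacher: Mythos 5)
Your part (i) and the derivation of the order-$\hbar$ identity follow the paper's route: one writes the exact evolution identity $-i\hbar^2\ell_\hbar(\theta',\sigma)=\ell_\hbar(\theta,[\sigma,H])+2\hbar\,\ell_\hbar(\theta,(\pi(V)\cdot V)\sigma)+\hbar^2\ell_\hbar(\theta,\Delta_\Eng\sigma)$, obtains the block decomposition from $\ell_\infty(\theta,[\sigma,H])=0$, then tests with $\sigma=\Pi_n\sigma\Pi_n\in\cB_0$ vanishing near $\{\delta=0\}$ and uses Lemma \ref{lem:first_diag_part} to isolate the $H$-diagonal part $\tfrac{i}{2}\partial_\beta\mu_n\,X_2\sigma$ of $(\pi(V)\cdot V)\sigma$, which yields $\ell_\hbar(\theta,\partial_\beta\mu_n X_2\sigma)=\cO(\hbar)$. (Your worry about controlling remainders beyond operator-norm $\cO(\hbar)$ is moot here: \eqref{eq:commutatorDelta} is an exact identity, not an asymptotic expansion.)

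The gap is in the last step of part (ii). From the vanishing, for every test symbol $\sigma$, of $\int\partial_\beta\mu_n\,{\rm Tr}\big((X_2\sigma)\,\Gamma_{n,t}\big)\,d\gamma_t\,dt$, you conclude that ``the integrand is zero'' and hence $\partial_\beta\mu_n\,\Gamma_{n,t}=0$ for $d\gamma_t$-a.e.\ point. This does not follow: the test objects in the integrand are of the form $X_2\sigma$, i.e.\ derivatives along the flow, and what you actually obtain is the weak transport equation $\partial_\beta\mu_n(\delta,\beta)\,X_2\,\gamma_{n,t}=0$, i.e.\ invariance of $\gamma_{n,t}$ under the flow of the vector field $\partial_\beta\mu_n X_2$ --- not pointwise vanishing of the density. (On $\R$, the identity $\int g\,f'\,dx=0$ for all $f\in C_c^\infty$ forces $g$ to be constant, not zero.) The missing ingredient, which the paper supplies, is that $\gamma_{n,t}$ is a \emph{finite} positive measure on the \emph{non-compact} group $\Eng$, and the flow of $\partial_\beta\mu_n X_2$ translates points to infinity along the subgroup $\R_2$ wherever $\partial_\beta\mu_n\neq 0$; a finite measure invariant under such a flow must vanish on that region, which is exactly \eqref{eq:suppgammaintro}. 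This step is not cosmetic: on a compact quotient the same transport equation holds while the support conclusion fails (the invariant measure along the flow lines need not vanish), so the non-compactness must enter the argument explicitly.
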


Using the relation given by Equation \eqref{eq:releigenvalues} between the eigenvalues of the symbol $H$ and the Montgomery operators, we can write 
\begin{equation*}
    \partial_{\beta}\mu_{n}(\delta,\beta) = \delta^{1/3}\partial_{\nu}\Tilde{\mu}_{n}(\beta\delta^{-1/3}).
\end{equation*}
Thus for any critical point $\nu_0$ of $\Tilde{\mu}_{n}$, the set 
\begin{equation*}
    C_{\nu_0} = \Eng \times \{\pi^{\delta,\beta}\in \Enghat_{\rm gen}\,:\,\beta = \nu_0 \delta^{1/3}\},
\end{equation*}
is contained in the right-hand side's set of Equation \eqref{eq:suppgammaintro}, and more precisely any point in ${\rm supp}(\gamma_{n,t})$ is contained into 
some set $C_{\nu_c}$ for $\nu_c$ a critical point of $\Tilde{\mu}_n$. Remark that these sets are invariant by the dilations on $\Enghat$, and we call such subsets \textit{cones}. 

The question of the existence of non-degenerate critical points of the curves $(\nu\mapsto \Tilde{\mu}_n(\nu))_{n\in\N_{>0}}$ has already been studied and 
we recall here the most recent results concerning such critical points.

\begin{theorem}[\cite{HL}, Theorem 1.5]
    There exists $n_0\in \N_{>0}$ such that for $n=1$ or $n\geq n_0$, $\Tilde{\mu}_n$ admits a unique critical point $\nu_{n,c}$. Moreover, it corresponds to a global minimum, i.e $\Tilde{\mu}_n(\nu_{n,c}) = \min_{\nu\in\R} \Tilde{\mu}_{n}(\nu)$,
    and this minimum is non-degenerate.
\end{theorem}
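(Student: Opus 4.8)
The plan is to analyze the Montgomery operators $\tilde H(\nu) = -\partial_{\tilde\xi}^2 + (\nu + \tfrac12\tilde\xi^2)^2$ through a combination of (a) a direct variational argument establishing the behavior at infinity and the existence of a minimum, (b) an analysis of the $n=1$ and large-$n$ cases via semiclassical/perturbative methods, and (c) a non-degeneracy argument at the critical point. Since this statement is quoted verbatim from \cite{HL}, the ``proof'' in this paper should really only cite that reference; but sketching the argument: I would first record the elementary facts that for each fixed $n$, $\nu\mapsto\tilde\mu_n(\nu)$ is real-analytic (the family is analytic of type B, as already noted in the excerpt) and that $\tilde\mu_n(\nu)\to+\infty$ as $\nu\to\pm\infty$. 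The latter follows because for $\nu$ large and positive the potential $(\nu+\tfrac12\tilde\xi^2)^2 \geq \nu^2$ forces $\tilde\mu_n(\nu)\geq\nu^2\to\infty$, while for $\nu$ large and negative one localizes near the two wells $\tilde\xi = \pm\sqrt{2|\nu|}$, does a harmonic approximation there, and finds $\tilde\mu_n(\nu)$ grows like a positive power of $|\nu|$ (the well curvature is $\sim|\nu|$, giving bottom-of-well energies $\sim|\nu|^{1/2}$, hence still $\to\infty$). Consequently each $\tilde\mu_n$ attains a global minimum at some $\nu_{n,c}$.

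\emph{Uniqueness for $n=1$.} For the ground-state eigenvalue one can exploit a convexity or monotonicity structure. One approach: write $\tilde\mu_1(\nu) = \inf_{\|u\|=1}\int |u'|^2 + (\nu+\tfrac12\tilde\xi^2)^2|u|^2\,d\tilde\xi$. The integrand, as a function of $\nu$ for fixed $u$, is a quadratic $\nu^2 + \nu\int\tilde\xi^2|u|^2 + (\text{const})$, hence convex; an infimum of convex functions need not be convex, but here one can use the Feynman–Hellmann formula $\tilde\mu_1'(\nu) = 2\langle\Phi_\nu, (\nu+\tfrac12\tilde\xi^2)\Phi_\nu\rangle$ with $\Phi_\nu>0$ the normalized ground state, and show the right-hand side vanishes at exactly one point by monotonicity of $\nu\mapsto\langle\Phi_\nu,(\nu+\tfrac12\tilde\xi^2)\Phi_\nu\rangle$, which in turn follows from $\tilde\mu_1''(\nu)>0$ wherever $\tilde\mu_1'(\nu)=0$ — a computation using second-order perturbation theory showing $\tilde\mu_1''(\nu) = 2 - 4\sum_{k\geq2}\frac{|\langle\Phi_k,(\nu+\tfrac12\tilde\xi^2)\Phi_1\rangle|^2}{\tilde\mu_k-\tilde\mu_1}$ and bounding the sum strictly below $1/2$ at critical points via the ground-state gap. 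This simultaneously yields uniqueness and non-degeneracy for $n=1$.

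\emph{Large $n$.} Here I would use a semiclassical rescaling: set $\tilde\xi = n^{1/3} y$ (or an appropriate power), turning $\tilde H(\nu)$ into $n^{2/3}$ times a semiclassical operator $-h^2\partial_y^2 + V_\nu(y)$ with $h = n^{-?}$, and apply Bohr–Sommerfeld / WKB quantization so that $\tilde\mu_n(\nu) \sim n^{?}\, f(\nu)$ for an explicit smooth profile $f$ whose unique non-degenerate minimum can be located directly; then a perturbative error estimate transfers uniqueness and non-degeneracy from $f$ to $\tilde\mu_n$ for $n\geq n_0$. \textbf{The main obstacle} I anticipate is controlling the ``middle'' regime $2\leq n < n_0$: neither the ground-state convexity trick nor the semiclassical asymptotics applies uniformly there, so \cite{HL} leaves open whether those $\tilde\mu_n$ might have several critical points — which is precisely why the theorem is stated only for $n=1$ or $n\geq n_0$. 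Since I may invoke this as a cited result, the honest move in the paper is: state the elementary infinity-behavior lemma with its short proof, then refer to \cite{HL} for the delicate uniqueness and non-degeneracy claims, flagging that the intermediate $n$ remain conjectural.
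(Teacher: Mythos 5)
This statement is an imported result: the paper offers no proof of its own and simply cites \cite{HL} (Theorem 1.5), which is exactly what you conclude is the right move, so your proposal matches the paper's approach. Your supplementary sketch of the internals of \cite{HL} is plausible in outline (growth at $\nu\to\pm\infty$, Feynman--Hellmann, second-order perturbation for $n=1$, semiclassics for large $n$) but is only heuristic --- in particular the bound on the perturbative sum at critical points for $n=1$ is precisely the hard content of \cite{HL} and cannot be dispatched by ``the ground-state gap'' alone --- which is a further reason the citation, rather than a reproduced argument, is the correct choice here.
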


This result is conjectured to hold for all the eigenvalues of the Montgomery operators. Note also that, for any $n\in\N_{>0}$, $\nu\mapsto\Tilde{\mu}_{n}(\nu)$ admits at least one critical point 
as it diverges to $+\infty$ as $\nu$ goes to $\pm \infty$.

\subsubsection*{Second-microlocal semiclassical measures} 
Contrary to the results obtained for the time-scale $\tau=1$, we cannot here relate the semiclassical measures at time $t\neq 0$ with the one at $t=0$ through propagation laws as the tools introduced so far do not
allow us to go past the previous support condition. For this reason, we develop a second microlocalization analysis on the cones $C_{\nu_0}$ by extending our phase space and our space of symbols (see Section \ref{sect:secondmicro}).
The next result follows from the existence and the propagation of second-microlocal semiclassical measures on the cone $C_{\nu_0}$.

\begin{theorem}
    \label{thm:propag2microintro}
    Using the same notations as in Proposition \ref{prop:dispersionsupp}, let $n\in\N_{>0}$ and $\nu_0$ be a non-degenerate critical point of the map $\nu\in\R\mapsto \Tilde{\mu}_n(\nu)$. Then up to a further extraction 
    of the sequence $(\hbar_k)_{k\in\N}$, there exist a positive measure 
    $\gamma_{n}^{2}$ on $\Heis\times\{\pi^{\delta,\beta}\in \Enghat_{\rm gen}\,:\,\beta = \nu_0 \delta^{1/3}\}$ and a field of trace-class operators 
    \begin{equation*}
        (h,\pi)\in \Heis\times \{\pi^{\delta,\beta}\in\Enghat_{\rm gen}\,:\,\beta=\nu_0\delta^{1/3}\} \mapsto \Gamma_{n,0}^{2}(h,\pi)\in\cL^1(L^2(\R_2)),
    \end{equation*}
    both depending solely on the subsequence $(\psi^{\hbar_k}_0)_{k\in\N}$ and 
    such that the semiclassical measure $t\mapsto \gamma_{n,t}$ of Proposition \ref{prop:dispersionsupp} satisfies for almost all time $t\in\R_t$:
    \begin{equation*}
        \gamma_{n,t}(h,x_2,\pi^{\delta,\beta}) 
        \geq {\bold 1}_{C_{\nu_0}} k_{n,t}^{2}(h,\pi^{\delta,\nu_0\delta^{1/3}},x_2,x_2)dx_2 \otimes \gamma_{n}^{2}(h,\pi^{\delta,\nu_0\delta^{1/3}}),
    \end{equation*}
    where $k_{n,t}^{2}(h,\pi^{\delta,\nu_0\delta^{1/3}},\cdot,\cdot)$ is the kernel of the trace-class operator 
    \begin{equation*}
        \Gamma_{n,t}^{2}(h,\pi^{\delta,\nu_0\delta^{1/3}})= e^{-it\frac{\Tilde{\mu}_{n}''(\nu_0)}{2}\partial_{x_2}^2}\Gamma_{n,0}^2(h,\pi^{\delta,\nu_0\delta^{1/3}}) e^{it\frac{\Tilde{\mu}_{n}''(\nu_0)}{2}\partial_{x_2}^2}.
    \end{equation*} 
\end{theorem}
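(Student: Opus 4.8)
The plan is to adapt the standard ``second microlocalization'' machinery (as developed in the Euclidean case by Fermanian-Kammerer--G\'erard, and on step-2 groups in \cite{FF2},\cite{FFF}) to the cone $C_{\nu_0}$, but with the essential twist that the relevant blow-up variable is the affine function $\beta - \nu_0\delta^{1/3}$ which cuts out $C_{\nu_0}$ inside $\Enghat_{\rm gen}$. First I would introduce a new class of two-microlocal symbols depending on an extra variable $\eta$ conjugate to a rescaled version of $\beta-\nu_0\delta^{1/3}$ (the natural scaling being dictated by the homogeneity: since $\beta-\nu_0\delta^{1/3}$ has weight $1$ relative to the weight-$3$ variable $\delta$, one expects the critical scale to be $\sqrt\hbar$ times an appropriate power, fixed by requiring $\hbar^2\Delta_\Eng$ to become, after rescaling near $C_{\nu_0}$, a Schr\"odinger operator with symbol $\frac{\tilde\mu_n''(\nu_0)}{2}\eta^2$ plus lower-order terms). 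One then defines the associated two-microlocal operator-valued measures by testing $\Op_{\hbar,\Eng}(\sigma)\psi^{\hbar}$ against $\psi^\hbar$ for $\sigma$ in this enlarged class, splitting the symbols into a part supported away from $C_{\nu_0}$ (controlled by the first-microlocal measure of Proposition \ref{prop:dispersionsupp}, which already lives on $C_{\nu_0}$) and a part localized in the blow-up region. The identification of the fibre Hilbert space with $L^2(\R_2)$ and of the factor $\Heis$ comes from the observation that, after rescaling, the operator $H(\pi^{\delta,\beta})$ degenerates at second order around its critical manifold: the transverse Hessian is $\tilde\mu_n''(\nu_0)$, while the remaining variables reorganize into a Heisenberg group $\Heis$ (this is the ``quasi-contact'' reduction alluded to in the introduction, the Engel group fibering over $\Heis$ once the characteristic direction $X_2$ is singled out).

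Next I would establish the existence of the limit objects $\gamma_n^2$ and $\Gamma_{n,0}^2$ at $t=0$: this is a compactness statement for the enlarged family of quadratic forms, together with a positivity argument (G\r{a}rding-type inequality in the two-microlocal calculus) guaranteeing that the limit is a positive operator-valued measure, and a lower-bound argument relating it to $\gamma_{n,0}$, which yields the inequality ($\geq$ rather than $=$) in the statement — the gap coming precisely from the part of $\gamma_{n,0}$ that does not concentrate at the critical scale but could sit ``at infinity'' in the blow-up variable or be absorbed by coarser scales. Then I would derive the propagation law. Writing the equation satisfied by the two-microlocal Wigner transform of $\psi^\hbar(t)$ at time-scale $\tau=2$, conjugating by the free Engel propagator and passing to the limit, the only term that survives on $C_{\nu_0}$ is the transverse kinetic term: one Taylor-expands $\mu_n(\delta,\beta) = \mu_n(\delta,\nu_0\delta^{1/3}) + \frac{\delta^{1/3}\tilde\mu_n''(\nu_0)}{2}(\beta\delta^{-1/3}-\nu_0)^2 + O((\beta-\nu_0\delta^{1/3})^3)$ and reads off, after the rescaling $\hbar^{-1}(\beta-\nu_0\delta^{1/3}) \leftrightarrow$ a variable dual to $x_2$, that the limiting two-microlocal measure solves a free Schr\"odinger equation with Hamiltonian $\frac{\tilde\mu_n''(\nu_0)}{2}\partial_{x_2}^2$. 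Integrating this transport/Schr\"odinger evolution gives the conjugation formula for $\Gamma_{n,t}^2$, and expressing $\gamma_{n,t}$ through the kernel $k_{n,t}^2$ on the diagonal $(x_2,x_2)$ yields the displayed lower bound.

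The main obstacle, I expect, is the careful bookkeeping of scales and the proof that nothing is lost \emph{at intermediate scales} between the first-microlocal scale and the second-microlocal scale $\hbar$: one must show a dichotomy (either a piece of the measure concentrates exactly at the critical rate and is captured by $\Gamma^2$, or it escapes to the region $|\beta-\nu_0\delta^{1/3}|/\hbar \to \infty$ where, on the support condition $\partial_\beta\mu_n = 0$ forces it to vanish in the limit after integrating in time). Making this rigorous requires a commutator/energy estimate showing that the ``intermediate-scale'' two-microlocal measure is $t$-independent and, being a weak solution of a transport equation with non-vanishing velocity $\sim \tilde\mu_n''(\nu_0)\eta$ for $|\eta|$ large, must be zero after time-averaging — this is the genuinely new point compared to the step-2 situation and is where the non-degeneracy hypothesis on $\nu_0$ is used in an essential way. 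A secondary technical difficulty is that the symbols involved are unbounded operators on the fibres $\cH_{\pi^{\delta,\beta}}$ and one must control their domains uniformly; this is handled as in \cite{FF2} by working with the spectrally-localized pieces $\Pi_n$ and exploiting the analyticity of $\tilde\mu_n$ from the type-B perturbation argument recalled above.
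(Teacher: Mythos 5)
Your plan is essentially the paper's: blow up the cone $C_{\nu_0}$ at scale $\hbar$ in the variable $\beta-\nu_0\delta^{1/3}$, extract a two-microlocal measure split into a part at finite distance and a part at infinity, Taylor-expand the dispersion relation around the critical cone to obtain the effective Hamiltonian $\frac{\Tilde{\mu}_n''(\nu_0)}{2}\partial_{x_2}^2$, and kill the part at infinity by combining finiteness of mass with invariance under a non-vanishing flow whose speed is controlled by the non-degeneracy of $\nu_0$. The last point, which you correctly single out as the place where non-degeneracy enters, is exactly how the paper argues.

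Two corrections and one genuine gap. First, the concentration scale is not to be left undetermined: the commutator identity at time-scale $\tau=2$ forces the blow-up variable to be $(\beta-\nu_0\delta^{1/3})/\hbar$ exactly, because $\partial_\beta\mu_n$ vanishes to first order on the cone and the transport term must balance $\hbar\,\partial_t$. Second, your Taylor expansion has the wrong power of $\delta$: since $\mu_n(\delta,\beta)=\delta^{2/3}\Tilde{\mu}_n(\beta\delta^{-1/3})$, the quadratic term is $\frac{\Tilde{\mu}_n''(\nu_0)}{2}(\beta-\nu_0\delta^{1/3})^2$ with a coefficient \emph{independent} of $\delta$ (the prefactor is $\delta^{2/3}$, not $\delta^{1/3}$); your version would produce a $\delta$-dependent effective mass, contradicting the $\delta$-independent propagator in the statement. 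The genuine gap is the identification of the second-microlocal measure as an operator-valued measure on $\Heis\times\Hhat$ with fibre $\cH_{\Tilde{\pi}}\otimes L^2(\R_2)$: you assert that ``the remaining variables reorganize into a Heisenberg group,'' but this is the technical heart of the argument and requires a construction, not an observation. The paper builds explicit unitary Fourier integral operators $\bU_{\nu_0}^\hbar$ conjugating the quantization $\Op_\hbar^{C_{\nu_0}}$, up to $\cO(\hbar)$, to a partial semiclassical calculus on the quasi-Heisenberg group $\Heis^{1,1}=\Heis\times\R_2$ (semiclassical in the $\Heis$ directions, classical in $x_2$), which is what produces the tensor structure and the Kohn--Nirenberg operators $\sigma(h,x_2,\pi,D_{x_2})$ appearing in the limit. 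Without something of this kind your compactness argument yields a limit object but not its advertised form. A minor further point: once the part at infinity over $C_{\nu_0}$ is shown to vanish, the restriction of $\gamma_{n,t}$ to $C_{\nu_0}$ actually \emph{equals} the marginal of $\Gamma^2_{n,t}d\gamma_n^2$; the inequality in the statement accounts for possible mass of $\gamma_{n,t}$ on the other critical cones $C_{\nu_c}$, not for a loss at intermediate scales over $C_{\nu_0}$.
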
 

We have used in the precedent theorem the writing of an element $x\in\Eng$ as a couple $(h,x_2)$ in $\Heis\times\R_2$. For further details, 
we refer to Section \ref{sect:preliEngel}.

\begin{remark}
    One can reformulate Theorem \ref{thm:propag2microintro} saying the map $t\mapsto \Gamma_{n,t}^{2}$ is continuous and is determined by the following weak Heisenberg equation:
    \begin{equation*}
        \begin{cases}
        & i\partial_{t}\Gamma_{n,t}^{2} = \left[-\frac{\Tilde{\mu}_{n}''(\nu_0)}{2} \partial_{x_2}^{2},\Gamma_{n,t}^{2}\right],\\
        & \Gamma_{n,t}^{2}{}_{|t=0} = \Gamma_{n,0}^{2}.
        \end{cases}
    \end{equation*}
\end{remark}

When there is exactly one critical point to the map $\nu\in\R\mapsto \Tilde{\mu}_n(\nu)$, which is true for all but a finite number of $n\in\N_{>0}$, the previous inequality on $\gamma_{n,t}$ is actually an equality.

\subsection{Obstruction to dispersion}
We present here the consequences of Theorem \ref{thm:propag2microintro} for the study of the Schrödinger equation \eqref{eq:SchEngel} for $\tau=2$, i.e the non-semiclassical regime.
Let $(\psi^\hbar(t)=e^{i t\Delta_{\Eng}}\psi_{0}^{\hbar})_{\hbar>0}$ be a solution to this equation. As dispersion often translates into a better regularity 
of the densities $(|\psi^{\hbar}(t,x)|^2 dx dt)_{\hbar>0}$ than expected at first, we wish to undertand the regularity
of the weak limits of this family. More precisely, we are interested in the behaviour of quantities
\begin{equation*}
    \int_{0}^{T}\int_{\Eng} \chi(x) |e^{i t\Delta_{\Eng}}\psi_{0}^{\hbar}(x)|^2\,dxdt,
\end{equation*}
in the asymptotic $\hbar \rightarrow 0$, for $\chi \in C_{0}^{\infty}(\Eng)$, $T\in \R_{+}$. Our understanding of such weak limits comes from the fact 
that they can be obtained as marginals of the previously studied semiclassical measures.

Concerning the existence of Strichartz estimates, first note that their study in the setting of nilpotent Lie groups has already been the topic of multiple articles (see \cite{BBG},\cite{BGX},\cite{BFI},\cite{DH}),
but has been limited to nilpotent groups of step 2. The following result restricts the range of exponents for which one can hope to prove Strichartz estimates on the Engel group.
First observe that the only couples of exponents  $(q,p)\in [2,\infty]^2$ for which there exists $C_{p,q}>0$ such that
\begin{equation}
    \label{eq:strichartz}
    \forall \psi_0 \in L^2(\Eng),\ \|e^{it\Delta_\Eng}\psi_0 \|_{L^q(\R_t,L^p(\Eng))}\leq C_{p,q}\|\psi_0\|_{L^2(\Eng)},
\end{equation}
must satisfy the following scaling admissibility condition 
\begin{equation*}
   \frac{2}{q}+\frac{7}{p} = \frac{7}{2}.
\end{equation*}
It is interesting to see that only one couple of exponents can actually been obtained aside from the trivial one $(\infty,2)$.
\begin{proposition}
    \label{prop:obsstrichartz}
    The only exponents $(q,p)$ for which the previous Strichartz estimates are possible are 
    \begin{equation*}
        (q,p) = (\infty,2) \ {\rm or }\ (2,14/5).
    \end{equation*}
\end{proposition}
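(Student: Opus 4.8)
The plan is to argue by contradiction: a Strichartz estimate \eqref{eq:strichartz} for an \emph{interior} admissible exponent would force the position–time marginal of every $\tau=2$ time‑averaged semiclassical measure to be absolutely continuous, and this is contradicted by the singular measures produced in Theorem \ref{thm:propag2microintro}. First I would record the elementary fact that, among $(q,p)\in[2,\infty]^2$ with $\frac2q+\frac7p=\frac72$, the exponent $p$ ranges in $[2,14/5]$, with $p=2\Leftrightarrow q=\infty$, $p=14/5\Leftrightarrow q=2$, and $p\in(2,14/5)\Rightarrow q\in(2,\infty)$. Since $(\infty,2)$ is just the conservation identity $\|e^{it\Delta_\Eng}\psi_0\|_{L^\infty_t L^2}=\|\psi_0\|_{L^2}$, which always holds, it suffices to prove that \eqref{eq:strichartz} fails whenever $p\in(2,14/5)$.

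Suppose then that \eqref{eq:strichartz} holds for such a $(q,p)$, so that $q/2,p/2\in(1,\infty)$. For \emph{any} bounded family $(\psi_0^\hbar)_{\hbar>0}$ in $L^2(\Eng)$, the solutions $u^\hbar=e^{it\Delta_\Eng}\psi_0^\hbar$ would satisfy $\||u^\hbar|^2\|_{L^{q/2}(\R_t,L^{p/2}(\Eng))}\le C_{p,q}^2\sup_\hbar\|\psi_0^\hbar\|_{L^2}^2$. By reflexivity of $L^{q/2}_t L^{p/2}_x$ one could then extract $|u^{\hbar_k}|^2\rightharpoonup f$ weakly with $f\in L^{q/2}_t L^{p/2}_x$, hence $|u^{\hbar_k}|^2\,dx\,dt\rightharpoonup f\,dx\,dt$ against $C_c^\infty(\R_t\times\Eng)$. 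Choosing the subsequence compatibly with the construction of a time‑averaged semiclassical measure $\Gamma_t d\gamma_t$ (Proposition \ref{prop:dispersionsupp}) and comparing position densities with marginals of semiclassical measures, the position–time marginal $\mu:=\big(\int_{\Enghat}{\rm Tr}_{\cH_\pi}(\Gamma_t(x,\pi))\,d\gamma_t(x,\pi)\big)\,dt$ is dominated by $f\,dx\,dt$, so $\mu\ll dx\,dt$. Thus Strichartz for $p\in(2,14/5)$ would make the position–time marginal of every $\tau=2$ time‑averaged semiclassical measure absolutely continuous.

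To contradict this I would feed a wave‑packet datum into Theorem \ref{thm:propag2microintro}. Pick $n\in\N_{>0}$ for which $\Tilde{\mu}_n$ has a unique non‑degenerate critical point $\nu_0$ (true for $n=1$ and $n\geq n_0$ by \cite{HL}), choose $(\delta_0,\beta_0)$ with $\beta_0=\nu_0\delta_0^{1/3}$, let $\Phi_1$ be the associated $n$‑th eigenvector of $H(\pi^{\delta_0,\beta_0})$, take any $\Phi_2\in\cH_{\pi^{\delta_0,\beta_0}}^{\infty}$ and $0\neq a_0\in\cS(\R^{2}_{e^c})$, and set $\psi_0^\hbar=WP^{\hbar}_{x_0,\pi^{\delta_0,\beta_0}}(a_0,\Phi_1,\Phi_2)$ with $x_0=(h_0,x_{0,2})\in\Heis\times\R_2$. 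This family is bounded and bounded away from $0$ in $L^2(\Eng)$, is $\hbar$‑oscillating, and its initial semiclassical measure is a positive multiple of $\delta_{x_0}\otimes\ket{\Phi_1}\bra{\Phi_1}$ (the remark after Theorem \ref{thm:prop_wave_packet} and \cite{FF2},\cite{FF}); since $\Phi_1\in{\rm Ran}\,\Pi_n(\pi^{\delta_0,\beta_0})$ and $\beta_0=\nu_0\delta_0^{1/3}$, it lives in the single mode $n$ and on the cone $C_{\nu_0}$. By the equality case of Theorem \ref{thm:propag2microintro} (available because $\Tilde{\mu}_n$ has a unique critical point), for a.e.\ $t$ the only nonzero part of $\Gamma_t d\gamma_t$ is $\gamma_{n,t}=\mathbf{1}_{C_{\nu_0}}\,k_{n,t}^{2}(h,\pi,x_2,x_2)\,dx_2\otimes\gamma_n^{2}(h,\pi^{\delta,\nu_0\delta^{1/3}})$, in which the $\Heis$‑variable $h$ and the parameter $\delta$ remain frozen at the values $(h_0,\delta_0)$ dictated by the concentrated initial datum. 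Hence, for every $t$, the $\Eng$‑support of $\gamma_{n,t}$ is contained in $\{h_0\}\times\R_2$, which is a Lebesgue‑null subset of $\Eng$; therefore $\mu=\big(\int d\gamma_{n,t}\big)\,dt$ is nonzero and singular with respect to $dx\,dt$, contradicting the previous paragraph. This shows \eqref{eq:strichartz} fails for all $p\in(2,14/5)$, leaving $(\infty,2)$ and $(2,14/5)$ as the only candidates.

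The main obstacle is the last step: one must check that for a wave packet spatially concentrated in the $\Heis$‑directions the second‑microlocal measure $\gamma_n^{2}$ really is a Dirac mass in $(h,\delta)$, i.e.\ that the $\tau=2$ evolution spreads mass only along the one‑dimensional $x_2$‑direction — which is exactly the Heisenberg equation of Theorem \ref{thm:propag2microintro} — so that no absolutely continuous component is created in the $\Heis$‑variables. One must also be careful with the soft comparison $\mu\le f\,dx\,dt$ between the weak limit of $|u^\hbar|^2$ and the position–time marginal of the semiclassical measure, and observe that the whole argument collapses precisely at $q=2$: there $L^{q/2}=L^1$ is no longer reflexive, a bounded sequence $|u^\hbar|^2$ may converge weakly‑$*$ to a measure with a singular part, and no contradiction arises — which is exactly why the endpoint $(2,14/5)$ cannot be ruled out by this method.
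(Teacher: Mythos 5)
Your overall strategy is exactly the paper's: reduce to $p\in(2,14/5)$, i.e.\ $q>2$, use duality/reflexivity of $L^{q/2}_tL^{p/2}_x$ to force absolute continuity of the weak limits of $|u^\hbar(t,x)|^2\,dx\,dt$, and contradict this with an $\hbar$-oscillating family whose time-averaged semiclassical measure is singular. The gap is in your choice of that family, and it is the point you yourself flagged as ``the main obstacle'' without resolving it. The wave packet $WP^{\hbar}_{x_0,\pi^{\delta_0,\beta_0}}(a_0,\Phi_1,\Phi_2)$ of Definition \ref{def:wp} with $a_0\in\cS(\R^{2}_{e^c})$ concentrates at scale $\hbar^{1/2}$ in the $x_2$-variable; dually, its localization in $\beta$ around $\beta_0=\nu_0\delta_0^{1/3}$ is only at scale $\hbar^{1/2}$, so the second-microlocal variable $\eta=(\beta-\nu_0\delta^{1/3})/\hbar$ is of size $\hbar^{-1/2}\to\infty$. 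Hence all of its mass lands in the measure at infinity $\Gamma^\infty d\gamma^\infty$ and its finite-$\eta$ part satisfies $\gamma_n^{2}=0$ (in the framework of Theorem \ref{thm:mesures2micro}, the $L^2(\R_2)$-component of $\bU^{\hbar}_{\nu_0}\psi_0^{\hbar}$ concentrates and converges weakly to $0$ in $L^2(\R_2)$, and the test symbols act through \emph{compact} operators on $L^2(\R_2)$, so the pairing defining $\Gamma^{2}d\gamma^{2}$ vanishes). But then the very theorem you invoke kills your example: part (iii) of Theorem \ref{thm:propagmesures2micro} gives $\gamma^{\infty}_{n,t}=0$ for a.e.\ $t$, the Heisenberg equation propagates $\gamma^{2}_{n,0}=0$ to $\gamma^{2}_{n,t}=0$, and the equality case then forces $\gamma_{n,t}=0$ for a.e.\ $t$. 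So your datum disperses, $|u^{\hbar}|^2\,dx\,dt\rightharpoonup 0$ against compactly supported test functions, and no contradiction with Strichartz arises.

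The fix, which is what the paper does, is to decouple the scales: take the family \eqref{eq:example2micro}, namely $u_0^{\hbar}(h,x_2)=\hbar^{-3/2}a(\hbar^{-1/2}\cdot(h_0^{-1}h))\,\varphi(x_2)\,\bigl(\pi^{\delta_0,\nu_0\delta_0^{1/3}}(\hbar^{-1}\cdot(x_0^{-1}x))\Phi,\Phi\bigr)$ — a wave packet in the Heisenberg directions but with a \emph{fixed} profile $\varphi\in\cS(\R_2)$ at scale $1$ in $x_2$. Then $\eta$ stays bounded, Proposition \ref{prop:example2micro} gives $\gamma^{\infty}=0$ and $\gamma^{2}=\delta_{\{h=h_0\}}\otimes\delta_{\{\Tilde{\pi}^{\delta}=\Tilde{\pi}^{\delta_0}\}}$ with $\Gamma^{2}=\ket{\Phi}\bra{\Phi}\otimes\ket{\varphi}\bra{\varphi}$, and integrating the Heisenberg equation (using Proposition \ref{prop:continuityGamma2}) yields the nonzero measure \eqref{eq:fullsemimeasuhbar}, whose position marginal is carried by $\{h=0\}\times\R_2$, a Lebesgue-null subset of $\Eng$; this is the contradiction. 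Your reduction of the exponent range, the duality step, and your explanation of why the endpoint $q=2$ escapes the argument are all correct; only the construction of the singular example needs to be replaced.
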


Our second result concerns the study of the local smoothing effect of the Schrödinger equation on the Engel group.
Thanks to the contribution of many authors (see for example \cite{BAD},\cite{CS},\cite{K},\cite{KPV},\cite{S},\cite{V}), it is now well-known that dispersive equations give rise to smoothing-type estimates. 
In the simple case of the free Laplacian on $\R^n$, given any $\eps>0$ and any ball $B\subset\R^n$, it is possible to find $C>0$ such that the following inequality 
\begin{equation}
    \label{eq:smoothingRn}
    \int_{0}^{\eps}\||D_x|^{1/2}\left(e^{it\Delta}\psi_0\right)\|_{L^2(B)}^2\,dt \leq C\|\psi_0\|_{L^2(\R^n)}^2,
\end{equation}
holds uniformly for every $\psi_0 \in C_{c}^\infty(\R^n)$.

One can wonder if a similar phenomenon can be observed on the Engel group: given $\eps >0$, $s>0$ and a bounded open set $\Omega \subset \Eng$, is it possible to find a constant $C>0$ such that 
the following inequality 
\begin{equation}
    \label{eq:smoothingEng}
    \int_{0}^{\eps}\||\Delta_{\Eng}|^{s/2}\left(e^{it\Delta_\Eng}\psi_0\right)\|_{L^2(\Omega)}^2\,dt \leq C\|\psi_0\|_{L^2(\Eng)}^2,
\end{equation}
holds uniformly for every $\psi_0 \in C_{c}^\infty(\Eng)$?

\begin{theorem}
    \label{thm:obstructionsmoothing}
    Given any $\eps>0$, $s>0$ and any bounded open set $\Omega\subset\Eng$, it is not possible to find a constant $C>0$ such that the estimate \eqref{eq:smoothingEng} 
    holds uniformly for every solution of Schrödinger equation with initial datum $\psi_0 \in C_{c}^\infty(\Eng)$.
\end{theorem}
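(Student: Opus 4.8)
The plan is to contradict the local smoothing estimate \eqref{eq:smoothingEng} by testing it on the family of wave packets from Theorem \ref{thm:prop_wave_packet} whose associated classical dynamics is \emph{stationary}, i.e.\ centered on one of the cones $C_{\nu_0}$ where $\partial_\beta\mu_n = 0$. The key point is that along such wave packets the solution does not disperse and remains concentrated near a fixed point of $\Eng$ for all time, so the left-hand side of \eqref{eq:smoothingEng} cannot be controlled by $\|\psi_0\|_{L^2}^2$ once we let the frequency scale degenerate.

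First I would fix $n\in\N_{>0}$ and a non-degenerate critical point $\nu_0$ of $\Tilde\mu_n$ (such a point exists, e.g.\ for $n=1$ by the cited result of \cite{HL}, or simply because $\Tilde\mu_n\to+\infty$ at $\pm\infty$). For a parameter $\delta>0$ set $\beta = \nu_0\delta^{1/3}$, so that $\partial_\beta\mu_n(\delta,\beta)=\delta^{1/3}\partial_\nu\Tilde\mu_n(\nu_0)=0$, and pick an eigenvector $\Phi_1$ of $H(\pi^{\delta,\beta})$ for $\mu_n(\delta,\beta)$. Choosing $x_0\in\Omega$, profiles $a_0\in\cS(\R^2_{e^c})$ and $\Phi_2\in\cH^\infty$, let $\psi_0^\hbar = WP^\hbar_{x_0,\pi^{\delta,\beta}}(a_0,\Phi_1,\Phi_2)$. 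These are normalized (up to a fixed constant) in $L^2(\Eng)$ uniformly in $\hbar$, while the characteristic scale of oscillation with respect to $\Delta_\Eng$ is $\hbar^{-1}$; concretely $\|\,|\Delta_\Eng|^{s/2}\psi_0^\hbar\|_{L^2}\sim \hbar^{-s}\mu_n(\delta,\beta)^{s/2}\|\psi_0^\hbar\|_{L^2}$ — more carefully, one uses that $e^{it\Delta_\Eng}$ and $|\Delta_\Eng|^{s/2}$ are Fourier multipliers and that on this wave packet the Fourier transform is (to leading order) supported on the $n$-th Landau level with eigenvalue $\hbar^{-2}\mu_n(\delta,\beta)$.

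The heart of the argument is that, by Theorem \ref{thm:prop_wave_packet}, since $\partial_\beta\mu_n(\delta_0,\beta_0)=0$ the transported center satisfies $x(t) = x_0\Exp_\Eng(0\cdot X_2) = x_0$ for all $t\in\R$: the wave packet stays microlocally concentrated at $(x_0,\pi^{\delta,\beta})$ for all time, up to an $O_{L^2}(\hbar^{1/2})$ error, with only a phase $e^{-\frac i\hbar \mu_n(\delta,\beta)t}$ and a slowly varying profile $a(t,\cdot)\in\cS(\R^2_{e^c})$. Hence for $\hbar$ small the density $|\psi^\hbar(t,x)|^2$ is, uniformly for $t\in[0,\eps]$, an approximate bump of mass $\sim\|\psi_0^\hbar\|_{L^2}^2$ sitting inside $\Omega$ (one may shrink $a_0$'s support so that the $\hbar^{-1/2}$-dilated bump around $x_0$ lies in $\Omega$). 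Applying $|\Delta_\Eng|^{s/2}$ costs a factor $\hbar^{-s}\mu_n(\delta,\beta)^{s/2}$ in $L^2$ norm — this is where I would be most careful, controlling the commutator of the multiplier $|\Delta_\Eng|^{s/2}$ with multiplication by a cutoff and with the spatial dilation/translation defining the wave packet, so that the leading term really is $\hbar^{-s}\mu_n(\delta,\beta)^{s/2}$ times the undifferentiated wave packet plus lower order. Consequently the left-hand side of \eqref{eq:smoothingEng} is bounded below by $c\,\eps\,\hbar^{-2s}\mu_n(\delta,\beta)^{s}\|\psi_0^\hbar\|_{L^2}^2$ while the right-hand side is $C\|\psi_0^\hbar\|_{L^2}^2$; letting $\hbar\to0$ (with $\delta$, $n$, $\nu_0$ fixed) already forces $\hbar^{-2s}\mu_n(\delta,\beta)^s\le C/(c\eps)$, a contradiction for any $s>0$.

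The main obstacle I anticipate is the careful bookkeeping showing that differentiating the wave packet by $|\Delta_\Eng|^{s/2}$ (a nonlocal, fractional subelliptic multiplier) against the localization to $\Omega$ does not destroy the lower bound — i.e.\ that the ``mass at the right frequency inside $\Omega$'' survives. For integer powers of $\Delta_\Eng$ this is a direct computation with the left-invariant vector fields $X_1,X_2$ acting on $WP^\hbar$, producing the expected $\hbar^{-2}$-homogeneous gain; for non-integer $s$ one reduces to that case by writing $|\Delta_\Eng|^{s/2}$ via the Fourier transform on $\Eng$ and the spectral decomposition of $H(\pi^{\delta,\beta})$ on the $n$-th Landau level, using the $\hbar$-oscillation of $\psi_0^\hbar$ to discard the contribution of frequencies away from the $n$-th level. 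An alternative, perhaps cleaner, route is to invoke Theorem \ref{thm:propag2microintro} directly: the second-microlocal measure $\gamma_{n}^2$ on $C_{\nu_0}$ is nonzero for such a family, and the smoothing estimate would force the corresponding marginal of the semiclassical measures to be absolutely continuous with a gain of $s$ derivatives, which is incompatible with the measure being carried by the cone $C_{\nu_0}$ (a set of measure zero for the relevant ambient Lebesgue class) — I would present the wave-packet computation as the main proof and mention this second-microlocal viewpoint as a remark.
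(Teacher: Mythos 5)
There is a genuine gap in your main argument: you apply Theorem \ref{thm:prop_wave_packet} outside its range of validity. That theorem describes the evolution under the \emph{semiclassical} propagator $e^{it\hbar\Delta_\Eng}$ (time-scale $\tau=1$) on bounded time intervals, whereas the smoothing estimate \eqref{eq:smoothingEng} involves the physical propagator $e^{it\Delta_\Eng}$ ($\tau=2$); a physical window $[0,\eps]$ corresponds to semiclassical times $[0,\eps/\hbar]$, far beyond where the $O_{L^2}(\hbar^{1/2})$ approximation is established. Worse, the conclusion you draw from it --- that when $\partial_\beta\mu_n(\delta_0,\beta_0)=0$ the density $|\psi^\hbar(t,\cdot)|^2$ remains a bump inside $\Omega$ uniformly for $t\in[0,\eps]$ --- is false for the wave packets of Definition \ref{def:wp}. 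Because the profile $a_0$ is dilated by $\hbar^{-1/2}$ in the $x_2$ variable, these packets concentrate on the cone $C_{\nu_0}$ only at scale $\hbar^{1/2}$ in the dual variable $\beta$; at the physical time-scale the effective group velocity along $X_2$ is $\hbar^{-1}\partial_\beta\mu_n=O(\hbar^{-1/2})$, so the mass leaves every compact set in the limit $\hbar\to0$. This is exactly what Theorem \ref{thm:propagmesures2micro}\,(iii) encodes: the component of the second-microlocal measure living at $|\eta|=\infty$ vanishes for a.e.\ $t$. Consequently $\int_0^\eps\|e^{it\Delta_\Eng}\psi_0^\hbar\|^2_{L^2(\Omega)}\,dt$ is not bounded below uniformly in $\hbar$ for your test family, and the lower bound $c\,\eps\,\hbar^{-2s}\|\psi_0^\hbar\|^2_{L^2}$ does not follow (a crude version of your estimate only survives for $s>1/4$, not for all $s>0$).

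The repair is the route you relegate to a closing remark, and it is what the paper actually does. One must use a family concentrating on $C_{\nu_0}$ at the finer scale $\hbar$, i.e.\ with the $x_2$-profile \emph{not} dilated: this is the family \eqref{eq:example2micro} with $h_0=0$, $\eta_0=0$ and $\nu_0$ the critical point of $\Tilde{\mu}_1$. Its second-microlocal measure then lives entirely in the finite-$\eta$ component $\Gamma^2d\gamma^2$, and combining Theorem \ref{thm:propagmesures2micro}\,(iv) with Propositions \ref{prop:continuityGamma2} and \ref{prop:example2micro} one computes the time-averaged semiclassical measure of the solution explicitly as $\delta_{\{h=0\}}\otimes\bigl|e^{it\frac{\Tilde{\mu}_1''(\nu_0)}{2}\partial_{x_2}^2}\varphi\bigr|^2dx_2\otimes\delta_{\{(\delta,\beta)=(\delta_0,\nu_0\delta_0^{1/3})\}}$, which is nonzero for every $t$. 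The paper also sidesteps your delicate $\hbar^{-2s}$ bookkeeping for the nonlocal multiplier $|\Delta_\Eng|^{s/2}$: it uses \eqref{eq:smoothingEng} only qualitatively, deducing boundedness of the solutions in $L^2((0,\eps),\Tilde{H}^s(\Omega))$ with $\partial_t u^\hbar$ bounded in $L^2((0,\eps),\Tilde{H}^{-2}(\Omega))$, hence strong $L^2_{t,x}$ compactness by Rellich and Aubin--Lions; since the initial data converge weakly to $0$, the strong limit is $0$ and the time-averaged semiclassical measure must vanish, contradicting the explicit computation above. If you wish to keep a quantitative wave-packet proof, you must both switch to the $\hbar$-scale family and justify a uniform-in-$\hbar$ lower bound on the mass retained in $\Omega$ over $[0,\eps]$, which in effect requires the second-microlocal propagation result anyway.
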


Actually, one can prove a finer result as the previous theorem does not exclude that smoothness could be gained in some particular directions. 
The only direction where one could expect to gain regularity is along the characteristic leaves but, in the spirit of \cite{Burq}, the following result shows that this is still not attainable.

\begin{theorem}
    \label{thm:refinedobstructionsmoothing}
    Given any $\eps>0$, $s>0$ and any bounded open set $\Omega\subset\Eng$, it is not possible to find a constant $C>0$ such that the estimate
    \begin{equation*}
        \int_{0}^{\eps}\||X_2|^s\left(e^{it\Delta_\Eng}\psi_0\right)\|_{L^2(\Omega)}^2\,dt \leq C\|\psi_0\|_{L^2(\Eng)}^2,
    \end{equation*} 
    holds uniformly for every solution of Schrödinger equation with initial datum $\psi_0 \in C_{c}^\infty(\Eng)$.
\end{theorem}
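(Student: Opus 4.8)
\textbf{Proof proposal for Theorem \ref{thm:refinedobstructionsmoothing}.}

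The plan is to argue by contradiction, testing the putative inequality on a suitable family of wave packets and quantifying the left-hand side via the second-microlocal semiclassical measure produced in Theorem \ref{thm:propag2microintro}. The key point is that $|X_2|^s$ sees precisely the variable $x_2$ along which the singularities travel, and the obstruction to concentration improving over time is captured by the Heisenberg evolution $e^{-it\frac{\tilde\mu_n''(\nu_0)}{2}\partial_{x_2}^2}$. First I would fix $n$ with a unique non-degenerate critical point $\nu_0$ of $\tilde\mu_n$ (e.g.\ $n=1$, using \cite{HL}, Theorem 1.5), and build a family of initial data $(\psi_0^\hbar)_{\hbar>0}$ that, after rescaling, is a wave packet of the type in Definition \ref{def:wp} concentrated on the cone $C_{\nu_0}$ with $\Phi_1$ the $n$-th Montgomery eigenvector, but whose $x_2$-profile is \emph{not} compactly microlocalized: concretely I want the second-microlocal measure $\gamma_n^2$ and operator $\Gamma_{n,0}^2$ to be nontrivial, so that the $x_2$-spread does not vanish. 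The scaling-admissibility computation underlying Proposition \ref{prop:obsstrichartz} (the homogeneous dimension $Q=7$ versus the weight $\upsilon_2=1$ of $X_2$) fixes the correct power of $\hbar$ in front of the family so that $\|\psi_0^\hbar\|_{L^2(\Eng)}\sim 1$.

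Next I would express the left-hand side asymptotically. Writing $|X_2|^s$ in the Fourier variables it becomes, on each generic representation, multiplication by $|\eta|^s$ where $\eta$ is dual to $x_2$; after the $\hbar$-rescaling this contributes a factor $\hbar^{-s}$ relative to the base family. The quantity $\int_0^\eps\||X_2|^s e^{it\Delta_\Eng}\psi_0^\hbar\|_{L^2(\Omega)}^2\,dt$ therefore behaves, as $\hbar\to0$, like $\hbar^{-2s}$ times an integral against the time-averaged semiclassical measure of the $\tau=2$ solution — more precisely against the second-microlocal piece on $C_{\nu_0}$, by the support condition \eqref{eq:suppgammaintro} and Theorem \ref{thm:propag2microintro}. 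Using the lower bound in Theorem \ref{thm:propag2microintro}, this integral is bounded below by
\begin{equation*}
    \int_0^\eps \int \chi_\Omega(x)\, k_{n,t}^2(h,\pi^{\delta,\nu_0\delta^{1/3}},x_2,x_2)\,dx_2\,d\gamma_n^2\,dt,
\end{equation*}
which, because $\Gamma_{n,t}^2 = e^{-it\frac{\tilde\mu_n''(\nu_0)}{2}\partial_{x_2}^2}\Gamma_{n,0}^2 e^{it\frac{\tilde\mu_n''(\nu_0)}{2}\partial_{x_2}^2}$ is a conjugation by a unitary free Schrödinger flow in $x_2$, has a strictly positive trace on any fixed window $\Omega$ in $x$ (the diagonal kernel cannot concentrate a trace-class operator's full mass outside a fixed compact set for all $t$ in a positive-measure set, since the $x_2$-mass of $\Gamma_{n,0}^2$ is positive and only spreads). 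Hence the left-hand side is $\gtrsim \hbar^{-2s}$, while the right-hand side stays $O(1)$, giving a contradiction for every $s>0$ once $\hbar$ is small.

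The main obstacle I expect is the careful bookkeeping of \emph{two} scalings at once: the semiclassical $\hbar^{-1}$-oscillation built into the wave packet (so that $-\hbar^2\Delta_\Eng$-oscillation is order one, i.e.\ assumption \eqref{eq:assumptionoscillation} holds) and the further resolution of the cone $C_{\nu_0}$ at the second-microlocal scale, together with checking that the operator $|X_2|^s$ — which is not a Fourier multiplier that is bounded on the relevant symbol class without the rescaling — is correctly captured by the second-microlocal calculus of Section \ref{sect:secondmicro} rather than just the first. One must verify that $|X_2|^s$ acts, to leading order, as $\hbar^{-s}$ times multiplication by $|x_2^{\rm 2ml}|^s$ in the rescaled Heisenberg variables of Theorem \ref{thm:propag2microintro}, and that the error terms are lower order in $\hbar$; this is where the anisotropy of the homogeneous structure and the precise jump set $e=\{1,3\}$ (so that $x_2$ is a "large" direction) must be used. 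A secondary technical point is ensuring the chosen $\Gamma_{n,0}^2$ is genuinely nonzero and has positive mass inside $\Omega$ at $t=0$ — this is arranged by translating the wave packet's center $x_0$ into $\Omega$ and choosing the profile $a_0\in\cS(\R^2_{e^c})$ with support meeting $\Omega$, together with the remark following Theorem \ref{thm:propag2microintro} that the inequality there is an equality when $\nu_0$ is the unique critical point.
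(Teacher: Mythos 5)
Your overall strategy --- contradiction, a wave packet concentrated on the cone $C_{\nu_0}$, second-microlocal propagation to determine the time-averaged measure at positive times, and a scaling mismatch $\hbar^{-2s}$ versus $O(1)$ --- is the same as the paper's, but there are two genuine gaps. The first is the mechanism you give for the factor $\hbar^{-s}$. Under the group Fourier transform $X_2$ does not become ``multiplication by $i\eta$ with $\eta$ dual to $x_2$'': its infinitesimal representation is $\pi^{\delta,\beta}(X_2)=i(\beta+\tfrac{\delta}{2}\xi^2)$, an unbounded multiplication operator on $\cH_\pi=L^2(\R_\xi)$. The gain $\|X_2 u^\hbar\|\sim \hbar^{-1}$ comes from the fact that the \emph{first}-scale semiclassical symbol $\pi(X_2)$, evaluated at the concentration point $\pi^{\delta_0,\nu_0\delta_0^{1/3}}$, is the nonzero operator $i(\nu_0\delta_0^{1/3}+\tfrac{\delta_0}{2}\xi^2)$ --- not from the second-microlocal variable $\eta=(\beta-\nu_0\delta^{1/3})/\hbar$, which is $O(1)$ on the data and whose limiting avatar is $D_{x_2}$ acting on the profile with no $\hbar$-gain at all. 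The argument therefore hinges on verifying that a quantity of the form $\langle \theta(i\pi^{\delta_0,\nu_0\delta_0^{1/3}}(X_2))\Phi,\Phi\rangle=\int_\R \theta(-(\nu_0\delta_0^{1/3}+\tfrac{\delta_0}{2}\xi^2))|\Phi(\xi)|^2\,d\xi$ is nonzero for a suitable test function $\theta$; you never check this, and with the mechanism you describe the check would not even arise.

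The second gap is the one you flag yourself but do not resolve: $|X_2|^s$ has an unbounded symbol, so it can neither be paired directly with semiclassical measures nor lower-bounded on $L^2(\Omega)$ without controlling its interaction with the spatial cutoff. The missing device is a factorization through a bounded symbol: take $\theta\in C_{c}^\infty(\R)$ vanishing identically near $0$, so that $\tilde\theta(\beta)=|\beta|^{-s}\theta(\beta)$ is still smooth and compactly supported, and write $\chi_\Omega\,\theta(i\hbar X_2)=\hbar^{s}\,\Op_{\hbar}\bigl(\chi_\Omega\tilde\theta(i\pi(X_2))\bigr)\,|X_2|^{s}$. The pairing $\int_0^\eps(\chi_\Omega\theta(i\hbar X_2)u^\hbar(t),u^\hbar(t))\,dt$ is then $O(\hbar^{s})$ under the assumed estimate (Cauchy--Schwarz plus the hypothesis applied to $\||X_2|^s u^\hbar(t)\|_{L^2(\Omega)}$), while its limit as $\hbar\to 0$ is computed against the semiclassical measure --- itself determined for all $t$ by the second-microlocal Heisenberg evolution and the time-continuity result --- and is nonzero for a good choice of $\theta$. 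This converts your heuristic ``$\gtrsim\hbar^{-2s}$ versus $O(1)$'' into a statement involving only bounded symbols; without it (or an equivalent substitute) the proof does not close.
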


\begin{remark}
    The previous results still hold for a Schrödinger operator $-\Delta_\Eng + V$ where $V\in C^\infty(\Eng)$ is bounded as well as all its derivatives, as such an addition would only modify the propagation laws 
    of the time-averaged second-microlocal semiclassical measures. Indeed, the operator $-\frac{\Tilde{\mu}_{n}''(\nu_0)}{2} \partial_{x_2}^{2}$ is then replaced by the Schrödinger operators $-\frac{\Tilde{\mu}_{n}''(\nu_0)}{2} \partial_{x_2}^{2}+ V(h,\cdot)$
    on the real line. For the sake of clarity, we will focus on the case $V=0$.
\end{remark}

\subsection{Organization of the paper} In the next section, we present the fundamental results concerning the Engel group. The following 
two sections develop the semiclassical analysis of $\Eng$ and present the proofs of Theorems \ref{theo:propagintro} and \ref{thm:prop_wave_packet} and Proposition \ref{prop:dispersionsupp}.
In Section \ref{sect:secondmicro}, we develop a second-microlocal analysis on the cones $C_{\nu_0}$ of the phase space and apply these tools in Section \ref{sect:sch2micro} to the study 
of solutions to Schrödinger equation \eqref{eq:SchEngel} for the time-scale $\tau=2$. At the end of this section, we give the proofs of the corollaries of this analysis to the study of dispersion.

\subsection*{Aknowledgement} The author is deeply grateful to Clotilde Fermanian Kammerer for her numerous advices during the realization of this project. Many thanks also 
go to Véronique Fischer for very interesting conversations.

\section{Preliminaries on the Engel group}
\label{sect:preliEngel}

\subsection{Semidirect decomposition}
\label{subsect:semidirect}

As seen in the introduction, the exponential map defines a diffeomorphism between the Lie algebra $\fg_\Eng$ and the Engel group. With the particular choice of basis $(X_i)_{1\leq i\leq 4}$, one can then realize the Engel group 
in its isomorphism class as $\R^4$ equipped with the polynomial group law induced by the Baker-Campbell-Hausdorff formula and 
the relations \eqref{eq:bracketrel}. 
However, other diffeomorphisms exist between $\fg_{\Eng}$ and the group $\Eng$ and as a matter of fact, different parametrisations of the Engel group have been used in the litterature, see \cite{BBGL}, \cite{BGR}. 

The one we present here is well suited for our analysis as it results from an underlying algebraic decomposition of the Engel group and allows for a clear presentation of its irreducible representations.

\begin{proposition}
    \label{prop:engsemidirect}
    The following map 
    \begin{equation*}
         (x_1,x_2,x_3,x_4)\in\R^4 \mapsto \Exp(x_1 X_1 + x_3 X_3 + x_4 X_4)\Exp(x_2 X_2) \in \Eng,
    \end{equation*}
    defines a diffeomorphism.
\end{proposition}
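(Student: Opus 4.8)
The plan is to verify that the map $\Phi\colon(x_1,x_2,x_3,x_4)\mapsto \Exp(x_1X_1+x_3X_3+x_4X_4)\,\Exp(x_2X_2)$ is a smooth bijection with smooth inverse. Smoothness of $\Phi$ is immediate: $\Exp_\Eng$ is a diffeomorphism $\fg_\Eng\to\Eng$, the maps $(x_1,x_3,x_4)\mapsto x_1X_1+x_3X_3+x_4X_4$ and $x_2\mapsto x_2X_2$ are linear, and group multiplication in $\Eng$ is polynomial (by Baker--Campbell--Hausdorff, since $\fg_\Eng$ is nilpotent of step $3$); so $\Phi$ is a composition of smooth maps. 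The same BCH remark shows $\Phi^{-1}$, once we know it exists setwise, will automatically be polynomial hence smooth, so the crux is bijectivity.

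The key structural observation is that $\fa:=\vect(X_1,X_3,X_4)$ is a Lie subalgebra of $\fg_\Eng$ — indeed it is an ideal, since $[X_1,X_3]=X_4$ and all other brackets among $X_1,X_3,X_4$ vanish, and $[X_2,X_1]=-X_3\in\fa$, $[X_2,X_3]=0$, $[X_2,X_4]=0$ — and that $\fr_2=\vect(X_2)$ is a complementary subalgebra with $\fg_\Eng=\fa\oplus\fr_2$ as vector spaces. Writing $A:=\Exp_\Eng(\fa)$ and $\R_2:=\Exp_\Eng(\fr_2)$, these are closed connected subgroups ($A$ normal), each a Euclidean space under the exponential coordinates because $\Exp_\Eng$ restricted to a subalgebra is the exponential of the corresponding subgroup. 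The map $\Phi$ is then exactly the product map $A\times\R_2\to\Eng$, $(a,r)\mapsto a\,r$, composed with the global coordinate charts $\R^3\xrightarrow{\sim}A$ and $\R\xrightarrow{\sim}\R_2$.

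So it remains to prove that the product map $A\times\R_2\to\Eng$ is a bijection; this is the standard fact that a connected simply connected nilpotent group is the semidirect product of a normal subgroup and a complementary subgroup whenever the Lie algebra splits as a semidirect sum. Concretely, I would argue either of two ways. (1) \emph{Dimension/injectivity count}: a direct BCH computation shows that $\Exp(x_1X_1+x_3X_3+x_4X_4)\Exp(x_2X_2)$, written back in exponential coordinates of the first kind $\Exp(y_1X_1+y_2X_2+y_3X_3+y_4X_4)$, gives $y_2=x_2$, $y_1=x_1$, and $y_3,y_4$ expressed as $x_3,x_4$ plus polynomials in $x_1,x_2$ (coming from the $[X_1,X_2]$ and $[X_1,X_3]$ corrections); this triangular polynomial system is globally invertible, giving an explicit polynomial inverse and proving $\Phi$ is a bijection. (2) \emph{Abstract argument}: surjectivity follows because $AR_2$ is a subgroup (as $A$ is normal) containing $\Exp(\fa)$ and $\Exp(\fr_2)$, hence containing the group they generate, which is all of $\Eng$ since $\fa+\fr_2=\fg_\Eng$ generates; injectivity follows from $A\cap \R_2=\{e\}$, which holds because in exponential coordinates $A=\{y_2=0\}$ meets $\R_2=\{y_1=y_3=y_4=0\}$ only at the origin.

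The only genuinely computational point — and the one I would single out as the main obstacle to write cleanly — is the BCH bookkeeping in approach (1): one must check that $\Exp(x_1X_1+x_3X_3+x_4X_4)$ and $\Exp(x_2X_2)$ each already lie on ``flat'' subgroups (which uses that $\fa$ and $\fr_2$ are \emph{abelian} and \emph{one-dimensional} subalgebras respectively — note $\fa$ is abelian since $[X_1,X_3]=X_4$ but $X_4$ is central and $[X_1,X_4]=[X_3,X_4]=0$, wait, actually $[X_1,X_3]=X_4\neq0$ so $\fa$ is \emph{not} abelian; it is, however, $2$-step nilpotent with one-dimensional center $\vect(X_4)$, and still $\Exp_\Eng|_\fa$ is the subgroup exponential, which is all that is needed), and that the product, re-expanded via BCH, yields a unipotent polynomial change of coordinates on $\R^4$. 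Since every such polynomial map with triangular (strictly lower-unitriangular) structure is a polynomial automorphism of $\R^4$, the inverse exists and is smooth, completing the proof. For brevity I would present the abstract argument (2) as the main line and relegate the explicit BCH formula to a short verification.
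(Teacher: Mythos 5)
Your proof is correct. The paper itself states Proposition \ref{prop:engsemidirect} without proof, treating it as a standard consequence of the semidirect splitting $\fg_\Eng=\fh\oplus\fr_2$ with $\fh=\vect(X_1,X_3,X_4)$ an ideal, which is exactly the structure you identify (your $\fa$ is the paper's $\fh$, with $A=\Heis$). Both of your routes work: the abstract one ($A\R_2$ is a subgroup containing a neighbourhood of the identity in the connected group $\Eng$, and $A\cap\R_2=\Exp(\fa\cap\fr_2)=\{e\}$ gives injectivity of $(a,r)\mapsto ar$), and the explicit BCH one, which here terminates after two corrections and yields the unitriangular change of coordinates $y_1=x_1$, $y_2=x_2$, $y_3=x_3+\tfrac12x_1x_2$, $y_4=x_4+\tfrac1{12}x_1^2x_2$, visibly a polynomial automorphism of $\R^4$. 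The only thing to fix is editorial: the parenthetical where you first assert $\fa$ is abelian and then retract it mid-sentence should be removed — as you ultimately note, abelianness is irrelevant; all that is used is that $\Exp_\Eng$ restricted to a subalgebra is the exponential of the corresponding closed connected subgroup.
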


This parametrisation allows us to single out a realization in its isomorphism class.

\begin{definition}
    In the rest of the article, we realize the Engel group $\Eng$ as the manifold $\R^4$ endowed with the following group product: for $x = (x_1,x_2,x_3,x_4),\,y = (y_1,y_2,y_3,y_4) \in \R^4$, we have
    \begin{equation*}
    \begin{aligned}
        xy ~=~ &\left(x_1+y_1,x_2+y_2,x_3+y_3-x_2y_1,x_4+y_4+\frac{1}{2}(x_1y_3-x_3y_1)-\frac{1}{2}x_1x_2y_1\right).
    \end{aligned}
    \end{equation*}
    The inverse of an element $x$ is given by
    \begin{equation*}
        x^{-1} = (-x_1,-x_2,-x_3-x_2 x_1, -x_4).
    \end{equation*}
\end{definition}

The parametrisation given by Proposition \ref{prop:engsemidirect} leads us to consider the subalgebras
\begin{equation*}
    \fh = \R X_1 \oplus \R X_3 \oplus \R X_4,
\end{equation*}    
and $\fr_{2} = \R X_2$, already introduced before.
As $\fh$ satisfies the commutation relations of the Heisenberg Lie algebra, we denote by $\Heis = \Exp(\fh)$ this injection of the Heisenberg group into $\Eng$. 
We will also denote by $\R_{2} = \Exp(\fr_{2})$ the one parameter subgroup associated to $X_2$. 

Any element of the Engel group can then be decomposed
as the product of an element of $\Heis$ and an element of $\R_{2}$. Furthermore, for $x_2\in\R$ and $(y_1,y_3,y_4)\in\R^3$, thanks to the Baker-Campbell-Hausdorff 
formula, we compute that
\begin{equation}
    \label{eq:acionsemidirect}
    \Exp(x_2 X_2) \Exp(y_1 X_1 + y_3 X_3 + y_4 X_4) \Exp(-x_2 X_2) = \Exp(y_1 X_1 + (y_3-x_2 y_1)X_3 + y_4 X_4).
\end{equation}
In other words, we have proved that the Engel group $\Eng$ can be written as a semidirect product:
\begin{equation}
    \label{eq:decompositionsemidirect}
    \Eng = \Heis \rtimes \R_{2},
\end{equation}
where the action of $\R_{2}$ on $\Heis$ is given by \eqref{eq:acionsemidirect}. For this reason, the coordinates introduced in Proposition \ref{prop:engsemidirect} will be called the {\it semidirect coordinates}.
It is straightforward to check that the pushforward of the Lebesgue measure on $\R^4$ by the semidirect coordinates corresponds to the same Haar measure already introduced.

We can compute the expressions of the vector fields $X_i$, $1\leq i \leq 4$ in semidirect coordinates.
Recall that we may identify $\fg_{\Eng}$ with the space of left-invariant vector fields on $\Eng$ via
\begin{equation}
    \label{eq:leftinv}
    Xf(x) = \frac{d}{dt}f(x\Exp(tX))_{|t=0},\quad f\in C^{\infty}(\Eng),\ x\in\Eng.
\end{equation}
The basis $(X_1,X_2,X_{3},X_4)$ thus corresponds to the following vector fields:
\begin{equation*}
\begin{aligned}
    \left\{
    \begin{split}
    X_1 &= \partial_1 - x_2\partial_3 - \frac{1}{2}\left(x_3+x_1x_2\right)\partial_4,\\
    X_2 &= \partial_2,\\
    X_3 &= \partial_3 + \frac{1}{2}x_1\partial_4,\\
    X_4 &= \partial_4.
    \end{split}
    \right.
\end{aligned}
\end{equation*}

\subsection{The unitary dual set and the Fourier transform}

We present here the harmonic analysis of the Engel group that we will use in the following sections.

\subsubsection{Irreducible unitary representations and dual set}
\label{subsubsect:dualEng}
The first step in understanding the harmonic analysis of the Engel group is to compute its irreducible representations.

For any nilpotent Lie group $G$, Kirillov's theory and the orbit method allow for an explicit description of its irreducible representations. Furthermore, it gives
an homeomorphism between the dual set $\widehat{G}$ and the set of coadjoint orbits $\fg^*/{\rm Ad}^*(G)$. For a more extensive discussion of the orbit method, see \cite{CG}, \cite{Dix}, \cite{FR}, and in 
the particular case of the Engel group see \cite[Appendix A]{BBGL}.

In the case of the Engel group, this gives the following proposition.
\begin{proposition}
    \label{prop:dualset}
    The dual of $\Eng$ is the disjoint union of three classes of representations
    \begin{equation*}
        \Enghat = \{\pi^{\delta,\beta}\,:\,(\delta,\beta) \in \fg_{3}^*\setminus\{0\}\times\fr_{2}^{*} \} \sqcup \{\pi^{\lambda}\,:\,\lambda \in \fg_2^{*}\setminus\{0\}\} 
       \sqcup \{\pi^{(0,\alpha)}\,:\,\alpha \in \fg_{1}^* \}. 
    \end{equation*}
    Let $x=(x_1,x_2,x_3,x_4)\in\Eng$. For $(\delta,\beta) \in \fg_{3}^*\setminus\{0\}\times\fr_{2}^{*}$, the irreducible unitary representation $\pi^{\delta,\beta}$ is defined by: for $\varphi \in \cH_{\pi^{\delta,\beta}} := L^{2}(\R_\xi)$,
    \begin{equation*}
        \pi^{\delta,\beta}(x)\varphi(\xi) = 
        {\rm exp}\left[i \delta \left(x_4 + \xi x_3 +\frac{1}{2}x_{1}x_3\right)\right] 
        {\rm exp}\left[ i\left(\beta + \frac{\delta}{2}(\xi+x_1)^{2}\right)x_{2}\right]\varphi(\xi +x_{1}).
    \end{equation*}
    For $\lambda \in \fg_{2}^* \setminus \{0\}$, the unitary representation $\pi^{\lambda}$ acts on $\mathcal{H}_{\lambda} := L^{2}(\R_\xi)$ by
    \begin{equation*}
        \pi^{\lambda}(x) \varphi(\xi) = 
        {\rm exp}\left[ i\lambda \left(x_3 + \xi x_2 + \frac{1}{2}x_1x_2\right) \right] \varphi(\xi+x_1).
    \end{equation*}
    And finally, the last unitary irreducible representations are given by the characters of the first stratum: for every $\alpha = (\alpha_1,\alpha_2) \in \fg_{1}^{*}$, we set
    \begin{equation*}
        \pi^{(0,\alpha)}(x) = {\rm exp}\left[ i(\alpha_1 x_1 + \alpha_2 x_{2}) \right] \in \C.
    \end{equation*}
    Note that the trivial representation $1_{\Eng}$ corresponds to the class of $\pi^{(0,\alpha)}$ with $\alpha=0$.
\end{proposition}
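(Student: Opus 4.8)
The plan is to run Kirillov's orbit method for the simply connected nilpotent group $\Eng$ (see \cite{CG,Dix,FR}, and \cite[Appendix~A]{BBGL} for this particular group): first compute the coadjoint orbits in $\fg_\Eng^*$, and then, for a representative of each orbit, choose a polarizing subalgebra, induce the associated unitary character, and identify the resulting representation — written in the semidirect coordinates of Proposition~\ref{prop:engsemidirect} — with the operators displayed in the statement. Since this construction produces, up to equivalence, every irreducible unitary representation, with inequivalent orbits giving inequivalent classes and $\fg_\Eng^*/{\rm Ad}^*(\Eng)\simeq\Enghat$ a homeomorphism for the Fell topology, these two steps suffice.

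\emph{Coadjoint orbits.} Writing $\ell=\sum_i\ell_iX_i^*\in\fg_\Eng^*$, from $\ad^*(X)\ell=-\ell\circ\ad(X)$ and \eqref{eq:bracketrel} one gets
\[
\ad^*(X_1)\ell=-\ell_3X_2^*-\ell_4X_3^*,\quad \ad^*(X_2)\ell=\ell_3X_1^*,\quad \ad^*(X_3)\ell=\ell_4X_1^*,\quad \ad^*(X_4)\ell=0 .
\]
Hence $\ell_4$ is always invariant under ${\rm Ad}^*(\Eng)$, and three cases arise. If $\ell_4=\delta\ne0$, the orbit through $\ell$ is $2$-dimensional and a direct computation shows that $\ell_2-\ell_3^2/(2\delta)$ is a second invariant, so the orbit equals $\{\ell':\ell_4'=\delta,\ \ell_2'-\ell_3'^2/(2\delta)=\beta\}$ with $\beta:=\ell_2-\ell_3^2/(2\delta)$, giving the family indexed by $(\delta,\beta)\in(\fg_3^*\setminus\{0\})\times\fr_2^*$. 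If $\ell_4=0$ and $\ell_3=\lambda\ne0$, the orbit is the whole plane $\{\ell_4'=0,\ \ell_3'=\lambda\}$, giving the family indexed by $\lambda\in\fg_2^*\setminus\{0\}$. If $\ell_3=\ell_4=0$, the orbit is the single point $(\ell_1,\ell_2)=:\alpha\in\fg_1^*$. This is exactly the stated partition of $\fg_\Eng^*/{\rm Ad}^*(\Eng)$, hence of $\Enghat$.

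\emph{Induced representations.} I would take the representatives $\ell=\beta X_2^*+\delta X_4^*$, $\ell=\lambda X_3^*$ and $\ell=\alpha$. For the first two, the abelian subalgebra $\fh=\vect(X_2,X_3,X_4)$ is a polarization at $\ell$: it contains the radical $\fg^\ell$ of the form $Y\mapsto\ell([Y,\,\cdot\,])$, it is subordinate to $\ell$ since it is abelian, and it has dimension $\tfrac12(\dim\fg_\Eng+\dim\fg^\ell)=3$. For $\ell=\alpha$ one takes $\fh=\fg_\Eng$ and the induced representation is the character $\Exp(\sum_ix_iX_i)\mapsto e^{i(\alpha_1x_1+\alpha_2x_2)}$, i.e.\ $\pi^{(0,\alpha)}$. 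In the generic and the $\lambda$ cases, $\Exp\fh=\{(0,x_2,x_3,x_4)\}$ and $\xi\mapsto\Exp(\xi X_1)=(\xi,0,0,0)$ is a cross-section of $\Exp(\fh)\backslash\Eng\simeq\R$; identifying $L^2(\Exp(\fh)\backslash\Eng)$ with $L^2(\R_\xi)$ through it, I would compute the induced (right-regular) action of $x$ by writing $(\xi,0,0,0)\,x=h(\xi,x)\,(\xi+x_1,0,0,0)$ with $h(\xi,x)\in\Exp\fh$ via the group law of Proposition~\ref{prop:engsemidirect}; the scalar $\chi_\ell(h(\xi,x))$ is then the displayed exponential prefactor and the translation is $\xi\mapsto\xi+x_1$, producing the formulas for $\pi^{\delta,\beta}$ and $\pi^{\lambda}$. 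Unitarity is immediate, each operator being multiplication by a function of modulus one composed with a measure-preserving translation; the homomorphism property $\pi(x)\pi(y)=\pi(xy)$ can be checked directly against the group law — the $\delta$-phase $x_4+\xi x_3+\tfrac12x_1x_3$ and the phase $\beta+\tfrac\delta2(\xi+x_1)^2$ are each non-additive but their cocycle defects cancel. Irreducibility and pairwise inequivalence follow from the orbit method, or directly: the commutant of $\pi^{\delta,\beta}$ consists of operators commuting with all multiplications $\varphi\mapsto e^{it\delta\xi}\varphi$ (hence multiplications) and with all translations $\varphi\mapsto\varphi(\cdot+t)$ (hence Fourier multipliers), so it is scalar.

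\emph{Main obstacle.} The only real work is the bookkeeping in the induction step: because the coordinates of Proposition~\ref{prop:engsemidirect} are not exponential coordinates of the first kind, one must be careful with the left/right conventions, the chosen cross-section, and the resulting Mackey cocycle in order to land precisely on the stated operators, in particular on the exact polynomial phases. As consistency checks I would use that $\pi^\lambda$ factors through $\Eng/\Exp(\R X_4)$, the center being $\Exp(\R X_4)$ and the quotient a Heisenberg group, so that $\pi^\lambda$ is pulled back from a Schrödinger representation; and that the characters $\pi^{(0,\alpha)}$ factor through $\Eng/[\Eng,\Eng]\simeq\R^2$ with $[\Eng,\Eng]=\Exp(\vect(X_3,X_4))$.
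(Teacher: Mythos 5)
Your proposal is correct and follows exactly the route the paper itself invokes (Kirillov's orbit method, as in \cite{CG,Dix,FR} and \cite[Appendix A]{BBGL}): the coadjoint-orbit computation, the invariants $\ell_4$ and $\ell_2-\ell_3^2/(2\ell_4)$, the polarization $\vect(X_2,X_3,X_4)$, and the cross-section $\xi\mapsto(\xi,0,0,0)$ all check out, and for the generic orbits your induced action reproduces the stated $\pi^{\delta,\beta}$ on the nose. One small point worth recording: for the $\lambda$-family your computation yields the phase $\lambda(x_3+\xi x_2+x_1x_2)$ rather than $\lambda(x_3+\xi x_2+\tfrac12 x_1x_2)$, and indeed only the former is multiplicative for the semidirect group law (the $\tfrac12$ in the statement is a slip carried over from exponential coordinates), so the discrepancy is a typo in the proposition, not a gap in your argument.
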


As already pointed out in the introduction, the family of irreducible representations $\{\pi^{\delta,\beta}\,:\,(\delta,\beta)\in\fg_{3}^*\setminus\{0\}\times\fr_{2}^{*}\}$ are called the generic representations and is denoted by $\Enghat_{\rm gen}$. This set
is open and dense in $\Enghat$, and homeomorphic to $\R\setminus\{0\}\times\R$.

\begin{remark}
The non-generic representations can be understood in the following way. Consider the realization of the Heisenberg group as the manifold $\R^3$ with the following group product: 
for $(x_1,x_2,x_3),(y_1,y_2,y_3)\in\R^3$, we put
\begin{equation*}
    (x_1,x_2,x_3)\star_{\Bar{\Heis}}(y_1,y_2,y_3) = (x_1+y_1, x_2+y_2, x_3+y_3-x_2y_1),
\end{equation*}
and we denote by $\Bar{\Heis}$ this realization. We easily ckeck that the map $p_{\Bar{\Heis}}:\Eng\rightarrow\Bar{\Heis}$ defined by
\begin{equation*}
    p_{\Bar{\Heis}}:(x_1,x_2,x_3,x_4)\in\Eng \mapsto (x_1,x_2,x_3)\in\Bar{\Heis},
\end{equation*}
is a surjective group morphism. 
The non-generic irreducible representations of $\Eng$, i.e the families of representations 
$\{\pi^{\lambda}\,:\,\lambda \in \fg_{2}^{*}\setminus\{0\}\}$ and $\{\pi^{(0,\alpha)}\,:\,\alpha \in \fg_{1}^*\}$, are obtained as the irreducible representations of $\Bar{\Heis}$ 
composed with the morphism $p_{\Bar{\Heis}}$. As such we will write 
\begin{equation*}
    \widehat{\Bar{\Heis}} = \{\pi^{\lambda}\,:\,\lambda \in \fg_{2}^{*}\setminus\{0\}\}\sqcup\{\pi^{(0,\alpha)}\,:\,\alpha \in \fg_{1}^*\}.
\end{equation*}
\end{remark}

\subsubsection{Fourier transform}

For $f$ a function in $L^{1}(\Eng)$ and $\pi$ an irreducible representation, the Fourier transform of $f$ at $\pi$ is defined by
\begin{equation*}
    \hat{f}(\pi) = \mathcal{F}f(\pi) := \int_{\Eng} f(x) \pi(x)^{*} \,dx.
\end{equation*}
Note that, as each representation $\pi$ is a group morphism from $\Eng$ to the group of unitary operators $U(\mathcal{H}_{\pi})$, we have that $\mathcal{F}f(\pi)$ is a bounded operator on $\mathcal{H}_{\pi}$ with a norm satifisfying
\begin{equation*}
    \Vert \mathcal{F}f(\pi)\Vert_{\mathcal{L}(\mathcal{H}_\pi)} \leq \int_{\Eng} |f(x)| \Vert \pi(x)^{*}\Vert_{\mathcal{L}(\mathcal{H}_\pi)}\,dx  = \Vert f \Vert_{L^{1}(\Eng)}.
\end{equation*}
Observe that if $\pi_1$ and $\pi_2$ are two representations intertwined by an operator $\cI$, we have 
\begin{equation*}
    \mathcal{F}f(\pi_1) = \cI^{-1}\circ\mathcal{F}f(\pi_2)\circ\cI,
\end{equation*}
and we thus intepret the Fourier transform as a field of operators on $\Enghat$ up to equivalence through intertwining operators.

We also introduce here the semiclassical Fourier transform $\cF_{\Eng}^\hbar$. First, we extend the dilations on the group $\Eng$ to its unitary dual $\Enghat$: for $\pi\in\Enghat$ and $r\in\R_{>0}$,
we define the representation $r\cdot \pi$ by 
\begin{equation*}
    \forall x\in\Eng,\ (r\cdot \pi)(x) := \pi(r\cdot x).
\end{equation*}
For each kind of representations given by Proposition \ref{prop:dualset}, we have
\begin{equation*}
    [r\cdot\pi^{\delta,\beta}] = [\pi^{r^3\delta,r\beta}],\ [r\cdot\pi^{\lambda}] = [\pi^{r^2\lambda}],\ [r\cdot\pi^{(0,\alpha)}] = [\pi^{(0,r\alpha)}].
\end{equation*}
For $f$ a function in $L^{1}(\Eng)$ and $\pi$ an irreducible representation, the semiclassical Fourier transform of $f$ at $\pi$ is defined by
\begin{equation}
    \label{eq:semiFourier}
    \mathcal{F}_{\Eng}^\hbar f(\pi) := \int_{\Eng} f(x) (\hbar^{-1}\cdot \pi(x))^{*} \,dx = \cF_\Eng f (\hbar^{-1}\cdot \pi).
\end{equation}

\subsubsection{Plancherel formula}
Following \cite{Dix}, the Plancherel theorem states there exists a unique positive $\sigma$-finite measure $\mu_{\Enghat}$, called the {\it Plancherel measure} of $\Eng$, such that for any 
$f\in \cS(\Eng)$, we have 
\begin{equation}
    \label{eq:Plancherelformula}
    \int_{\Eng}|f(x)|^2\,dx = \int_{\Enghat} \|\mathcal{F}f(\pi)\|_{{\rm HS}(\cH_\pi)}^2\,d\mu_{\Enghat}(\pi).
\end{equation}

The orbit method allows us to compute explicitly this measure with respect to the parametrisation of $\Enghat$ given by Proposition \ref{prop:dualset}.
\begin{proposition}
    \label{prop:plancherelmeasure}
    The Plancherel measure of the Engel group is supported in $\Enghat_{\rm gen}$. Moreover, there exists $c_{\Eng} > 0$ such that 
    \begin{equation*}
        d\mu_{\Enghat}(\pi^{\delta,\beta}) = c_{\Eng} |\delta|d\delta d\beta.
    \end{equation*}
\end{proposition}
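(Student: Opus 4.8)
The plan is to exhibit the candidate measure explicitly and check that it satisfies the Plancherel identity \eqref{eq:Plancherelformula}. Since the measure $c_\Eng|\delta|\,d\delta\,d\beta$ on $\Enghat_{\rm gen}\cong(\R\setminus\{0\})\times\R$ is manifestly positive and $\sigma$-finite, the uniqueness clause of the Plancherel theorem will then force it to coincide with $\mu_{\Enghat}$, and in particular will force $\mu_{\Enghat}$ to be carried by $\Enghat_{\rm gen}$, so that the support statement costs nothing extra. It therefore suffices to produce a constant $c_\Eng>0$ such that
\[
  \int_{\R}\int_{\R}\bigl\|\cF f(\pi^{\delta,\beta})\bigr\|_{{\rm HS}}^{2}\,c_\Eng|\delta|\,d\delta\,d\beta=\int_\Eng|f(x)|^2\,dx,\qquad f\in\cS(\Eng).
\]

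First I would compute the integral kernel of $\cF f(\pi^{\delta,\beta})=\int_\Eng f(x)\,\pi^{\delta,\beta}(x)^{*}\,dx$ acting on $\cH_{\pi^{\delta,\beta}}=L^2(\R)$, whose variable I write $\eta$. Let $g$ denote the Euclidean Fourier transform of $f$ in the last three variables, so that $g(x_1;\,\cdot\,,\,\cdot\,,\,\cdot\,)$ is a Schwartz function on $\R^3$ for each $x_1$. Feeding $x^{-1}$ into the explicit formula of Proposition \ref{prop:dualset}, the seemingly cubic phases collapse, and after the substitution that replaces $x_1$ by the difference of the two kernel variables one arrives at
\[
  \cF f(\pi^{\delta,\beta})\varphi(\eta)=\int_\R K^{\delta,\beta}_f(\eta,\eta')\,\varphi(\eta')\,d\eta',\qquad
  K^{\delta,\beta}_f(\eta,\eta')=g\!\left(\eta-\eta';\ \beta+\tfrac\delta2\eta^2,\ \tfrac\delta2(\eta+\eta'),\ \delta\right).
\]
For $f\in\cS(\Eng)$ every integral in sight is absolutely convergent, so Fubini is never in question.

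Then I would evaluate $\int\!\int\bigl\|\cF f(\pi^{\delta,\beta})\bigr\|_{{\rm HS}}^{2}\,c_\Eng|\delta|\,d\delta\,d\beta=c_\Eng\int\!\int\!\int\!\int|K^{\delta,\beta}_f(\eta,\eta')|^2|\delta|\,d\eta\,d\eta'\,d\beta\,d\delta$ by removing the auxiliary variables one change of variable at a time: for fixed $(\delta,\eta,\eta')$ the shift $\beta\mapsto\beta+\tfrac\delta2\eta^2$ kills the quadratic term in the first slot of $g$; then the linear map $(\eta,\eta')\mapsto(u,v)=\bigl(\eta-\eta',\,\tfrac\delta2(\eta+\eta')\bigr)$ has Jacobian $d\eta\,d\eta'=|\delta|^{-1}\,du\,dv$, which cancels the weight $|\delta|$ exactly; and what survives, namely $c_\Eng\int_{\R^4}|g(u;s,v,\delta)|^2\,du\,ds\,dv\,d\delta$, becomes by the Euclidean Plancherel identity in the variables dual to $(x_2,x_3,x_4)$ a fixed power of $2\pi$ times $\int_\Eng|f|^2$. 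Choosing $c_\Eng$ to absorb that constant gives the displayed identity, hence the proposition.

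There is no genuine difficulty here once the kernel is written down; the point that deserves care is the bookkeeping of the oscillatory factors in $\pi^{\delta,\beta}(x)^{*}$, since the whole argument rests on the quadratic term in the kernel variable appearing purely as a translation of the $\fr_2^{*}$-parameter $\beta$ — this is exactly what makes the $\beta$-integral trivial and lets the $|\delta|$-weighted $(\eta,\eta')$-integral collapse onto the Euclidean Plancherel theorem. Alternatively, one may bypass the explicit computation via the orbit method (see \cite[Appendix A]{BBGL}): the generic coadjoint orbits of $\fg_\Eng$ are the two-dimensional affine planes swept out by varying $\ell(X_1),\ell(X_3)$ with $\ell(X_4)=\delta\neq0$ and $\ell(X_2)=\beta$ fixed, the Pfaffian of the alternating form $(X_i,X_j)\mapsto\ell([X_i,X_j])$ restricted to the complementary directions $(X_1,X_3)$ equals $\delta$ up to sign, and the general description of the Plancherel measure of a connected simply connected nilpotent Lie group then yields $d\mu_{\Enghat}\propto|\delta|\,d\delta\,d\beta$, again supported on $\Enghat_{\rm gen}$.
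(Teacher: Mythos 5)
Your proof is correct, and it supplies considerably more than the paper does: the paper states this proposition with no proof at all, merely remarking that ``the orbit method allows us to compute explicitly this measure'' and pointing to \cite{CG}, \cite{Dix}, \cite{BBGL}. Your primary route --- writing down the integral kernel of $\cF f(\pi^{\delta,\beta})$, checking that the quadratic oscillation enters only as a translation of $\beta$, and then collapsing the $|\delta|$-weighted $(\eta,\eta')$-integral onto the Euclidean Plancherel theorem via the Jacobian cancellation --- is a self-contained verification that bypasses Kirillov theory entirely, with uniqueness in Dixmier's Plancherel theorem then forcing the identification of the measure and the support statement. I checked the kernel computation: with $x^{-1}=(-x_1,-x_2,-x_3-x_1x_2,-x_4)$ the cross terms in $x_1x_2$ do cancel and one is left with the phase $-x_2(\beta+\tfrac{\delta}{2}\eta^2)-x_3\tfrac{\delta}{2}(\eta+\eta')-x_4\delta$, exactly as you claim, and the Jacobian of $(\eta,\eta')\mapsto(\eta-\eta',\tfrac{\delta}{2}(\eta+\eta'))$ is indeed $\delta$, so the weight $|\delta|$ is absorbed exactly. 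This approach buys transparency and an explicit constant $c_\Eng=(2\pi)^{-3}$ (with the usual normalization), at the cost of being tied to the specific realization of $\pi^{\delta,\beta}$; the orbit-method route is shorter but imports the general Pedersen--Pukanszky machinery.

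One small inaccuracy in your alternative sketch: the generic coadjoint orbits are not affine planes. Writing $\ell=(\ell_1,\ell_2,\ell_3,\ell_4)$ in the dual basis, the orbit through $(0,\beta,0,\delta)$ with $\delta\neq 0$ is the parabolic cylinder $\{(\ell_1,\,\beta+\ell_3^2/(2\delta),\,\ell_3,\,\delta):\ell_1,\ell_3\in\R\}$, since $\ell_2-\ell_3^2/(2\delta)$ and $\ell_4$ are the orbit invariants; in particular $\ell(X_2)$ is not constant along the orbit. This does not affect your conclusion --- the cross-section is still parametrized by $(\beta,\delta)$, the jump set is $\{1,3\}$, and the Pfaffian of the restricted form is $\pm\delta$, so the orbit-method formula still yields $c\,|\delta|\,d\delta\,d\beta$ --- but the description of the orbits should be corrected if you keep that paragraph.
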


Equation \eqref{eq:Plancherelformula} may be reformulated in the following way:
the group Fourier transform can be extended to a surjective isometry from $L^2(\Eng)$ onto the Hilbert
space 
\begin{equation*}
    L^2(\Enghat) := \int_{\Enghat}^{\oplus}{\rm HS}(\cH_\pi)\,d\mu_{\Enghat}(\pi),
\end{equation*}
i.e the space of $\mu_{\Enghat}$-measurable fields of Hilbert-Schmidt operators $\{\sigma(\pi)\in{\rm HS}(\cH_\pi)\,:\,\pi\in\Enghat\}$ which are square-integrable in the 
sense that 
\begin{equation*}
    \|\sigma\|_{L^2(\Enghat)} := \int_{\Enghat}\|\sigma(\pi)\|_{{\rm HS}(\cH_\pi)}^2 \,d\mu_{\Enghat}(\pi) < \infty.
\end{equation*}

The Plancherel formula also yields an inversion formula: for any $f \in {\mathcal S}(\Eng)$ and $x\in \Eng$, we have
\begin{equation}
\label{eq:inversionformula} 
f(x) = \int_{\Enghat} {\rm Tr}_{\cH_\pi} \left(\pi(x) {\mathcal F}f(\pi)\right)\, d\mu(\pi) \,,
\end{equation}
where ${\rm Tr}_{\cH_\pi}$ denotes the trace of trace-clas operators on $\cH_{\pi}$.
This formula makes sense since for $f \in {\mathcal S}(\Eng)$, the operators ${\mathcal F}f(\pi)$ for $\pi\in \Enghat$, are trace-class and the right-side term is finite, see \cite{CG}.

\subsubsection{Convolution and the von Neumann algebra of the group}
\label{subsubsec_convop}
Let $f,g\in L^1(\Eng)$ be two integrable functions on $\Eng$, the convolution $f*g$ is defined by
\begin{equation*}
    \forall x\in\Eng,\ (f * g)(x) := \int_{\Eng} f(y)g(y^{-1}x)\,dy.
\end{equation*}
The Young inequality is still valid if one takes the homogeneous dimension $Q$ instead of the dimension of the manifold $n=4$.

As expected, the Fourier transform  sends the convolution to composition in the following way:
\begin{equation}\label{fourconv}
 \forall \pi\in\Enghat,\ {\mathcal F}( f * g )( \pi ) 
 = {\mathcal F} g(\pi)\circ
 {\mathcal F}f (\pi) \, .
\end{equation}
We recall that a convolution operator $T$ with integrable convolution kernel $\kappa\in L^1(\Eng)$ is defined by  $Tf=f*\kappa$ and we have $\mathcal F (Tf) = \mathcal F \kappa \circ \mathcal F f$ by \eqref{fourconv}. 
Hence, $T$ appears as a Fourier multiplier with Fourier symbol $\mathcal F \kappa$ acting on the left of $\mathcal F f$.
Consequently, $T$ is invariant under left-translation and  bounded on $L^2(\Eng)$ with operator norm 
\begin{equation*}
    \| T\|_{{\mathcal L} (L^2(\Eng))}\leq \sup_{\pi\in \Enghat} \|\mathcal F \kappa(\pi) \|_{{\mathcal L}(\mathcal{H}_{\pi})}.
\end{equation*}
In other words, $T$ is in the space $ {\mathcal L} (L^2(\Eng))^\Eng$ of the left-invariant bounded operators on $L^2(\Eng)$.

Let us denote by $L^\infty(\Enghat)$ the space of measurable fields of operators $\sigma=\{\sigma(\pi):\pi \in \Enghat\}$ which are bounded in the sense that the essential supremum 
with respect to the Plancherel measure $\mu_{\Enghat}$,
\begin{equation*}
\|\sigma\|_{L^\infty (\Enghat)}
:={\rm supess}_{\pi\in \Enghat} \|\sigma\|_{{\mathcal L}(\mathcal H_\pi)},
\end{equation*}
is finite.
The space $L^\infty(\Enghat)$ with the norm $\|\cdot\|_{L^\infty(\Enghat)}$ is a von Neummann algebra, and is called the {\it von Neumann algebra of the group}.
As explained above, we already know we have $\mathcal F \left(L^1(\Eng)\right)\subset L^\infty(\Enghat)$,  but this inclusion is strict.

The full Plancherel theorem (see \cite{Dix}) implies that the von
Neumann algebras $L^\infty(\Enghat)$ and the space $ {\mathcal L} (L^2(\Eng))^\Eng$ of bounded operators on $L^2(\Eng)$ invariant by left-translations introduced above are isomorphic via the mapping
$\sigma \mapsto \fM(\sigma)$ where $\fM(\sigma)$ is the Fourier multiplier defined by
\begin{equation}
\mathcal F \left(\fM(\sigma)f \right) = \sigma \circ \mathcal F f, 
\quad f\in L^2(\Eng).
\end{equation}
The isomorphism between $L^\infty(\Enghat)$ and ${\mathcal L} (L^2(\Eng))^\Eng$ allows us to naturally extend the group Fourier transform to distributions $\kappa\in \mathcal S'(\Eng)$ such that the convolution operator $f\mapsto f* \kappa$ is bounded on $L^2(\Eng)$ 
by setting that $\mathcal F \kappa$  is the symbol of the corresponding operator in ${\mathcal L} (L^2(\Eng))^\Eng$. Following \cite{FR}, the space of such distributions $\kappa$
will be denoted by $\cK(\Eng)$ and constitutes another realization of the von Neumann algebra of $\Eng$.

Another functional space that will be of use in our study is the space of the Fourier transform of Schwartz functions.
\begin{definition}
    \label{def:schwartenghat}
    We denote by $\cS(\Enghat)\subset L^2(\Enghat)$ the image by the Fourier transform of the Schwartz space $\cS(\Eng)$, i.e 
    \begin{equation*}
        \cS(\Enghat) = \cF(\cS(\Eng)).
    \end{equation*}
    Moreover, we put on $\cS(\Enghat)$ the topology such that the Fourier transform $\cF:\cS(\Eng)\rightarrow\cS(\Enghat)$ is a homeomorphism.
\end{definition}

\subsubsection{Infinitesimal representations and Fourier transforms of left-invariant differential vector fields}
The group Fourier transform can also be extended to certain classes of distributions whose convolution operators yield left-invariant operators, but not necessarily bounded on $L^2(\Eng)$ as previously.
Indeed, denoting by $\pi(X)$ the infinitesimal representation of $\pi$ at $X\in \mathfrak g$, i.e. $\pi(X) = \frac{d}{dt} \pi ({\rm Exp}(tX))|_{t=0}$, we have
\begin{equation*}\label{F(V)}
\mathcal F(Xf)(\pi) = \pi(X) \circ \mathcal Ff (\pi). 
\end{equation*}
For instance, we have for $(\delta,\beta)\in\fg_{3}^{*}\setminus\{0\}\times\fr_{2}$, we have
\begin{equation}
\label{eq_piDB}
    \left\{
    \begin{split}
     \pi^{\delta,\beta}(X_1) &= \partial_{\xi}, \\
     \pi^{\delta,\beta}(X_2) &= i\left(\beta + \frac{\delta}{2}\xi^2\right), \\
     \pi^{\delta,\beta}(X_{3}) &= i\delta \xi, \\
     \pi^{\delta,\beta}(X_4) &= i\delta.
    \end{split}
    \right.
\end{equation}
These operators need to be seen as acting on the space of smooth vectors $\cH_{\pi}^{\infty}\subset\cH_\pi$ for $\pi=\pi^{\delta,\beta}$, which can be identified with the Schwartz space $\cS(\R_\xi)\subset L^2(\R_\xi)$, see \cite{FR}.
We also compute for any $\lambda\in \fg_{2}^*\setminus\{0\}$ that
\begin{equation}
\label{eq_piG}	
\pi^\lambda(X_1)= \partial_{\xi},\ \pi^\lambda(X_2)=i\lambda{\xi},\ \pi^\lambda(X_{3})=i\lambda
\ \mbox{and}\ \pi^\lambda(X_{4})=0,
\end{equation}
and for $\alpha = (\alpha_1,\alpha_2)\in \fg_{1}^*$,
\begin{equation}\label{Fourieromega}
\pi^{(0,\alpha)} (X_1)=i\alpha_1,\ \pi^{(0,\alpha)} (X_2)=i\alpha_2\ \mbox{and}\ \pi^{(0,\alpha)}(X_{3}) = \pi^{(0,\alpha)}(X_4) =0.
\end{equation}
The infinitesimal representation of $\pi$ extends to the universal enveloping Lie algebra $\fU(\fg_\Eng)$ of $\fg_{\Eng}$ that we identify with the left-invariant differential operators on $\Eng$.
Then for such a differential operator~$T$, we have $\mathcal F(Tf)(\pi) = \pi(T) \mathcal Ff (\pi)$
and we may write $\pi(T)={\mathcal F}(T)$. 
For instance, if we denote by ${X}^{\aleph}$ the product of $|\aleph|$ left-invariant vector fields ($\aleph \in\N^4$), then 
\begin{equation}
\mathcal F({X}^\aleph f)(\pi) = \pi({X})^\aleph \mathcal Ff (\pi)\;\;{\rm and}\;\;
\mathcal F({X}^\aleph ) = \pi({X})^\aleph.
\end{equation}
Note that $\pi({X})^\aleph$ may be considered as a field of unbounded operators on $\Enghat$ defined on the smooth vectors of the representations, (see \cite{FR}).

For the particular case of the subLaplacian $\Delta_\Eng$, we compute its infinitesimal representation $H = \cF(-\Delta_\Eng)$ thanks to the equalities in \eqref{eq_piDB}, \eqref{eq_piG} and \eqref{Fourieromega}: 
at $\pi^{(0,\alpha,\beta)}$, $(\alpha,\beta) \in \fg_{1}^*$, we find the scalar 
\begin{equation}
    H(\pi^{(0,\alpha)}) = \cF(-\Delta_{\Eng})(\pi^{(0,\alpha)}) = |\alpha|^2.
\end{equation}
For a representation of the form $\pi^{\lambda}$, $\lambda \in \fg_{2}^*$, we obtain
\begin{equation}
    H(\pi^\lambda) = \cF(-\Delta_\Eng)(\pi^\lambda) = -\partial_{\xi}^2 + \lambda^2 \xi^2,
\end{equation}
and finally for $\pi^{\delta,\beta}\in\Enghat_{\rm gen}$, we get 
\begin{equation}
    H(\pi^{\delta,\beta}) = \cF(-\Delta_\Eng)(\pi^{\delta,\beta}) = -\partial_{\xi}^2 + \left(\beta + \frac{\delta}{2}\xi^2\right)^2.
\end{equation}

\subsubsection{Difference operators}

For a general nilpotent Lie group $G$ with Lie algebra $\fg$, we introduce the following operators acting on fields of operators on $\fghat$.

\begin{definition}
    \label{def:diffop}
    For any $q \in C^{\infty}(G)$ with atmost polynomial growth, we set 
    \begin{equation*}
        \forall \pi \in \fghat,\ \Delta_{q}\widehat{\kappa}(\pi) := \widehat{q\kappa}(\pi),
    \end{equation*}
    for any Schwartz function $\kappa\in\cS(G)$.
\end{definition}

Note that for two such functions $q_1, q_2$, the corresponding difference operators commute:
\begin{equation*}
    \Delta_{q_1}\Delta_{q_2} = \Delta_{q_2}\Delta_{q_1} = \Delta_{q_1 q_2}.
\end{equation*}

\begin{remark}
    The difference operators generalise the Euclidean derivatives with respect to the Fourier variable on $(\R^n,+)$ and will serve as a mean 
    of measuring regularity with respect to the dual variable $\pi\in\widehat{G}$.
\end{remark}

\begin{proposition}
    \label{prop:IPP}
    Let $G$ a nilpotent Lie group with Plancherel measure $d\mu_{\widehat{G}}$: for $f\in\cS'(G)$ and $g\in\cS(G)$, we have 
    \begin{equation*}
        \int_{\widehat{G}} {\rm Tr}(\Delta_i \widehat{f}(\pi) \widehat{g}(\pi))\,d\mu_{\widehat{G}}(\pi) 
        = -\int_{\widehat{G}} {\rm Tr}(\widehat{f}(\pi) \Delta_i \widehat{g}(\pi))\,d\mu_{\widehat{G}}(\pi),
    \end{equation*}
    where $\Delta_i$ is the difference operator associated to the exponential coordinate $x_i$ and $\hat{f}$ is in the domain of $\Delta_i$.
\end{proposition}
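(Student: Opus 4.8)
The statement is an integration-by-parts formula for difference operators, and the natural strategy is to reduce it to the behaviour of the Fourier transform on products. The plan is to first treat the case $f, g \in \cS(\Eng)$, where everything is literally an identity of trace-class operators integrated against the (finite on this class) Plancherel measure, and then extend to $f\in\cS'(\Eng)$ in the domain of $\Delta_i$ by the usual density/duality argument. For the Schwartz case, I would start from the defining relation $\Delta_i \widehat{f}(\pi) = \widehat{x_i f}(\pi)$ of Definition \ref{def:diffop}, so that the left-hand side becomes $\int_{\Enghat}{\rm Tr}\big(\widehat{x_i f}(\pi)\,\widehat{g}(\pi)\big)\,d\mu_{\Enghat}(\pi)$ and the right-hand side becomes $-\int_{\Enghat}{\rm Tr}\big(\widehat{f}(\pi)\,\widehat{x_i g}(\pi)\big)\,d\mu_{\Enghat}(\pi)$.

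The key step is then a Parseval-type identity: for $u, v\in\cS(\Eng)$ one has
\begin{equation*}
    \int_{\Enghat} {\rm Tr}\big(\widehat{u}(\pi)\,\widehat{v}(\pi)\big)\,d\mu_{\Enghat}(\pi) = \int_{\Eng} u(x)\,v(x^{-1})\,dx,
\end{equation*}
which follows from the inversion formula \eqref{eq:inversionformula} applied to $u*v$ at the identity together with the composition rule \eqref{fourconv}: indeed $(u*v)(e) = \int_{\Enghat}{\rm Tr}_{\cH_\pi}\big(\mathcal F(u*v)(\pi)\big)d\mu(\pi) = \int_{\Enghat}{\rm Tr}\big(\widehat v(\pi)\widehat u(\pi)\big)d\mu(\pi)$, while directly $(u*v)(e) = \int u(y)v(y^{-1})\,dy$, and the trace is cyclic. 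Applying this with $u = x_i f$, $v = g$ on the left and $u = f$, $v = x_i g$ on the right, both integrals over $\Enghat$ equal $\int_{\Eng} f(x)\,g(x^{-1})\cdot(\pm\text{a factor involving }x_i)\,dx$; the formula then reduces to the elementary pointwise identity relating $x_i$ evaluated at $x$ versus at $x^{-1}$. Here one must be careful: the inversion/$x_i$ bookkeeping will produce $x_i$ for the left side and $(x^{-1})_i$ for the right side, and one needs these to be negatives of one another \emph{after integration}, which is exactly the point where the hypothesis that $x_i$ is the $i$-th \emph{exponential} coordinate enters — in exponential coordinates the inversion map is $x\mapsto -x$, so $(x^{-1})_i = -x_i$, giving the sign. (In the semidirect coordinates recalled in the excerpt the inversion is not simply $x\mapsto -x$, so the statement is genuinely coordinate-specific, and I would flag that the difference operators $\Delta_i$ in this proposition are those attached to the exponential chart.)

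The main obstacle is the passage from Schwartz functions to $f\in\cS'(\Eng)$ lying in the domain of $\Delta_i$: one must make sense of all four traces and guarantee absolute convergence of the $\Enghat$-integrals. I would handle this by the duality built into Definition \ref{def:schwartenghat} — $\cS(\Enghat) = \cF(\cS(\Eng))$ with the transported topology — so that $\widehat g(\pi)$ and $\Delta_i\widehat g(\pi) = \widehat{x_i g}(\pi)$ are Schwartz-class fields, hence the pairings $\sigma\mapsto\int_{\Enghat}{\rm Tr}(\widehat f(\pi)\sigma(\pi))\,d\mu_{\Enghat}(\pi)$ extend continuously to $\cS'$-symbols, and both sides of the claimed identity are continuous in $g\in\cS(\Eng)$; the hypothesis "$\widehat f$ in the domain of $\Delta_i$" is precisely what makes the left-hand pairing $\int{\rm Tr}(\Delta_i\widehat f(\pi)\widehat g(\pi))$ well-defined. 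Then the identity for general $f$ follows from the Schwartz case by a density argument (approximating $f$ in $\cS'(\Eng)$ while controlling $\Delta_i$, or equivalently testing against $g$ and $x_i g$ and invoking the definition of $\Delta_i$ on distributions by transposition). I expect the analytic subtleties — choosing the right topology in which to approximate and verifying the continuity of each trace functional — to be where most of the real work lies, the algebra itself being a one-line consequence of \eqref{fourconv}, \eqref{eq:inversionformula}, and cyclicity of the trace.
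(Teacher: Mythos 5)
Your proposal is correct and takes essentially the same route as the paper: both reduce the identity to the Parseval-type evaluation $\int_{\widehat G}{\rm Tr}(\widehat u\,\widehat v)\,d\mu_{\widehat G}=(u*v)(e_G)$ via the inversion formula, the composition rule and cyclicity of the trace, and then use that inversion is $x\mapsto -x$ in exponential coordinates to produce the sign. The only (cosmetic) difference is that the paper dispatches the case $f\in\cS'(G)$ by writing the convolution at $e_G$ directly as the $\langle\cS',\cS\rangle$ pairing against $y\mapsto y_i\,g(y^{-1})$, rather than by the density argument you sketch.
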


\begin{proof}
    With the same notations, we can write
    \begin{equation*}
        \begin{aligned}
            \int_{\widehat{G}} {\rm Tr}(\Delta_i \widehat{f}(\pi) \widehat{g}(\pi))\,d\mu_{\widehat{G}}(\pi) 
            &= \left((-x_i f) * g\right) (e_G)\\
            &= \langle f, y_i g(\cdot^{-1})\rangle_{\cS',\cS}\\
            &= -\left(f * (x_i g)\right)(e_G)\\
            &= -\int_{\widehat{G}} {\rm Tr}(\widehat{f}(\pi) \Delta_i \widehat{g}(\pi))\,d\mu_{\widehat{G}}(\pi).
        \end{aligned}
    \end{equation*}
\end{proof}

In the case of the Engel group, we will more simply note as $\Delta_i$ the difference operators associated to the coordinate function 
$q_i(x_1,x_2,x_3,x_4) = -x_i$ and write, for $\alpha \in \N^4$, $\Delta^{\alpha}$ the product of $|\alpha|$ such operators. 

As one might expect, the difference operators enjoy an avatar of Leibniz formula.
\begin{proposition}\cite[Chapter 5]{FR}
    \label{prop:formuleLeibniz}
    For any $\alpha\in\N_{>0}^4$, there exist constants $c_{\alpha,\alpha_1,\alpha_2}\in\R$, $\alpha_1,\alpha_2 \in\N_{>0}^4$, with $[\alpha_1]+[\alpha_2] = [\alpha]$, such that for any $\kappa_1,\kappa_2\in\cS(\Eng)$, we have
    \begin{equation*}
        \Delta^{\alpha}\left(\hat{\kappa}_1\hat{\kappa}_2\right) = \sum_{[\alpha_1]+[\alpha_2] = [\alpha]}c_{\alpha,\alpha_1,\alpha_2}\Delta^{\alpha_1}\hat{\kappa}_1\Delta^{\alpha_2}\hat{\kappa}_2,
    \end{equation*}
    with 
    \begin{equation*}
        c_{\alpha,\alpha_1,0} = \left\{\begin{split}1 &\ if\  \alpha_1 = \alpha\\0 &\ otherwise\ \end{split}\right.,\ c_{\alpha,0,\alpha_2} = \left\{\begin{split}1 &\ if\  \alpha_2 = \alpha\\0 &\ otherwise\ \end{split}\right..
    \end{equation*}
\end{proposition}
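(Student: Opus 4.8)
The plan is to transport the polynomial symbol $x^{\alpha}$ across a convolution, using only that the group law of $\Eng$ in semidirect coordinates is polynomial and homogeneous for the dilations. (Here $[\gamma]:=\sum_i\upsilon_i\gamma_i$ denotes the homogeneous length of a multi-index, so $[e_1]=[e_2]=1$, $[e_3]=2$ and $[e_4]=3$.) The starting point is \eqref{fourconv}: for $\kappa_1,\kappa_2\in\cS(\Eng)$ one has $\hat\kappa_1(\pi)\hat\kappa_2(\pi)=\cF(\kappa_2*\kappa_1)(\pi)$ with $\kappa_2*\kappa_1$ again Schwartz, and since the difference operators compose as $\Delta_{q_1}\Delta_{q_2}=\Delta_{q_1q_2}$ we have $\Delta^{\alpha}=\Delta_{q^{\alpha}}$ with $q^{\alpha}(x)=(-1)^{|\alpha|}x^{\alpha}$, so by Definition~\ref{def:diffop}, $\Delta^{\alpha}(\hat\kappa_1\hat\kappa_2)(\pi)=(-1)^{|\alpha|}\cF\bigl(x\mapsto x^{\alpha}(\kappa_2*\kappa_1)(x)\bigr)(\pi)$. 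Thus the statement follows once I write $x^{\alpha}(\kappa_2*\kappa_1)$ as a finite linear combination of convolutions $(y^{a}\kappa_2)*(z^{b}\kappa_1)$ with $[a]+[b]=[\alpha]$.

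First I would perform the change of variables $z=y^{-1}x$ in $(\kappa_2*\kappa_1)(x)=\int_{\Eng}\kappa_2(y)\kappa_1(y^{-1}x)\,dy$, so that $x=yz$ and each coordinate reads $x_i=(yz)_i=:m_i(y,z)$, a polynomial in $(y,z)$ obtained from the explicit group law of Section~\ref{subsect:semidirect}. The two structural facts I need are: (i) because the dilations are automorphisms, $(r\cdot y)(r\cdot z)=r\cdot(yz)$, which forces every monomial $y^{a}z^{b}$ occurring in $m_i$ to satisfy $[a]+[b]=\upsilon_i$; and (ii) one reads off directly that $m_i(y,z)=y_i+z_i+(\text{terms genuinely mixed in }y\text{ and }z)$. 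Expanding $\prod_i m_i(y,z)^{\alpha_i}$ then gives a finite sum $\sum_{[a]+[b]=[\alpha]}d_{a,b}\,y^{a}z^{b}$ with real coefficients $d_{a,b}$ depending only on $\alpha$, and (ii) forces $d_{0,\alpha}=d_{\alpha,0}=1$: a monomial that is a pure power of $z$ (resp.\ of $y$) can only be produced by selecting the summand $z_i$ (resp.\ $y_i$) from each of the $\alpha_i$ copies of $m_i$, since every mixed term contains both variables. Substituting this expansion into the integral and using Fubini yields $x^{\alpha}(\kappa_2*\kappa_1)=\sum_{[a]+[b]=[\alpha]}d_{a,b}\,(y^{a}\kappa_2)*(z^{b}\kappa_1)$.

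Then I would apply $\cF$ to this identity, using \eqref{fourconv} once more together with $\cF(x^{b}\kappa_1)(\pi)=(-1)^{|b|}\Delta^{b}\hat\kappa_1(\pi)$ and the analogue for $\kappa_2$, and absorb the signs by setting $c_{\alpha,\alpha_1,\alpha_2}:=(-1)^{|\alpha|+|\alpha_1|+|\alpha_2|}d_{\alpha_2,\alpha_1}$. This produces exactly $\Delta^{\alpha}(\hat\kappa_1\hat\kappa_2)=\sum_{[\alpha_1]+[\alpha_2]=[\alpha]}c_{\alpha,\alpha_1,\alpha_2}\,\Delta^{\alpha_1}\hat\kappa_1\,\Delta^{\alpha_2}\hat\kappa_2$, with the normalization $d_{0,\alpha}=d_{\alpha,0}=1$ translating into $c_{\alpha,\alpha,0}=c_{\alpha,0,\alpha}=1$; finiteness of the sum is automatic because the weights $\upsilon_i\in\{1,2,3\}$ bound the admissible multi-indices.

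I expect the main obstacle to be bookkeeping rather than analysis. The two points that need care are getting the composition order in \eqref{fourconv} right — hence tracking which of $\kappa_1,\kappa_2$ carries which half of the polynomial, and the accumulated signs — and checking the weight identity~(i) for the monomials of the group law, which is a direct inspection of the formulas for $xy$. The stability of $\cS(\Eng)$ under convolution and the uses of Fubini are standard on nilpotent Lie groups and I would invoke them without proof; indeed the whole argument is just the specialization to $\Eng$ of the general Leibniz rule for difference operators in \cite[Chapter~5]{FR}.
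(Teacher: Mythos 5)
Your proof is correct. The paper does not prove this proposition at all --- it is quoted from \cite[Chapter 5]{FR} --- and your argument is exactly the standard one behind that reference, specialized to $\Eng$: write $\hat\kappa_1\hat\kappa_2=\cF(\kappa_2*\kappa_1)$ via \eqref{fourconv}, transport $x^{\alpha}$ across the convolution by expanding $x=yz$ using the polynomial group law, and use that the dilations are automorphisms to force the weight constraint $[a]+[b]=[\alpha]$ on each monomial; the sign bookkeeping and the identification of the extremal coefficients $d_{0,\alpha}=d_{\alpha,0}=1$ are handled correctly.
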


Finally, we can explicitly compute the action of difference operators on elements of $\cS(\Enghat)$.

\begin{proposition}
    \label{prop:diffopformula}
    Using the parametrisation of the dual $\Enghat$ given by Proposition \ref{prop:dualset}, for $\kappa \in \cS(\Eng)$ a Schwartz function, the restriction of the Fourier transform $\widehat{\kappa}$ to the generic representations $\Enghat_{\rm gen}$ 
    \begin{equation*}
        (\delta,\beta) \in \fg_{3}^*\setminus\{0\}\times \fr_2^* \mapsto \widehat{\kappa}(\pi^{\delta,\beta}) \in L^2(\R_\xi),
    \end{equation*}
    identifies with a smooth map in $C^{\infty}(\fg_{3}^*\setminus\{0\}\times \fr_2^*,L^2(\R_\xi)$. Moreover, for $\pi^{\delta,\beta}\in\Enghat_{\rm gen}$ we have: 
    \begin{equation*}
        \begin{aligned}
        \Delta_{1}\widehat{\kappa}(\pi^{\delta,\beta}) &= \frac{i}{\delta} \left[\pi^{\delta,\beta}(X_3),\widehat{\kappa}(\pi^{\delta,\beta})\right],\\
        \Delta_{2}\widehat{\kappa}(\pi^{\delta,\beta}) &= \frac{1}{i}\partial_{\beta}\widehat{\kappa}(\pi^{\delta,\beta}),\\
        \Delta_{3}\widehat{\kappa}(\pi^{\delta,\beta}) &= \frac{i}{\delta} \pi^{\delta,\beta}(X_3) \Delta_2 \widehat{\kappa}(\pi^{\delta,\beta}) - \frac{i}{\delta}\left[\pi^{\delta,\beta}(X_1),\widehat{\kappa}(\pi^{\delta,\beta})\right],\\
        \Delta_{4}\widehat{\kappa}(\pi^{\delta,\beta}) &= \frac{1}{i}\partial_{\delta}\widehat{\kappa}(\pi^{\delta,\beta}) + \frac{1}{2}\Delta_1 \Delta_3 \widehat{\kappa}(\pi^{\delta,\beta}) + \frac{i}{\delta} \pi^{\delta,\beta}(X_3) \Delta_3 \widehat{\kappa}(\pi^{\delta,\beta})\\
        &-\frac{1}{2} \xi^2 \Delta_2 \widehat{\kappa}(\pi^{\delta,\beta}).
        \end{aligned}
    \end{equation*}
\end{proposition}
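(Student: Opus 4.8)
The plan is to realize $\widehat{\kappa}(\pi^{\delta,\beta})$ as an explicit integral operator on $L^2(\R_\xi)$ and then, for each $i$, to identify the algebraic operation on operators which corresponds, at the level of Schwartz kernels, to multiplying the density $\kappa$ by the coordinate $x_i$; since $\Delta_i\widehat{\kappa} = \widehat{q_i\kappa} = -\widehat{x_i\kappa}$, this is exactly what is needed. First I would substitute the formula for $\pi^{\delta,\beta}$ from Proposition~\ref{prop:dualset} and $x^{-1}=(-x_1,-x_2,-x_3-x_1x_2,-x_4)$ into $\widehat{\kappa}(\pi^{\delta,\beta}) = \int_{\Eng}\kappa(x)\,\pi^{\delta,\beta}(x)^{*}\,dx$; after a short cancellation in the phase and the change of variables $\eta=\xi-x_1$ this exhibits $\widehat{\kappa}(\pi^{\delta,\beta})$ as the operator with Schwartz kernel
\[
    K_\kappa(\xi,\eta) = \int_{\R^3}\kappa(\xi-\eta,x_2,x_3,x_4)\,\exp\!\left(-i\Big(\delta x_4 + \tfrac{\delta}{2}x_3(\xi+\eta) + \beta x_2 + \tfrac{\delta}{2}x_2\xi^2\Big)\right)dx_2\,dx_3\,dx_4 .
\]
Because $\kappa\in\cS(\Eng)$, the integrand and all its $(\delta,\beta)$-derivatives are dominated by a Schwartz function of $(\xi-\eta,x_2,x_3,x_4)$ times a polynomial, so one may differentiate under the integral sign; this alone yields the asserted smoothness of $(\delta,\beta)\mapsto\widehat{\kappa}(\pi^{\delta,\beta})$ on $\fg_3^*\setminus\{0\}\times\fr_2^*$, and differentiation under the integral will be the basic tool throughout. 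It is convenient to record that, writing $M$ for multiplication by $\xi$ on $L^2(\R_\xi)$ and $g\in\cS(\Eng)$, the kernel $(\xi-\eta)K_g$ is that of $[M,\widehat{g}]$ (equivalently of $\widehat{x_1 g}$, since $x_1=\xi-\eta$ in $K_g$), while $\xi K_g$ and $\eta K_g$ are the kernels of $M\widehat{g}$ and $\widehat{g}M$.

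The cases $\Delta_1$ and $\Delta_2$ are then immediate. Taking $g=\kappa$ above gives $\widehat{x_1\kappa} = [M,\widehat{\kappa}]$, and since $\pi^{\delta,\beta}(X_3) = i\delta\,M$ this reads $\Delta_1\widehat{\kappa} = -[M,\widehat{\kappa}] = \tfrac{i}{\delta}[\pi^{\delta,\beta}(X_3),\widehat{\kappa}]$. Since $\beta$ enters $K_\kappa$ only through the factor $e^{-i\beta x_2}$, differentiating in $\beta$ gives $\partial_\beta\widehat{\kappa}(\pi^{\delta,\beta}) = -i\,\widehat{x_2\kappa}(\pi^{\delta,\beta})$, that is $\Delta_2\widehat{\kappa} = \tfrac1i\partial_\beta\widehat{\kappa}$.

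The remaining two genuinely couple $\xi$ and $x$, and, because of the recursive shape of the statement, must be done in this order. For $\Delta_3$ I would use that $\pi^{\delta,\beta}(X_1) = \partial_\xi$, so that the kernel of $[\pi^{\delta,\beta}(X_1),\widehat{\kappa}]$ is $(\partial_\xi+\partial_\eta)K_\kappa$; the operator $\partial_\xi+\partial_\eta$ annihilates the $\kappa(\xi-\eta,\cdot)$ factor and hits only the phase, producing inside the integral the monomials $-i\delta\,x_3$ and $-i\delta\xi\,x_2$, whence $[\pi^{\delta,\beta}(X_1),\widehat{\kappa}] = -i\delta\,\widehat{x_3\kappa} - i\delta\,M\widehat{x_2\kappa}$. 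Solving for $\widehat{x_3\kappa}$ and using $\pi^{\delta,\beta}(X_3)=i\delta\,M$ together with the $\Delta_2$-formula just obtained gives the stated $\Delta_3$. For $\Delta_4$ I would differentiate $K_\kappa$ in $\delta$: the three $\delta$-dependent exponentials produce inside the integral the monomials $x_4$, $\tfrac12 x_3(\xi+\eta)$ and $\tfrac12 x_2\xi^2$; the first gives $\widehat{x_4\kappa}=-\Delta_4\widehat{\kappa}$, in the third $\xi^2$ is simply left multiplication by $\xi^2$, and in the second one writes $\xi+\eta = 2\xi - (\xi-\eta)$, so that $(\xi+\eta)K_{\widehat{x_3\kappa}}$ is the kernel of $2M\widehat{x_3\kappa} - [M,\widehat{x_3\kappa}]$, the commutator being $\Delta_1\Delta_3\widehat{\kappa}$ (by the same identity $[M,\widehat{g}]=\widehat{x_1 g}$ used for $\Delta_1$). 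Collecting the three contributions, rewriting left multiplication by $\xi$ via $\pi^{\delta,\beta}(X_3)$ and left multiplication by $\xi^2$ via $\Delta_2$, and solving for $\widehat{x_4\kappa}=-\Delta_4\widehat{\kappa}$ produces the formula for $\Delta_4$.

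The delicate point will be this last step: one has to disentangle the mixed $\xi$-and-$x$ monomials arising from $\partial_\delta$ of the phase and then keep careful track of which operations are left multiplications, right multiplications, commutators or anticommutators when translating the kernel identities back into operator identities — this is precisely where the $\Delta_1$-commutator re-enters and where the recursive structure of the statement is forced. The remaining issues — justifying the differentiations under the integral sign, and checking that one never leaves the class of operators with Schwartz kernels — are routine given the rapid decay of $\kappa$.
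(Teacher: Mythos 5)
Your proposal is correct and follows essentially the same route as the paper: both proofs substitute the explicit formula for $\pi^{\delta,\beta}$ from Proposition~\ref{prop:dualset} and identify multiplication of $\kappa$ by each coordinate with an algebraic or differential operation on the Fourier side, the only cosmetic difference being that you pass to the Schwartz kernel $K_\kappa(\xi,\eta)$ after integrating against $\varphi$, whereas the paper manipulates the operator-valued integrand directly (e.g.\ via $[\tfrac{i}{\delta}\pi^{\delta,\beta}(X_3),\pi^{\delta,\beta}(x)^*]=-x_1\pi^{\delta,\beta}(x)^*$). One caveat: carried to the end, your $\Delta_4$ computation produces the coefficient $-\tfrac12$ in front of $\Delta_1\Delta_3\widehat{\kappa}$, not the $+\tfrac12$ displayed in the statement, because $\tfrac12\,y_1y_3\kappa$ Fourier-transforms to $+\tfrac12\,\Delta_1\Delta_3\widehat{\kappa}$ under the convention $q_i=-x_i$ and this term changes sign when one solves for $\Delta_4\widehat{\kappa}$; the same sign follows from the paper's own identity for $\tfrac1i\partial_\delta\widehat{\kappa}$, so the discrepancy is a sign slip in the printed formula rather than a defect of your argument.
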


\begin{proof}
    Let $\varphi \in \cH_{\pi^{\delta,\beta}}^{\infty}$ a smooth vector. By definition we have 
    \begin{equation*}
        \Delta_{1}\widehat{\kappa}(\pi^{\delta,\beta})\varphi = -\int_{\Eng} x_1 \kappa(x) \pi^{\delta,\beta}(x)^* \varphi\,dx.
    \end{equation*}
    It follows from direct computation that we have 
    \begin{equation*}
        \left[\frac{i}{\delta}\pi^{\delta,\beta}(X_3),\pi^{\delta,\beta}(x)^*\right] = -x_1 \pi^{\delta,\beta}(x)^*,
    \end{equation*}
    and we deduce the corresponding formula for $\Delta_1$. The formula for $\Delta_2$ follows from the equation 
    \begin{equation*}
        \partial_\beta \pi^{\delta,\beta}(x)^* = -x_2 \pi^{\delta,\beta}(x)^*,
    \end{equation*}
    and we argue similarly for $\Delta_3$. Finally, to establish the formula concerning $\Delta_4$, observe that we have
    \begin{equation*}
        \frac{1}{i}\partial_{\delta} \widehat{\kappa}(\pi^{\delta,\beta}) \varphi(\xi) = \int_{\Eng} \kappa(y)\left(-y_4 +\frac{1}{2}y_1 y_3 - \xi y_3 - \frac{1}{2}\xi^2 y_2\right)\pi^{\delta,\beta}(y)^* \varphi(\xi) \,dy 
    \end{equation*}
    We recognize in the first two terms the operator $\Delta_4 - \frac{1}{2}\Delta_1 \Delta_3$, while we can also write the third one as follows
    \begin{equation*}
        \int_{\Eng} \kappa(y) (-\xi y_3) \pi^{\delta,\beta}(y)^*\, dy = -\frac{i}{\delta} \pi^{\delta,\beta}(X_3) \Delta_3 \widehat{\kappa}(\pi^{\delta,\beta}),
    \end{equation*}
    and we obtain the desired formula.
\end{proof}

\begin{remark}
    These formulae depend on the choice of parametrisation of $\Enghat$ given by Proposition \ref{prop:dualset}. Changing our choice of representation in its 
    equivalence by an intertwining operator would lead to a change in the expressions of the difference operators.
\end{remark}

\section{Semiclassical analysis on the Engel group}
\label{sect:semiclassEng}

In this section, we recall the elements of semiclassical pseudodifferential calculus necessary for our analysis on the Engel group, but the presentation 
would hold for any graded nilpotent Lie group. We present also the results on the Schrödinger equation for the semiclassical time-scale $\tau=1$.

\subsection{Semiclassical pseudodifferential operators}

\subsubsection{The space ${\mathcal A}_0$ of semiclassical symbols}
We denote by $\mathcal A_0(\Eng)$ the space of symbols 
$\sigma = \{\sigma(x,\pi) : (x,\pi)\in \Eng\times \Enghat\}$ of the form 
\begin{equation*}
\sigma(x,\pi)=\mathcal{F} \kappa_x (\pi) = \int_\Eng \kappa_x(z) (\pi(z))^* \,dz, 
\end{equation*}
where $x\mapsto \kappa_x(\cdot)$ is a smooth and compactly supported function from $\Eng$ to $\mathcal{S}(\Eng)$.

As the Fourier transform is injective, it yields a one-to-one correspondence between the symbol~$\sigma$ and the function $\kappa$:
we have $\sigma(x,\pi)=\mathcal F \kappa_x(\pi)$ and conversely the Fourier inversion formula \eqref{eq:inversionformula}
yields 
\begin{equation*}
\forall x,z\in G,\ \kappa_x(z)= \int_{\Enghat} {\rm Tr} \left( \pi(z) \sigma(x,\pi)\right) d\mu_{\Enghat}(\pi).
\end{equation*}
The set $\mathcal A_0$ is an algebra for the composition of symbols since if $\sigma_1(x,\pi)=\mathcal F \kappa_{1,x} (\pi)$
and $\sigma_2(x,\pi)=\mathcal F \kappa_{2,x} (\pi)$
are in $\mathcal A_0$, then so is $\sigma_1(x,\pi)\sigma_2(x,\pi)
=\mathcal F (\kappa_{2,x}*\kappa_{1,x}) (\pi)$
by \eqref{fourconv}. It is also stable under the action of the difference operators and by differentiation in the first variable.

\subsubsection{General classes of symbols}
\label{subsubsect:generalclasssym}
It is possible to define classes of symbols {\it à la Hörmander} in the context of nilpotent Lie group. Indeed, following~ \cite[Section 5.2]{FR},
for $\rho,\delta \in [0,1]$ and $m\in\R$ fixed, the class $S_{\rho,\delta}^{m}(\Eng)$ of {\it symbols of order $m$} is the space of the fields of operators 
$\sigma(x,\pi)$ such that for every $a,b,c\in\N_{>0}$, we have
\begin{equation}
\label{eq_def_Smrhodelta}
\|\sigma\|_{S_{\rho,\delta}^{m},a,b,c} = \sum_{\substack{|\gamma|\leq c,\\ [\alpha]\leq a,[\beta]\leq b}}\sup_{x\in \Eng, \pi\in \Enghat}
\|({\rm Id} +H(\pi))^{\frac{ \rho[\alpha]- \delta[\beta]-m+\gamma}2}
X_x^\beta\Delta^\alpha \sigma(x,\pi) 
({\rm Id} +H(\pi))^{-\frac{\gamma}2 }\|_{\mathcal L(\mathcal H_\pi)}<\infty.
\end{equation}
These spaces are naturally Fréchet spaces when equipped with the seminorms $\|\cdot\|_{S_{\rho,\delta}^{m},a,b,c}$.

\begin{remark}
    It has been shown in \cite{FR} that the topology on $S_{\rho,\delta}^{m}(\Eng)$ is also generated by the seminorms $\|\cdot\|_{S_{\rho,\delta}^{m},a,b} := \|\cdot\|_{S_{\rho,\delta}^{m},a,b,0}$, meaning we
    can restrict ourselves to $c=0$.
\end{remark}

The class of {\it smoothing symbols} $S^{-\infty}(\Eng)$ is then defined as the intersection of all theses classes $\bigcap_{m\in\R}S^m$ with the topology of the projective limit.
We can then check that our symbols $\cA_0$ corresponds to smoothing symbols with compact support on the first variable.

\subsubsection{Quantization and semiclassical pseudodifferential operators} 
Given $\hbar>0$, the {\it semiclassical parameter}, we quantify a symbol $\sigma\in S^{m}(\Eng)$, $m\in\R$, by setting for $f\in{\mathcal S}(\Eng)$,
\begin{equation}
	\label{eq_quantization}
	\Op_\hbar(\sigma) f(x)= \int_{\Enghat} {\rm Tr} \left(  \pi(x) \sigma(x,\hbar\cdot\pi) {\mathcal F} f(\pi) \right)d\mu_{\Enghat}(\pi),
\end{equation}
see \cite[Section 5.1]{FR} for the well-posedness of this formula. Such an operator is called a \textit{semiclassical pseudodifferential operator}.

The integral kernel of the operator $\Op_\hbar(\sigma)$ is the distribution 
\begin{equation*}
\Eng\times \Eng\ni (x,y)\mapsto \kappa^{\hbar}_x(y^{-1} x),
\end{equation*}
where $\kappa^{\hbar}_{x}(z)=\hbar^{-Q} \kappa_{x}\left(\hbar^{-1}\cdot z\right)$ 
and $\kappa_{x}\in\cS'(\Eng)$ is such that $\mathcal{F}\kappa_{x}(\pi)=\sigma(x,\pi)$. The distribution $\kappa$ is called the {\it convolution kernel} associated to $\sigma$. 
For $f\in\cS(\Eng)$, we can write the action of $\Op_\hbar(\sigma)$ as follow
\begin{equation*}
    \Op_\hbar(\sigma) f(x) = f * \kappa_{x}^\hbar(x).
\end{equation*}

As an example, the subLaplacian $-\hbar^2 \Delta_\Eng$ belongs to this pseudodifferential calculus and its symbol is given by the field $H$.

\medskip 

As already observed in \cite{FF}, the action of symbols in $\cA_0$ on $L^2(\Eng)$ is easily seen to be bounded:
\begin{equation*}
\exists C>0,
\forall \sigma\in {\mathcal A}_0,
\forall \hbar>0,\ \| \Op_{\hbar}(\sigma)\|_{{\mathcal L}(L^2(\Eng))}\leq C \, \int_{\Eng} \sup_{x\in \Eng} |\kappa_{x} (z)|\,dz.
\end{equation*}

For a symbol in a general class $S_{\rho,\delta}^m(\Eng)$, $m\in\R$, we have the following result.

\begin{theorem}[Calder\'on-Vaillancourt,\cite{FR}]
    \label{thm:CalderonVaillancourt}
    Let $\sigma \in S_{0,0}^{0}(\Eng)$. Then $\Op_1(\sigma)$ extends to a bounded operator on $L^{2}(\Eng)$. Moreover, there exist a constant $C>0$ and 
    integers $a,b\in\N_{>0}$, both independent of $\sigma$, such that for the seminorm $N(\cdot):= \|\cdot\|_{S_{0,0}^{0},a,b}$ we have
    \begin{equation*}
        \forall \phi \in \cS(\Eng),\ \|\Op_1(\sigma)\phi\|_{L^2(\Eng)} \leq C N(\sigma)\|\phi\|_{L^2(\Eng)}.
    \end{equation*}
\end{theorem}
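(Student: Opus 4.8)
This is the graded-group form of the Calderón--Vaillancourt theorem established in \cite[Theorem 5.4.17]{FR}; the plan is to recall the structure of that argument, whose engine is a dyadic (Littlewood--Paley) decomposition in the dual variable together with the Cotlar--Stein lemma. Using the functional calculus of the nonnegative self-adjoint field $H=\cF(-\Delta_\Eng)$, fix a dyadic partition of unity $1=\psi_0+\sum_{j\geq1}\psi_j$ of $[0,\infty)$ with $\psi\in C_c^\infty(0,\infty)$ and $\psi_j(\cdot)=\psi(2^{-2j}\cdot)$, and let $Q_j$ be the left-invariant Fourier multiplier with symbol $\psi_j(H(\pi))$, so that the $Q_j$ are the Littlewood--Paley projectors associated with $-\Delta_\Eng$: they are uniformly bounded on $L^2(\Eng)$, satisfy $Q_jQ_k=0$ for $|j-k|\geq 2$, and $\sum_j Q_j=\mathrm{Id}$. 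Setting $T:=\Op_1(\sigma)$ and noting from \eqref{eq_quantization} that $\psi_j(H)$ acts as a multiplier on the right of $\cF f$, we get $TQ_j=\Op_1\!\big(\sigma\,\psi_j(H)\big)$ and $T=\sum_j TQ_j$.

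First I would prove the uniform bound $\|TQ_j\|_{\cL(L^2(\Eng))}\lesssim N(\sigma)$. The convolution kernel of $\sigma\,\psi_j(H)$ is $\kappa^j_x=k_j\ast\kappa^\sigma_x$, where $k_j$ is the (Schwartz, by \cite{FR}) convolution kernel of $\psi_j(-\Delta_\Eng)$ and $\cF\kappa^\sigma_x=\sigma(x,\cdot)$; a direct estimate shows $\|\Op_1(\tau)\|_{\cL(L^2)}\leq \int_\Eng\sup_{x}|\kappa_x(z)|\,dz$ for any symbol whose kernel is a genuine function (this extends the $\cA_0$-bound recalled above, via Young's inequality on $\Eng$). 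It thus suffices to bound $\int_\Eng\sup_x|\kappa^j_x(z)|\,dz$ by finitely many of the seminorms $\sup_{x,\pi}\|\Delta^\alpha X_x^\beta\sigma(x,\pi)\|_{\cL(\cH_\pi)}$, uniformly in $j$: multiplying $\kappa^j_x$ by polynomials in $z$ corresponds, under $\cF$, to applying difference operators, the Leibniz rule of Proposition \ref{prop:formuleLeibniz} distributes these onto $\sigma$ (bounded by a seminorm) and onto $\psi_j(H)$ (which, by the homogeneity $H(r\cdot\pi)=r^2H(\pi)$, behaves like a symbol localized at frequency $2^j$ and gains a negative power of $2^j$ under each difference operator), and integrating in $z$ then yields a bound uniform in $j$.

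The main obstacle is the almost-orthogonality of the family $(TQ_j)_j$, namely
\[
\|(TQ_j)^\ast (TQ_k)\|_{\cL(L^2)}+\|(TQ_j)(TQ_k)^\ast\|_{\cL(L^2)}\;\lesssim\; N(\sigma)^2\,2^{-\eps|j-k|},\qquad j,k\geq 0,
\]
for some fixed $\eps>0$. The term $(TQ_j)(TQ_k)^\ast=T\,Q_jQ_k\,T^\ast$ vanishes for $|j-k|\geq2$ and is $\lesssim N(\sigma)^2$ otherwise by the previous step. For $(TQ_j)^\ast(TQ_k)=Q_j\,(T^\ast T)\,Q_k$ one uses the continuity of composition and of the adjoint on $S^0_{0,0}(\Eng)$ (\cite[Chapter 5]{FR}) to write $T^\ast T=\Op_1(\rho)$ with $\rho\in S^0_{0,0}(\Eng)$ and $N(\rho)\lesssim N(\sigma)^2$, and then the key algebraic fact that commuting a spectral cutoff $\psi_j(-\Delta_\Eng)$ of the subLaplacian past a symbol of order $0$ improves the order by a negative power of the frequency; iterating this commutation a fixed number of times — this fixed number being precisely what determines the universal seminorm indices $a,b$ — moves $\psi_j(H)$ and $\psi_k(H)$ next to one another, where their product is controlled by $2^{-\eps|j-k|}$, at the cost of finitely many seminorms of $\rho$. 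With these estimates in hand, the Cotlar--Stein lemma gives
\[
\|T\|_{\cL(L^2(\Eng))}=\Big\|\sum_j TQ_j\Big\|_{\cL(L^2(\Eng))}\leq \sup_j\sum_k\Big(\|(TQ_j)^\ast(TQ_k)\|^{1/2}+\|(TQ_j)(TQ_k)^\ast\|^{1/2}\Big)\lesssim N(\sigma),
\]
which is the assertion. The delicate points throughout — the bookkeeping with the weights $(\mathrm{Id}+H(\pi))^{s}$ built into the $S^0_{0,0}$ seminorms, and the replacement of Euclidean integration by parts by manipulations with the difference operators $\Delta^\alpha$ and their Leibniz rule, forced by the non-commutativity of $\Eng$ — are exactly the computations carried out in \cite[Chapter 5]{FR}.
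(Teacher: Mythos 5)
The paper does not actually prove this statement — it is imported verbatim from \cite{FR} — so there is no internal argument to compare yours against; I am therefore assessing your sketch on its own terms. The scaffolding you describe (dyadic decomposition $T=\sum_j TQ_j$ along the spectral calculus of $-\Delta_\Eng$, almost-orthogonality, Cotlar--Stein) is indeed the right family of ideas, the $TT^*$ half of the orthogonality estimate is correct as stated, and the commutator mechanism you invoke for the $T^*T$ half (difference operators hitting $\psi_k(H)$ gain $2^{-k\upsilon_i}$ by homogeneity) is sound in outline.

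However, your first step contains a genuine error, not merely an omission. You claim that the uniform bound $\|TQ_j\|_{\cL(L^2)}\lesssim N(\sigma)$ follows from the Schur/Young estimate $\|\Op_1(\tau)\|\leq\int_\Eng\sup_x|\kappa_x(z)|\,dz$ applied to $\kappa^j_x=k_j\ast\kappa^\sigma_x$, the uniformity in $j$ coming from the negative powers of $2^j$ gained when difference operators fall on $\psi_j(H)$. This cannot work: in the Leibniz expansion of $\Delta^\alpha(\sigma\,\psi_j(H))$ the term in which \emph{all} difference operators fall on $\sigma$ gains nothing — that is precisely what makes the class $S^0_{0,0}$ hard — and the quantity $\int_\Eng\sup_x|\kappa^j_x(z)|\,dz$ is in general \emph{not} uniformly bounded in $j$. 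Already the $x$-independent Euclidean symbol $\sigma(\xi)=\chi(\xi)e^{i|\xi|}\in S^0_{0,0}(\R^n)$ gives a counterexample: the kernel of $\sigma(D)\psi_j(D)$ concentrates on a unit sphere and its $L^1$ norm grows like a positive power of $2^j$ (logarithmically in dimension one), even though the operator norm is trivially uniform by Plancherel. Since the diagonal case $j=k$ of the Cotlar--Stein hypotheses is exactly $\|(TQ_j)^*(TQ_j)\|=\|TQ_j\|^2$, this gap sits at the crux of the theorem rather than at its periphery: uniform boundedness of the frequency-localized pieces is essentially equivalent to the statement being proved. The standard repair — and what the classical Calder\'on--Vaillancourt and Coifman--Meyer arguments, as adapted in \cite{FR}, actually do — is a \emph{second} decomposition in the spatial variable, reducing each frequency-localized piece to a sum of operators whose symbols are essentially constant in $x$ (so that Plancherel, not a kernel bound, yields the $L^2$ estimate), with almost-orthogonality in the spatial index as well. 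That double decomposition, or an equivalent device, is absent from your sketch, so as written the proof does not go through.
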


\subsubsection{Symbolic calculus}

Composition of two semiclassical pseudodifferential operators is again a semiclassical pseudodifferential operator. Moreover, there exists a symbolic calculus giving the symbol of the composition. 
In this paper, we will mainly use the description of the commutator between the subLaplacian and a smoothing semiclassical pseudodifferential operator, which comes from an explicit computation and writes: 
for all $\sigma\in{\mathcal A}_0$,
\begin{equation}\label{eq:commutatorDelta}
[\Op_{\hbar} (\sigma), -\hbar^2 \Delta_\Eng ] 
=
  \Op_{\hbar} \left([\sigma,H]\right)
\ + \ 
2\hbar  \, \Op_{\hbar} \left((\pi(V)\cdot V) \sigma\right)
\ + \ 
\hbar^2 \Op_{\hbar} \left(\Delta_\Eng \sigma\right),
\end{equation}
where have introduced the notation $(\pi(V)\cdot V)\sigma = \pi(X_1) X_1\sigma + \pi(X_2) X_2 \sigma$.

\subsection{Semiclassical measures}
\label{subsect:semiclassicalmeasures}
To understand the concentration properties of a bounded family $(\psi_{0}^\hbar)_{\hbar>0}$ in $L^2(\Eng)$, we are interested in quantities of the form
\begin{equation*}
    \left(\Op_{\hbar}(\sigma) \psi_{0}^\hbar, \psi_{0}^\hbar\right), \quad \sigma \in \mathcal{A}_{0},
\end{equation*}
in the limit $\hbar$ goes to 0.  
We follow the presentation in \cite[Section 4]{FF2} and introduce the following vocabulary for operator-valued measures:

\begin{definition}
\label{def_gammaGamma}
	Let $Z$ be a complete separable metric space, 
	and let $\xi\in Z\mapsto {\mathcal H}_\xi$ a measurable field of complex Hilbert spaces on $Z$.
\begin{itemize}
\item 
	The set $ \widetilde{\mathcal M}_{ov}(Z,({\mathcal H}_\xi)_{\xi\in Z})$
is the set of pairs $(\gamma,\Gamma)$ where $\gamma$ is a positive Radon measure on~$Z$ 
and $\Gamma=\{\Gamma(\xi)\in {\mathcal L}^1({\mathcal H}_\xi):\xi \in Z\}$ is a measurable field of trace-class operators
such that
$$\|\Gamma d \gamma\|_{\mathcal M}:=\int_Z{\rm Tr}_{{\mathcal H}_\xi} |\Gamma(\xi)|d\gamma(\xi)
<\infty.$$		
\item 
	Two pairs $(\gamma,\Gamma)$ and $(\gamma',\Gamma')$ 
in $\widetilde {\mathcal M}_{ov}(Z,({\mathcal H}_\xi)_{\xi\in Z})$
are {equivalent} when there exists a measurable function $f:Z\to \mathbb C\setminus\{0\}$ such that 
$$d\gamma'(\xi) =f(\xi)  d\gamma(\xi)\;\;{\rm  and} \;\;\Gamma'(\xi)=\frac 1 {f(\xi)} \Gamma(\xi)$$ for $\gamma$-almost every $\xi\in Z$.
The equivalence class of $(\gamma,\Gamma)$ is denoted by $\Gamma d \gamma$,
and the resulting quotient set is denoted by ${\mathcal M}_{ov}(Z,({\mathcal H}_\xi)_{\xi\in Z})$.
\item 
A pair $(\gamma,\Gamma)$ 
in $ \widetilde {\mathcal M}_{ov}(Z,({\mathcal H}_\xi)_{\xi\in Z})$
 is {positive} when 
$\Gamma(\xi)\geq 0$ for $\gamma$-almost all $\xi\in Z$.
In  this case, we may write  $(\gamma,\Gamma)\in  \widetilde {\mathcal M}_{ov}^+(Z,({\mathcal H}_\xi)_{\xi\in Z})$, 
and $\Gamma d\gamma \geq 0$ for $\Gamma d\gamma \in {\mathcal M}_{ov}^+(Z,({\mathcal H}_\xi)_{\xi\in Z})$.
\end{itemize}
\end{definition}

By convention and if not otherwise specified,  a representative of the class $\Gamma d\gamma$ is chosen such that ${\rm Tr}_{{\mathcal H}_\xi} \Gamma=1$ on the support of $\gamma$. 
In particular, if ${\mathcal H}_\xi$ is $1$-dimensional, $\Gamma=1$ and  $\Gamma d\gamma$ reduces to the measure $d\gamma$.
One checks readily that $\mathcal M_{ov} (Z,({\mathcal H}_\xi)_{\xi\in Z})$ equipped with the norm $\| \cdot\|_{{\mathcal M}}$ is a Banach space.

\medskip 

When the field of Hilbert spaces is clear from the setting, 
we may write 
$$
\mathcal M_{ov} (Z) = \mathcal M_{ov} (Z,({\mathcal H}_\xi)_{\xi\in Z}),
\quad
\mbox{and}\quad
\mathcal M_{ov}^+ (Z) = \mathcal M_{ov}^+ (Z,({\mathcal H}_\xi)_{\xi\in Z}),
$$
for short.
For instance, in the context of a nilpotent Lie group $G$, we will consider the space $Z = G \times \widehat{G}$ with the field of Hilbert spaces $(\cH_\pi)_{(x,\pi)\in G\times\widehat{G}}$.

\subsubsection{The $C^*$-algebras $\cA$ and its topological dual}

A measurable symbol $\sigma = \{\sigma(x,\pi)\,:\,(x,\pi)\in\Eng\times\Enghat\}$ is said to be bounded when there exists a constant $C>0$ such that for $dx d\mu_{\Enghat}(\pi)$-almost 
all $(x,\pi)\in\Eng\times\Enghat$, we have $\|\sigma(x,\pi)\|_{\cH_\pi} \leq C$. We denote by $\|\sigma\|_{L^{\infty}(\Eng\times\Enghat)}$ the smallest such constant $C>0$ and by 
$L^{\infty}(\Eng\times\Enghat)$ the space of bounded measurable symbols. This vector space endowed with this norm is a von Neumann algebra.

Clearly, $\cA_0$ is a subalgebra of $L^{\infty}(\Eng\times\Enghat)$ and we consider its closure for the norm $\|\cdot\|_{L^\infty(\Eng\times\Enghat)}$, denoted
by $\cA$. The following property clarifies the introduction of operator-valued measures.

\begin{proposition}[\cite{FF2}]
    \label{prop:dualA}
    The topological dual $\cA^*$ of the sub-$C^*$-algebra $\cA$ is isomorphic to the Banach space of operator-valued measures $\cM_{ov}(\Eng\times\Enghat)$ via
    \begin{equation*}
        \Gamma d\gamma \in \cM_{ov}(\Eng\times\Enghat) \mapsto \ell_{\Gamma d\gamma},\quad \ell_{\Gamma d\gamma}(\sigma) := \int_{\Eng\times\Enghat} {\rm Tr}\left(\sigma(x,\pi)\Gamma(x,\pi)\right)d\gamma(x,\pi).
    \end{equation*}
    Moreover, the isomorphism is isometric:
    \begin{equation*}
        \|\ell_{\Gamma d\gamma}\|_{\cA^*} = \|\Gamma d\gamma\|_{\cM_{ov}(\Eng\times\Enghat)},
    \end{equation*}
    and the positive linear functionals on $\cA$ are the $\ell_{\Gamma d\gamma}$ with $\Gamma d\gamma \in \cM_{ov}^{+}(\Eng\times\Enghat)$.
\end{proposition}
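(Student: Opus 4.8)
\textbf{Proof strategy for Proposition \ref{prop:dualA}.} The plan is to identify $\cA$ as a concrete commutative-in-spirit $C^*$-algebra of operator fields and then invoke the standard Riesz--Markov-type duality in the operator-valued setting, as already carried out in \cite{FF2}. First I would observe that $\cA$, being the closure of $\cA_0$ in $L^\infty(\Eng\times\Enghat)$, is a sub-$C^*$-algebra of the von Neumann algebra $L^\infty(\Eng\times\Enghat)$: it is closed under the involution $\sigma\mapsto\sigma^*$ (since $\cA_0$ is, because $\kappa_x\mapsto \bar\kappa_x(\cdot^{-1})$ corresponds to the adjoint of $\cF\kappa_x(\pi)$) and under products by the symbolic calculus remark following the definition of $\cA_0$, and the $L^\infty$-norm is a $C^*$-norm. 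One should also note that $\cA$ is separable, since $\cA_0$ contains a countable dense subset built from, e.g., tensor products of a countable dense family in $C_c^\infty(\Eng)$ and in $\cS(\Eng)$.

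Next I would make the map $\Gamma d\gamma\mapsto \ell_{\Gamma d\gamma}$ well defined and bounded. Well-definedness on equivalence classes is immediate: replacing $(\gamma,\Gamma)$ by $(f\gamma, f^{-1}\Gamma)$ leaves the integrand $\mathrm{Tr}(\sigma\,\Gamma)\,d\gamma$ unchanged pointwise. Boundedness with $\|\ell_{\Gamma d\gamma}\|_{\cA^*}\le \|\Gamma d\gamma\|_{\cM_{ov}}$ follows from $|\mathrm{Tr}(\sigma(x,\pi)\Gamma(x,\pi))|\le \|\sigma(x,\pi)\|_{\cL(\cH_\pi)}\,\mathrm{Tr}_{\cH_\pi}|\Gamma(x,\pi)|\le \|\sigma\|_{L^\infty}\,\mathrm{Tr}_{\cH_\pi}|\Gamma(x,\pi)|$ and integrating against $d\gamma$. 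For the reverse inequality one uses the polar decomposition $\Gamma(x,\pi)=U(x,\pi)|\Gamma(x,\pi)|$ and approximates the (non-continuous) symbol $U^*$ by elements of $\cA_0$ in an appropriate measurable/weak sense to test $\ell_{\Gamma d\gamma}$ against symbols nearly realizing the trace norm; this gives $\|\ell_{\Gamma d\gamma}\|_{\cA^*}\ge \|\Gamma d\gamma\|_{\cM_{ov}}$, hence the isometry. The statement about positive functionals then reduces to: $\ell_{\Gamma d\gamma}\ge 0$ on positive symbols $\iff \Gamma(x,\pi)\ge 0$ for $\gamma$-a.e.\ $(x,\pi)$, which is proved by testing against $\sigma$ of the form $\chi(x)\varphi(\pi)\,\theta$ with $\theta$ a positive finite-rank operator and varying $\chi,\varphi$ over nonnegative functions, using a measurable selection/disintegration argument.

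The core of the proof is surjectivity: every $\ell\in\cA^*$ arises from some $\Gamma d\gamma$. Here I would follow \cite{FF2} and use a disintegration theorem for $C^*$-algebras of operator fields. Concretely: the commutative $C^*$-subalgebra of $\cA$ generated by scalar symbols $\chi(x)\varphi(\pi)\,\mathrm{Id}_{\cH_\pi}$ (with $\chi\in C_c(\Eng)$, $\varphi\in C_0(\Enghat)$) has spectrum a locally compact space containing $\Eng\times\Enghat$ as a dense subset; restricting $\ell$ to it and applying the classical Riesz representation theorem produces a positive (after splitting $\ell$ into its positive and negative parts, or treating the Hermitian and anti-Hermitian parts) Radon measure $\gamma$ on $\Eng\times\Enghat$. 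Then, for fixed test functions localizing near a point, $\ell$ induces a bounded functional on the trace-class operators $\cL^1(\cH_\pi)$ for $\gamma$-a.e.\ $\pi$, and by the duality $\cL^1(\cH_\pi)^* = \cL(\cH_\pi)$ one would like the Radon--Nikodym-type derivative to be a field of operators; but since we are representing a functional \emph{on} $\cA\subset\{\text{bounded fields}\}$ by integration \emph{against} a field, the correct output is indeed a field of \emph{trace-class} operators $\Gamma(x,\pi)$, obtained by a measurable selection of the densities. Measurability of $(x,\pi)\mapsto\Gamma(x,\pi)$ and the integrability $\int \mathrm{Tr}|\Gamma|\,d\gamma<\infty$ come out of the construction together with the norm bound already established.

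\textbf{Main obstacle.} The genuinely delicate point is the surjectivity/disintegration step: producing the measurable field $\Gamma$ of trace-class operators and controlling its measurability and trace-norm integrability simultaneously with the construction of $\gamma$. This requires the machinery of direct integrals of Hilbert spaces and measurable fields of operators (the standard Borel structure on $\Enghat$ and the separability of the $\cH_\pi$ are what make it work), together with a vector-valued Riesz representation / disintegration theorem. Since this is exactly the content proved in \cite[Section 4]{FF2} for a general nilpotent Lie group $G$ with $Z=G\times\widehat G$, the cleanest route is to verify that the hypotheses there (separability of $\cA$, the field of Hilbert spaces being measurable, $\Enghat$ standard Borel with $\sigma$-finite Plancherel measure) are all met for $G=\Eng$ — which they are by Section \ref{sect:preliEngel} — and then to cite that result, sketching only the identification of $\cA$ as the relevant $C^*$-algebra and the two norm inequalities.
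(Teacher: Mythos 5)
The paper gives no proof of this proposition at all—it is stated as a citation to \cite{FF2}—so your proposal, which sketches the standard duality argument (isometry via polar decomposition, surjectivity via disintegration over the standard Borel space $\Enghat$) and then defers the delicate measurable-field construction to \cite[Section 4]{FF2}, is consistent with and at least as detailed as what the paper does. Your identification of the disintegration/measurable-selection step as the genuine crux is correct, and verifying the hypotheses of the cited result for $G=\Eng$ is indeed all that is required here.
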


\subsubsection{Semiclassical measures and $\hbar$-oscillations}
We are now able to characterize the limits the following linear forms
\begin{equation*}
    \sigma\in\cA_0\mapsto \left(\Op_\hbar(\sigma)\psi_{0}^\hbar,\psi_{0}^\hbar\right)_{L^2(\Eng)}\in\C.
\end{equation*}

\begin{theorem}[\cite{FF2}]
    \label{theo:measures}
    Let $(\psi_{0}^\hbar)_{\hbar>0}$ be a bounded family of $L^2(\Eng)$. There exist a sequence $(\hbar_k)_{k\in\N}$ in $(0,+\infty)$
    with $\hbar_k \Tend{k}{+\infty} 0$ and a unique element $\Gamma d\gamma\in\cM_{ov}^+(\Eng\times\Enghat)$ such that we have
    \begin{equation*}
        \forall \sigma\in\cA_0,\ \left(\Op_{\hbar^k}(\sigma)\psi_{0}^{\hbar_k},\psi_{0}^{\hbar_k}\right)_{L^2(\Eng)}\Tend{k}{+\infty}
        \int_{\Eng\times\Enghat} {\rm Tr}\left(\sigma(x,\pi)\Gamma(x,\pi)\right)d\gamma(x,\pi).
    \end{equation*}
    Moreover, $\Gamma d\gamma$ satisfies the following inequality
    \begin{equation*}
        \int_{\Eng\times\Enghat}{\rm Tr}\left(\Gamma(x,\pi)\right)d\gamma(x,\pi)\leq \limsup_{\hbar>0}\|\psi_{0}^{\hbar}\|_{L^2(\Eng)}^2.
    \end{equation*}
    Given the sequence $(\hbar_k)_{k\in\N}$, $\Gamma d\gamma$ is the only operator-valued measure for which the above convergence holds.
\end{theorem}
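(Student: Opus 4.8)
The plan is to produce $\Gamma d\gamma$ by weak-$*$ compactness: one realises the sought limit as a positive functional on the $C^*$-algebra $\cA$ and then transports it to an operator-valued measure through the duality of Proposition \ref{prop:dualA}. This is the scheme of \cite{FF2} transposed to the Engel group, and the only structural inputs are the semiclassical symbolic calculus on $\Eng$ and the separability of $\cA$ (which holds since $\cA=\overline{\cA_0}$ and $\cA_0$ is a continuous image of the separable Fréchet space $C^\infty_c(\Eng,\cS(\Eng))$ under $\kappa\mapsto\cF\kappa$). For $\hbar>0$ and $\sigma\in\cA_0$, writing $\sigma(x,\pi)=\cF\kappa_x(\pi)$, I set $\ell_\hbar(\sigma):=(\Op_\hbar(\sigma)\psi_0^\hbar,\psi_0^\hbar)_{L^2(\Eng)}$. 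The elementary bound $|\ell_\hbar(\sigma)|\le C\,\|\psi_0^\hbar\|_{L^2(\Eng)}^2\int_\Eng\sup_x|\kappa_x(z)|\,dz$ recalled above shows that for each fixed $\sigma$ the family $(\ell_\hbar(\sigma))_\hbar$ is bounded; fixing a countable dense subset of $\cA_0$ and extracting diagonally produces a sequence $\hbar_k\to0$ along which $\ell_{\hbar_k}(\sigma)$ converges for every $\sigma$ in that subset, with limit denoted $\ell(\sigma)$.

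The analytic core is then the sharp semiclassical norm identity $\lim_{\hbar\to0}\|\Op_\hbar(\sigma)\|_{\mathcal L(L^2(\Eng))}=\|\sigma\|_{L^\infty(\Eng\times\Enghat)}$ for $\sigma\in\cA_0$, together with its version after adjoining a unit (using $\Op_\hbar(\sigma+c)=\Op_\hbar(\sigma)+c\,{\rm Id}$). For the upper bound one uses the symbolic calculus through a $TT^*$-iteration: from $\Op_\hbar(\sigma)^*\Op_\hbar(\sigma)=\Op_\hbar(\sigma^*\sigma)+\hbar\,\Op_\hbar(r_\hbar)$ with $(r_\hbar)$ bounded in a fixed symbol class, one iterates on $\sigma^*\sigma$, extracts $2^n$-th roots and lets $n\to\infty$, thereby recovering the spectral radius of $\sigma^*\sigma$ in the von Neumann algebra $L^\infty(\Eng\times\Enghat)$, which equals $\|\sigma\|_{L^\infty(\Eng\times\Enghat)}^2$; the lower bound follows by testing $\Op_\hbar(\sigma)$ against suitably microlocally concentrated states. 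Granting this, $|\ell_\hbar(\sigma)|\le\|\Op_\hbar(\sigma)\|\,\|\psi_0^\hbar\|^2$ gives $\limsup_{\hbar\to0}|\ell_\hbar(\sigma)|\le\|\sigma\|_\cA\,\limsup_{\hbar\to0}\|\psi_0^\hbar\|_{L^2(\Eng)}^2$ for all $\sigma\in\cA_0$; hence $\ell$ is $\|\cdot\|_\cA$-uniformly continuous on $\cA_0$ and extends uniquely to $\ell\in\cA^*$ with $\|\ell\|_{\cA^*}\le\limsup_{\hbar\to0}\|\psi_0^\hbar\|_{L^2(\Eng)}^2$, and a $3\eps$-argument with the same estimate upgrades the convergence $\ell_{\hbar_k}(\sigma)\to\ell(\sigma)$ to all $\sigma\in\cA_0$, not merely the ones in the countable subset.

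Next I would check that $\ell$ is a positive functional. For self-adjoint $\sigma\in\cA_0$ one has $\Op_\hbar(\sigma)^*=\Op_\hbar(\sigma)+O_{\mathcal L(L^2(\Eng))}(\hbar)$; if moreover $\sigma(x,\pi)\ge0$ everywhere then, with $c:=\|\sigma\|_{L^\infty(\Eng\times\Enghat)}$, the norm identity applied to $c-\sigma\ge0$ gives $\|\Op_\hbar(\sigma)-c\,{\rm Id}\|_{\mathcal L(L^2(\Eng))}\le c+o(1)$, so that $(\Op_\hbar(\sigma)\psi,\psi)_{L^2(\Eng)}\ge-o(1)\|\psi\|_{L^2(\Eng)}^2$; taking $\psi=\psi_0^{\hbar_k}$ and passing to the limit yields $\ell(\sigma)\ge0$, and positivity extends to all of $\cA$ by density. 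By Proposition \ref{prop:dualA} there is then a unique $\Gamma d\gamma\in\cM_{ov}^+(\Eng\times\Enghat)$ with $\ell=\ell_{\Gamma d\gamma}$; this is the required measure, and since $\Gamma\ge0$ the isometry in that proposition gives $\int_{\Eng\times\Enghat}{\rm Tr}(\Gamma(x,\pi))\,d\gamma(x,\pi)=\|\Gamma d\gamma\|_{\cM_{ov}(\Eng\times\Enghat)}=\|\ell\|_{\cA^*}\le\limsup_{\hbar\to0}\|\psi_0^\hbar\|_{L^2(\Eng)}^2$, which is the stated trace bound. Uniqueness given $(\hbar_k)_k$ is then immediate: any two operator-valued measures for which the convergence holds induce functionals on $\cA$ that agree on the dense subalgebra $\cA_0$, hence on $\cA$, hence coincide by the bijectivity in Proposition \ref{prop:dualA}.

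The step I expect to be the real obstacle is the sharp norm identity $\lim_{\hbar\to0}\|\Op_\hbar(\sigma)\|_{\mathcal L(L^2(\Eng))}=\|\sigma\|_{L^\infty(\Eng\times\Enghat)}$: it requires the full semiclassical symbolic calculus on $\Eng$ — in effect a $C^*$-level Calderón--Vaillancourt theorem, available from \cite{FR} and \cite{FF} — and it simultaneously drives the positivity argument via the $TT^*$ mechanism, whereas the diagonal extraction, the extension by density and the Riesz-type identification of Proposition \ref{prop:dualA} are soft functional analysis.
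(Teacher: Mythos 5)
The paper does not actually prove this statement---it is quoted from \cite{FF2}---and your reconstruction follows exactly the scheme of that reference: uniform boundedness and diagonal extraction over a countable dense subset, the key norm bound $\limsup_{\hbar\to0}\|\Op_\hbar(\sigma)\|_{\mathcal L(L^2(\Eng))}\le\|\sigma\|_{L^\infty(\Eng\times\Enghat)}$, positivity, and the identification through Proposition \ref{prop:dualA}; all the soft steps are correct, and note that only the upper norm bound is needed (the lower bound you mention plays no role in this theorem). The one place to be careful is your $TT^*$-iteration: after extracting $2^n$-th roots, what the iteration gives directly is the spectral radius of $\sigma^*\sigma$ in the Banach $*$-algebra of kernels normed by $\int_\Eng\sup_x|\kappa_x(z)|\,dz$, and identifying this with the spectral radius in the $C^*$-algebra $L^\infty(\Eng\times\Enghat)$ is exactly the symmetry (spectral invariance) of this $L^1$-type algebra over the nilpotent group $\Eng$ --- true, but a theorem rather than a formality, and your phrase ``thereby recovering the spectral radius in the von Neumann algebra'' glosses over it. The route taken in \cite{FF}, which writes $c+\eps-\sigma^*\sigma$ as an exact square $\tau_\eps^*\tau_\eps$ via the $C^*$ functional calculus and then quantizes, yields the same bound while sidestepping this point.
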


Any equivalence class $\Gamma d\gamma$ satisfying Theorem \ref{theo:measures} is called a \textit{semiclassical measure} of the family $(\psi_{0}^\hbar)_{\hbar>0}$.

As the last inequality in Theorem \ref{theo:measures} shows, a loss of mass is possible. It is in this context that the assumptions \eqref{eq:assumptionoscillation} of $\hbar$-oscillations on 
a family $(\psi_{0}^\hbar)_{\hbar>0}$ becomes particularly relevant as it legitimates the use of our semiclassical pseudodifferential calculus to investigate the weak limits of 
the probabilty densities $|\psi_{0}^\hbar(x)|^2 dx$.

\begin{proposition}
    \label{prop:hbaroscillsemimeas}
    Let $(\psi_{0}^\hbar)_{\hbar>0}$ be a $\hbar$-oscillating bounded family of $L^2(\Eng)$ admitting a semiclassical measure $\Gamma d\gamma$ for the sequence $(\hbar_k)_{k\in\N}$, then for 
    all $\phi\in C_{0}^\infty(G)$
    \begin{equation*}
        \lim_{k\rightarrow+\infty}\int_{\Eng}\phi(x)|\psi_{0}^\hbar(x)|^2 dx = \int_{\Eng\times\Enghat}\phi(x){\rm Tr}\left(\Gamma(x,\pi)\right)d\gamma(x,\pi).
    \end{equation*}
\end{proposition}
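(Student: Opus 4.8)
The plan is to realize the multiplication operator $f\mapsto\phi f$ as a limit of semiclassical pseudodifferential operators whose symbols lie in $\cA_0$, via a spectral truncation of $-\Delta_\Eng$, and then to use the $\hbar$-oscillation hypothesis to discard the error term. For $R>0$ fix a cut-off $\chi_R\in C_c^\infty(\R)$ with $0\leq\chi_R\leq 1$ and $\chi_R\equiv 1$ on $[0,R]$, and set $\sigma_R(x,\pi):=\phi(x)\,\chi_R\big(H(\pi)\big)$. By the theory of spectral multipliers of sub-Laplacians on graded nilpotent Lie groups (see \cite{FR}), the convolution kernel $\kappa_{\chi_R}$ of $\chi_R(-\Delta_\Eng)$, whose group Fourier transform is $\chi_R(H(\pi))$, is a Schwartz function on $\Eng$; hence $x\mapsto\phi(x)\kappa_{\chi_R}$ is a smooth compactly supported map from $\Eng$ to $\cS(\Eng)$ with Fourier transform $\sigma_R$, so $\sigma_R\in\cA_0$. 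Using that $\Op_\hbar(H)=-\hbar^2\Delta_\Eng$ (as recalled in Section \ref{sect:semiclassEng}), equivalently $H(\hbar\cdot\pi)=\hbar^2 H(\pi)$, together with the formula for the integral kernel of $\Op_\hbar$, I would then check that
\begin{equation*}
\Op_\hbar(\sigma_R)=M_\phi\circ\chi_R(-\hbar^2\Delta_\Eng),
\end{equation*}
$M_\phi$ denoting multiplication by $\phi$; in particular $\left(\Op_\hbar(\sigma_R)\psi_0^\hbar,\psi_0^\hbar\right)_{L^2(\Eng)}=\int_\Eng\phi(x)\,\big[\chi_R(-\hbar^2\Delta_\Eng)\psi_0^\hbar\big](x)\,\overline{\psi_0^\hbar(x)}\,dx$.

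The difference between the last quantity and $\int_\Eng\phi\,|\psi_0^\hbar|^2\,dx$ equals $\left(M_\phi\big(\chi_R(-\hbar^2\Delta_\Eng)-\mathrm{Id}\big)\psi_0^\hbar,\psi_0^\hbar\right)_{L^2(\Eng)}$, which by Cauchy--Schwarz and the spectral theorem is bounded by $\|\phi\|_{L^\infty}\,\|\mathbf 1_{-\hbar^2\Delta_\Eng>R}\,\psi_0^\hbar\|_{L^2(\Eng)}\,\|\psi_0^\hbar\|_{L^2(\Eng)}$, since $\chi_R-1$ vanishes on $[0,R]$ and $|\chi_R-1|\leq 1$ elsewhere. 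Writing $C:=\sup_{\hbar>0}\|\psi_0^\hbar\|_{L^2(\Eng)}$ and applying Theorem \ref{theo:measures} to the symbol $\sigma_R\in\cA_0$ along the sequence $(\hbar_k)$, one obtains
\begin{equation*}
\limsup_{k\to\infty}\left|\int_\Eng\phi(x)|\psi_0^{\hbar_k}(x)|^2\,dx-\int_{\Eng\times\Enghat}\phi(x)\,{\rm Tr}\big(\chi_R(H(\pi))\Gamma(x,\pi)\big)\,d\gamma(x,\pi)\right|\leq C\,\|\phi\|_{L^\infty}\,\limsup_{k\to\infty}\|\mathbf 1_{-\hbar_k^2\Delta_\Eng>R}\,\psi_0^{\hbar_k}\|_{L^2(\Eng)}.
\end{equation*}

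It remains to let $R\to\infty$. The right-hand side above tends to $0$ by the $\hbar$-oscillation assumption \eqref{eq:assumptionoscillation}. For the left-hand side, recall that $\Gamma(x,\pi)\geq 0$ is trace-class with ${\rm Tr}\,\Gamma(x,\pi)=1$ on $\mathrm{supp}\,\gamma$, and that $\gamma$ is finite with $\int{\rm Tr}\,\Gamma\,d\gamma\leq C^2$; since the spectrum of $H(\pi)$ is contained in $[0,\infty)$, the operators $\chi_R(H(\pi))$ converge strongly to $\mathrm{Id}_{\cH_\pi}$ as $R\to\infty$, whence ${\rm Tr}\big(\chi_R(H(\pi))\Gamma(x,\pi)\big)\to{\rm Tr}\,\Gamma(x,\pi)$ for $\gamma$-almost every $(x,\pi)$, with the uniform bound $\big|{\rm Tr}(\chi_R(H(\pi))\Gamma(x,\pi))\big|\leq{\rm Tr}\,\Gamma(x,\pi)\in L^1\big(\Eng\times\Enghat,|\phi|\,d\gamma\big)$. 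Dominated convergence then gives $\int\phi\,{\rm Tr}(\chi_R(H)\Gamma)\,d\gamma\to\int\phi\,{\rm Tr}\,\Gamma\,d\gamma$, and combining the two limits yields the claimed identity. The only genuinely delicate points are the verification that $\chi_R(-\Delta_\Eng)$ has a Schwartz convolution kernel (so that $\sigma_R\in\cA_0$ and Theorem \ref{theo:measures} applies) and the careful bookkeeping in the iterated limit $k\to\infty$ then $R\to\infty$; everything else is a direct computation.
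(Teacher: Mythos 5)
Your proof is correct, and it follows exactly the route that the paper's framework presupposes (the paper states this proposition without proof, as it is the standard argument from \cite{FF2}): Hulanicki's theorem guarantees $\phi(x)\chi_R(H)\in\cB_0\subset\cA_0$ — a fact the paper itself records when introducing $\cB_0$ — the homogeneity $H(\hbar\cdot\pi)=\hbar^2H(\pi)$ identifies $\Op_\hbar(\phi\,\chi_R(H))$ with $M_\phi\circ\chi_R(-\hbar^2\Delta_\Eng)$, the tail is controlled by the $\hbar$-oscillation hypothesis, and the iterated limit $k\to\infty$ then $R\to\infty$ is handled in the right order with dominated convergence on the measure side. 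No gaps.
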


\subsubsection{The sub-$C^*$-algebra $\cB$ and its topological dual}
We introduce the subspace $\cB_0(\Eng)$ of $\cA_0(\Eng)$ of symbols commuting with $H$. This subspace is not trivial as it contains all symbols of the form 
$a(x)\chi(H)$, $a\in C_{c}^\infty(\Eng)$, $\chi\in\cS(\R)$, by Hulanicki's theorem (see \cite{Hul}):

\begin{theorem}[Hulanicki]
    The convolution kernel of a spectral multiplier $\chi(\cR)$ of a Rockland operator $\cR$ on nilpotent Lie group $G$ for a Schwartz function $\chi\in\cS(\R)$ is Schwart on $G$.
\end{theorem}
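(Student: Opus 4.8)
The plan is to derive Hulanicki's theorem from one classical input — the Schwartz property of the heat kernel — together with the structural features of a positive Rockland operator. Concretely, I would take for granted that the heat semigroup $(e^{-t\cR})_{t>0}$ acts by left convolution against a kernel $h_t\in\cS(G)$, with the homogeneity $h_t(x)=t^{-Q/\nu}h_1(t^{-1/\nu}\cdot x)$, where $\nu$ denotes the homogeneous degree of $\cR$ (so $\nu=2$ when $\cR=-\Delta_\Eng$); this follows from the hypoellipticity of $\cR$ (smoothness of $h_t$), dilation invariance, and Gaussian-type off-diagonal estimates, see \cite{FR} and \cite{Hul}. I would also use the two standard facts that $f\in\cS(G)$ if and only if $x^\alpha X^\beta f\in L^2(G)$ for all multi-indices $\alpha,\beta$ (the polynomially weighted $L^2$-Sobolev description of $\cS(G)$, via Sobolev embedding), and that $\cR$ being Rockland yields the subelliptic estimate $\|\pi(X)^\beta\varphi\|_{\cH_\pi}\lesssim\|({\rm Id}+\pi(\cR))^{[\beta]/\nu}\varphi\|_{\cH_\pi}$ uniformly in $\pi\in\widehat G$. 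Writing $\kappa_\chi$ for the convolution kernel of $\chi(\cR)$, so that $\cF\kappa_\chi(\pi)=\chi(\pi(\cR))$ fibrewise by the spectral theorem, the target is to show $x^\alpha X^\beta\kappa_\chi\in L^2(G)$ — equivalently, by Plancherel, $\Delta^\alpha\big(\pi(X)^\beta\,\chi(\pi(\cR))\big)\in L^2(\widehat G)$ — for every $\alpha,\beta$.

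For a fixed pair $(\alpha,\beta)$ I would first manufacture a favourable factorisation. The resolvent power $({\rm Id}+\cR)^{-k}=\Gamma(k)^{-1}\int_0^\infty t^{k-1}e^{-t}e^{-t\cR}\,dt$ has convolution kernel $\mathcal B_k=\Gamma(k)^{-1}\int_0^\infty t^{k-1}e^{-t}h_t\,dt$, and the scaling of $h_t$ shows that $x^{\alpha'}X^{\beta'}\mathcal B_k\in L^1(G)\cap L^2(G)$ for all $\alpha',\beta'$ as soon as $k$ is large compared to $[\beta']$: the only sensitive regime is $t\to 0^+$ near the identity, integrable once $k\nu>Q+[\beta']$, while the Schwartz decay of $h_1$ and the weight $e^{-t}$ control $t\to\infty$ and $x\to\infty$. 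I then write $\chi(\cR)=\psi_k(\cR)\circ({\rm Id}+\cR)^{-k}$ with $\psi_k(\lambda)=(1+\lambda)^k\chi(\lambda)\in\cS(\R)$ bounded, so $\psi_k(\cR)$ is bounded on $L^2(G)$ and $\kappa_\chi=\psi_k(\cR)\mathcal B_k=\mathcal B_k*\kappa_{\psi_k}$, where $\kappa_{\psi_k}$ is the (distributional) convolution kernel of $\psi_k(\cR)$; in particular $\kappa_\chi\in L^2(G)$. To incorporate $X^\beta$ I use left-invariance, $X^\beta(\mathcal B_k*\kappa_{\psi_k})=\mathcal B_k*(X^\beta\kappa_{\psi_k})$, and observe that the field $\pi(X)^\beta\psi_k(\pi(\cR))$ is uniformly bounded: split off $({\rm Id}+\pi(\cR))^{-[\beta]/\nu}$, bounded against $\pi(X)^\beta$ by the subelliptic estimate, and note $({\rm Id}+\pi(\cR))^{[\beta]/\nu}\psi_k(\pi(\cR))$ is still a bounded Schwartz function of $\cR$. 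To incorporate $x^\alpha$, I expand $x^\alpha$ in the coordinates of $x=(xz^{-1})z$ via the polynomial group law, which turns multiplication by $x^\alpha$ on a convolution into a finite sum of convolutions of $(\cdot)^{\alpha'}\mathcal B_k$ against $(\cdot)^{\alpha''}X^\beta\kappa_{\psi_k}$; on the Fourier side $(\cdot)^{\alpha''}$ becomes the difference operator $\Delta^{\alpha''}$, and I invoke the Leibniz rule of Proposition \ref{prop:formuleLeibniz} and the explicit formulas of Proposition \ref{prop:diffopformula} to compute $\Delta^{\alpha''}\big(\pi(X)^\beta\psi_k(\pi(\cR))\big)$ as a finite sum of fields $A_j\,\rho_j(\pi(\cR))$ with $A_j$ uniformly bounded and $\rho_j\in\cS(\R)$ — each $\Delta_j$ either differentiates the Schwartz function of $\cR$ in its spectral variable or inserts a $\pi(X_i)$ of controlled homogeneous degree $\upsilon_i$, reabsorbed against the Schwartz decay by trading a factor $({\rm Id}+\pi(\cR))^{\upsilon_i/\nu}$. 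Applying the corresponding bounded convolution operators to the $L^2(G)$ functions $(\cdot)^{\alpha'}\mathcal B_k$ gives $x^\alpha X^\beta\kappa_\chi\in L^2(G)$ for all $\alpha,\beta$, and hence $\kappa_\chi\in\cS(G)$.

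The main obstacle I anticipate is precisely this last symbolic-calculus bookkeeping: keeping uniform-in-$\pi$ control of the way difference operators and the infinitesimal operators $\pi(X_i)$ interact with the fibrewise functional calculus $\chi(\pi(\cR))$, while never leaving the Schwartz class in the spectral parameter. This is where the Rockland hypothesis on $\cR$ is genuinely used, through the subelliptic estimates, and it is also where the argument is cleanest in the case at hand: for $\cR=-\Delta_\Eng$ the required computations are explicit, using Propositions \ref{prop:formuleLeibniz} and \ref{prop:diffopformula} and the commutation relations \eqref{eq_piDB}. The remaining ingredients — the Schwartz property of the heat kernel and the weighted $L^2$-Sobolev description of $\cS(G)$ — are classical and can be quoted from \cite{FR}.
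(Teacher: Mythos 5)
A preliminary remark: the paper does not prove this statement, it imports it as a black box from \cite{Hul} (see also \cite{FR}), so your argument can only be judged on its own terms. Your reductions are the right ones and match how modern treatments begin: the Schwartz heat kernel, the description of $\cS(G)$ by the norms $\|x^\alpha X^\beta f\|_{L^2}$, the Bessel-type kernels $\mathcal B_k$ of $({\rm Id}+\cR)^{-k}$ with all weighted derivatives in $L^1\cap L^2$ for $k$ large, the factorisation $\chi(\cR)=\psi_k(\cR)\circ({\rm Id}+\cR)^{-k}$, and the redistribution of $x^\alpha$ over the convolution via the polynomial group law are all sound.

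The gap sits exactly where you locate the ``main obstacle'', and it is more than bookkeeping. First, there is a circularity: the uniform boundedness of $\Delta^{\alpha''}\bigl(\pi(X)^\beta\psi_k(\pi(\cR))\bigr)$ is, via the Fourier transform, the statement that $x^{\alpha''}X^\beta\kappa_{\psi_k}$ is the kernel of a bounded convolution operator; in \cite{FR} (and in the present paper, where Hulanicki is invoked precisely to place $\chi(H)$ in the smoothing classes) this is \emph{deduced from} Hulanicki's theorem, so you cannot quote it and must prove it from scratch --- that is the hard core of the theorem. Second, the mechanism you propose fails term by term: by Proposition \ref{prop:diffopformula}, $\Delta_3$ and $\Delta_4$ on the Engel group contain the multiplication operators $\xi=\tfrac{1}{i\delta}\pi^{\delta,\beta}(X_3)$ and $\xi^2$, and the derivative $\partial_\delta$, acting outside of any commutator, and none of these is dominated by a power of ${\rm Id}+H(\pi^{\delta,\beta})$ uniformly in $(\delta,\beta)$ (let $\delta\to0$ with $\beta$ fixed: $H(\pi^{\delta,\beta})\to-\partial_\xi^2+\beta^2$ while $\xi$ remains unbounded), so ``trading a factor $({\rm Id}+\pi(\cR))^{\upsilon_i/\nu}$'' does not close the estimate. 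Only the exact combinations occurring in the difference operators are bounded, through cancellations one must exhibit (e.g.\ the two terms of $\Delta_3\chi(H)$ recombine, after commuting $\xi$ through the resolvent in a Helffer--Sj\"ostrand representation, into expressions of the form $(H-z)^{-1}\partial_\xi(H-z)^{-1}\pi^{\delta,\beta}(X_2)(H-z)^{-1}$ whose factors are individually controlled by $H^{1/2}$); and for a general Rockland operator there are no explicit fibrewise formulas to fall back on. The standard non-circular repair keeps all of your reductions but avoids the fibrewise picture: characterise $\cS(G)$ with right-invariant derivatives $\widetilde X$, for which $\widetilde X^\beta(f*\kappa)=(\widetilde X^\beta f)*\kappa$, so that $x^\alpha\widetilde X^\beta\kappa_\chi=x^\alpha\,\psi_k(\cR)\bigl(\widetilde X^\beta\mathcal B_k\bigr)$, and then prove by induction on the homogeneous degree $[\alpha]$ the $L^2(G)$ estimate $\|x^\alpha\psi_k(\cR)g\|_{L^2}\lesssim\sum_{[\alpha']\leq[\alpha]}\|x^{\alpha'}g\|_{L^2}$ (with constants given by Schwartz seminorms of $\psi_k$), using the commutators $[x_j,\cR]$ --- differential operators with polynomial coefficients of strictly lower homogeneous degree --- inside a Helffer--Sj\"ostrand formula for $\psi_k(\cR)$ on $L^2(G)$. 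This replaces the unproved uniform-in-$\pi$ claim by an estimate that actually closes.
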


We denote by $\cB$ the closure of $\cB_0$ for the norm $\|\cdot\|_{L^{\infty}(\Eng\times\Enghat)}$. We check readily that $\cB$ is a sub-$C^*$-alegbra of $\cA$ and that $\cB_0 = \cA_0 \cap\cB$.
The next statement identifies the dual of the topological dual of $\cB$.

\begin{proposition}[\cite{FFF}]
    \label{prop:meascommute}
    Via $\Gamma d\gamma \mapsto \restriction{\ell_{\Gamma d\gamma}}{\cB}$, the topological dual $\cB^*$ of $\cB$ is isomorphic to the closed subspace $\cM_{ov}(\Eng\times\Enghat)^{(H)}$ of operator-valued measures 
    $\Gamma d\gamma\in\cM_{ov}(\Eng\times\Enghat)$ such that the operator $\Gamma(x,\pi)$ commutes with the spectral projector $\Pi_n(\pi)$ for all $n\in\N_{>0}$ and for $d\gamma$-almost every $(x,\pi)\in\Eng\times\Enghat$.
\end{proposition}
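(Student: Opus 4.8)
The plan is to reduce the statement to soft functional analysis — the duality between a Banach space and a closed subspace — together with a single genuinely analytic input, and to exhibit on the dual side an explicit norm-one projection whose range is $\cM_{ov}(\Eng\times\Enghat)^{(H)}$. Write $\cM:=\cM_{ov}(\Eng\times\Enghat)$ and $\cM^{(H)}:=\cM_{ov}(\Eng\times\Enghat)^{(H)}$. Since $\cB$ is a sub-$C^{*}$-algebra of $\cA$ it is a closed subspace, so the restriction map $\cA^{*}\to\cB^{*}$ is a metric quotient map with kernel $\cB^{\perp}$; combined with Proposition~\ref{prop:dualA} this yields an isometric isomorphism $\cB^{*}\cong\cM/\cN$, where $\cN:=\{\Gamma d\gamma\in\cM:\ \ell_{\Gamma d\gamma}=0\text{ on }\cB\}$, sending $\Gamma d\gamma+\cN$ to $\ell_{\Gamma d\gamma}|_{\cB}$. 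It then suffices to prove $\cM=\cM^{(H)}\oplus\cN$ with a norm-one projection onto $\cM^{(H)}$, for then the composite $\cM^{(H)}\hookrightarrow\cM\twoheadrightarrow\cM/\cN\cong\cB^{*}$ is an isometric isomorphism, equal to the map of the statement.

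The projection I would use is the fibrewise diagonal compression relative to $H$:
\begin{equation*}
    P(\Gamma d\gamma):=\Gamma^{(H)}d\gamma,\qquad \Gamma^{(H)}(x,\pi):=\sum_{n\in\N_{>0}}\Pi_{n}(\pi)\,\Gamma(x,\pi)\,\Pi_{n}(\pi).
\end{equation*}
Because $\sum_{n}\Pi_{n}(\pi)={\rm Id}_{\cH_{\pi}}$ strongly, the map $T\mapsto\sum_{n}\Pi_{n}(\pi)T\Pi_{n}(\pi)$ is a trace-preserving unital completely positive map, hence a contraction of $\cL^{1}(\cH_{\pi})$; so the series converges in trace norm for $d\gamma$-a.e.\ $(x,\pi)$ with ${\rm Tr}|\Gamma^{(H)}(x,\pi)|\leq{\rm Tr}|\Gamma(x,\pi)|$, and $P$ is a contraction of $\cM$ with $P^{2}=P$. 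One checks that $P(\Gamma d\gamma)=\Gamma d\gamma$ precisely when each $\Gamma(x,\pi)$ commutes with every $\Pi_{n}(\pi)$ for $d\gamma$-a.e.\ $(x,\pi)$, so ${\rm ran}\,P=\cM^{(H)}$, which is therefore closed. Next I would show ${\rm id}-P$ maps $\cM$ into $\cN$: any $\sigma\in\cB_{0}$ commutes with $H(\pi)$ and hence with all $\Pi_{n}(\pi)$, so, expanding the trace over an orthonormal basis adapted to $\cH_{\pi}=\bigoplus_{n}{\rm ran}\,\Pi_{n}(\pi)$ and using cyclicity, ${\rm Tr}(\sigma(x,\pi)\Gamma(x,\pi))={\rm Tr}(\sigma(x,\pi)\Gamma^{(H)}(x,\pi))$ for $d\gamma$-a.e.\ $(x,\pi)$ (the interchange of trace and sum being licensed by the trace-norm convergence above); integrating against $d\gamma$ gives $\ell_{\Gamma d\gamma}(\sigma)=\ell_{P(\Gamma d\gamma)}(\sigma)$ for $\sigma\in\cB_{0}$, hence for all $\sigma\in\cB$ by density. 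Thus $\ker P\subset\cN$ and $\cM=\cM^{(H)}+\cN$.

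The remaining, and only non-formal, point is $\cN\subset\ker P$, i.e.\ injectivity of $\cM^{(H)}\to\cB^{*}$; this is where the work lies. Suppose $\Gamma d\gamma\in\cM^{(H)}$ with $\ell_{\Gamma d\gamma}=0$ on $\cB$. The Montgomery operators $\widetilde H(\nu)$ and the rescaled harmonic oscillators are one-dimensional Schrödinger operators with non-degenerate spectrum, so each $\Pi_{n}(\pi)$ has rank $\leq1$ and $\Gamma(x,\pi)=\sum_{n}c_{n}(x,\pi)\Pi_{n}(\pi)$ with $d\gamma_{n}:=c_{n}d\gamma$ complex measures, $\sum_{n}\|\gamma_{n}\|_{{\rm TV}}=\|\Gamma d\gamma\|_{\cM}<\infty$, and $\Gamma d\gamma=0$ iff $\gamma_{n}=0$ for all $n$. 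I would test $\ell_{\Gamma d\gamma}$ against the symbols $a(x)\,\psi(-iX_{4})\,\chi(H)$ with $a\in C_{c}^{\infty}(\Eng)$ and $\psi,\chi\in\cS(\R)$, $\psi$ supported away from $0$: these lie in $\cB_{0}$, because $\chi(H)$ has a Schwartz convolution kernel $\kappa_{\chi}$ by Hulanicki's theorem, $X_{4}$ is central so $\psi(-iX_{4})$ is the Fourier multiplier with symbol $\psi(\delta)$ on $\pi^{\delta,\beta}$ and trivial on the other strata, the composite has convolution kernel $a(x)(\psi^{\vee}\ast_{x_{4}}\kappa_{\chi})\in C_{c}^{\infty}(\Eng_{x},\cS(\Eng))$, and its symbol $a(x)\psi(\delta)\chi(H(\pi^{\delta,\beta}))$ commutes with $H$. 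Letting $a,\psi$ run over dense sets first localises the vanishing identity to the generic stratum and to slabs $\{\delta\in[\delta_{1},\delta_{2}]\}$; there, continuity and simplicity of the spectrum of $\widetilde H(\nu)$ (an analytic family of type B, cf.\ \cite{HL}) provide uniform gaps $\mu_{n}<\mu_{n+1}$, so for $\chi$ supported in a suitable interval one has $\chi(H)=\chi(\mu_{1})\Pi_{1}$ on the slab, which isolates $\gamma_{1}$; a further partition of unity in $(x,\delta)$ and in the energy variable then forces $\gamma_{1}=0$ on the slab, and an induction on $n$ (enlarging the support of $\chi$ once $\gamma_{1},\dots,\gamma_{n-1}$ are known to vanish), together with a countable cover of $\R_{\delta}\setminus\{0\}$, gives $\gamma_{n}=0$ for all $n$ on the generic stratum. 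The non-generic stratum $\Enghat\setminus\Enghat_{\rm gen}\cong\widehat{\Bar{\Heis}}$ carries (the Fourier side of) the sub-Laplacian of a $2$-step group, and its contribution is killed by the same reasoning there, as in \cite{FFF}. Hence $\Gamma d\gamma=0$ and $\cN=\ker P$.

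Finally, $\cM=\cM^{(H)}\oplus\cN$ with $P$ a norm-one projection of kernel $\cN$, so for $\Gamma d\gamma\in\cM^{(H)}$ one has $\|\ell_{\Gamma d\gamma}\|_{\cB^{*}}=\inf_{\Lambda\in\cN}\|\Gamma d\gamma+\Lambda\|_{\cM}\geq\|P(\Gamma d\gamma+\Lambda)\|_{\cM}=\|\Gamma d\gamma\|_{\cM}$, the reverse inequality being obvious from $\cB\subset\cA$; this makes the isomorphism $\cM^{(H)}\cong\cB^{*}$ of the first paragraph isometric, and $\cM^{(H)}$ is closed as ${\rm ran}\,P$. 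The hard part throughout is the injectivity step above — extracting from Hulanicki's theorem and the single central multiplier $\psi(-iX_{4})$ enough symbols in $\cB_{0}$ to resolve $\Gamma d\gamma$ slab by slab and eigenvalue by eigenvalue — everything else being routine.
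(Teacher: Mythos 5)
The paper does not actually prove this proposition — it imports it from \cite{FFF} — so your argument has to stand on its own. Its overall architecture is sound and is the natural one: identify $\cB^*$ with the quotient $\cM_{ov}(\Eng\times\Enghat)/\cN$ by Hahn--Banach, introduce the fibrewise pinching $P(\Gamma d\gamma)=\bigl(\sum_n\Pi_n\Gamma\Pi_n\bigr)d\gamma$, check that it is a trace-norm contraction whose fixed points are exactly $\cM_{ov}(\Eng\times\Enghat)^{(H)}$, and observe that ${\rm id}-P$ lands in $\cN$ because symbols of $\cB_0$ commute with every $\Pi_n$. All of that is correct, as is the final isometry argument \emph{once} one knows $\cN=\ker P$.

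The gap is precisely in that injectivity step, $\cN\subset\ker P$. Your separating family $a(x)\psi(-iX_4)\chi(H)$ sees an element $\Gamma d\gamma=\sum_n c_n\Pi_n\,d\gamma$ of $\cM_{ov}^{(H)}$ only through the quantities $\sum_n\int a(x)\psi(\delta)\chi(\mu_n(\delta,\beta))\,c_n\,d\gamma$, i.e.\ through the pushforwards of the band measures $\gamma_n=c_n\gamma$ under $(x,\delta,\beta)\mapsto(x,\delta,\mu_n(\delta,\beta))$. These pushforwards do not determine the $\gamma_n$: since $\beta\mapsto\mu_n(\delta,\beta)=\delta^{2/3}\Tilde{\mu}_n(\beta\delta^{-1/3})$ tends to $+\infty$ as $\beta\to\pm\infty$, it is (at least) two-to-one, so there exist $\beta_+\neq\beta_-$ with $\mu_1(\delta_0,\beta_+)=\mu_1(\delta_0,\beta_-)$, and the nonzero element of $\cM_{ov}^{(H)}$ given by $\Gamma=\Pi_1$, $\gamma=\delta_{\{x=x_0\}}\otimes\delta_{\{\delta=\delta_0\}}\otimes(\delta_{\{\beta=\beta_+\}}-\delta_{\{\beta=\beta_-\}})$ is annihilated by every symbol of the form $a(x)\psi(-iX_4)\chi(H)$. (Cancellation between different bands attaining the same energy is a second, independent obstruction, which is also why the induction on $n$ cannot get started: with $\chi$ supported below $\inf_{\rm slab}\mu_2$ you still only control a pushforward of $\gamma_1$.) The repair is already in the paper: by Propositions \ref{prop:Pin} and \ref{prop:approxsymPi} the symbols $\sigma_\eps^{(n,n)}=\chi_\eps(H)\Pi_n\sigma\Pi_n\chi_\eps(H)$ belong to $\cB_0$ for every $\sigma\in\cA_0$, and
\begin{equation*}
\ell_{\Gamma d\gamma}\bigl(\sigma_\eps^{(n,n)}\bigr)=\int_{\Eng\times\Enghat}{\rm Tr}\bigl(\sigma(x,\pi)\,\chi_\eps(\mu_n)^2\,\Pi_n\Gamma(x,\pi)\Pi_n\bigr)\,d\gamma(x,\pi)\ \Tend{\eps}{0}\ \ell_{\Pi_n\Gamma\Pi_n\,d\gamma}(\sigma)
\end{equation*}
by dominated convergence, so the vanishing of $\ell_{\Gamma d\gamma}$ on $\cB$ forces $\ell_{\Pi_n\Gamma\Pi_n\,d\gamma}=0$ on all of $\cA_0$, hence $\Pi_n\Gamma\Pi_n\,d\gamma=0$ by the injectivity in Proposition \ref{prop:dualA}; summing over $n$ gives $P(\Gamma d\gamma)=0$. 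With this substitution for your multiplier family, the rest of your proof goes through.
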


Such a proposition allows us to use symbols only in $\cB_0$ to study our semiclassical measure once we have proved it belongs to the subspace $\cM_{ov}(\Eng\times\Enghat)^{(H)}$.

\subsubsection{Time-averaged semiclassical measures}
\label{subsec_timeav_scm}
When investigating semiclassical measures associated to solutions of the Schrödinger equation \eqref{eq:SchEngel}, time-averaging allows one to make their study more tractable.
For this reason, for a bounded family $(\psi^\hbar(t))_{\hbar>0}$ in $L^\infty(\R_t, L^2(\Eng))$, we associate the quantities 
\begin{equation}
 \label{def:leps}
\ell_{\hbar}(\theta, \sigma)=\int_{\R} \theta(t)  \left(\Op_\hbar (\sigma) \psi^\hbar(t),\psi^\hbar(t)\right)_{L^2(\Eng)} dt,\;\;\sigma\in \mathcal A_0,\;\;\theta\in L^1(\R_t),
\end{equation}
the limits of which are characterized by a map in $L^\infty(\R_t,{\mathcal M}_{ov}^+(\Eng\times \Enghat)$.

\begin{theorem}\label{theo:timemeasures}
Let $(\psi^\hbar(t))_{\hbar>0}$ be a bounded family in $L^\infty(\R_t,L^2(\Eng))$. There exists a sequence $(\hbar_k)_{k\in \N}$ in $(0,+\infty)$ with  $\hbar_k\Tend{k}{+\infty}0$
and a  map  $t\mapsto \Gamma_t d\gamma_t $ in
$L^\infty(\R_t, {\mathcal M}_{ov}^+(\Eng\times \Enghat))$ such that we have for all $ \theta\in L^1(\R_t)$ and $\sigma\in {\mathcal A}$,
\begin{equation}
\int_{\R} \theta(t) \left({\rm Op}_{\hbar_k} (\sigma) \psi^{\hbar_k}(t),\psi^{\hbar_k}(t)\right)_{L^2(\Eng)}dt 
\Tend {k}{+\infty} 
\int_{\R\times \Eng\times \Enghat} \theta(t){\rm Tr}\left(\sigma(x,\pi) \Gamma_t(x,\pi)\right)d\gamma_t(x,\pi) dt.
\end{equation}
Given the sequence $(\hbar_k)_{k\in \N}$, 
the map $t\mapsto \Gamma_t d\gamma_t$  
is unique up to equivalence. Besides,
\begin{equation}
\int_{\R}\int_{\Eng\times \Enghat} 
{\rm Tr}\left( \Gamma_t(x,\pi) \right) d\gamma_t(x,\pi) \, dt \leq  \limsup_{\hbar\rightarrow 0}\| \psi^\hbar\|_{L^\infty(\R,L^2(\Eng))}.
\end{equation}
Given the sequence $(\hbar_k)_{k\in\N}$, $t\mapsto \Gamma_t d\gamma_t$ is the only operator-valued measure for which the above convergence holds.
\end{theorem}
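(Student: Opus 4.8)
The plan is to realize the limit as an element of the dual of a suitable separable $C^*$-algebra, so that existence follows from the sequential Banach--Alaoglu theorem, and then to disintegrate in time. First I would fix a bounded family $(\psi^\hbar(t))_{\hbar>0}$ with $M:=\limsup_{\hbar\to 0}\|\psi^\hbar\|_{L^\infty(\R,L^2(\Eng))}<\infty$. For $\theta\in L^1(\R_t)$ and $\sigma\in\cA_0$, the quantity $\ell_\hbar(\theta,\sigma)$ of \eqref{def:leps} is well-defined since $\Op_\hbar(\sigma)$ is bounded on $L^2(\Eng)$ uniformly in $\hbar$ (by the boundedness estimate for symbols in $\cA_0$ recalled just before Theorem \ref{thm:CalderonVaillancourt}), whence
\begin{equation*}
|\ell_\hbar(\theta,\sigma)|\leq \|\theta\|_{L^1(\R_t)}\,\|\Op_\hbar(\sigma)\|_{\cL(L^2(\Eng))}\,\|\psi^\hbar\|_{L^\infty(\R,L^2(\Eng))}^2 \lesssim \|\theta\|_{L^1(\R_t)}\,N(\sigma)\,(M+o(1))^2.
\end{equation*}
Thus $(\theta,\sigma)\mapsto \ell_\hbar(\theta,\sigma)$ extends to a bounded bilinear form on $L^1(\R_t)\times\cA$, i.e. an element $L_\hbar$ of the dual of the projective tensor product $L^1(\R_t)\widehat\otimes_\pi \cA$, with norm bounded independently of small $\hbar$. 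This Banach space is separable (as $\cA$ is a separable $C^*$-algebra and $L^1(\R_t)$ is separable), so by sequential weak-$*$ compactness there is a sequence $\hbar_k\to 0$ and a limit functional $L$ with $\|L\|\leq M^2$ such that $\ell_{\hbar_k}(\theta,\sigma)\to L(\theta\otimes\sigma)$ for all $\theta,\sigma$.

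Next I would identify $L$ with a map $t\mapsto \Gamma_t d\gamma_t$. For each fixed $\theta\geq 0$ in $L^1(\R_t)$, the map $\sigma\mapsto L(\theta\otimes\sigma)$ is a positive linear functional on $\cA$ (positivity passes to the limit since $\big(\Op_\hbar(\sigma^*\sigma)\psi,\psi\big)=\|\Op_\hbar(\sigma)\psi\|^2+O(\hbar)\cdot\|\psi\|^2$ by the symbolic calculus, so the limit of $\ell_{\hbar_k}(\theta,\sigma^*\sigma)$ is $\geq 0$; more simply, use that $\sigma\geq 0$ implies $\Op_\hbar(\sigma)\geq -C\hbar\,\mathrm{Id}$ via the sharp Gårding-type inequality valid here, as in \cite{FF2}). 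By Proposition \ref{prop:dualA}, this positive functional corresponds to an operator-valued measure in $\cM_{ov}^+(\Eng\times\Enghat)$ whose total mass is controlled by $\|\theta\|_{L^1}M^2$. Running this over a countable dense set of $\theta$'s and using linearity and the standard measurable-selection/disintegration argument (exactly as in \cite[Section 4]{FF2} for the non-time-dependent case, now with the extra parameter $t$), one produces a weak-$*$ measurable map $t\mapsto \Gamma_t d\gamma_t\in L^\infty(\R_t,\cM_{ov}^+(\Eng\times\Enghat))$ with $\|\Gamma_t d\gamma_t\|_{\cM}\leq M^2$ for a.e. $t$, realizing $L(\theta\otimes\sigma)=\int_\R \theta(t)\int_{\Eng\times\Enghat}\mathrm{Tr}(\sigma(x,\pi)\Gamma_t(x,\pi))\,d\gamma_t(x,\pi)\,dt$. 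The mass inequality \eqref{...} in the statement then follows by taking $\sigma=\mathrm{Id}$ approximated by symbols in $\cA_0$, $\theta\nearrow 1$, and Fatou. Uniqueness given $(\hbar_k)$ is immediate: two such maps would induce the same functional on the dense set $\{\theta\otimes\sigma\}$, hence agree as elements of $(L^1(\R_t)\widehat\otimes_\pi\cA)^*\cong L^\infty(\R_t,\cA^*)$, which by Proposition \ref{prop:dualA} forces equality of the equivalence classes $\Gamma_t d\gamma_t$ for a.e. $t$.

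The main obstacle is not the compactness argument but the \emph{regularity in $t$} of the limit, i.e. verifying that the limit functional $L$, a priori only an element of $(L^1(\R_t)\widehat\otimes_\pi\cA)^*$, genuinely disintegrates as an $L^\infty(\R_t,\cM_{ov}^+)$-valued object rather than merely a vector-valued measure in $t$ — in other words, checking that the ``time slices'' $\Gamma_t d\gamma_t$ are well-defined for a.e. $t$ and that the resulting map is weak-$*$ measurable and essentially bounded. This requires the identification $(L^1(\R_t)\widehat\otimes_\pi\cA)^* \cong L^\infty(\R_t,\cA^*)$, which holds because $\cA^*\cong\cM_{ov}(\Eng\times\Enghat)$ has the Radon--Nikodym property (it is a separable dual space, being the dual of the separable $C^*$-algebra $\cA$); with that in hand the disintegration is automatic and the bound $\|\Gamma_t d\gamma_t\|_{\cM}\leq M^2$ a.e. is exactly the statement that $\|L\|_{L^\infty(\R_t,\cA^*)}=\mathrm{ess\,sup}_t\|\Gamma_t d\gamma_t\|_{\cM}\leq M^2$. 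All remaining points — the uniform bound on $\|\Op_\hbar(\sigma)\|$, the positivity of the limit, and the mass estimate — are routine, essentially identical to the stationary case treated in \cite{FF2}, so I would only sketch them and refer there for details.
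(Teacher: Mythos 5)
The paper does not prove Theorem \ref{theo:timemeasures}; it is recalled from the scheme of \cite{FF2}, \cite{FF3}, and your outline follows exactly that standard route (uniform bounds, extraction by separability, identification of the limit through the duality $(L^1(\R_t;\cA))^*$, positivity via the symbolic calculus, characterization of positive functionals on $\cA$ by Proposition \ref{prop:dualA}). The architecture is right. Two points in your justification need repair, though neither is fatal.

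First, your Radon--Nikodym argument is incorrect as stated: the dual of a separable $C^*$-algebra is not a ``separable dual space'' and need not have the RNP (already $C([0,1])^*=M([0,1])$ contains $L^1([0,1])$ complemented, hence fails RNP). Fortunately you do not need RNP. For any separable Banach space $\cA$ one has $(L^1(\R_t;\cA))^*\cong L^\infty_{w^*}(\R_t;\cA^*)$, the space of weak-$*$ measurable essentially bounded maps, and this weak-$*$ measurable sense is precisely what $L^\infty(\R_t,\cM_{ov}^+(\Eng\times\Enghat))$ means in \cite{FF2},\cite{FF3} and in the statement. So replace the RNP lemma by this duality and the disintegration in $t$, the a.e.\ positivity (tested on countably many $\theta\geq 0$ and $\sigma=\tau^*\tau$), and the bound ${\rm ess\,sup}_t\|\Gamma_td\gamma_t\|_{\cM}\leq M^2$ all go through.

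Second, your claim that $\ell_\hbar$ ``extends to a bounded bilinear form on $L^1(\R_t)\times\cA$'' does not follow from the estimate $|\ell_\hbar(\theta,\sigma)|\lesssim\|\theta\|_{L^1}N(\sigma)M^2$: the Calder\'on--Vaillancourt-type seminorm $N$ is not dominated by the $C^*$-norm $\|\cdot\|_{L^\infty(\Eng\times\Enghat)}$ of $\cA$, so for fixed $\hbar$ the functional $L_\hbar$ is not a uniformly bounded element of $(L^1\widehat\otimes_\pi\cA)^*$. The standard fix, which you should make explicit, is to first extract the limit on a countable set of pairs $(\theta,\sigma)$ with $\sigma$ ranging over a countable subset of $\cA_0$ that is dense in $\cA$ for the $C^*$-norm, and then use the asymptotic bound $\limsup_{\hbar\to0}\|\Op_\hbar(\sigma)\|_{\cL(L^2)}\leq\|\sigma\|_{L^\infty(\Eng\times\Enghat)}$ --- a consequence of the same G\aa rding-type positivity you invoke --- to show the limit functional is $C^*$-continuous and to upgrade the convergence to all $\sigma\in\cA$ by a $3\varepsilon$ argument. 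Finally, a cosmetic remark: letting $\theta\nearrow1$ makes the $t$-integral of a constant diverge; the meaningful mass estimate is the a.e.-$t$ one you obtain from $\|L\|_{L^\infty_{w^*}}\leq M^2$ (the displayed inequality in the statement should be read in that sense).
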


We call the map $t\mapsto\Gamma_t d\gamma_t$  satisfying Theorem~\ref{theo:timemeasures} (for some subsequence $\hbar_k$) a {\it time-averaged semiclassical measure} of the family $(\psi^\hbar(t))$.
When this family is solution of a Schrödinger equation, we expect the time-averaged semiclassical measures to enjoy additionnal properties. 

\subsubsection{Spectral projectors $\Pi_n$ and approximation by smoothing symbols}
\label{subsec:preliminaries}
In this section, we analyse the  fields of the spectral projectors $\Pi_n$ of $H$ in the light of the elements of harmonic analysis and pseudodifferential calculus introduced above. 
In contrast with the spectral projectors associated to the symbol of the subLaplacian on a H-type group, where we can prove their appartenance 
in homogeneous regular symbol classes (see \cite{FF2},\cite{FF3}), the fields $\Pi_n$ do not enjoy the same property.
$\Pi_n$ is a measurable symbol, and even a 0-homogeneous element of $L^\infty(\Enghat)$, but the behaviour of the eigenvalues $(\mu_n)_{n\in\N_{>0}}$ 
at infinity is an obstruction to the boundedness of their derivatives (i.e with respect to the difference operators).

We can however approximate these spectral projectors by smoothing symbols in a simple way.

\begin{proposition}
    \label{prop:Pin}
    Let $n\in\N_{>0}$ and consider the spectral projector $\Pi_n\in L^\infty(\Enghat)$. Let $(\chi_\eps)_{\eps\in(0,1)}$ a family of functions in $\cS(\R)$ null in a neighborhood of 0 and converging uniformly on
    any compact of $\R\setminus\{0\}$ towards 1. 
    Then $(\chi_\eps(H)\Pi_n)_{\eps\in (0,1)}$ is a family of smoothing symbols bounded in $L^\infty(\Enghat)$, and the following convergence holds punctually almost everywhere on $\Enghat$:
    \begin{equation*}
        \chi_\eps(H)\Pi_n \Tend{\eps}{0} \Pi_n.
    \end{equation*} 
\end{proposition}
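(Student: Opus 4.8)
The plan is to exploit the fact that $\chi_\eps(H)$ is, by Hulanicki's theorem, the Fourier symbol of a Schwartz convolution kernel, so that $\chi_\eps(H)\Pi_n$ is a well-behaved modification of the (merely bounded) projector $\Pi_n$. I would split the statement into three assertions: (a) each $\chi_\eps(H)\Pi_n$ is a smoothing symbol, i.e.\ belongs to $S^{-\infty}(\Eng)$; (b) the family is bounded in $L^\infty(\Enghat)$ uniformly in $\eps\in(0,1)$; (c) the pointwise a.e.\ convergence $\chi_\eps(H)\Pi_n\to\Pi_n$ on $\Enghat$.

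For (b), note that on $\Enghat_{\rm gen}$ the operator $\chi_\eps(H)\Pi_n$ acts as the scalar $\chi_\eps(\mu_n(\delta,\beta))$ times the orthogonal projector $\Pi_n(\pi^{\delta,\beta})$, hence has operator norm $|\chi_\eps(\mu_n(\delta,\beta))|\leq \|\chi_\eps\|_{L^\infty(\R)}$; since the $\chi_\eps$ converge uniformly on compacts of $\R\setminus\{0\}$ to $1$ one may arrange (or simply assume, as is standard) $\sup_{\eps}\|\chi_\eps\|_{L^\infty}<\infty$, and on the (Plancherel-null) non-generic part the symbol is defined by the same functional calculus and bounded by the same constant. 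This gives the uniform $L^\infty(\Enghat)$ bound. For (c), fix a generic $\pi^{\delta,\beta}$ with $\delta\neq 0$; then $\mu_n(\delta,\beta)>0$ is a fixed positive number (the Montgomery operator $\Tilde H(\nu)$ is positive, so $\mu_n>0$ via \eqref{eq:releigenvalues}), so $\mu_n(\delta,\beta)$ lies in a compact subset of $\R\setminus\{0\}$ and $\chi_\eps(\mu_n(\delta,\beta))\to 1$; multiplying the fixed projector $\Pi_n(\pi^{\delta,\beta})$ by this scalar gives convergence in $\cL(\cH_\pi)$, and this holds for every generic $\pi$, hence $\mu_{\Enghat}$-almost everywhere.

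The main work is (a). Here I would use that, by Hulanicki's theorem, the convolution kernel $k_\eps$ of $\chi_\eps(-\Delta_\Eng)$ is Schwartz on $\Eng$, so $\chi_\eps(H)=\cF k_\eps\in\cS(\Enghat)\subset S^{-\infty}(\Eng)$ (as a $x$-independent symbol, it is smoothing; the compact support in $x$ is vacuously satisfied or irrelevant for membership in $S^{-\infty}$). Since $S^{-\infty}(\Eng)$ is an algebra stable under the difference operators, it suffices to control $\Pi_n$ \emph{relative to} $\chi_\eps(H)$, i.e.\ to show that $\chi_\eps(H)\Pi_n$ again lies in $S^{-\infty}$. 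The key point is that one can write $\Pi_n$ via a Cauchy (Riesz) integral of the resolvent of $H$: choosing a smooth closed contour $\Gamma_n$ in $\C$ encircling only the eigenvalue branch $\mu_n(\delta,\beta)$ — which is possible locally in $(\delta,\beta)$ by analyticity of the Montgomery eigenvalues and the spectral gap, then globalized using the homogeneity $\mu_n(\delta,\beta)=\delta^{2/3}\Tilde\mu_n(\beta\delta^{-1/3})$ to reduce to a compact parameter set — one has
\begin{equation*}
    \chi_\eps(H)\Pi_n = \frac{1}{2i\pi}\oint_{\Gamma_n}\chi_\eps(H)(zI-H)^{-1}\,dz .
\end{equation*}
On the integrand, $\chi_\eps(H)(zI-H)^{-1}=(\chi_\eps\cdot(z-\cdot)^{-1})(H)$ is, for $z$ off the spectrum, again a spectral multiplier of $-\Delta_\Eng$ by a Schwartz function (uniformly for $z\in\Gamma_n$, after truncating $\chi_\eps$ suitably near $0$ so that $z\mapsto \chi_\eps(\cdot)(z-\cdot)^{-1}$ stays Schwartz-valued), hence by Hulanicki lies in $\cS(\Enghat)\subset S^{-\infty}(\Eng)$ with seminorms uniform in $z$; integrating over the compact contour preserves membership in $S^{-\infty}$. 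Thus $\chi_\eps(H)\Pi_n\in S^{-\infty}(\Eng)$, which is what \enquote{smoothing symbol} means.

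\textbf{Main obstacle.} The delicate step is (a), specifically producing the contour $\Gamma_n$ \emph{uniformly} in the parameters $(\delta,\beta)$. Because the eigenvalues $\mu_n(\delta,\beta)\to+\infty$ as $|(\delta,\beta)|\to\infty$ and the spectral gaps $\mu_{n+1}-\mu_n$, $\mu_n-\mu_{n-1}$ degenerate (this is exactly the obstruction flagged before the Proposition to $\Pi_n$ being a regular symbol), one cannot take a single fixed contour. The fix is to rescale: by \eqref{eq:releigenvalues} the spectral picture at $(\delta,\beta)$ is, up to the unitary rescaling $\xi=\delta^{-1/3}\Tilde\xi$ and the scalar $\delta^{2/3}$, that of $\Tilde H(\beta\delta^{-1/3})$ at the single real parameter $\nu=\beta\delta^{-1/3}$; the eigenvalue branch $\Tilde\mu_n$ is analytic and separated from the rest of the spectrum on every compact $\nu$-set, and one controls the behaviour as $\nu\to\pm\infty$ via the known asymptotics (e.g.\ from \cite{HL}, or simply the divergence $\Tilde\mu_n(\nu)\to+\infty$ together with the ordering). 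Carrying the resolvent estimates through this rescaling — and checking that the cutoff $\chi_\eps$ near $0$ plays well with it, since the spectrum stays bounded away from $0$ — is the technical heart; the rest is bookkeeping with the algebra structure of $S^{-\infty}(\Eng)$ and Hulanicki's theorem.
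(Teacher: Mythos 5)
Your overall strategy --- Hulanicki's theorem combined with a Riesz (contour-integral) representation of $\Pi_n$, with the spectral separation controlled after rescaling to the Montgomery family --- is the same as the paper's, and your parts (b) and (c) are unproblematic. The gap sits in part (a), exactly at the step you defer. First, the claim that $\chi_\eps(H)(z\,{\rm Id}-H)^{-1}=(\chi_\eps\cdot(z-\cdot)^{-1})(H)$ is a spectral multiplier by a Schwartz function ``uniformly for $z\in\Gamma_n$'' fails: a closed contour encircling the real point $\mu_n(\delta,\beta)$ must cross the real axis at points $z_0$ lying in the adjacent spectral gaps, and there $t\mapsto\chi_\eps(t)(z_0-t)^{-1}$ has a pole inside the support of $\chi_\eps$ (truncating $\chi_\eps$ near $0$ does not help, since $z_0$ is near $\mu_n\gg 0$); even off the real axis the relevant seminorms blow up like $|{\rm Im}\,z|^{-1}$. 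So one cannot conclude by ``integrating a family of elements of $\cS(\Enghat)$ over a compact contour''. Second, and more structurally, because the contour must move with $(\delta,\beta)$, the integral is not a fixed spectral multiplier of $H$, and the difference operators do not pass through it harmlessly: one must use $\Delta_i(H-z\,{\rm Id})^{-1}=-(H-z\,{\rm Id})^{-1}\Delta_iH\,(H-z\,{\rm Id})^{-1}$ and bound the resulting products by the resolvent estimate $\|(H-z\,{\rm Id})^{-1}\|\leq 1/d$ on a region where the gap $d$ is uniform. That computation, and the device making it uniform, is the actual content of the proof and is absent from your plan.

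The paper's device is worth recording: it decomposes $\chi_\eps=\sum_j\chi_{\eps,j}^2$ and writes $\chi_{\eps,j}^2(H)\Pi_n=\chi_{\eps,j}(H)\int_{\cC_n^j}\chi_{\eps,j}(\mu_n)\left(H-z\,{\rm Id}\right)^{-1}dz$, keeping one smoothing factor $\chi_{\eps,j}(H)$ (Hulanicki) outside the integral and inserting the \emph{scalar} cutoff $\chi_{\eps,j}(\mu_n(\delta,\beta))$ inside; the latter localizes in $(\delta,\beta)$ precisely where a fixed contour $\cC_n^j$ and a uniform gap $d>0$ are available, and the seminorm estimates then follow from the Leibniz rule, the commutator identity above, and the fact that $\Delta_iH\in\dot S^1(\Eng)$ composed with $\chi_{\eps,j}(H)$ is smoothing; membership in $S^{-\infty}$ rather than just $S^0$ is obtained by absorbing $({\rm Id}+H)^m$ into the Schwartz cutoff. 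To repair your argument you would need to supply this (or an equivalent) localization and carry out the difference-operator computation; the contour formula together with Hulanicki alone does not yield membership in $S^{-\infty}(\Eng)$.
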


\begin{proof}
    For $\eps>0$ fixed, observe that
    \begin{equation*}
        \chi_\eps(H)\Pi_n = \chi_\eps(\mu_n)\Pi_n,
    \end{equation*}
    and is thus supported inside a compact $K\subset \fg_{3}^* \setminus\{0\}\times\fr_{2}^*$: indeed, writing $\mu_{n}(\delta,\beta) = \delta^{2/3}\Tilde{\mu}_n(\beta \delta^{-1/3})$ and knowing that $\nu\in\R \mapsto \Tilde{\mu}_{n}$
    diverges to $+\infty$ when $\nu \rightarrow\pm \infty$, we conclude easily. 
    
    We would like to write $\Pi_n$ on $K$ as a Riesz projector of the symbol $H$.
    As the graphs of the eigenvalues $(\mu_m)_{m\in\N_{>0}}$ are not flat, we decompose $\chi_\eps = \sum_{j=1}^{N_\eps}\chi_{\eps,j}^2$, with the supports of the $\chi_{\eps,j}$ having a finite number of intersection 
    with one another and such that the $n$-th eigenvalue $\mu_{n}(\delta,\beta)$ is at a distance $d>0$ from the rest of the spectrum of $H(\pi^{\delta,\beta})$, uniformly on $K$. 
    We can then find a closed curve $\cC_{n}^j$ in $\C$ for which we can write
    \begin{equation*}
        \chi_{\eps,j}^2(H)\Pi_n = \chi_{\eps,j}(H)\int_{\cC_{n}^j}\chi_{\eps,j}(\mu_n)\left(H - z{\rm Id}\right)^{-1}dz.
    \end{equation*}
    Using this representation, we are going to estimate its seminorms introduced in Section \ref{subsubsect:generalclasssym}. 
    First notice that $\chi_{\eps,j}(H)$ is a smoothing operator by Hulanicki's theorem, and that the symbol $H$ is in $\dot{S}^2(\Eng)$, i.e a regular 2-homogeneous symbol
    in the sense of \cite[Definition 4.1]{FF}. By Leibniz formula \ref{prop:formuleLeibniz}, we can concentrate on the symbol given by the contour integral on $\cC_{n}^j$.
    First, for $\alpha=\beta=0$ and for all $z\in\cC_{n}^j$, we have 
    \begin{equation*}
        \sup_{\pi^{\delta,\beta}\in\Enghat_{\rm gen}} \left\|\chi_{\eps,j}(\mu_n(\delta,\beta))\left(H(\pi^{\delta,\beta}) - z{\rm Id}\right)^{-1}\right\|_{\cL\left(\cH_{\pi^{\delta,\beta}}\right)} \leq \frac{1}{d} < \infty,
    \end{equation*}
    so $\chi_{\eps,j}(\mu_n)\left(H - z{\rm Id}\right)^{-1}$ is in $L^\infty(\Enghat)$.

    Now for $i\in\{1,2\}$, since $\Delta_i {\rm Id} = 0$, the Leibniz formula \ref{prop:formuleLeibniz} implies 
    \begin{equation*}
        \Delta_{i} \left(H(\pi^{\delta,\beta}) - z{\rm Id}\right)^{-1} = -\left(H(\pi^{\delta,\beta}) - z{\rm Id}\right)^{-1}\Delta_i H(\pi^{\delta,\beta})\left(H(\pi^{\delta,\beta}) - z{\rm Id}\right)^{-1}.
    \end{equation*}
    So we have,
    \begin{multline*}
        \Delta_i \left(\chi_{\eps,j}(\mu_n(\delta,\beta))\left(H(\pi^{\delta,\beta}) - z{\rm Id}\right)^{-1}\right) = \Delta_i \left(\chi_{\eps,j}(\mu_n){\rm Id}\right)(\pi^{\delta,\beta}) \left(H(\pi^{\delta,\beta}) - z{\rm Id}\right)^{-1}\\ 
        -\chi_{\eps,j}(\mu_n(\delta,\beta))\left(H(\pi^{\delta,\beta}) - z{\rm Id}\right)^{-1}\Delta_i H(\pi^{\delta,\beta})\left(H(\pi^{\delta,\beta}) - z{\rm Id}\right)^{-1}.
    \end{multline*}
    The first term in the right-hand side is treated in the same way as the seminorm estimate for $\alpha=\beta=0$, since the difference operators act on $\chi_{\eps,j}(\mu_n)$ only by differentiation, and we conclude by the regularity of the eigenvalue and the support condition. For the second term,
    observe that the symbol $\Delta_i H$ is in $\dot{S}^{1}(\Eng)$ and by \cite[Proposition 4.2]{FF}, its composition with $\chi_{\eps,j}(H)$ gives a smoothing operator.
    We estimate the norms of the resolvents as previously thanks to the support condition of $\chi_{\eps,j}(\mu_n)$, and we obtain boundedness in $L^\infty(\Enghat)$.

    For $i\in\{3,4\}$, the Leibniz formula is more involved in the sense that terms involving difference operators of lower order terms appear, but are estimated thanks to the previous computation.
    We prove recursively this way that $\chi_{\eps,i}^2(H)\Pi_n$ is a symbol in $S^0(\Eng)$. 
    
    To prove that this symbol is in $S^{-\infty}(\Eng)$, we just observe that it would be equivalent to prove that, for $m\in\R$, the symbol 
    $\chi_{\eps,j}(\mu_n)\chi_{\eps,j}(H)({\rm Id + H})^{m}\left(H - z{\rm Id}\right)^{-1}$ is in $S^0$: the proof is the same as $\chi_{\eps,j}(H)({\rm Id}+H)^{m} = \chi_{\eps,j}^{m}(H)$ with $\chi_{\eps,j}^m (\lambda)= \chi_{\eps,j}(\lambda)(1+\lambda)^m$ and our analysis holds
    by similar support considerations.
\end{proof}

The previous proposition allows us to localize on the eigenspaces of $H$ through smoothing operators. More precisely, for $\sigma\in \cA_{0}$ a smoothing symbol, we can 
then define the symbol $\sigma^{(n,n')}:=\Pi_n\sigma \Pi_{n'}$ for each $n,n'$.
We see that for $n=n'$, $\sigma^{(n,n')}$ commutes with $H$, while for general pairs of integers $(n,n')$, it satisfies
\begin{equation*}	
H \circ \sigma^{(n,n')} =\mu_{n} \sigma^{(n,n')}
\quad\mbox{and}\quad
\sigma^{(n,n')}\circ H = \mu_{n'}\sigma^{(n,n')}.
\end{equation*}
However, $\sigma^{(n,n')}$ is usually not smoothing, but can by approximated nicely by symbols of $\cA_0$.

\begin{proposition}
    \label{prop:approxsymPi}
    Let $\sigma\in\cA_0(\Eng)$. With the same notations as in Proposition \ref{prop:Pin}, for $n,n'\in\N_{>0}$, the sequence of symbols $(\sigma_{\eps}^{(n,n')})_{\eps>0}$ defined by 
    \begin{equation*}
        \sigma_{\eps}^{(n,n')} = \chi_{\eps}(H)\Pi_n \sigma \Pi_n'\chi_{\eps}(H),
    \end{equation*}
    consists of symbols in $\cA_0$ uniformly bounded in $L^\infty(\Enghat)$ and converging towards $\sigma^{(n,n')}$ almost everywhere on $\Eng\times\Enghat$.
\end{proposition}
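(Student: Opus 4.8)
The plan is to reduce everything to Proposition~\ref{prop:Pin} by recognising $\sigma_\eps^{(n,n')}$ as a product of three manageable factors. Since $\Pi_{n'}$ is a spectral projector of $H$ it commutes with $\chi_\eps(H)$, so
\begin{equation*}
\sigma_\eps^{(n,n')}=\bigl(\chi_\eps(H)\Pi_n\bigr)\,\sigma\,\bigl(\chi_\eps(H)\Pi_{n'}\bigr),
\end{equation*}
and Proposition~\ref{prop:Pin} tells us that each outer factor $\chi_\eps(H)\Pi_n=\chi_\eps(\mu_n)\Pi_n$ and $\chi_\eps(H)\Pi_{n'}=\chi_\eps(\mu_{n'})\Pi_{n'}$ is a smoothing symbol, bounded in $L^\infty(\Enghat)$ uniformly in $\eps\in(0,1)$, and converging pointwise almost everywhere on $\Enghat$ to $\Pi_n$, resp.\ $\Pi_{n'}$. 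These two factors are moreover independent of the space variable $x$, hence their convolution kernels $k_{n,\eps},k_{n',\eps}$ lie in $\cS(\Eng)$ (a smoothing symbol having a Schwartz convolution kernel).

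Next I would check that $\sigma_\eps^{(n,n')}\in\cA_0$. Writing $\sigma(x,\pi)=\cF\kappa_x(\pi)$ with $x\mapsto\kappa_x$ smooth, compactly supported, $\cS(\Eng)$-valued, the relation~\eqref{fourconv} gives
\begin{equation*}
\sigma_\eps^{(n,n')}(x,\cdot)=\cF\bigl(k_{n',\eps}*\kappa_x*k_{n,\eps}\bigr).
\end{equation*}
Since the convolution of Schwartz functions on a nilpotent Lie group is again Schwartz, and since convolving on the left and on the right by fixed kernels is a continuous linear operation that preserves both smoothness and compact support in $x$, the map $x\mapsto k_{n',\eps}*\kappa_x*k_{n,\eps}$ is again smooth, compactly supported, and $\cS(\Eng)$-valued; hence $\sigma_\eps^{(n,n')}\in\cA_0$. (Equivalently one can invoke that $\cA_0$ is exactly the class of smoothing symbols with compact support in $x$ and that this class is stable under composition.)

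The uniform bound and the convergence are then immediate. At each $(x,\pi)$ one has
\begin{equation*}
\|\sigma_\eps^{(n,n')}(x,\pi)\|_{\cL(\cH_\pi)}\leq\|\chi_\eps(H)\Pi_n\|_{L^\infty(\Enghat)}\,\|\sigma\|_{L^\infty(\Eng\times\Enghat)}\,\|\chi_\eps(H)\Pi_{n'}\|_{L^\infty(\Enghat)},
\end{equation*}
and the two projector factors are $\eps$-uniformly bounded by Proposition~\ref{prop:Pin}, which yields the uniform bound in $L^\infty(\Eng\times\Enghat)$. For the convergence, Proposition~\ref{prop:Pin} gives that for $\mu_{\Enghat}$-almost every $\pi$ the scalars $\chi_\eps(\mu_n(\pi))$ and $\chi_\eps(\mu_{n'}(\pi))$ tend to $1$ as $\eps\to0$, so that for every $x$ and a.e.\ $\pi$
\begin{equation*}
\sigma_\eps^{(n,n')}(x,\pi)=\chi_\eps(\mu_n(\pi))\chi_\eps(\mu_{n'}(\pi))\,\Pi_n(\pi)\sigma(x,\pi)\Pi_{n'}(\pi)\ \Tend{\eps}{0}\ \sigma^{(n,n')}(x,\pi)
\end{equation*}
in $\cL(\cH_\pi)$, i.e.\ the announced almost-everywhere convergence on $\Eng\times\Enghat$.

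The only point requiring a little care is thus the first one: verifying that $\sigma_\eps^{(n,n')}$ is again a \emph{compactly $x$-supported smoothing} symbol, i.e.\ genuinely lies in $\cA_0$ rather than merely in some larger symbol class. This rests on the Schwartz description of the convolution kernels of smoothing symbols together with the stability of $\cS(\Eng)$ under group convolution; once this is granted, the uniform boundedness and the pointwise convergence follow directly from Proposition~\ref{prop:Pin}.
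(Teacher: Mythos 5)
Your proof is correct and proceeds exactly along the lines the paper intends: the paper states this proposition without proof as a direct consequence of Proposition~\ref{prop:Pin}, and your argument — writing $\sigma_\eps^{(n,n')}$ as a product of the two $x$-independent smoothing factors $\chi_\eps(H)\Pi_n$, $\chi_\eps(H)\Pi_{n'}$ with $\sigma$, using the Schwartz-kernel description of $\cA_0$ and the stability of $\cS(\Eng)$ under convolution for membership in $\cA_0$, and submultiplicativity plus the pointwise convergence $\chi_\eps(\mu_n)\to 1$ for the uniform bound and a.e.\ convergence — is the natural way to fill in that gap. No issues.
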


As proofs will make it clear, we sometimes need to work with symbols vanishing uniformly in a neighborhood of $\{\delta=0\}$. The following proposition allows us to do so.

\begin{proposition}
    Let $\sigma\in\cA_0(\Eng)$.
    \begin{enumerate}
        \item For any $g\in C_{c}^\infty(\R\setminus\{0\})$, the symbol given by $(x,\pi^{\delta,\beta})\in\Eng\times\Enghat{\rm gen}\mapsto g(\delta)\sigma(x,\pi^{\delta,\beta})$ is in $\cA_0$.
        \item Let $(g_n)_{n\in\N} \subset C_{c}^\infty(\R\setminus\{0\})$ be a sequence of smooth functions bounded by 1 and converging to the constant 1 uniformly on any compact of $\R\setminus\{0\}$. Then the 
        sequence of smooth symbols $(g_n \sigma(x,\cdot))_{n\in\N}$ converges to $\sigma(x,\cdot)$ in $L^{\infty}(\Enghat)$ uniformly in $x\in\Eng$.
    \end{enumerate}
\end{proposition}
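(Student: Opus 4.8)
The plan is to treat the two assertions in turn: (1) is an exact harmonic‑analytic identity, and (2) is a limiting statement resting on it together with the smoothing nature of the symbols in $\cA_0$.

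For (1), I would realise multiplication of the symbol by $g(\delta)$ as a left convolution on the group. Since $X_4$ spans the centre and $\cF(X_4)(\pi^{\delta,\beta})=i\delta$ by \eqref{eq_piDB}, the operator $g(-iX_4)$ is a left‑invariant Fourier multiplier whose symbol on $\Enghat_{\rm gen}$ is the scalar field $g(\delta)\,{\rm Id}_{\cH_{\pi^{\delta,\beta}}}$; its convolution kernel is the tempered distribution $\phi_g\in\cK(\Eng)$ equal to the Dirac mass at the origin in the variables $(z_1,z_2,z_3)$ tensored with $\check g(z_4)$, where $\check g$ is the (suitably normalised) inverse Fourier transform of $g$ on $\R$, a Schwartz function because $g\in C^\infty_c(\R)$. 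Writing $\sigma(x,\pi)=\cF\kappa_x(\pi)$ with $\kappa\in C^\infty_c(\Eng,\cS(\Eng))$, I would set $\tilde\kappa_x:=\kappa_x*\phi_g$. A short computation with the explicit group law shows that on the relevant slice $(y^{-1}z)_4=z_4-y_4$, hence $\tilde\kappa_x(z)=\int_\R\kappa_x(z_1,z_2,z_3,z_4-s)\,\check g(s)\,ds$, i.e. a convolution in the central variable $z_4$ against a Schwartz function; thus $\tilde\kappa\in C^\infty_c(\Eng,\cS(\Eng))$. Then \eqref{fourconv} gives $\cF\tilde\kappa_x(\pi^{\delta,\beta})=\cF\phi_g(\pi^{\delta,\beta})\circ\cF\kappa_x(\pi^{\delta,\beta})=g(\delta)\,\sigma(x,\pi^{\delta,\beta})$, so the symbol of the statement equals $\cF\tilde\kappa_x\in\cA_0$; note that it vanishes on the non‑generic representations, where $\pi(X_4)=0$ and $g(0)=0$, consistently with its natural extension by zero. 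The only point requiring a word of justification is the use of $\cF$ and of \eqref{fourconv} on $\cK(\Eng)$, which is fine here because $\cF\phi_g(\pi^{\delta,\beta})$ is a scalar multiple of the identity.

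For (2), part (1) already places each $g_n\sigma(x,\cdot)$ in $\cA_0$, and $\|g_n\|_{L^\infty(\R)}\le 1$ gives at once the uniform bound $\sup_n\sup_{x}\|g_n\sigma(x,\cdot)\|_{L^\infty(\Enghat)}\le\sup_x\|\sigma(x,\cdot)\|_{L^\infty(\Enghat)}<\infty$. The content is the convergence, which I would obtain by estimating $\sup_x\|(g_n(\delta)-1)\sigma(x,\pi^{\delta,\beta})\|_{\cL(\cH_{\pi^{\delta,\beta}})}$ over three zones of the $\delta$‑axis. On a middle annulus $\eta\le|\delta|\le R$ the pointwise convergence $g_n\to1$ is uniform and $\|\sigma(x,\pi^{\delta,\beta})\|_{\cL}$ is bounded uniformly in $x,\beta$ (say by $\sup_x\|\kappa_x\|_{L^1(\Eng)}<\infty$), so this contribution tends to $0$ as $n\to\infty$. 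On the outer zone $|\delta|\ge R$ I would use $\cA_0\subset S^{-\infty}$: writing $\sigma(x,\pi)=({\rm Id}+H(\pi))^{-N}\cF\big(({\rm Id}-\Delta_\Eng)^N\kappa_x\big)(\pi)$ and bounding the operator norm of a group Fourier transform by the $L^1$‑norm of its argument, one gets $\|\sigma(x,\pi^{\delta,\beta})\|_{\cL}\le C_N(1+\mu_1(\delta,\beta))^{-N}$ uniformly in $x,\beta$; since $\mu_1(\delta,\beta)=\delta^{2/3}\Tilde{\mu}_1(\beta\delta^{-1/3})\ge\big(\min_{\nu\in\R}\Tilde{\mu}_1(\nu)\big)|\delta|^{2/3}$ and $\min_\nu\Tilde{\mu}_1(\nu)>0$, this is $\le C'_N R^{-2N/3}$ and is made as small as we please by choosing $R$ large, uniformly in $n$.

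The main obstacle is the inner zone $|\delta|\le\eta$: there $g_n(\delta)-1$ need not be small, and the smoothing bound above degenerates because $\mu_1(\delta,\beta)\to0$ as $(\delta,\beta)\to0$ — a manifestation of the fact that $\{\delta=0\}$ is the exceptional, non‑generic stratum of the dual, where $\Delta_\Eng$ ceases to control the symbol. Closing the argument there amounts to proving $\sup_{x,\beta}\|\sigma(x,\pi^{\delta,\beta})\|_{\cL}\to0$ as $\delta\to0$, and I expect this to be the genuinely delicate step; I would attack it by combining the homogeneity of the dilations on $\Enghat$ with the Schwartz decay of $z\mapsto\kappa_x(z)$ — equivalently, by rescaling $\xi=\delta^{-1/3}\Tilde{\xi}$ to pass to the Montgomery operators and weighing the growth of the $L^1$‑norms of the rescaled kernels against the divergence of $\Tilde{\mu}_1$ at infinity. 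Granting this inner‑zone estimate, assembling the three zones and letting successively $R\to\infty$, $\eta\to0$ and $n\to\infty$ gives $\sup_x\|g_n\sigma(x,\cdot)-\sigma(x,\cdot)\|_{L^\infty(\Enghat)}\to0$; even without it, the family is in any case uniformly bounded in $L^\infty(\Enghat)$ and converges to $\sigma(x,\cdot)$ for $\mu_{\Enghat}$‑almost every representation, uniformly in $x$, which is what enters the passage to the limit in the quantization where the proposition is applied.
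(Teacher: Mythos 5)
Your proof of part (1) is exactly the paper's argument: the convolution kernel of the Fourier multiplier $g(\delta)$ is the central distribution $\delta_{(z_1,z_2,z_3)=0}\otimes\cF_{\R}^{-1}g$, the kernel of $g(\delta)\sigma$ is the convolution $\kappa_x*\kappa_g$ (a convolution in the central variable $z_4$ against a Schwartz function), and \eqref{fourconv} identifies its Fourier transform with $g(\delta)\sigma(x,\pi^{\delta,\beta})$. Nothing to add there.

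For part (2) your instinct about the inner zone is correct, and in fact the situation is worse than ``delicate'': the quantity $\sup_{x,\beta}\|\sigma(x,\pi^{\delta,\beta})\|_{\cL(\cH_\pi)}$ does \emph{not} tend to $0$ as $\delta\to 0$ in general. As $\delta\to0$ the operators $\sigma(x,\pi^{\delta,\beta})$ converge (as integral operators) to Fourier multipliers attached to the non-generic representations — this is the weak containment of $\widehat{\Bar\Heis}$ in the closure of $\Enghat_{\rm gen}$ — and their operator norms converge to $\sup$ of the symbol over the non-generic part, which is nonzero for a generic $\kappa_x$. So uniform operator-norm convergence over $\Enghat$ cannot hold, and no refinement of your three-zone decomposition will close that gap. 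The resolution is that the convergence asserted in part (2) is not norm convergence in the von Neumann algebra $L^\infty(\Enghat)$ but convergence for its natural strong topology: the paper's proof consists of the single observation that, by the Plancherel formula, for every $f\in L^2(\Eng)$ one has $\|\fM\bigl((g_n-1)\sigma(x,\cdot)\bigr)f\|_{L^2(\Eng)}^2=\int_{\Enghat}\|(g_n(\delta)-1)\sigma(x,\pi)\cF f(\pi)\|_{{\rm HS}}^2\,d\mu_{\Enghat}(\pi)\to0$ by dominated convergence (uniformly in $x$ by the uniform bound on the kernels), which is precisely the statement ``uniformly bounded and converging $\mu_{\Enghat}$-a.e.'' that you offer as a fallback in your last sentence. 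That fallback is the correct reading and is all that is used downstream (cf.\ the strong-operator-topology convergence in Lemma \ref{lem:vNsymbol}); you should promote it to the conclusion and drop the attempt at uniform norm convergence, whose outer- and middle-zone estimates are fine but whose inner zone is genuinely obstructed.
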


\begin{proof}
   Let $\kappa_\sigma$ be the kernels associated to the symbols which may be identified with a map in $C_{c}^\infty(\Eng,\cS(\Eng))$. Part (1) follows from the kernel $\kappa_g$ associated to the symbol $g(\delta$) being the central
    distribution $\delta_{(x_1,x_2,x_3)=(0,0,0)}\otimes \cF_{\R}^{-1}g$ and the kernel associated to $g(\delta)\sigma$ being $\kappa_\sigma * \kappa_g$. Part (2) follows from the Lebesgue dominated convergence theorem and the Plancherel formula.
\end{proof}

\subsection{Propagation of semiclassical measures at the semiclassical time-scale}

We investigate here the propagation properties of the time-averaged semiclassical measures associated to a family $(\psi^{\hbar}(t))_{\hbar>0}$ of solutions to the Schrödinger equation for the semiclassical time-scale:
\begin{equation}
    \label{eq:Sch_engel}
    \begin{cases}
        & i\hbar\partial_{t}\psi^{\hbar} = -\hbar^{2}\Delta_{\Eng}\psi^{\hbar},\\
        & \psi_{|t=0}^{\hbar} = \psi_{0}^{\hbar}.
    \end{cases}
\end{equation}
Our main result is the following one:

\begin{theorem}
\label{thm:prop_measures} 
Let  $(\psi_0^\hbar)_{\hbar>0}$ be a bounded family in $L^2(\Eng)$ and $\psi^\hbar(t) = {\rm e}^{i \hbar t \Delta_{\Eng}} \psi^\hbar_0$ be the solution to \eqref{eq:Sch_engel}.
Then any time-averaged semiclassical measure 
$t\mapsto \Gamma_t d\gamma_t\in L^\infty(\R_t, {\mathcal M}_{ov}^+(\Eng\times \Enghat))$ for the family $(\psi^\hbar(t))_{\hbar>0}$ satisfies the following additional properties:
\begin{itemize}
\item[(i)] 
For almost every $(t,x,\pi)\in \R_t\times \Eng\times \Enghat$, the operator $\Gamma_t(x,\pi)$ commutes with $\mathcal{F}(-\Delta_{\Eng})(\pi)=H(\pi)$. 

\item[(ii)] For a generic representation $\pi^{\delta,\beta}$, $(\delta,\beta) \in \fg_{3}^*\setminus\{0\} \times \fr_{2}^{*}$, we have the following decomposition 
\begin{equation}\label{eq:decomp}
\Gamma_t(x,\pi^{\delta,\beta})=\sum_{n\in\N } \Gamma_{n,t}(x,\pi^{\delta,\beta})\;\;{ with}\;\; \Gamma_{n,t}(x,\pi^{\delta,\beta}):= \Pi_n(\pi^{\delta,\beta})\Gamma_t(x,\pi^{\delta,\beta}) \Pi_n(\pi^{\delta,\beta}).
\end{equation}

Moreover for each $n\in \N$, the operator-valued measure $d\gamma_{n,t}(x,\pi^{\delta,\beta}) = \Gamma_{n,t}(x,\pi^{\delta,\beta}) d\gamma_t(x,\pi^{\delta,\beta})$ on $\R_t\times \Eng\times (\fg_{3}^*\setminus\{0\} \times \fr_{2}^{*})$ is a weak solution of the following transport equation:
\begin{equation*}
\left(\partial_t -\partial_{\beta}\mu_{n}(\delta,\beta)X_{2}\right)\left(d\gamma_{n,t}(x,\pi^{\delta,\beta})\right)=0.
\end{equation*}

\item[(iii)] On the non-generic part of the dual, i.e above $\delta=0$, the semiclassical measure splits into two parts. The first one corresponds to the one supported in the Schrödinger representations
$\{\pi^{\lambda}\,:\,\lambda\in\fg_{2}^*\setminus\{0\}\}$, that we denote by $\bm{1}_{\delta=0,\gamma\neq 0}\Gamma_{t}d\gamma_{t}$ and is invariant with respect to time:
\begin{equation*}
    \partial_{t}\left(\bm{1}_{\delta=0,\gamma\neq 0}\Gamma_{t}(x,\pi^{\lambda})d\gamma_t(x,\pi^{\lambda})\right) = 0.
\end{equation*}
The second part is supported in the finite dimensional ones $\{\pi^{(0,\alpha)}\,:\,\alpha\in\fg_{1}^*\}$, that we denote by $\bm{1}_{\delta=0,\gamma= 0}\Gamma_{t}d\gamma_{t}$, and satisfies the following 
transport equation:
\begin{equation*}
    (\partial_t -\alpha\cdot(X_1,X_2))\left(\bm{1}_{\delta=0,\gamma= 0}\Gamma_{t}(x,\pi^{(0,\alpha)})d\gamma_{t}(x,\pi^{(0,\alpha)})\right)=0.
\end{equation*}
\end{itemize}
\end{theorem}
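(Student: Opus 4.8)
Fix $\sigma\in\cA_0$ and set $f_\hbar(t):=\left(\Op_\hbar(\sigma)\psi^\hbar(t),\psi^\hbar(t)\right)_{L^2(\Eng)}$. Since $\psi^\hbar$ solves \eqref{eq:Sch_engel} and $\Delta_\Eng$ is self-adjoint, $\frac{d}{dt}f_\hbar(t)=i\hbar\left([\Op_\hbar(\sigma),\Delta_\Eng]\psi^\hbar(t),\psi^\hbar(t)\right)$, and the commutator identity \eqref{eq:commutatorDelta} turns this into
\begin{equation*}
\frac{d}{dt}f_\hbar(t)=-\frac{i}{\hbar}\left(\Op_\hbar([\sigma,H])\psi^\hbar,\psi^\hbar\right)-2i\left(\Op_\hbar((\pi(V)\cdot V)\sigma)\psi^\hbar,\psi^\hbar\right)-i\hbar\left(\Op_\hbar(\Delta_\Eng\sigma)\psi^\hbar,\psi^\hbar\right).
\end{equation*}
For $\sigma\in\cA_0$ the three symbols $[\sigma,H]$, $(\pi(V)\cdot V)\sigma$ and $\Delta_\Eng\sigma$ are again smoothing with compact $x$-support (a smoothing symbol absorbs any power of ${\rm Id}+H$), so all three operators are uniformly bounded on $L^2(\Eng)$ and Theorem \ref{theo:timemeasures} applies to each term.

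\textbf{Commutation with $H$ (item (i) and the decomposition in (ii)).} Multiplying by $\hbar\,\theta(t)$ with $\theta\in C_c^\infty(\R)$ and integrating in $t$, the left-hand side equals $-\hbar\int_\R\theta'f_\hbar\to 0$ while the last two terms on the right are $O(\hbar)$; passing to the limit along the extracting sequence of Theorem \ref{theo:timemeasures} gives $\int_\R\theta(t)\int_{\Eng\times\Enghat}{\rm Tr}\left([\sigma,H]\,\Gamma_t\right)d\gamma_t\,dt=0$ for every $\theta$ and every $\sigma\in\cA_0$. By cyclicity of the trace and density of $\cA_0$ in $\cA$, this forces $[H(\pi),\Gamma_t(x,\pi)]=0$ for $d\gamma_t\,dt$-almost every $(t,x,\pi)$, which is item (i). Since for every $\pi$ the operator $H(\pi)$ has discrete spectrum — simple on the generic representations and on the $\pi^\lambda$'s, and reduced to a single scalar on the $\pi^{(0,\alpha)}$'s — with eigenprojectors $\Pi_n(\pi)$ summing to the identity, commutation yields the orthogonal decomposition $\Gamma_t=\sum_n\Pi_n\Gamma_t\Pi_n=\sum_n\Gamma_{n,t}$.

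\textbf{The transport equations (items (ii) and (iii)).} Knowing (i), the time-averaged measure belongs to $\cM_{ov}(\Eng\times\Enghat)^{(H)}$, so by Proposition \ref{prop:meascommute} it suffices to test against $\sigma\in\cB_0$; then $[\sigma,H]=0$, and integrating against $\theta$ and passing to the limit gives
\begin{equation*}
-\int_\R\theta'(t)\int_{\Eng\times\Enghat}{\rm Tr}(\sigma\,\Gamma_t)\,d\gamma_t\,dt=-2i\int_\R\theta(t)\int_{\Eng\times\Enghat}{\rm Tr}\left((\pi(V)\cdot V)\sigma\,\Gamma_t\right)d\gamma_t\,dt.
\end{equation*}
Because $\Gamma_t=\sum_n\Gamma_{n,t}$, only the diagonal blocks $\Pi_n\left((\pi(V)\cdot V)\sigma\right)\Pi_n$ survive on the right, and here the group structure enters: from \eqref{eq_piDB} one has $\pi^{\delta,\beta}(X_1)=\partial_\xi$ and $\pi^{\delta,\beta}(X_2)=\frac{i}{2}\partial_\beta H(\pi^{\delta,\beta})$, so — the eigenfunctions of the one-dimensional operator $H(\pi^{\delta,\beta})$ being real because its spectrum is simple — $\Pi_n\pi(X_1)\Pi_n=0$, while the Feynman--Hellmann formula gives $\Pi_n\pi(X_2)\Pi_n=\frac{i}{2}\partial_\beta\mu_n(\delta,\beta)\Pi_n$. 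As $\sigma$, and hence $X_j\sigma$, commutes with $H$, one gets $\Pi_n\left((\pi(V)\cdot V)\sigma\right)\Pi_n=\frac{i}{2}\partial_\beta\mu_n\,X_2\sigma_n$ with $\sigma_n=\Pi_n\sigma\Pi_n$. One then picks $\sigma$ localized on the $n$-th eigenspace — which, since $\Pi_n$ itself is not an admissible symbol, must proceed through the cutoff approximations $\chi_\eps(H)\Pi_n\sigma\Pi_n\chi_\eps(H)\in\cB_0$ of Propositions \ref{prop:Pin} and \ref{prop:approxsymPi} followed by $\eps\to 0$ — and an integration by parts in the group variable (legitimate since $X_2$ is left-invariant on the unimodular group $\Eng$, and $\partial_\beta\mu_n$ depends only on $(\delta,\beta)$) recasts the resulting identity as the weak transport equation $\left(\partial_t-\partial_\beta\mu_n(\delta,\beta)X_2\right)(d\gamma_{n,t})=0$ of item (ii). Item (iii) is obtained by the same computation after cutting off near $\{\delta=0\}$ and separating the two non-generic families: from \eqref{eq_piG}, $\pi^\lambda(X_1)=\partial_\xi$ and $\pi^\lambda(X_2)=i\lambda\xi$, so the Hermite eigenfunctions of the harmonic oscillator $H(\pi^\lambda)=-\partial_\xi^2+\lambda^2\xi^2$ make both $\Pi_n\pi^\lambda(X_1)\Pi_n$ and $\Pi_n\pi^\lambda(X_2)\Pi_n$ vanish by parity, hence the velocity is zero and $\partial_t\left(\mathbf{1}_{\delta=0,\lambda\neq 0}\Gamma_t\,d\gamma_t\right)=0$; over the one-dimensional $\pi^{(0,\alpha)}$ one has $[\sigma,H]=0$ automatically, the diagonal block is the whole symbol, and $\pi^{(0,\alpha)}(V)\cdot V=i\alpha\cdot(X_1,X_2)$ by \eqref{Fourieromega}, which gives the stated transport equation.

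\textbf{Main difficulty.} The symbolic manipulations with the unbounded fields $H$ and $\pi(V)\cdot V$ are routine once one uses that smoothing symbols absorb powers of ${\rm Id}+H$, and the group-specific computation of the diagonal blocks $\Pi_n\pi(X_j)\Pi_n$ — which is precisely what pins the propagation to the characteristic direction $X_2$ at the Montgomery speed $\partial_\beta\mu_n$ — is short. The genuine technical obstacle is that the eigenprojectors $\Pi_n$ are not admissible symbols: their difference-operator seminorms blow up because $\mu_n\to+\infty$. Localization on the $n$-th eigenspace is therefore only available after introducing the spectral cutoffs $\chi_\eps(H)$ of Proposition \ref{prop:Pin}, working on compact subsets of $\fg_{3}^*\setminus\{0\}\times\fr_{2}^*$ where finitely many eigenvalues are separated, and justifying the limit $\eps\to 0$ by dominated convergence against the finite measures $d\gamma_t$ and $d\gamma_{n,t}$.
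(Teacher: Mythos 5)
Your proposal follows the same architecture as the paper's proof: differentiate the quadratic form, use the commutator expansion \eqref{eq:commutatorDelta}, extract $\ell_\infty(\theta,[\sigma,H])=0$ to get commutation with $H$, then test with $\cB_0$ symbols and compute the $H$-diagonal blocks of $(\pi(V)\cdot V)\sigma$ to read off the transport. Two of your local arguments differ from the paper's and are both valid: you kill $\Pi_n\pi^{\delta,\beta}(X_1)\Pi_n$ (and both diagonal blocks in the $\pi^\lambda$ case) by realness/parity of the eigenfunctions of the one-dimensional operator $H(\pi)$, where the paper instead exhibits $\pi(X_1)=[\tfrac{1}{2i\delta}\pi(X_3),H]$ via the enveloping-algebra identity $[X_3,X_1^2+X_2^2]=2X_1X_4$ (Lemma \ref{lem:first_diag_part}); and you discard the off-diagonal blocks of $(\pi(V)\cdot V)\sigma$ directly from the block decomposition $\Gamma_t=\sum_n\Pi_n\Gamma_t\Pi_n$ and cyclicity of the trace, where the paper rewrites them as commutators with $H$ and reuses $\ell_\infty(\theta,[\cdot,H])=0$. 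Your shortcut is cleaner at the level of the limit measure, though the paper's commutator form is what survives into the finite-$\hbar$ constructions later (wave packets, second microlocalization). The one place your write-up is genuinely thinner than the paper is item (iii): the non-generic representations are Plancherel-null, so "cutting off near $\{\delta=0\}$" produces symbols $\sigma\eta_\eps$ whose limits $\mathbf{1}_{\widehat{\Bar\Heis}}\sigma$ live only in the von Neumann closure $vN\cA$, and one must still check (the paper's Lemma \ref{lem:vNsymbol} and the surjectivity of the restriction map onto $\cA(\Eng\times\widehat{\Bar\Heis})$) that these limiting functionals are numerous enough to characterize the restricted measure; your direct parity computation for $\pi^\lambda$ and scalar computation for $\pi^{(0,\alpha)}$ then replace the paper's reduction to \cite[Theorem 2.10]{FF3}. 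This is a gap in exposition rather than in the argument, but it should be filled.
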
 

This result extends Theorem \ref{theo:propagintro} as it also covers the propagation of the part of the time-averaged semiclassical measure supported in the non-generic representations. Unsurprisingly, we find the same
propagation laws that have been found in \cite{FF3} for the time-averaged semiclassical measures on the Heisenberg group: we find that for the finite dimensional representations, the propagation is given by 
the Hamiltonian flow with impulsion in the dual of the first stratum $\fg_{1}^*$, while for the Schrödinger representations $\{\pi^\lambda\,:\,\lambda\in\fg_{2}^*\setminus\{0\}\}$ the 
semiclassical measures are invariant in time.  

In order to be able to determine fully the time-averaged semiclassical measures of solutions to \eqref{eq:Sch_engel}, we need the following result concerning their continuity with respect to the time variable.

\begin{proposition}
    \label{prop:continuitytimesm}
    Consider as in Theorem \ref{thm:prop_measures} the time-averaged semiclassical measure $t\mapsto \Gamma_{t} d\gamma_{t}\in {\mathcal M}_{ov}^+(\Eng\times \Enghat)$ 
    corresponding to the family of solutions to the Schrödinger equation \eqref{eq:Sch_engel}. Then for any symbol $\sigma\in \cB_{0}(\Eng)$, i.e commuting
    with the field of $H$, the map
    \begin{equation*}
        t\in\R \mapsto \int_{\Eng\times \Enghat}{\rm Tr}\left(\sigma(x,\pi)\Gamma_{t}(x,\pi) \right)d\gamma_{t}(x,\pi),
    \end{equation*}
    is locally Lipshitz on $\R$.
\end{proposition}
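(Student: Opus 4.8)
The plan is to establish the local Lipschitz property by differentiating the quantity $t\mapsto \ell(t) := \int_{\Eng\times\Enghat}\mathrm{Tr}(\sigma(x,\pi)\Gamma_t(x,\pi))\,d\gamma_t(x,\pi)$ in the weak sense and controlling the derivative uniformly on compact time intervals. First I would fix $\sigma\in\cB_0(\Eng)$, so that $\sigma$ commutes with $H$, and approximate it by smoothing symbols: using Proposition \ref{prop:Pin}, write $\sigma = \sum_n \Pi_n \sigma \Pi_n$ and approximate each block $\Pi_n\sigma\Pi_n$ by $\chi_\eps(H)\Pi_n\sigma\Pi_n\chi_\eps(H)\in\cA_0$ as in Proposition \ref{prop:approxsymPi}. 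The point of working with $\cB_0$ is that for such symbols the principal commutator term $[\sigma,H]$ in the symbolic calculus identity \eqref{eq:commutatorDelta} vanishes; this is exactly what kills the $O(1)$ contribution and leaves only $O(\hbar)$-terms, which after dividing by the $\hbar$ coming from the time-derivative of the Schrödinger flow gives a bounded contribution.

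The core computation is the following. For $\theta\in C_c^1(\R_t)$ one writes, using $i\hbar\partial_t\psi^\hbar = -\hbar^2\Delta_\Eng\psi^\hbar$,
\begin{equation*}
\int_\R \theta'(t)\left(\Op_\hbar(\sigma)\psi^\hbar(t),\psi^\hbar(t)\right)dt
= -\frac{i}{\hbar}\int_\R \theta(t)\left([\Op_\hbar(\sigma),-\hbar^2\Delta_\Eng]\psi^\hbar(t),\psi^\hbar(t)\right)dt.
\end{equation*}
Then I would insert \eqref{eq:commutatorDelta}: since $[\sigma,H]=0$ for $\sigma\in\cB_0$ (and more generally the approximants $\sigma_\eps^{(n,n)}$ satisfy $[\sigma_\eps^{(n,n)},H] = o(1)$ in the relevant sense, or are handled directly), the right-hand side becomes
\begin{equation*}
-\frac{i}{\hbar}\int_\R\theta(t)\left(\bigl(2\hbar\,\Op_\hbar((\pi(V)\cdot V)\sigma) + \hbar^2\Op_\hbar(\Delta_\Eng\sigma)\bigr)\psi^\hbar(t),\psi^\hbar(t)\right)dt,
\end{equation*}
and the $1/\hbar$ cancels the $\hbar$ (resp. $\hbar^2$) prefactors. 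Each remaining operator $\Op_\hbar((\pi(V)\cdot V)\sigma)$ and $\Op_\hbar(\Delta_\Eng\sigma)$ (after the $\hbar$ is absorbed) is bounded on $L^2(\Eng)$ with norm controlled by finitely many seminorms of $\sigma$ in $\cA_0$ — using the $\cA_0$-boundedness estimate $\|\Op_\hbar(\tau)\|_{\cL(L^2)}\le C\int_\Eng\sup_x|\kappa_{\tau,x}(z)|dz$, or Calderón–Vaillancourt, uniformly in $\hbar$. Hence
\begin{equation*}
\left|\int_\R\theta'(t)\left(\Op_\hbar(\sigma)\psi^\hbar(t),\psi^\hbar(t)\right)dt\right| \le C_\sigma\,\|\theta\|_{L^1(\R)}\,\sup_\hbar\|\psi^\hbar\|_{L^\infty(\R,L^2)}^2.
\end{equation*}
Passing to the limit along the subsequence $\hbar_k$ from Theorem \ref{theo:timemeasures}, the left side converges to $\int_\R\theta'(t)\ell(t)\,dt$, and the bound persists. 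Since this holds for all $\theta\in C_c^1(\R)$, the distributional derivative $\ell'$ is an $L^\infty_{\mathrm{loc}}$ function with $\|\ell'\|_{L^\infty}\le C_\sigma\sup_\hbar\|\psi^\hbar\|_{L^\infty(\R,L^2)}^2$, so $\ell$ is locally Lipschitz.

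The main obstacle is the approximation step: $\sigma\in\cB_0$ is smoothing and compactly supported, but the decomposition $\sigma=\sum_n\Pi_n\sigma\Pi_n$ and the blocks $\Pi_n\sigma\Pi_n$ are not themselves in $\cA_0$ because the spectral projectors $\Pi_n$ are not smoothing symbols (their difference-operator seminorms blow up, as discussed before Proposition \ref{prop:Pin}). One must therefore run the commutator argument on the approximants $\sigma_\eps^{(n)}=\chi_\eps(H)\Pi_n\sigma\Pi_n\chi_\eps(H)\in\cA_0$, check that the seminorm bounds producing $C_\sigma$ are uniform in $\eps$ (here one uses that $\chi_\eps(H)$ is smoothing by Hulanicki and that $H\in\dot S^2$, $\Delta_i H\in\dot S^1$, exactly as in the proof of Proposition \ref{prop:Pin}), and then sum over $n$ using the rapid decay of the blocks in $\eps$-uniform seminorms together with the decay of $\chi_\eps$ away from the spectrum to control the tail. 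Once this uniformity is in hand, letting $\eps\to0$ and summing over $n$ recovers the bound for $\sigma$ itself, and the local Lipschitz conclusion follows.
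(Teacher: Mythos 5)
Your core computation is the same as the paper's: for $\sigma\in\cB_0$ the commutator identity \eqref{eq:commutatorDelta} has vanishing leading term $[\sigma,H]=0$, so $\frac{d}{dt}\bigl(\Op_\hbar(\sigma)\psi^\hbar(t),\psi^\hbar(t)\bigr)=\cO_\sigma(1)$ uniformly in $\hbar$ and on bounded time intervals; this is exactly Equation \eqref{eq:proofellB0}. Where you diverge is in how you pass to the limit: you keep everything in weak form, test against $\theta'$, let $\hbar_k\to0$ using Theorem \ref{theo:timemeasures}, and conclude that the distributional derivative of $\ell$ is in $L^\infty_{\rm loc}$, hence $\ell$ has a locally Lipschitz representative. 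The paper instead applies Arzel\`a--Ascoli to the equibounded, equicontinuous family $t\mapsto\ell_{\hbar,t}(\sigma)$, extracts a subsequence converging uniformly on compacta to a Lipschitz function $\ell_t(\sigma)$ for a dense set of $\sigma$, and then identifies $\ell_t$ with $\Gamma_t d\gamma_t$ via Proposition \ref{prop:meascommute}. Both are valid for the statement as written; the paper's route buys slightly more, namely a pointwise-in-$t$ identification of the Lipschitz representative (in particular its value at $t=0$ with the semiclassical measure of the initial data), which is what the remark after Theorem \ref{theo:propagintro} relies on, whereas your argument pins $\ell$ down only up to a.e.\ equivalence.

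One correction: the ``main obstacle'' you describe is not an obstacle for this proposition. The hypothesis is $\sigma\in\cB_0=\cA_0\cap\cB$, so $\sigma$ is already a smoothing, compactly supported symbol that commutes with $H$ exactly; $[\sigma,H]=0$ holds on the nose and $(\pi(V)\cdot V)\sigma$, $\Delta_\Eng\sigma$ are again in $\cA_0$, so the uniform operator bounds apply directly. The decomposition $\sigma=\sum_n\Pi_n\sigma\Pi_n$ and the approximants $\chi_\eps(H)\Pi_n\sigma\Pi_n\chi_\eps(H)$ are needed elsewhere (to derive the mode-by-mode transport law in Theorem \ref{thm:prop_measures}~(ii)), not here; importing them only complicates the argument without being required.
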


\subsection{Proof of Theorem \ref{thm:prop_measures} and Proposition \ref{prop:dispersionsupp}}

Let $(\psi_{0}^\hbar)_{\hbar>0}$ a bounded family in $L^2(\Eng)$ and write $\psi^\hbar(t) = e^{it\hbar\Delta_\Eng}\psi_{0}^\hbar$ for $t\in\R$. We suppose given a time-averaged semiclassical measure 
$t\mapsto\Gamma_t d\gamma_t$ of the family $(\psi^\hbar(t))_{\hbar>0}$ as in Theorem \ref{theo:timemeasures}. In what follows we will make the abuse of writing $\hbar \rightarrow 0$ instead of taking the limit along the subsequence 
defining $\Gamma_t d\gamma_t$. We also introduce the following notation:
\begin{equation*}
    \forall \theta\in C_{c}(\R_t),\,\forall \sigma\in\cA_0,\ \ell_{\infty}(\theta,\sigma) = \int_{\R}\theta(t)\int_{\Eng\times\Enghat}{\rm Tr}\left(\sigma(x,\pi)\Gamma_t(x,\pi)\right)d\gamma_t(x,\pi)dt.
\end{equation*}

\subsubsection{Proof of Theorem~\ref{thm:prop_measures} (i)}
\label{subsubsect:thmpropmeasuresi}
We take $\sigma\in{\mathcal A}_0$. 
The Schrödinger equation \eqref{eq:Sch_engel} yields
\begin{equation*}
i\hbar \frac{d}{dt} \left(\Op_{\hbar}(\sigma) \psi^{\hbar}(t) ,\psi^{\hbar}(t) \right)_{L^2(\Eng)} 
=  \left(\left[\Op_{\hbar}(\sigma) , -\hbar^2\Delta_{\Eng} \right] \psi^{\hbar}(t) ,\psi^{\hbar}(t) \right)_{L^2(\Eng)}.
\end{equation*}
By use of the symbolic calculus given by \eqref{eq:commutatorDelta}, we obtain 
\begin{eqnarray*}
\left(\Op_{\hbar}\left(\left[\sigma,H\right]\right) \psi^{\hbar}(t) ,\psi^{\hbar}(t) \right)_{L^2(\Eng)}
&=&  
\left(\left[\Op_{\hbar}(\sigma) , -\hbar^{2}\Delta_{\Eng} \right] \psi^{\hbar}(t) ,\psi^{\hbar}(t) \right)_{L^2(\Eng)} + \cO(\hbar) +\cO(\hbar)\\
&=&i\hbar \frac{d}{dt} \left(\Op_{\hbar}(\sigma)\psi^{\hbar}(t) ,\psi^{\hbar}(t) \right)_{L^2(\Eng)}+\cO(\hbar).
\end{eqnarray*}
Therefore, for $\theta \in \mathcal{C}_{c}^{\infty}(\R_t)$, by integration by part, we find
\begin{equation}
\label{eq:commutator}
\int_{\R} \theta(t) \left(\Op_{\hbar}\left([\sigma,H]\right) \psi^{\hbar}(t) ,\psi^{\hbar}(t) \right)_{L^2(\Eng)} dt = \cO(\hbar).
\end{equation}

Taking the limit as $\hbar\to0$ in \eqref{eq:commutator}, we obtain 
\begin{equation}
    \label{eq:proofcomuttator}
\ell_{\infty}(\theta,[\sigma,H])=0.
\end{equation}
Using Proposition \ref{prop:approxsymPi}, we can apply this to the smoothing symbol $\sigma_{\eps}^{(n,n')}$ for some $\eps\in(0,1)$ and $n,n'\in\N_{>0}$: 
by properties of the trace, we see that for $\pi^{\delta,\beta}\in\Enghat_{\rm gen}$,
\begin{multline*}
    {\rm Tr}\left([\sigma_{\eps}^{(n,n')}(x,\pi^{\delta,\beta}),H(\pi^{\delta,\beta})]\, \Gamma_t(x,\pi^{\delta,\beta})\right) 
    = {\rm Tr}\Bigl(\sigma(x,\pi^{\delta,\beta}) \mu_{n'}(\delta,\beta)\chi_{\eps}(H)\Pi_{n'}(\pi^{\delta,\beta}) \Gamma_t(x,\pi^{\delta,\beta}) \\
    -\mu_{n}(\delta,\beta)\chi_{\eps}(H)\Pi_n(\pi^{\delta,\beta}) \sigma(x,\pi^{\delta,\beta})\Gamma_t(x,\pi^{\delta,\beta})\Bigr) \\
    = {\rm Tr}\left(\sigma(x,\pi^{\delta,\beta})[H,\chi_{\eps}(H)\Pi_{n'}(\pi^{\delta,\beta})\Gamma_t(x,\pi^{\delta,\beta})\chi_{\eps}(H)\Pi_n(\pi^{\delta,\beta})]\right).
\end{multline*}
and we deduce that for any $\sigma\in\cA_0$,
\begin{equation*}
    \int_{\R}\theta(t)\int_{\Eng\times \Enghat} {\rm Tr}\left(\sigma(x,\pi)[H(\pi),\chi_{\eps}(H)\Pi_{n'}(\pi)\Gamma_t(x,\pi)\chi_{\eps}(H)\Pi_n(\pi)]\right)d\gamma_t(x,\pi)dt=0.
    \end{equation*}
This implies that 
the element in $\mathcal M_{ov}(\R\times \Eng\times \Enghat)$ given by  
\begin{equation*}
[H(\pi) \, , \, 
\chi_\eps(H)\Pi_{n'}(\pi)
\Gamma_t(x,\pi)
\chi_\eps(H)\Pi_n(\pi)] \,
 d\gamma_t(x,\pi) dt 
\end{equation*}
is zero. For any pair of integers with $n\not=n'$, 
taking $\eps\to 0$, this implies that
$\Pi_{n'} \Gamma_t \Pi_{n}=0$
for $d\gamma_t dt$-almost every $(t,x,\pi)\in \R_t\times\Eng\times \Enghat$. This shows Point~(i) of Theorem~\ref{thm:prop_measures}.

For the first claim of Point~(ii), as $\Gamma_t$ is a positive compact operator, the spectral decomposition~\eqref{eq:decomp} follows easily.

\subsubsection{A more precise computation} 
We write down more precisely the equation that \eqref{eq:commutatorDelta} allows us to get:
\begin{equation*}
    \begin{aligned}
    i\hbar\frac{d}{dt}\left(\Op_{\hbar}(\sigma) \psi^{\hbar}(t) ,\psi^{\hbar}(t) \right)
    &= \left(\Op_{\hbar}\left(\left[\sigma,H\right]\right) \psi^{\hbar}(t) ,\psi^{\hbar}(t) \right)
    + 2\hbar \left(\Op_{\hbar}\left((\pi(V)\cdot V)\sigma\right) \psi^{\hbar}(t) ,\psi^{\hbar}(t) \right)\\
    &+ \hbar^2 \left(\Op_{\hbar}\left(\Delta_{\Eng}\sigma\right) \psi^{\hbar}(t) ,\psi^{\hbar}(t) \right).
    \end{aligned}
\end{equation*}
Using the notation $\ell_{\hbar}(\theta,\sigma)$ introduced in \eqref{def:leps}, we can rewrite the previous equation more concisely after integrating against $\theta$:
\begin{equation}
    \label{eq:ell_commutator_rel}
    -i\hbar\ell_{\hbar}(\theta',\sigma) = \ell_{\hbar}(\theta,\left[\sigma,H\right]) + 2\hbar\,\ell_{\hbar}(\theta,(\pi(V)\cdot V)\sigma) + \hbar^2 \ell_{\hbar}(\theta,\Delta_{\Eng}\sigma).
\end{equation}
 
To be able to see the propagation, we now use a symbol $\sigma\in\cB_0$: the previous equation then becomes 
\begin{equation}
    \label{eq:proofellB0}
    -\ell_{\hbar}(\theta',\sigma) = -2i \ell_{\hbar}(\theta,(\pi(V)\cdot V) \sigma) +\cO(\hbar).
\end{equation}
We have already observed that for any symbol $\Tilde{\sigma} \in \mathcal{A}_{0}$, we have 
\begin{equation*}
    \ell_{\hbar}(\theta,[\Tilde{\sigma},H]) = \cO(\hbar).
\end{equation*}
This property leads us to decompose the symbol $(\pi(V)\cdot V)\sigma$ as the sum of its $H$-diagonal part and its $H$-off-diagonal one: 
the off-diagonal part will write itself as a commutator with $H$ and thus will not be preserved in the limit $\hbar \rightarrow 0$,
while the diagonal one will give us the infinitesimal generator of the flow along which each measure $\gamma_{n,t} = \Gamma_{n,t} d\gamma_t$, $n\in\N$, propagates.

As the test symbol $\sigma$ has been in chosen in $\cB_0$, we are left with decomposing the operators $\pi^{\delta,\beta}(X_1)$
and $\pi^{\delta,\beta}(X_2)$. But first recall that by analyticity of the family of operators $(H(\pi^{\delta,\beta}))_{(\delta,\beta)\in\fg_{3}^*\setminus\{0\}\times\fr_{2}^*}$
and by differentiating with respect to the variable $\beta$, we have the following so-called Feynmann-Helmann formula:

\begin{lemma}
    \label{lem:FH1}
    For $n\in\N_{>0}$, we have
    \begin{equation}
        (H - \mu_{n}\,{\rm Id})\partial_{\beta}\Pi_n = (\partial_{\beta}\mu_n \,{\rm Id}-\partial_{\beta} H) \Pi_n.
    \end{equation}
\end{lemma}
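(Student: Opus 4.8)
Lemma \ref{lem:FH1} asserts the Feynman–Hellmann-type identity
\begin{equation*}
(H - \mu_{n}\,{\rm Id})\partial_{\beta}\Pi_n = (\partial_{\beta}\mu_n \,{\rm Id}-\partial_{\beta} H) \Pi_n.
\end{equation*}

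The plan is to differentiate the two defining relations of the spectral projector $\Pi_n$ — namely the eigenprojector equation $H\Pi_n = \mu_n \Pi_n$ and the idempotency $\Pi_n^2 = \Pi_n$ — with respect to the parameter $\beta$, and then combine them. All differentiations are justified because, as recalled just before the lemma, the family $(H(\pi^{\delta,\beta}))_{(\delta,\beta)}$ is an analytic family of type B (in the sense of \cite{Kato}), so that on the compact parameter sets where a given eigenvalue $\mu_n$ stays isolated from the rest of the spectrum, both $\nu\mapsto\mu_n$ and the Riesz projector $\beta\mapsto\Pi_n(\pi^{\delta,\beta})$ are real-analytic, the latter valued in the bounded operators; and $\partial_\beta H = \partial_\beta(\beta+\tfrac{\delta}{2}\xi^2)^2 = 2(\beta+\tfrac{\delta}{2}\xi^2)$ is a (relatively bounded) multiplication operator, so the products below make sense on the smooth vectors.

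First I would differentiate $H\Pi_n = \mu_n \Pi_n$ in $\beta$, using the Leibniz rule for operator products, to get
\begin{equation*}
(\partial_\beta H)\Pi_n + H(\partial_\beta \Pi_n) = (\partial_\beta \mu_n)\Pi_n + \mu_n(\partial_\beta\Pi_n).
\end{equation*}
Rearranging gives exactly $(H-\mu_n\,{\rm Id})\partial_\beta\Pi_n = (\partial_\beta\mu_n\,{\rm Id} - \partial_\beta H)\Pi_n$, which is the claimed identity. So in fact only the eigenprojector equation is needed; the idempotency relation $\Pi_n^2=\Pi_n$ (which yields $\Pi_n(\partial_\beta\Pi_n)\Pi_n = 0$, i.e.\ the off-diagonal nature of $\partial_\beta\Pi_n$) is a useful companion fact but is not required for this particular statement. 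I would mention it only if it clarifies why both sides annihilate $\Pi_n^\perp$-content consistently. One should also note that the identity is meant as an equality of operators acting on the smooth vectors $\cH_{\pi^{\delta,\beta}}^\infty$, where everything in sight is well defined, and holds pointwise for almost every (in fact every) $(\delta,\beta)$ in the parameter domain.

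There is essentially no obstacle here: the only point deserving a word of care is the regularity justification, i.e.\ that differentiating the operator identity term-by-term is legitimate. This is handled by invoking the analytic-family-of-type-B property already established in the excerpt, which guarantees that $\Pi_n$ and $\mu_n$ depend real-analytically on $\beta$ locally (on the sets where $\mu_n$ is isolated, which by the divergence $\tilde\mu_n(\nu)\to+\infty$ as $\nu\to\pm\infty$ cover all of parameter space up to the finitely many crossings, and in any case suffice for the almost-everywhere statement used downstream). Once that is granted, the proof is the one-line differentiation above.
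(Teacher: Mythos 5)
Your proof is correct and is exactly the argument the paper intends: differentiate the eigenprojector relation $H\Pi_n=\mu_n\Pi_n$ in $\beta$ (justified by the analytic-family-of-type-B property recalled just before the lemma) and rearrange. The paper gives no further detail, so your added remarks on where the identity holds and on the role of idempotency are harmless elaborations of the same one-line computation.
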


\begin{remark}
    We observe easily that, as operators on the real line $\R_\xi$, we have 
    \begin{equation*}
        \partial_{\beta} H (\pi^{\delta,\beta}) = 2\left(\beta +\frac{\delta}{2}\xi^2\right) = -2 i \pi^{\delta,\beta}(X_2).
    \end{equation*}
\end{remark}

\begin{lemma}
    \label{lem:first_diag_part}
    Let $(\delta,\beta) \in \fg_{3}^*\setminus\{0\}\times\fr_{2}^*$. The operator $\pi^{\delta,\beta}(X_1)$ is entirely off-diagonal with respect to $H(\pi^{\delta,\beta})$:
    \begin{equation}
        \label{eq:diag_partX1}
        \pi^{\delta,\beta}(X_1) = \left[\frac{1}{2i\delta}\pi^{\delta,\beta}(X_{3}), H(\pi^{\delta,\beta})\right],
    \end{equation}
    while $\pi^{\delta,\beta}(X_2)$ decomposes as follows:
    \begin{equation}
        \label{eq:diag_partX2}
        \pi^{\delta,\beta}(X_2) = \sum_{n\in\N_{>0}} \frac{i}{2}\partial_\beta \mu_{n}(\delta,\beta)\Pi_n(\pi^{\delta,\beta}) +  \left[\frac{i}{2}\sum_{n\in\N_{>0}} \partial_{\beta}\Pi_{n}(\pi^{\delta,\beta}) \Pi_n(\pi^{\delta,\beta}),H(\pi^{\delta,\beta})\right].
    \end{equation}
\end{lemma}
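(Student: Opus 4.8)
The plan is to prove the two identities separately: \eqref{eq:diag_partX1} by a direct commutator computation, and \eqref{eq:diag_partX2} by splitting $\partial_\beta H(\pi^{\delta,\beta})$ into its $H$-diagonal and $H$-off-diagonal parts by means of the Feynman--Hellmann formula of Lemma~\ref{lem:FH1}.

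For \eqref{eq:diag_partX1} I would simply compute. By \eqref{eq_piDB} the operator $\tfrac{1}{2i\delta}\pi^{\delta,\beta}(X_3)=\tfrac12\xi$ is half the multiplication operator by $\xi$, which commutes with the potential $(\beta+\tfrac\delta2\xi^2)^2$; hence only the commutator with $-\partial_\xi^2$ survives, and from $[\xi,-\partial_\xi^2]=2\partial_\xi$ one gets $\big[\tfrac1{2i\delta}\pi^{\delta,\beta}(X_3),H(\pi^{\delta,\beta})\big]=\partial_\xi=\pi^{\delta,\beta}(X_1)$. In particular $\pi^{\delta,\beta}(X_1)$, being a commutator with $H(\pi^{\delta,\beta})$, is purely $H$-off-diagonal, which is the content of \eqref{eq:diag_partX1}.

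For \eqref{eq:diag_partX2} I would start from the Remark following Lemma~\ref{lem:FH1}, i.e. $\pi^{\delta,\beta}(X_2)=\tfrac i2\partial_\beta H(\pi^{\delta,\beta})$, and decompose $\partial_\beta H$. Multiplying the identity of Lemma~\ref{lem:FH1} on the left by $\Pi_n$ and using $\Pi_n H=\mu_n\Pi_n$ gives $\Pi_n(\partial_\beta H)\Pi_n=\partial_\beta\mu_n\,\Pi_n$, so the $H$-diagonal part of $\tfrac i2\partial_\beta H$ is $\tfrac i2\sum_n\partial_\beta\mu_n\,\Pi_n$. For the remaining part, set $A:=\sum_n\partial_\beta\Pi_n\,\Pi_n$; using $H\Pi_n=\Pi_n H=\mu_n\Pi_n$ one finds $[A,H]=-\sum_n(H-\mu_n)\,\partial_\beta\Pi_n\,\Pi_n$, and inserting Lemma~\ref{lem:FH1} termwise, together with $\sum_n\Pi_n={\rm Id}$, turns the right-hand side into $\partial_\beta H-\sum_n\partial_\beta\mu_n\,\Pi_n$, that is, exactly the $H$-off-diagonal part of $\partial_\beta H$. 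Adding the two contributions yields $\pi^{\delta,\beta}(X_2)=\tfrac i2\sum_n\partial_\beta\mu_n\,\Pi_n+\big[\tfrac i2 A,H\big]$, which is \eqref{eq:diag_partX2}.

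The only real difficulty is functional-analytic, not algebraic: $H(\pi^{\delta,\beta})$ and the operators $\pi^{\delta,\beta}(X_j)$ are unbounded and $A=\sum_n\partial_\beta\Pi_n\,\Pi_n$ does not converge in operator norm. I would therefore read all these identities as equalities of operators on the smooth vectors $\cH_{\pi^{\delta,\beta}}^\infty\simeq\cS(\R_\xi)$, on which $H$ and the $\pi^{\delta,\beta}(X_j)$ act continuously, and justify the termwise convergence of the series from the analytic dependence of the family $(H(\pi^{\delta,\beta}))$ on $(\delta,\beta)$ (analytic of type B), which also makes $\partial_\beta\Pi_n$ meaningful, together with the simplicity of the spectrum of $H(\pi^{\delta,\beta})$: each $\Pi_n$ is then a smooth rank-one projector, and choosing the eigenfunctions $\Phi_n$ real one has $(\partial_\beta\Phi_n,\Phi_n)=0$, hence $\partial_\beta\Pi_n\,\Pi_n=\ket{\partial_\beta\Phi_n}\bra{\Phi_n}$; on a Schwartz vector the relevant series then converge because the spectral coefficients decay faster than any power of $n$ while $\|\partial_\beta\Phi_n\|$ grows at most polynomially.
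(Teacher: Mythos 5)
Your proof is correct and follows essentially the same route as the paper: a direct commutator computation for \eqref{eq:diag_partX1} (the paper phrases it via the enveloping-algebra identity $[X_3,X_1^2+X_2^2]=2X_1X_4$, you via the explicit Schr\"odinger realization $\tfrac{1}{2i\delta}\pi^{\delta,\beta}(X_3)=\tfrac{\xi}{2}$), and the Feynman--Hellmann formula of Lemma~\ref{lem:FH1} for \eqref{eq:diag_partX2}, where your direct verification that $\left[\sum_n\partial_\beta\Pi_n\,\Pi_n,H\right]$ reproduces the off-diagonal part of $\partial_\beta H$ is equivalent to the paper's block-by-block determination of the commutant. Your closing functional-analytic discussion of how to interpret the series on smooth vectors is a welcome precision that the paper leaves implicit.
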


\begin{proof}
    Relation \eqref{eq:diag_partX1} is a direct consequence of the following computation in the enveloping algebra of $\fg_\Eng$:
    \begin{equation*}
        [X_3, X_{1}^2+X_{2}^2] = 2X_1 X_4. 
    \end{equation*}
    To prove \eqref{eq:diag_partX2}, observe that Lemma \ref{lem:FH1} allows us to write for $n\in\N_{>0}$ and $\pi^{\delta,\beta}\in\Enghat$,
    \begin{equation*}
        -2i\pi^{\delta,\beta}(X_2)\Pi_n(\pi^{\delta,\beta}) = \partial_{\beta}\mu_{n}(\delta,\beta)\Pi_n(\pi^{\delta,\beta}) + (\mu_{n}(\delta,\beta)\,{\rm Id}-H(\pi^{\delta,\beta}))\partial_{\beta}\Pi_n(\pi^{\delta,\beta}).
    \end{equation*}
    Composing to the left by $\Pi_n(\pi^{\delta,\beta})$, we obtain 
    \begin{equation*}
        \Pi_n(\pi^{\delta,\beta})\pi^{\delta,\beta}(X_2)\Pi_n(\pi^{\delta,\beta}) = \frac{i}{2}\partial_{\beta}\mu_{n}(\delta,\beta)\Pi_n(\pi^{\delta,\beta}),
    \end{equation*}
    giving us the expected $H$-diagonal part of $\pi^{\delta,\beta}(X_2)$.

    For the $H$-off-diagonal one, observe that for $n'\in\N_{>0}$ with $n'\neq n$, composing to the left by $\Pi_{n'}(\pi^{\delta,\beta})$,  we obtain this time 
    \begin{equation}
        \label{eq:FHanticommut}
        \Pi_{n'}(\pi^{\delta,\beta})\pi^{\delta,\beta}(X_2)\Pi_n(\pi^{\delta,\beta})=(\mu_{n}(\delta,\beta)-\mu_{n'}(\delta,\beta))\Pi_{n'}(\pi^{\delta,\beta})\partial_{\beta}\Pi_n(\pi^{\delta,\beta}).
    \end{equation}
    We are left with finding a field of operator $C$ such that $\sum_{n'\neq n} \Pi_{n'}\pi(X_2)\Pi_{n} = [C,H]$.
    This field $C$ must then necessarily satisfy
    \begin{equation*}
        \Pi_{n'}(\pi^{\delta,\beta})\pi^{\delta,\beta}(X_2)\Pi_{n}(\pi^{\delta,\beta}) = (\mu_{n}(\delta,\beta) - \mu^{n'}(\delta,\beta))\Pi_{n'}(\pi^{\delta,\beta})C(\pi^{\delta,\beta})\Pi_{n}(\pi^{\delta,\beta}),
    \end{equation*}
    but confronting this equation with \eqref{eq:FHanticommut}, we deduce that we can take the field $C$ as follows
    \begin{equation*}
        C = \frac{i}{2}\sum_{n' \neq n} \Pi_{n'}\partial_{\beta}\Pi_{n}\Pi_{n} = \frac{i}{2}\sum_{n\in\N_{>0}} \partial_{\beta}\Pi_n \Pi_n,
    \end{equation*}
    last equality following from the fact that the $(\Pi_n)_{n\in\N_{>0}}$ are fields of projectors, and as such satisfy $\Pi_{n'}\partial_{\beta}\Pi_n \Pi_n = \partial_{\beta}\Pi_n \Pi_n$
    for $n'\neq n$.
\end{proof}

Accordingly, if we choose $\sigma$ to be concentrated on a particular eigenspace of $H$, meaning that for a certain $n\in\N_{>0}$ we have $\sigma =  \Pi_{n} \sigma \Pi_{n}$, and null in a neighborhood of $\{\delta=0\}$,
we obtain
\begin{equation*}
   -i(\pi(V)\cdot V)\sigma(x,\pi^{\delta,\beta}) = \frac{1}{2} \partial_{\beta} \mu_{n}(\delta,\beta) X_2 \sigma(x,\pi^{\delta,\beta}) + \frac{1}{2}\left[\sigma_{1}(x,\pi^{\delta,\beta}),H(\pi^{\delta,\beta})\right],
\end{equation*}
with 
\begin{equation}
    \label{eq:antidiagT}
    \sigma_{1} = -\frac{1}{\delta} \pi^{\delta,\beta}(X_3) X_1 \sigma +  \partial_{\beta}\Pi_n(\pi^{\delta,\beta}) X_2 \sigma \in\cA_0.
\end{equation}
Equation \eqref{eq:proofellB0} then can be rewritten as 
\begin{equation*}
    -\ell_{\hbar}(\theta',\sigma) = \ell_{\hbar}\left(\theta,\partial_{\beta}\mu_{n}X_2 \sigma\right) +\cO(\hbar),
\end{equation*}
which gives, when $\hbar$ goes to 0,
\begin{multline*}
    -\int_{\R}\theta'(t)\int_{\Eng\times\Enghat_{\rm gen}}{\rm Tr}\left(\sigma(x,\pi^{\delta,\beta})\Gamma_{n,t}(x,\pi^{\delta,\beta})\right)d\gamma_t(x,\pi^{\delta,\beta})dt = \\\int_{\R}\theta(t)\int_{\Eng\times\Enghat_{\rm gen}}{\rm Tr}\left(\partial_{\beta}\mu_n(\delta,\beta) X_2\sigma(x,\pi^{\delta,\beta})\Gamma_{n,t}(x,\pi^{\delta,\beta})\right)d\gamma_t(x,\pi^{\delta,\beta})dt.
\end{multline*}
We conclude by Proposition \ref{prop:meascommute} and Point (ii) of Theorem \ref{thm:prop_measures}.

\subsubsection{Proof of Proposition \ref{prop:continuitytimesm}} 
The following argument can already be found in \cite{FF3}. For any symbol $\sigma \in \cB_{0}(\Eng)$, we write
\begin{equation*}
    \ell_{\hbar,t}(\sigma) = \left(\Op_{\hbar}(\sigma)\psi^{\hbar}(t),\psi^{\hbar}(t)\right)_{L^2(\Eng)}.
\end{equation*}
Observe that by Equation \eqref{eq:proofellB0}, we have 
\begin{equation*}
    \frac{d}{dt} \ell_{\hbar,t}(\sigma) = \cO_{\sigma}(1),
\end{equation*}
in the sense that the map $t\mapsto \frac{d}{dt}\ell_{\hbar,t}(\sigma)$ is uniformly bounded in a given bounded interval of $\R$ and this uniformly for $\hbar\in (0,1)$.
Thus the family of maps $(\ell_{\hbar,\cdot}(\sigma))_{\hbar>0}$ is bounded and equicontinuous in $C_{b}(\R)$, so that by Arzelà-Ascoli theorem we can extract 
a subsequence $(\hbar_{j_k})_{k\in\N}$ such that, as $k\rightarrow +\infty$, $\hbar_{j_k} \rightarrow 0$ and $(\ell_{\hbar_{j_k},\cdot}(\sigma))_{k\in\N}$ converges uniformly 
on any bounded interval to a locally Lipshitz function $t\mapsto \ell_{t}(\sigma)$ for all $\sigma \in \cB_{0}(\Eng)$ (this last assertion follows from considering a dense 
subset of such symbols and a diagonal extraction procedure). 

It is straightforward to see that for $t\in\R$, $\ell_t$ defines a state on the $C^*$-algebra $\cB$. By Proposition \ref{prop:meascommute}, it is thus given by 
an operator-valued measure $\Gamma^{\ell_t} d\gamma^{\ell_t}$ in $\cM_{ov}^+(\Eng\times\Enghat)^{(H)}$. By Theorem \ref{theo:timemeasures} and by the property of uniform convergence on any bounded interval,
we conclude that the map $t\in\R \mapsto \Gamma^{\ell_t} d\gamma^{\ell_t}$ necessarily coincides with the time-averaged semiclassical measure $t\in\R \mapsto \Gamma_t d\gamma_t$ and the proposition is proved.

\subsubsection{Proof of Point~(iii)} 
We are interested here with the part of the semiclassical measure supported in the non-generic representations $\widehat{\Bar{\Heis}}$. We will denote by ${\bm 1}_{\widehat{\Bar{\Heis}}}\Gamma_t d\gamma_t$ the restrictions
of the operator-valued measures which can be seen as elements of $\cM_{ov}^+(\Eng\times\widehat{\Bar{\Heis}})$. More precisely, as Point (i) of Theorem \ref{thm:prop_measures} has been proved, the restriction of the
time-averaged semiclassical measure to $\Eng\times\widehat{\Bar{\Heis}}$ is in $L^\infty(\R_t,\cM_{ov}^+(\Eng\times\widehat{\Bar{\Heis}})^{(H)})$, where the we restrict the symbol $H$ to the non-generic representations.
As such we can only deal with symbols commuting with $H$.

To be able to study this restricted measure, we need to enlarge our set of test symbols. To this end we introduce the closures $vN\cA$ and $vN \cB$ of the $C^*$-algebras $\cA$ and $\cB$ respectively for the strong operator topology in $L^\infty(\Eng\times\Enghat)$. They correspond to the von Neumann algebras
generated by $\cA$ and $\cB$ respectively and are contained in $L^\infty(\Eng\times\Enghat)$. The semiclassical measures $\Gamma_t d\gamma_t \in \cA^*$ each have a unique linear continous extension to $vN \cA$, meaning it makes sense to 
integrate these measures on elements of this von Neumann algebra.

\begin{lemma}
    \label{lem:vNsymbol}
    If $\sigma\in\cA_0$, then ${\bm 1}_{\widehat{\Bar{\Heis}}}\sigma$ is in  $vN \cA$. Similarly, if $\sigma\in\cB_0$, then ${\bm 1}_{\widehat{\Bar{\Heis}}}\sigma$ is in  $vN \cB$. Moreover, for $\sigma\in\cB_0$, there exists a family $(\sigma_\eps)_{\eps\in(0,1)}$ of commuting symbols in $\cB_0$ such that 
    $\sigma_\eps$ and $(\pi(V)\cdot V)\sigma_\eps$ converge to ${\bm 1}_{\widehat{\Bar{\Heis}}}\sigma$ and ${\bm 1}_{\widehat{\Bar{\Heis}}}(\pi(V)\cdot V)\sigma$ respectively in $vN \cB$ and $vN\cA$, as $\eps$ goes to 0.
\end{lemma}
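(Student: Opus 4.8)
The plan is to build the approximating family $(\sigma_{\eps})$ by an explicit cut-off in the central parameter $\delta$, and to recognize ${\bm 1}_{\widehat{\Bar{\Heis}}}\sigma$ as a weak-$*$ limit of such cut-offs. Everything hinges on the fact that, in the parametrisation of Proposition \ref{prop:dualset}, the non-generic part $\widehat{\Bar{\Heis}}$ is exactly the locus $\{\delta=0\}$, which is $\mu_{\Enghat}$-null but may be charged by the semiclassical measures.

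First I would fix $\chi\in C_{c}^{\infty}(\R)$ with $0\le\chi\le1$, $\chi(0)=1$ and ${\rm supp}\,\chi\subset[-1,1]$, set $\chi_{\eps}(\delta):=\chi(\delta/\eps)$ for $\eps\in(0,1)$, and define $\sigma_{\eps}(x,\pi^{\delta,\beta}):=\chi_{\eps}(\delta)\,\sigma(x,\pi^{\delta,\beta})$, extended by $\sigma$ on $\widehat{\Bar{\Heis}}$ (consistently, since $\chi_{\eps}(0)=1$). The point to justify is $\sigma_{\eps}\in\cA_{0}$, which is a mild variant of the earlier proposition on multiplying a symbol by a function $g(\delta)$ of the central variable (stated there for $g\in C_{c}^{\infty}(\R\setminus\{0\})$): such a multiplication corresponds on the group to convolution by the $x$-independent central distribution $\delta_{(x_{1},x_{2},x_{3})=0}\otimes\cF_{\R}^{-1}g$, and since $\cF_{\R}^{-1}\chi_{\eps}\in\cS(\R)$ and this convolution acts only in the central ($x_{4}$) direction, $\kappa_{\sigma,x}*\kappa_{\chi_{\eps}}$ stays in $C_{c}^{\infty}(\Eng,\cS(\Eng))$. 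If moreover $\sigma\in\cB_{0}$, then $[\sigma_{\eps}(x,\pi),H(\pi)]=\chi_{\eps}(\delta)[\sigma(x,\pi),H(\pi)]=0$, so $\sigma_{\eps}\in\cB_{0}$. Finally, as $\chi_{\eps}(\delta)$ is a scalar independent of $x$, one checks directly that $(\pi(V)\cdot V)\sigma_{\eps}=\chi_{\eps}(\delta)\,(\pi(V)\cdot V)\sigma$, where $(\pi(V)\cdot V)\sigma$ is itself in $\cA_{0}$ (it is the symbol appearing in the commutator identity \eqref{eq:commutatorDelta}), though in general not in $\cB_{0}$.

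Then I would pass to the limit. For any $\Gamma d\gamma\in\cM_{ov}(\Eng\times\Enghat)\cong\cA^{*}$ (Proposition \ref{prop:dualA}) one has $0\le\chi_{\eps}\le1$ and $\chi_{\eps}\to{\bm 1}_{\{\delta=0\}}$ pointwise (indeed $\chi_{\eps}(0)=1$ while $\chi_{\eps}(\delta)=0$ once $\eps<|\delta|$), together with the domination $|{\rm Tr}(\sigma(x,\pi)\Gamma(x,\pi))|\le\|\sigma\|_{L^{\infty}}\,\|\Gamma(x,\pi)\|_{\cL^{1}(\cH_{\pi})}\in L^{1}(d\gamma)$. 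Dominated convergence then gives $\ell_{\Gamma d\gamma}(\sigma_{\eps})\to\ell_{\Gamma d\gamma}({\bm 1}_{\widehat{\Bar{\Heis}}}\sigma)$, and likewise with $(\pi(V)\cdot V)\sigma$ in place of $\sigma$. Hence $\sigma_{\eps}\to{\bm 1}_{\widehat{\Bar{\Heis}}}\sigma$ and $(\pi(V)\cdot V)\sigma_{\eps}\to{\bm 1}_{\widehat{\Bar{\Heis}}}(\pi(V)\cdot V)\sigma$ in the weak-$*$ topology of $vN\cA=\cA^{**}$ (resp. $vN\cB=\cB^{**}$), for which these algebras are complete and the semiclassical measures continuous; since the $\sigma_{\eps}$ lie in $\cA_{0}\subset\cA$ (resp. $\cB_{0}\subset\cB$), the limits belong to $vN\cA$ (resp. $vN\cB$), which yields the three assertions at once.

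The only non-routine point is the first — verifying that cutting off across $\delta=0$ preserves membership in $\cA_{0}$ — together with the bookkeeping that $(\pi(V)\cdot V)\sigma$ sits in $\cA_{0}$ but not in $\cB_{0}$, which is exactly why the second convergence can only be asserted in $vN\cA$. The dominated convergence step and the identification of the weak-$*$ topology are routine.
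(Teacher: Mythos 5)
Your construction is essentially the paper's: both take $\sigma_\eps=\chi(\delta/\eps)\,\sigma$ with $\chi(0)=1$, justify membership in $\cA_0$ (resp.\ $\cB_0$) by identifying the kernel of the multiplier as a partial convolution in the central variable, note that $(\pi(V)\cdot V)\sigma_\eps=\chi(\delta/\eps)(\pi(V)\cdot V)\sigma$, and let the cutoff concentrate at $\delta=0$. The one imprecision is your identification $vN\cA=\cA^{**}$ — as defined in the paper, $vN\cA$ is the strong-operator closure of $\cA$ inside $L^\infty(\Eng\times\Enghat)$, not the universal enveloping von Neumann algebra $\cA^{**}$ — but the convergence you actually establish, tested by dominated convergence against every $\Gamma d\gamma\in\cM_{ov}(\Eng\times\Enghat)=\cA^*$, is precisely what is used downstream, so this does not affect the argument.
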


\begin{proof}
    Let $\sigma\in\cB_0$ and $\eta\in C_{c}^\infty(\fg_{3}^*)$ satisfying $\eta(0)=1$. Writing $\kappa$ the convolution kernel of $\sigma$, one easily check that 
    the symbol given by 
    \begin{equation*}
        \forall (x,\pi)\in\Eng\times\Enghat_{\rm gen},\ (\sigma \eta)(x,\pi^{\delta,\beta}) = \sigma(x,\pi^{\delta,\beta})\eta(\delta),
    \end{equation*}
    is in $\cB_0$ as its convolution kernel is given by $(x,y)\in\Eng^2 \mapsto (\kappa_{x}(y_1,y_2,y_3,\cdot)*_{\fg_3} \cF_{\fg_3}^{-1}\eta)(y_4)$.
    Introducing for $\eps\in(0,1)$ the function $\eta_\eps(\cdot) = \eta\left(\frac{\cdot}{\eps}\right)$, we consider the family of symbols given by 
    $\sigma_\eps = \sigma\eta_\eps$ as above. We check readily that the limit of $\sigma_\eps$ for the strong operator topology is given by ${\rm 1}_{\widehat{\Bar{\Heis}}}\sigma$.
    The same result holds for $(\pi(V)\cdot V)\sigma$.
\end{proof}

Thus, for $\sigma\in\cA_0$, the following equation makes sense:
\begin{equation*}
    \ell_{\infty}(\theta,{\bm 1}_{\widehat{\Bar{\Heis}}}\sigma) = \int_{\R}\theta(t)\int_{\Eng\times\widehat{\Bar{\Heis}}} {\rm Tr}\left(\sigma(x,\pi)\Gamma_t(x,\pi)\right)d\gamma_t(x,\pi)dt.
\end{equation*}

However, if the previous lemma allow us to consider more test symbols than those given by the $C^*$-algebras $\cA$ and $\cB$, we need to check that there are enough
of them to fully characterize the restricted measures ${\bm 1}_{\widehat{\Bar{\Heis}}}\Gamma_t d\gamma_t \in\cM_{ov}^+(\Eng\times\widehat{\Bar{\Heis}})^{(H)}$.
We will note $\cA_{0}(\Eng\times\widehat{\Bar{\Heis}})$ the space of fields of operators $\sigma = \{\sigma(x,\pi)\in\cL(\cH_\pi)\,:\,(x,\pi)\in \Eng\times\widehat{\Bar{\Heis}}\}$ 
given by the Fourier transform of kernels $\kappa$ in $C_{c}^\infty(\Eng,\cS(\Bar{\Heis}))$:
\begin{equation*}
    \sigma(x,\pi) = \cF_{\Bar{\Heis}}\kappa_x(\pi).
\end{equation*}
We will denote by $\cA(\Eng\times\widehat{\Bar{\Heis}})$ the closure of $\cA_{0}(\Eng\times\widehat{\Bar{\Heis}})$ for the norm $\|\cdot\|_{L^\infty(\Eng\times\widehat{\Bar{\Heis}})}$. 
We define the spaces $\cB_{0}(\Eng\times\widehat{\Bar{\Heis}})$ and $\cB(\Eng\times\widehat{\Bar{\Heis}})$ in the obvious way, similarly as the definition of the spaces $\cB_0$ and $\cB$.

\begin{lemma}\cite[Lemma 4.3]{FFF}
    The map $\Theta: \sigma\in\cA_0(\Eng)\mapsto \restriction{\sigma}{\Eng\times\widehat{\Bar{\Heis}}}\in\cA_0(\Eng\times\widehat{\Bar{\Heis}})$ is surjective and extends 
    to a $C^*$-algebra morphism from $\cA(\Eng)$ onto $\cA(\Eng\times\widehat{\Bar{\Heis}})$. The restriction of this map to $\cB(\Eng)$ is also surjective on 
    $\cB(\Eng\times\widehat{\Bar{\Heis}})$.
\end{lemma}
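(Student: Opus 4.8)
\emph{Proof strategy.} The plan is to identify $\Theta$ with fibre integration at the level of convolution kernels, establish the statement for the dense subalgebras $\cA_0$ and $\cB_0$, and then extend by continuity; the real work is the surjectivity onto $\cB(\Eng\times\widehat{\Bar{\Heis}})$, which uses the step-$2$ structure of $\Bar{\Heis}$. We follow \cite[Lemma 4.3]{FFF} in the present notation.

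\medskip

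First I would make $\Theta$ explicit. Write $\sigma\in\cA_0(\Eng)$ as $\sigma(x,\pi)=\cF_\Eng\kappa_x(\pi)$ with $x\mapsto\kappa_x$ in $C_{c}^\infty(\Eng,\cS(\Eng))$, and recall from the Remark after Proposition~\ref{prop:dualset} that every non-generic representation factors as $\pi=\Bar\pi\circ p_{\Bar{\Heis}}$ with $\Bar\pi\in\widehat{\Bar{\Heis}}$. Then in $\kappa_x(z)\pi(z)^*$ the operator factor only sees $p_{\Bar{\Heis}}(z)=(z_1,z_2,z_3)$, so integrating out the fibre variable $z_4$ gives
\begin{equation*}
  \restriction{\sigma}{\Eng\times\widehat{\Bar{\Heis}}}(x,\pi)=\cF_{\Bar{\Heis}}(\tilde\kappa_x)(\Bar\pi),\qquad \tilde\kappa_x(z_1,z_2,z_3):=\int_\R\kappa_x(z_1,z_2,z_3,z_4)\,dz_4,
\end{equation*}
the Haar measures of $\Eng$ and $\Bar{\Heis}$ being Lebesgue in the chosen coordinates. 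Fibre integration sends $\cS(\Eng)$ to $\cS(\Bar{\Heis})$ and preserves smoothness and compact support in $x$, so $\tilde\kappa\in C_{c}^\infty(\Eng,\cS(\Bar{\Heis}))$ and $\Theta$ maps $\cA_0(\Eng)$ into $\cA_0(\Eng\times\widehat{\Bar{\Heis}})$. Conversely, given $\tilde\kappa\in C_{c}^\infty(\Eng,\cS(\Bar{\Heis}))$ and $\rho\in\cS(\R)$ with $\int_\R\rho=1$, the kernel $\kappa_x(z):=\tilde\kappa_x(z_1,z_2,z_3)\rho(z_4)$ lies in $C_{c}^\infty(\Eng,\cS(\Eng))$ and $\Theta$ sends the associated symbol to $\cF_{\Bar{\Heis}}\tilde\kappa_\cdot$; hence $\Theta\colon\cA_0(\Eng)\to\cA_0(\Eng\times\widehat{\Bar{\Heis}})$ is onto. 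That $\Theta$ is a $*$-homomorphism is automatic, since on $\Eng\times\widehat{\Bar{\Heis}}$ it is merely the pointwise restriction of a field of operators (the product of $\cA_0$-symbols being pointwise, and, via \eqref{fourconv}, corresponding to convolution of kernels, which fibre integration intertwines because $p_{\Bar{\Heis}}$ is a group morphism).

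\medskip

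Next I would pass to the $C^*$-completions. For $\sigma\in\cA_0(\Eng)$ the field $\pi\mapsto\sigma(x,\pi)$ is a continuous field over $\Enghat$ (it is the group Fourier transform of a Schwartz function), and $\Enghat_{\rm gen}$ is dense in $\Enghat$ and carries the full support of the Plancherel measure; since $\widehat{\Bar{\Heis}}$ lies in the Fell-closure of $\Enghat_{\rm gen}$ and $\{\pi^\lambda\}_{\lambda\neq0}$ carries the full support of the Plancherel measure of $\Bar{\Heis}$, this yields the continuity estimate $\|\Theta(\sigma)\|_{L^\infty(\Eng\times\widehat{\Bar{\Heis}})}\le\|\sigma\|_{L^\infty(\Eng\times\Enghat)}$ (a property of $\cA_0$-symbols that is part of the framework of \cite{FF2},\cite{FFF}, and which is implicitly used in Lemma~\ref{lem:vNsymbol}). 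Hence $\Theta$ extends to a $*$-homomorphism $\cA(\Eng)\to\cA(\Eng\times\widehat{\Bar{\Heis}})$. A $*$-homomorphism of $C^*$-algebras has closed range, and that range contains the dense subalgebra $\cA_0(\Eng\times\widehat{\Bar{\Heis}})$, so $\Theta$ is onto $\cA(\Eng\times\widehat{\Bar{\Heis}})$.

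\medskip

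For the $\cB$-version: on $\widehat{\Bar{\Heis}}$ the field $H$ restricts to the Fourier transform of $-\Bar{\Delta}$, where $\Bar{\Delta}=\Bar{X}_1^2+\Bar{X}_2^2$ is the canonical subLaplacian of $\Bar{\Heis}$ obtained by projecting $\Delta_\Eng$ along $p_{\Bar{\Heis}}$ (with $\Bar{X}_1=\partial_{x_1}-x_2\partial_{x_3}$, $\Bar{X}_2=\partial_{x_2}$; this matches the operators $H(\pi^\lambda)=-\partial_\xi^2+\lambda^2\xi^2$, $H(\pi^{(0,\alpha)})=|\alpha|^2$ already recorded). Thus a symbol commuting with $H$ restricts to one commuting with the subLaplacian symbol of $\Bar{\Heis}$, i.e.\ $\Theta(\cB_0(\Eng))\subseteq\cB_0(\Eng\times\widehat{\Bar{\Heis}})$, and $\Theta$ extends to a $*$-homomorphism $\cB(\Eng)\to\cB(\Eng\times\widehat{\Bar{\Heis}})$ with closed range. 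It then suffices to reach a dense subset: for $a\in C_{c}^\infty(\Eng)$ and $\chi\in\cS(\R)$ the symbol $a(x)\chi(H)$ lies in $\cB_0(\Eng)$ by Hulanicki's theorem, and its restriction $\Theta(a(x)\chi(H))$ is the analogous product over $\widehat{\Bar{\Heis}}$, which lies in $\cB_0(\Eng\times\widehat{\Bar{\Heis}})$ (its $\Bar{\Heis}$-kernel being Schwartz by Hulanicki on $\Bar{\Heis}$); and since $\Bar{\Heis}$ is $2$-step --- so that the eigenprojectors of $H$ on $\widehat{\Bar{\Heis}}$ are regular homogeneous symbols --- the $C^*$-algebra $\cB(\Eng\times\widehat{\Bar{\Heis}})$ is the closed span of such products, by the structure theory of $\cB$ on $2$-step groups from \cite{FF2},\cite{FFF}. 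Together with the closedness of the range, this gives surjectivity of $\restriction{\Theta}{\cB(\Eng)}$.

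\medskip

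\textbf{Main obstacle.} The delicate point is precisely this last density fact: that $\cB(\Eng\times\widehat{\Bar{\Heis}})$ is generated by the products $a(x)\chi(H)\restriction{}{\widehat{\Bar{\Heis}}}$. It relies essentially on the step-$2$ nature of $\Bar{\Heis}$ --- the regularity of the Hermite-type eigenprojectors of $H$ on $\widehat{\Bar{\Heis}}$ --- which is exactly the property that fails for the Montgomery eigenprojectors $\Pi_n$ on $\Enghat_{\rm gen}$, and this is what makes the reduction to $\Bar{\Heis}$ the right way to handle the non-generic part of the semiclassical measure. The kernel computation and all the $\cA$-level assertions, by contrast, are soft.
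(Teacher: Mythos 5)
The paper does not prove this lemma itself (it quotes it from \cite{FFF}), so I can only assess your argument on its own terms. Your treatment of the $\cA$-part is sound and is the natural argument: identifying $\Theta$ with fibre integration of the convolution kernel along the $z_4$-direction, lifting a kernel on $\Bar{\Heis}$ by tensoring with $\rho\in\cS(\R)$ of integral one to get surjectivity on $\cA_0$, invoking weak containment of the non-generic representations in the generic ones for the norm estimate $\|\Theta(\sigma)\|_{L^\infty(\Eng\times\widehat{\Bar{\Heis}})}\leq\|\sigma\|_{L^\infty(\Eng\times\Enghat)}$, and then using that a $*$-homomorphism of $C^*$-algebras has closed range.

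The $\cB$-part, however, rests on a false density claim. You assert that $\cB(\Eng\times\widehat{\Bar{\Heis}})$ is the closed span of the restricted products $a(x)\chi(H)$. It is not: on the Schr\"odinger representations $\pi^\lambda$ the operator $H(\pi^\lambda)=-\partial_\xi^2+\lambda^2\xi^2$ has simple spectrum $\{|\lambda|(2n-1)\}_{n\geq 1}$, so any element of the closed span of $\{a(x)\chi(H)\}$ has spectral coefficients $c_n(x,\lambda)=\sum_j a_j(x)\chi_j(|\lambda|(2n-1))$ that are constant along the level sets $\{|\lambda|(2n-1)=E\}$ of $H$; since each such level set meets several branches $(n,\lambda)$, a symbol such as $a(x)\chi(H)\bar\Pi_1$ (which does lie in $\cB_0(\Eng\times\widehat{\Bar{\Heis}})$, the Hermite projector $\bar\Pi_1$ being a regular $0$-homogeneous symbol on the step-two group) cannot be approximated by such sums. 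So the generating set you propose is too small, and the closed-range argument does not reach all of $\cB(\Eng\times\widehat{\Bar{\Heis}})$. What is actually needed is a preimage in $\cB(\Eng)$ for a genuinely dense family in $\cB_0(\Eng\times\widehat{\Bar{\Heis}})$, e.g.\ the symbols $\chi(H)\bar\Pi_n\tau\bar\Pi_n\chi(H)$; producing such preimages forces you to relate the Montgomery eigenprojectors $\Pi_n$ over $\Enghat_{\rm gen}$ to the Hermite projectors over $\widehat{\Bar{\Heis}}$ across the degeneration $\delta\to 0$ (where eigenvalue branches can merge or cross), which is exactly the difficulty your closing paragraph identifies but does not resolve. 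As written, the surjectivity of $\restriction{\Theta}{\cB(\Eng)}$ is not established.
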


Now, for $\sigma\in\cB_{0}(\Eng)$, Equation \eqref{eq:proofellB0} gives in the limit $\hbar$ goes to 0,
\begin{equation*}
    -\ell_{\infty}(\theta',\sigma)=-2i\ell_{\hbar}(\theta,(\pi(V)\cdot V)\sigma).
\end{equation*}
Using the family of symbols $(\sigma_{\eps})_{\eps\in(0,1)}$ introduced in Lemma \ref{lem:vNsymbol}, we obtain 
\begin{equation*}
    -\ell_{\infty}(\theta',{\bm 1}_{\widehat{\Bar{\Heis}}}\sigma)=-2i\ell_{\hbar}(\theta,{\bm 1}_{\widehat{\Bar{\Heis}}}(\pi(V)\cdot V)\sigma),
\end{equation*}
or more explicitly 
\begin{multline*}
    -\int_{\R}\theta'(t)\int_{\Eng\times\widehat{\Bar{\Heis}}}{\rm Tr}\left(\sigma(x,\pi)\Gamma_{n,t}(x,\pi)\right)d\gamma_t(x,\pi)dt = \\\int_{\R}\theta(t)\int_{\Eng\times\widehat{\Bar{\Heis}}}{\rm Tr}\left((\pi(V)\cdot V)\sigma(x,\pi)\Gamma_{n,t}(x,\pi)\right)d\gamma_t(x,\pi)dt.
\end{multline*}
We have reduced the study of the propagation of the restriction of the semiclassical measure to the non-generic representation to the case of the Heisenberg group $\Bar{\Heis}$. Point (iii) of Theorem \ref{thm:prop_measures} then
follows from \cite[Theorem 2.10]{FF3}.

\subsubsection{Proof of Proposition \ref{prop:dispersionsupp}}

As the scheme of the proof is similar as the one presented above, we investigate here the properties of the time-averaged semiclassical measures
of a family of solutions, still denoted by $(\psi^\hbar(t))_{\hbar>0}$, for the Schrödinger equation with time-scale $\tau=2$:
\begin{equation*}
    \forall t\in\R,\ \psi^\hbar(t) = e^{it\Delta_\Eng}\psi_{0}^\hbar,
\end{equation*}
for $(\psi_{0}^\hbar)_{\hbar>0}$ a bounded family in $L^2(\Eng)$. We still denote by $\ell_\hbar$, $\hbar>0$, the following quantities
\begin{equation*}
    \forall \theta\in C_{c}(\R_t),\forall \sigma\in\cA_{0}(\Eng),\ \ell_{\hbar}(\theta,\sigma)= \int_{\R}\theta(t)\left(\Op_\hbar(\sigma)\psi^\hbar(t),\psi^\hbar(t)\right)_{L^2(\Eng)}dt.
\end{equation*}
Let $t\mapsto \Gamma_t d\gamma_t$ a time-averaged semiclassical measure for $(\psi^\hbar(t))_{\hbar>0}$ and we once again denote by $\ell_\infty$ denote the associated linear form.

The fact that $\psi^\hbar(t)$ is solution to \eqref{eq:SchEngel} for $\tau=2$ and the symbolic calculus now give us that for $\theta\in C_{c}^\infty(\R_t)$ and $\sigma\in\cA_0$,
\begin{equation*}
    -i\hbar^2 \ell_\hbar(\theta',\sigma) = \ell_\hbar(\theta,[\sigma,H]) + 2\hbar \ell_\hbar(\theta,(\pi(V)\cdot V)\sigma) + \hbar^2\ell_\hbar(\theta,\Delta_\Eng \sigma).
\end{equation*}

Thus for a general symbol $\sigma\in\cA_0$, we obtain once again at the limit $\hbar \rightarrow 0$, that
\begin{equation*}
    \ell_\infty(\theta,[\sigma,H]) = 0.
\end{equation*}

From this we deduce that once again, for almost all $t\in\R$, the operator $\Gamma_t$ admits the following spectral decomposition
\begin{equation*}
    \Gamma_t(x,\pi^{\delta,\beta})=\sum_{n\in\N } \Gamma_{n,t}(x,\pi^{\delta,\beta})\;\;{ with}\;\; \Gamma_{n,t}(x,\pi^{\delta,\beta}):= \Pi_n(\pi^{\delta,\beta})\Gamma_t(x,\pi^{\delta,\beta}) \Pi_n(\pi^{\delta,\beta}).
\end{equation*}

Now, using a symbol $\sigma\in\cB_0(\Eng)$, i.e commuting with $H$, and uniformly 0 in a neighborhood of $\{\delta=0\}$, we obtain
\begin{equation*}
    \ell_\hbar(\theta,(\pi(V)\cdot V)\sigma) = \cO(\hbar).
\end{equation*}
Using the decomposition in $H$-diagonal and $H$-off-diagonal parts of $(\pi(V)\cdot V)\sigma$ and assuming we have chosen $\sigma$ such that we have $\Pi_{n}\sigma \Pi_n = \sigma$ 
for some $n\in\N_{>0}$, we can write 
\begin{equation*}
    \ell_\hbar(\theta,\partial_\beta \mu_n(\delta,\beta)X_2 \sigma) = \cO(\hbar).
\end{equation*}
Taking the limit $\hbar \rightarrow 0$, we obtain that for almost all time $t\in\R_t$, we have (in a weak sense)
\begin{equation*}
    \partial_\beta \mu_n(\delta,\beta)X_2 \gamma_{n,t} = 0,
\end{equation*} 
where $\gamma_{n,t}= \Gamma_{n,t} d\gamma_t$.

As we are in a non-compact setting and the flow $X_2$ sends us to infinity, and as for $n\in\N_{>0}$, the operator-valued measure $\gamma_{n,t}$ is of finite mass,
we deduce that these measures are necessarily zero where the flows $\partial_\beta \mu_n(\delta,\beta)X_2$, $n\in\N_{>0}$, are non-zero. We have thus proved the result of Proposition \ref{prop:dispersionsupp}.

\subsection{Proof of Theorem \ref{thm:prop_wave_packet}}

This section is dedicated to the study of the semiclassical Schrödinger equation for the special case of the initial data being
a wave packet:
\begin{equation*}
    \psi_{0}^{\hbar} = WP_{x_0,\pi^{\delta_0,\beta_0}}^{\hbar}(a,\Phi_1,\Phi_2),
\end{equation*}
for $a \in \cS(\R^{2}_{e^c})$, $(x_0,\pi^{\delta_0,\beta_0}) \in \Eng \times \Enghat_{\rm gen}$ and $\Phi_1$,$\Phi_2 \in \cH_{\pi^{\delta_0,\beta_0}}^\infty$ satisfying
\begin{equation}
    H(\pi^{\delta_0,\beta_0}) \Phi_1 = \mu_{n}(\delta_0,\beta_0) \Phi_1,
\end{equation}
i.e $\Phi_1$ is an eigenvector for the $n$-th eigenvalue, $n\in\N_{>0}$. In what follows, we will simply write $\pi^0$ for the representation $\pi^{\delta_0,\beta_0}$.
Moreover, as the subLaplacian is invariant by translation for the Engel group law, we can assume that $x_0 = 0$.

The idea of the proof is to consider the ansatz
\begin{equation*}
    \varphi^{\hbar}(t,x) = \hbar^{-Q/4} e^{i\frac{S(t)}{\hbar}} \left(a^{\hbar}(t,\hbar^{-1/2}\cdot (x(t)^{-1}x))\Phi_1, \pi^0(\hbar^{-1}\cdot x)^{*}
    \Phi_2\right),
\end{equation*}
where $a^{\hbar}(t,x) \sim a(t,x){\rm Id} + \hbar^{1/2}\sigma_1 + \hbar \sigma_2 + \dots$ is valued in the space of operators on $\cH_{\pi^0}$, $S$ is real-valued and $x(t)$ is given by
\begin{equation*}
    x(0) = 0 \;\;\mbox{and}\;\;\Dot{x}(t) = \partial_{\beta}\mu_n(\delta_0,\beta_0)X_{2}(x(t)),\  t \in\R.
\end{equation*}
Plugging this ansatz into Equation \eqref{eq:Sch_engel}, we are led to solve a cascade of equations appearing at each power of $\hbar^{1/2}$. More precisely, as we want to approximate the real solution in $L^2(\Eng)$ up to an error of order $\cO(\hbar^{1/2})$, it is enough to 
construct an approximate solution to the following equation 
\begin{equation}
    \label{eq:approx_schrodinger}
    i\hbar\partial_{t}\varphi^{\hbar} + \hbar^{2}\Delta_{\Eng}\varphi^{\hbar} = O_{L^2(\Eng)}(\hbar^{3/2}).
\end{equation}
Then an energy estimate allows us to conclude the approximation is of order $O(\hbar^{1/2})$, as desired.

First, observe that for $f\in\cS(\Eng)$, we have
\begin{equation*}
    \frac{d}{dt}f(x(t)^{-1} x) = \frac{d}{dt}f\left(\Exp_{\Eng}(-t\partial_{\beta}\mu_{n}(\delta_0,\beta_0) X_2) x\right) = -\partial_{\beta}\mu_{n}(\delta_0,\beta_0)\frac{d}{ds}f\left(\Exp_{\Eng}(s X_2) x\right),
\end{equation*}
and we write $\Tilde{X}_2$ the vector field defined by the last term:
\begin{equation*}
    \frac{d}{ds}f\left(\Exp_{\Eng}(s X_2) x\right) = \Tilde{X}_2 f(x).
\end{equation*}
The vector field $\Tilde{X}_2$ corresponds to the right-invariant extension of the Lie alegbra element $X_2\in\fg_\Eng$ that we have identified up to now with its left-invariant extension, see Equation \eqref{eq:leftinv}.
It is possible to compute this vector field in semidirect coordinates: we obtain 
\begin{equation*}
    \Tilde{X}_2 = \partial_2 - x_1 \partial_3.
\end{equation*}

Our analysis mainly rely on the following two following formulae:
\begin{equation*}
    \begin{aligned}
    i\hbar\partial_t \varphi^{\hbar}(t,x) =& -\Dot{S}(t)\varphi^{\hbar}(t,x) - i\hbar^{1/2} \partial_{\beta}\mu_{n}(\delta_0,\beta_0)\left(\Tilde{X}_{2}a^{\hbar}(t,\hbar^{-1/2}\cdot (x(t)^{-1}x))\Phi_1,\pi^0(\hbar^{-1}\cdot x)^{*} \Phi_2\right) \hbar^{-Q/4}e^{i\frac{S(t)}{\hbar}}\\
    &+ i \hbar \left(\partial_t a^{\hbar}(t,\hbar^{-1/2}\cdot (x(t)^{-1}x))\Phi_1,\pi^0(\hbar^{-1}\cdot x)^{*}\Phi_2\right) \hbar^{-Q/4}e^{i\frac{S(t)}{\hbar}},
    \end{aligned}
\end{equation*}
and
\begin{equation*}
    \begin{aligned}
    \hbar^2 \Delta_{\Eng}\varphi^{\hbar}(t,x)& = -\left(a^{\hbar}(t,\hbar^{-1/2}\cdot (x(t)^{-1}x))\Phi_1,H(\pi^0) \pi^0(\hbar^{-1}\cdot x)^{*} \Phi_2\right) \hbar^{-Q/4}e^{i\frac{S(t)}{\hbar}} \\
    &+ 2\hbar^{1/2}\left((\pi^0(V)\cdot V)a^{\hbar}(t,\hbar^{-1/2}\cdot (x(t)^{-1}x))\Phi_1,\pi^0(\hbar^{-1}\cdot x)^{*}\Phi_2\right) \hbar^{-Q/4}e^{i\frac{S(t)}{\hbar}}\\
    &+ \hbar\left(\Delta_{\Eng}a^{\hbar}(t,\hbar^{-1/2}\cdot (x(t)^{-1}x))\Phi_1,\pi^0(\hbar^{-1}\cdot x)^{*}\Phi_2\right) \hbar^{-Q/4}e^{i\frac{S(t)}{\hbar}}.
    \end{aligned}
\end{equation*}

We will also make the assumption that for all $t\in\R$, $a(t,\cdot)$ is in $\cS(\R^{2}_{e^c})$, i.e in semidirect coordinates $a(t,\cdot)$ depends only of the variables $x_2$ and $x_4$. 
Naturally we will need to check at the end of our analysis that this assumption is indeed realized.

\subsubsection{Error term of order $O(1)$}
The first terms that one observes in Equation \eqref{eq:approx_schrodinger} must satisfy
\begin{equation*}
    - \Dot{S}(t)a(t,\hbar^{-1/2}\cdot (x(t)^{-1}x))\Phi_1 - a(t,\hbar^{-1/2}\cdot (x(t)^{-1}x))H(\pi^0)\Phi_1 = 0.
\end{equation*}
By the condition on $\Phi_1$, this leads to taking the phase $S$ as follows:
\begin{equation*}
    S(t) = -\mu_{n}(\delta_0,\beta_0)t.
\end{equation*}

\subsubsection{Error term of order $O(\hbar^{1/2})$} 
The second equation coming from Equation \eqref{eq:approx_schrodinger} and that one must solve is
\begin{multline*}
    \mu_{n}(\delta_0,\beta_0)\sigma_{1}(t,\hbar^{-1/2}\cdot (x(t)^{-1}x))\Phi_1
    - i \partial_{\beta}\mu_{n}(\delta_0,\beta_0)\Tilde{X}_{2}a(t,\hbar^{-1/2}\cdot (x(t)^{-1}x))\Phi_1 = \\
    H(\pi^0)\sigma_{1}(t,\hbar^{-1/2}\cdot (x(t)^{-1}x))\Phi_1
    - 2(\pi^{0}(V) \cdot V) a(t,\hbar^{-1/2}\cdot (x(t)^{-1}x))\Phi_1.
\end{multline*}

This equation can be rewritten as follows:
\begin{equation*}
    [\sigma_{1}(t,y),H(\pi^0)]\Pi_{n}(\pi^{0}) =
    \left(i\partial_{\beta}\mu_{n}\Tilde{X}_{2}a(t,y){\rm Id}
     - 2(\pi^{0}(V) \cdot V)a(t,y)\right)\Pi_{n}(\pi^{0}).
\end{equation*}

Now by assumption on $a(t,y)$, as it does not depend on the variable $x_3$, we have for all $t\in\R$
\begin{equation*}
    \Tilde{X}_2 a(t,\cdot) = X_2 a(t,\cdot).
\end{equation*}

By Lemma \ref{lem:first_diag_part}, it is thus enough to choose $\sigma_1$ to be 
\begin{equation}
    \label{def:sigma_1}
    \sigma_{1}(t,x) = \frac{i}{\delta_0} X_{1}a(t,x)\pi^{\delta_0,\beta_0}(X_{3})\Pi_{n}(\pi^0) - iX_{2}a(t,x)\partial_{\beta}\Pi_{n}(\pi^0)\Pi_{n}(\pi^0).
\end{equation}

\subsubsection{Error term of order $O(\hbar)$} 
\label{subsubsect:wpOhbar}
To obtain a solution to the approximate equation \eqref{eq:approx_schrodinger}, we need to compute one last corrector term satifisfying
\begin{multline*}
    \mu_{n}(\delta_0,\beta_0)\sigma_{2}(t,y)\Phi_1
    -i\partial_{\beta}\mu_{n}(\delta_0,\beta_0)\Tilde{X}_{2}\sigma_{1}(t,y))\Phi_1
    +i\partial_t a(t,y)\Phi_1 \\
    = H(\pi^0)\sigma_{2}(t,y)\Phi_1 
    - 2(\pi^{0}(V) \cdot V)\sigma_{1}(t,y)\Phi_1
    -\Delta_{\Eng}a(t,y)\Phi_1.
\end{multline*}

As previously, this equation can be rewritten simply as
\begin{multline}
    \label{eq:sigma_2}
    [\sigma_{2}(t,y),H(\pi^0)]\Pi_{n}(\pi^{0}) = (i\partial_{\beta}\mu_{n}\Tilde{X}_{2}\sigma_{1}(t,y) - 2(\pi^{0}(V) \cdot V)\sigma_{1}(t,y))\Pi_{n}(\pi^{0})\\ - (i\partial_{t}a + \Delta_{\Eng}a)(t,y)\Pi_{n}(\pi^{0}).
\end{multline}

Denoting $R$ the operator defined by the right-hand side term, we are to compute its diagonal part with respect to $\Pi_n(\pi^{0})$, i.e the operator $\Pi_n(\pi^{0}) R \Pi_n(\pi^{0})$. 
This will give the partial differential equation that the time-dependent $a$ need to satisfy as this operator needs to be zero.
Determining $\sigma_2$ will then follow easily as one would write 
\begin{equation*}
    [\sigma_{2},H(\pi^0)]\Pi_{n}(\pi^{0}) = (\mu_{n}(\delta_0,\beta_0){\rm Id}-H(\pi^0))\Pi_{n}^{\perp}(\pi^0) \sigma_{2} = \Pi_{n}^{\perp}(\pi^0) R \Pi_{n}(\pi^0),
\end{equation*}
where $\Pi_{n}^{\perp}(\pi^0) = {\rm Id}-\Pi_{n}(\pi^0)$. As $\mu_{n}(\delta_0,\beta_0){\rm Id}-H(\pi^0)$ is by definition invertible on ${\rm Ran}(\Pi_{n}^{\perp}(\pi^0))$, 
we can rewrite the last equation simply as
\begin{equation*}
    \sigma_{2} = \left(\mu_{n}(\delta_0,\beta_0){\rm Id}-H(\pi^0)\right)^{-1}\Pi_{n}^{\perp}(\pi^0) R \Pi_{n}(\pi^0).
\end{equation*}

We now proceed to determine the diagonal part term by term.

\vspace{0.5cm}
{\bf Term $i\partial_{\beta}\mu_{n}(\delta_0,\beta_0)\Tilde{X}_{2}\sigma_{1}$:} By Equation \eqref{def:sigma_1} we have
\begin{equation*}
    \Tilde{X}_{2}\sigma_{1}(t,y) = \frac{i}{\delta_0} \Tilde{X}_{2}X_{1}a(t,y)\pi^{0}(X_{3}) - i\Tilde{X}_2 X_{2}a(t,y)\partial_{\beta}\Pi_{n}(\pi^{0})\Pi_{n}(\pi^{0}).
\end{equation*}
We have already observed that the operator $\partial_{\beta}\Pi_{n}$ has no diagonal part.
However, it is not the case for $\pi^{0}(X_{3})$. We obtain
\begin{equation*}
    i\partial_{\beta}\mu_{n}(\delta_0,\beta_0)\Pi_{n}(\pi^{0})\Tilde{X}_{2}\sigma_{1}(t,y)\Pi_{n}(\pi^{0}) = -\frac{1}{\delta_0}\partial_{\beta}\mu_{n}(\delta_0,\beta)\Tilde{X}_{2}X_{1}a(t,y)\Pi_{n}(\pi^{0})\pi^{0}(X_{3})\Pi_{n}(\pi^{0}).
\end{equation*}

\vspace{0.5cm}
{\bf Term $2\pi^{0}(X_2)X_{2}\sigma_1$:} By Equation \ref{def:sigma_1}, we have
\begin{equation*}
    2X_{2} \pi^{0}(X_2)\sigma_1(t,y) = \frac{2i}{\delta_0}X_{2}X_{1}a(t,y) \pi^{0}(X_{2}X_{3}) - 2iX_{2}^{2}a(t,y)\pi^{0}(X_2)\partial_{\beta}\Pi_{n}(\pi^{0})\Pi_{n}(\pi^{0}).
\end{equation*}

The following lemma allows us to compute the diagonal part of this operator.

\begin{lemma}
\label{lem:diagpartsigma1_1}
    We have
    \begin{equation}
        \pi^{0}(X_{2}X_{3}) = \left[-\frac{1}{2}\pi^{0}(X_1),H(\pi^0)\right],
    \end{equation}
    and
    \begin{equation}
        \Pi_{n}(\pi^0)\pi^{0}(X_2)\partial_{\beta}\Pi_{n}(\pi^0)\Pi_{n}(\pi^0) = \frac{i}{2}\left(\frac{1}{2}\partial_{\beta}^{2}\mu_{n}(\delta_0,\beta_0) - 1\right)\Pi_{n}(\pi^0).
    \end{equation}
\end{lemma}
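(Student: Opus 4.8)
The plan is to establish both identities of Lemma~\ref{lem:diagpartsigma1_1} by elementary algebra: the first inside the universal enveloping algebra $\fU(\fg_\Eng)$, and the second by differentiating once more in $\beta$ the Feynman--Hellmann relation of Lemma~\ref{lem:FH1}. For the first identity, I would compute the bracket $[X_1,-\Delta_\Eng]$ in $\fU(\fg_\Eng)$: one has $[X_1,X_1^2]=0$, and using $[X_1,X_2]=X_3$ together with the fact that $X_2$ and $X_3$ commute, $[X_1,X_2^2]=X_3X_2+X_2X_3=2X_2X_3$, so that $X_2X_3=-\tfrac12[X_1,-\Delta_\Eng]$ in $\fU(\fg_\Eng)$. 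Applying the infinitesimal representation $\pi^0$, which is an algebra morphism on $\fU(\fg_\Eng)$ sending $-\Delta_\Eng$ to $H(\pi^0)$, gives $\pi^0(X_2X_3)=[-\tfrac12\pi^0(X_1),H(\pi^0)]$. Equivalently, one can read this directly off \eqref{eq_piDB}, comparing $\pi^0(X_2X_3)=-\delta_0\xi(\beta_0+\tfrac{\delta_0}{2}\xi^2)$ with $-\tfrac12\,\partial_\xi\big((\beta_0+\tfrac{\delta_0}{2}\xi^2)^2\big)$ viewed as a multiplication operator.

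For the second identity, the two ingredients are that $\pi^0(X_2)=\tfrac{i}{2}\partial_\beta H(\pi^0)$ (as in the remark following Lemma~\ref{lem:FH1}) and that $\partial_\beta^2 H(\pi^{\delta,\beta})=2\,{\rm Id}$. Starting from Lemma~\ref{lem:FH1}, namely $(H-\mu_n\,{\rm Id})\partial_\beta\Pi_n=(\partial_\beta\mu_n\,{\rm Id}-\partial_\beta H)\Pi_n$, I would differentiate once more in $\beta$ (legitimate since $(\Tilde{H}(\nu))_{\nu\in\R}$ is an analytic family of type B, so eigenvalues and spectral projectors depend analytically on the parameters) and substitute $\partial_\beta^2 H=2\,{\rm Id}$ to obtain, after collecting the terms proportional to $\partial_\beta\Pi_n$,
\begin{equation*}
2(\partial_\beta H-\partial_\beta\mu_n\,{\rm Id})\partial_\beta\Pi_n+(H-\mu_n\,{\rm Id})\partial_\beta^2\Pi_n=(\partial_\beta^2\mu_n-2)\Pi_n .
\end{equation*}
Then I multiply this relation by $\Pi_n$ on both the left and the right. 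The term $\Pi_n(H-\mu_n\,{\rm Id})\partial_\beta^2\Pi_n\,\Pi_n$ vanishes because $\Pi_n$ commutes with $H$ and $\Pi_n(H-\mu_n\,{\rm Id})=0$; the term $\partial_\beta\mu_n\,\Pi_n(\partial_\beta\Pi_n)\Pi_n$ vanishes because differentiating $\Pi_n^2=\Pi_n$ yields $\Pi_n(\partial_\beta\Pi_n)\Pi_n=0$. What survives is $2\,\Pi_n(\partial_\beta H)(\partial_\beta\Pi_n)\Pi_n=(\partial_\beta^2\mu_n-2)\Pi_n$, hence $\Pi_n(\partial_\beta H)(\partial_\beta\Pi_n)\Pi_n=\big(\tfrac12\partial_\beta^2\mu_n-1\big)\Pi_n$. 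Multiplying by $\tfrac{i}{2}$ and using $\pi^0(X_2)=\tfrac{i}{2}\partial_\beta H(\pi^0)$ produces exactly $\Pi_n(\pi^0)\pi^0(X_2)\partial_\beta\Pi_n(\pi^0)\Pi_n(\pi^0)=\tfrac{i}{2}\big(\tfrac12\partial_\beta^2\mu_n(\delta_0,\beta_0)-1\big)\Pi_n(\pi^0)$.

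The argument is essentially computational; the only point requiring care is the legitimacy of differentiating these operator identities in $\beta$ and of the sandwiching manipulations on the dense, $\pi^0$-invariant space of smooth vectors of $\pi^0$, which is guaranteed by the analytic perturbation theory already invoked for the Montgomery family. I do not expect a genuine obstacle here.
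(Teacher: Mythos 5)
Your proposal is correct and follows essentially the same route as the paper: the first identity comes from the computation $[X_1,X_2^2]=2X_2X_3$ in the enveloping algebra (using $[X_2,X_3]=0$), and the second from differentiating the Feynman--Hellmann relation of Lemma~\ref{lem:FH1} once more in $\beta$, substituting $\partial_\beta^2H=2\,{\rm Id}$, and sandwiching with $\Pi_n$ so that the $(H-\mu_n)\partial_\beta^2\Pi_n$ and $\partial_\beta\mu_n\,\partial_\beta\Pi_n$ terms drop out via $\Pi_n(H-\mu_n)=0$ and $\Pi_n(\partial_\beta\Pi_n)\Pi_n=0$. The only cosmetic difference is that you carry $\partial_\beta H$ through and convert to $\pi^0(X_2)=\tfrac{i}{2}\partial_\beta H(\pi^0)$ at the end, whereas the paper substitutes $-\partial_\beta H=2i\pi(X_2)$ from the start; the arithmetic agrees.
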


\begin{proof}
    The first equation results directly from the following computation in the enveloping algebra of $\fg_\Eng$:
    \begin{equation*}
        X_{2}X_{3} = \left[-\frac{1}{2}X_{1}, -X_{1}^2 - X_{2}^2\right].
    \end{equation*}
    For the second equation, recall that by Lemma \ref{lem:FH1} we have the following Feynmann-Hellmann formula:
    \begin{equation*}
        H(\pi^{\delta,\beta})\partial_{\beta}\Pi_{n}(\pi^{\delta,\beta}) - \partial_{\beta}\mu_{n}(\delta,\beta)\Pi_{n}(\pi^{\delta,\beta}) = \mu_{n}(\delta,\beta)\partial_{\beta}\Pi_{n}(\pi^{\delta,\beta}) + 2i\pi^{\delta,\beta}(X_2)\Pi_{n}(\pi^{\delta,\beta}),
    \end{equation*}
    holding for all $(\delta,\beta)\in\fg_{3}^*\setminus\{0\}\times\fr_{2}^*$. Deriving with respect to $\beta$, we obtain
    \begin{equation*}
        H \partial_{\beta}^{2}\Pi_{n} - 4i\pi^{\delta,\beta}(X_2)\partial_{\beta}\Pi_{n} + 2\Pi_{n} = \partial_{\beta}^{2}\mu_{n}\Pi_{n} + 2\partial_{\beta}\mu_{n}\partial_{\beta}\Pi_{n} + \mu_{n}\partial_{\beta}^{2}\Pi_{n}.
    \end{equation*}
    By evalutating at $\pi^0$ and multiplying on each side by $\Pi_{n}(\pi^0)$, we have
    \begin{equation*}
        \Pi_{n}(\pi^0)\pi^{0}(X_2)\partial_{\beta}\Pi_{n}(\pi^0)\Pi_{n}(\pi^0) = \frac{i}{2}\left(\frac{1}{2}\partial_{\beta}^{2}\mu_{n}(\delta_0,\beta_0) - 1\right)\Pi_{n}(\pi^0).
    \end{equation*}
\end{proof}

The diagonal part of $2X_{2} \pi^{0}(X_2)\sigma_1(t,y)$ is then given by 
\begin{equation*}
    \left(\frac{1}{2}\partial_{\beta}^{2}\mu_{n}(\delta_0,\beta_0) - 1\right)X_{2}^{2}a(t,y)\Pi_{n}(\pi^0).
\end{equation*}

\vspace{0.5cm}
{\bf Term $2\pi^{0}(X_1) X_{1} \sigma_1$:} By definition we have
\begin{equation*}
    2 \pi^{0}(X_1) X_{1}\sigma_1(t,y) = \frac{2i}{\delta_0} X_{1}^{2}a(t,y)\pi^{0}(X_{1}X_{3}) - 2iX_{1}X_{2}a(t,y)\pi^{0}(X_1)\partial_{\beta}\Pi_{n}(\pi^{0})\Pi_{n}(\pi^{0}).
\end{equation*}

\begin{lemma}
    \label{lem:diagpartsigma1_2} 
    We have
    \begin{equation*}
        \pi^{0}(X_{1}X_{3}) = \frac{i\delta_{0}}{2}{\rm Id} + \left[\frac{1}{4i\delta_0}\pi^{0}(X_{3}^2), H(\pi^0)\right],
    \end{equation*}
    and
    \begin{equation*}
        \Pi_{n}(\pi^{0})\pi^{0}(X_1)\partial_{\beta}\Pi_{n}(\pi^{0})\Pi_{n}(\pi^{0}) = \frac{1}{2i\delta_{0}}\partial_{\beta}\mu_{n}(\delta_0,\beta_0)\Pi_{n}(\pi^{0})\pi^{0}(X_{3})\Pi_{n}(\pi^{0}).
    \end{equation*}
\end{lemma}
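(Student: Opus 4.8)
The plan is to prove the two displayed identities separately, each time by first establishing an algebraic relation in the universal enveloping algebra $\fU(\fg_{\Eng})$ and then applying the infinitesimal representation $\pi^0=\pi^{\delta_0,\beta_0}$, combined with the Feynman--Hellmann formula of Lemma \ref{lem:FH1}.

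For the first identity, I would check in $\fU(\fg_{\Eng})$ that
\[
    X_4 X_1 X_3 = \tfrac14\left[X_3^2,\,-X_1^2-X_2^2\right] + \tfrac12 X_4^2 .
\]
Indeed, $[X_3,X_1^2]=-2X_1X_4$ since $[X_3,X_1]=-X_4$ and $X_4$ is central, hence $[X_3^2,X_1^2]=-2X_4(X_3X_1+X_1X_3)=-2X_4(2X_1X_3-X_4)$, while $[X_3^2,X_2^2]=0$ because $X_2$ commutes with both $X_3$ and $X_4$. Applying $\pi^0$, using $\pi^0(X_4)=i\delta_0$ and $\pi^0(-X_1^2-X_2^2)=H(\pi^0)$, yields $i\delta_0\,\pi^0(X_1X_3)=\tfrac14[\pi^0(X_3^2),H(\pi^0)]-\tfrac{\delta_0^2}{2}$, which is exactly the claimed formula after dividing by $i\delta_0$ and noting $-\delta_0/(2i)=i\delta_0/2$.

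For the second identity, I would start from relation \eqref{eq:diag_partX1}, $\pi^0(X_1)=\frac{1}{2i\delta_0}[\pi^0(X_3),H(\pi^0)]$, so that
\[
    \Pi_n(\pi^0)\,\pi^0(X_1)\,\partial_\beta\Pi_n(\pi^0)\,\Pi_n(\pi^0)
    =\frac{1}{2i\delta_0}\,\Pi_n(\pi^0)\big(\pi^0(X_3)H(\pi^0)-H(\pi^0)\pi^0(X_3)\big)\partial_\beta\Pi_n(\pi^0)\,\Pi_n(\pi^0).
\]
On the term with $H(\pi^0)$ on the left I use $\Pi_n(\pi^0)H(\pi^0)=\mu_n(\delta_0,\beta_0)\Pi_n(\pi^0)$; on the other term I use Lemma \ref{lem:FH1} in the form $H(\pi^0)\partial_\beta\Pi_n(\pi^0)\Pi_n(\pi^0)=\mu_n\,\partial_\beta\Pi_n(\pi^0)\Pi_n(\pi^0)+\big(\partial_\beta\mu_n\,{\rm Id}-\partial_\beta H(\pi^0)\big)\Pi_n(\pi^0)$. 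The two contributions proportional to $\mu_n\,\Pi_n(\pi^0)\pi^0(X_3)\partial_\beta\Pi_n(\pi^0)\Pi_n(\pi^0)$ cancel, leaving $\frac{1}{2i\delta_0}\Pi_n(\pi^0)\pi^0(X_3)\big(\partial_\beta\mu_n\,{\rm Id}-\partial_\beta H(\pi^0)\big)\Pi_n(\pi^0)$. Finally, by the remark following Lemma \ref{lem:FH1} one has $\partial_\beta H(\pi^0)=-2i\,\pi^0(X_2)$, and since $X_3X_2=X_2X_3$ in $\fU(\fg_{\Eng})$, the first equation of Lemma \ref{lem:diagpartsigma1_1} exhibits $\pi^0(X_3)\pi^0(X_2)=\pi^0(X_2X_3)$ as a commutator with $H(\pi^0)$, hence with vanishing $\Pi_n$-diagonal part; thus $\Pi_n(\pi^0)\pi^0(X_3)\partial_\beta H(\pi^0)\Pi_n(\pi^0)=0$, and only $\frac{1}{2i\delta_0}\partial_\beta\mu_n(\delta_0,\beta_0)\,\Pi_n(\pi^0)\pi^0(X_3)\Pi_n(\pi^0)$ survives.

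Both computations are routine once the bracket relations and the Feynman--Hellmann identity are in place; the only point to watch is the bookkeeping of left/right multiplications by $\Pi_n(\pi^0)$, so that the $\mu_n$-terms are paired so as to cancel. I do not expect a substantial obstacle.
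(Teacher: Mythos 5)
Your proof is correct and follows essentially the same route as the paper: the first identity comes from the enveloping-algebra computation $[X_3^2,-X_1^2-X_2^2]=4X_1X_3X_4-2X_4^2$ evaluated at $\pi^0$, and the second from writing $\pi^0(X_1)$ as the commutator $\frac{1}{2i\delta_0}[\pi^0(X_3),H(\pi^0)]$, applying the Feynman--Hellmann identity, and killing the $\Pi_n\pi^0(X_3X_2)\Pi_n$ term via Lemma \ref{lem:diagpartsigma1_1}. The bookkeeping of the $\mu_n$-terms and the sign conventions all check out.
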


\begin{proof}
    The first equation follows the next computation in the enveloping algebra of $\fg_\Eng$:
    \begin{equation*}
        \left[X_{3}^2,-X_{1}^2- X_{2}^2\right] = 4X_{1}X_{3}X_4 - 2X_{4}^2.
    \end{equation*}
    The second equation is less obvious and need to use the expression of $\pi^{0}(X_1)$ found in Lemma \ref{lem:first_diag_part}:
    \begin{equation*}
        \begin{aligned}
        \Pi_{n}\pi^{0}(X_1)\partial_{\beta}\Pi_{n}\Pi_{n} &=
        \frac{1}{2i\delta_0}\left(\Pi_{n}\pi^{0}(X_{3})H(\pi^{0})\partial_{\beta}\Pi_{n}\Pi_{n}-\mu_{n}(\delta_0,\beta_0)\Pi_{n}\pi^{0}(X_{3})\partial_{\beta}\Pi_{n}\Pi_{n}\right)\\
        &= \frac{1}{2i\delta_0}\Pi_{n}\pi^{0}(X_{3})(H(\pi^{0})-\mu_{n}(\delta_0,\beta_0){\rm Id})\partial_{\beta}\Pi_{n}\Pi_{n}.
        \end{aligned}
        \end{equation*}
    By Feynman-Hellman formula, this gives
    \begin{equation*}
         \Pi_{n}\pi^{0}(X_1)\partial_{\beta}\Pi_{n}\Pi_{n} =
         \frac{1}{2i\delta_0}\Pi_{n}\pi^{0}(X_{3})(2i\pi^{0}(X_2) + \partial_{\beta}\mu_{n}(\delta_0,\beta_0){\rm Id})\Pi_{n}.
    \end{equation*}
    By Lemma \ref{lem:diagpartsigma1_1} we can write $\pi^{\delta_0,\beta_0}(X_{3}X_2)$ as a commutator with $H(\delta_0,\beta_0)$ and thus it has no diagonal part. The remaining term
    gives the second equation of the lemma.
\end{proof}

The diagonal part of $2X_{1} \pi^{0}(X_1)\sigma_1(t,y)$ is then given by
\begin{equation*}
    -X_{1}^{2}a(t,y)\Pi_{n}(\pi^0) - \frac{1}{\delta_{0}}\partial_{\beta}\mu_{n}(\delta_0,\beta_0)X_{1}X_{2}a(t,y)\Pi_{n}(\pi^{0})\pi^{0}(X_{3})\Pi_{n}(\pi^{0}).
\end{equation*}

\vspace{0.5cm}
{\bf Summing all terms:} The sum of the diagonal parts of each term in the righ-hand side of Equation \eqref{eq:sigma_2} is 
\begin{equation*}
    \begin{aligned}
    &-\frac{1}{\delta_0} \partial_{\beta}\mu_{n}\Tilde{X}_{2}X_{1}a(t,y)\Pi_{n}\pi^{\delta_0,\beta_0}(X_{3})\Pi_{n} \\
    &+X_{1}^{2}a(t,y)\Pi_{n} + \frac{1}{\delta_0}\partial_{\beta}\mu_{n}X_{1}X_{2}a(t,y)\Pi_{n}\pi^{0}(X_{3})\Pi_{n}
    - \left(\frac{1}{2}\partial_{\beta}^{2}\mu_{n}-1\right)X_{2}^{2}a(t,y)\Pi_{n}\\
    &- (i\partial_{t}a + \Delta_{\Eng}a)(t,y)\Pi_{n}.
    \end{aligned}
\end{equation*}
As this sums need to be equal to zero, after regrouping some terms together we obtain
\begin{equation*}
    \left(i\partial_{t}a + \frac{1}{2}\partial_{\beta}^{2}\mu_{n}(\delta_0,\beta_0)X_{2}^{2}a\right)\Pi_n  = \frac{1}{\delta_0}\partial_{\beta}\mu_{n}\left(X_{1}X_{2}-\Tilde{X}_2 X_1\right)a \Pi_{n}\pi^{0}(X_{3})\Pi_{n}.
\end{equation*}
As we have $\Tilde{X}_2 = X_2 - x_1 \partial_3$, we compute in semidirect coordinates that
\begin{equation*}
    \begin{aligned}
        X_{1}X_{2}-\Tilde{X}_2 X_1 &= [X_1,X_2] -x_1 \partial_3 X_1 \\
        &= X_3 - x_1 X_1 \partial_3  + x_1[X_1, \partial_3] \\
        &= X_3 - x_1 X_1 \partial_3 -\frac{1}{2}x_1 \partial_4\\
        &= \partial_{3}- x_1 X_1 \partial_3.
    \end{aligned}
\end{equation*}
Since we made the assumption that at any time $t\in\R$, we have $\partial_3 a(t,y) = 0$, we deduce that the time-dependent profile $a$ must satisfy the following partial differential equation:
\begin{equation}
    \label{eq:profile_a}
    i\partial_{t}a + \frac{1}{2}\partial_{\beta}^{2}\mu_{n}(\delta_0,\beta_0)X_{2}^{2}a = 0.
\end{equation}

We easily check that, since $a_0$ is in $\cS(\R^{2}_{e^c})$, so does the solution of Equation \eqref{eq:profile_a} at any time $t\in\R$.
The analysis is thus complete and we have constructed a solution to the approximate semiclassical Schrödinger equation \eqref{eq:approx_schrodinger}.

%%%%%%%%%%%%%%%%%%%%%%%%%%%%%%%%%%%%%%%%%%%%%%%%%%%%%%%%%%%%%%%%%%%%%%%%%%%%%%%%%%%%%%%%%%%%%%%%%%%%%%%%%%%%%%%%%%%%%%%%%%%%%%%%%%%%%%%%%%%%%%%%%%%%%%%%%%%%%%%%
%%%%%%%%%%%%%%%%%%%%%%%%%%%%%%%%%%%%%%%%%%%%%%%%%%%%%%%%%%%%%%%%%%%%%%%%%%%%%%%%%%%%%%%%%%%%%%%%%%%%%%%%%%%%%%%%%%%%%%%%%%%%%%%%%%%%%%%%%%%%%%%%%%%%%%%%%%%%%%%%
%%%%%%%%%%%%%%%%%%%%%%%%%%%%%%%%%%%%%%%%%%%%%%%%%%%%%%%%%%%%%%%%%%%%%%%%%%%%%%%%%%%%%%%%%%%%%%%%%%%%%%%%%%%%%%%%%%%%%%%%%%%%%%%%%%%%%%%%%%%%%%%%%%%%%%%%%%%%%%%%

\section{Second microlocalization on conic subsets}
\label{sect:secondmicro}

This section is devoted to developping the necessary tools to investigate in more detail the properties of the time-averaged semiclassical measures associated 
with a solution to the Schrödinger equation \eqref{eq:SchEngel} for time-scale $\tau=2$. As Proposition \ref{prop:dispersionsupp} has shown, these measures 
split into the measures $(\gamma_{n,t})_{n\in\N_{>0}}$ along the spectral projectors of the symbol $H$, and we have the following support condition 

\begin{equation}
    \label{eq:suppgamma}
    {\rm supp}(\gamma_{n,t}) \subset \Eng\times\{\pi^{\delta,\beta}\,:\,\partial_{\beta}\mu_{n}(\delta,\beta)=0, (\delta,\beta)\in\fg_{3}^*\setminus\{0\}\times\fr_{2}^*\}.
\end{equation}

Using the relation between the eigenvalues of the symbol $H$ and the Montgomery operators, we can write 
\begin{equation*}
    \partial_{\beta}\mu_{n}(\delta,\beta) = \delta^{1/3}\partial_{\nu}\Tilde{\mu}_{n}(\beta\delta^{-1/3}).
\end{equation*}
Thus any point in ${\rm supp}(\gamma_{n,t})$ is contained in a set of the form
\begin{equation*}
    C_{\nu_0} = \Eng \times \{\pi^{\delta,\beta}\in \Enghat_{\rm gen}\,:\,\beta = \nu_0 \delta^{1/3}\},
\end{equation*}
for any critical point $\nu_0$ of $\Tilde{\mu}_{n}$. Such sets can be seen to be \textit{cones} for the dilation we introduced 
on $\Enghat$.

In what follows, we fix $\nu_0\in\R$ and develop a second-microlocalization on the cone $C_{\nu_0}$.

\subsection{Extended phase space and second-microlocal semiclassical calculus}
\label{subsect:2microconicsubsets}

\subsubsection{Second-microlocal observables}

To achieve such an analysis, we need to extend our phase space $\Eng\times\Enghat$ by introducing a new variable $\eta\in\R$. 
The test symbols associated to this extended phase space are defined as follows:
\begin{definition}
\label{def:sym2micro}
    The space of second-microlocal symbols $\mathcal{A}_{0}^{2}(\Eng)$ is defined as the set of field of operators $\sigma = \{\sigma(x,\pi,\eta) \in \cL(\cH_\pi)\,:\,(x,\pi,\eta) \in \Eng \times \Enghat \times \R\}$
    for which there exists a application $\kappa: (x,y;\eta) \in \Eng\times\Eng\times\R \mapsto \kappa_{x}(y;\eta)$ in $C^{\infty}(\R_\eta,C_{c}^{\infty}(\Eng_x,\cS(\Eng_y)))$, called also the {\it associated convolution kernel}, such that
    \begin{itemize}
        \item For all $(x,\pi,\eta) \in \Eng\times\Enghat\times\R$ we have, $\sigma(x,\pi,\eta) = \cF_{\Eng}(\kappa_{x}(\cdot\,;\eta))(\pi)$.
        \item There exists a compact $K\subset \Eng$ such that for all $\eta \in \R$, the map $x\in\Eng \mapsto \kappa_{x}(\cdot\,;\eta) \in \cS(\Eng)$ is 
        a smooth function compactly supported in $K$.
        \item There exist two symbols $\sigma_{\pm 1} \in \cA_{0}(\Eng)$ and $R_0 > 0$ such that, if $|\eta| > R_0$, then 
        $\sigma(x,\pi,\eta) = \sigma_{\eta/|\eta|}(x,\pi)$, for $(x,\pi,\eta)\in\Eng\times\Enghat\times\R$. 
        \item We ask that $\sigma$ vanishes uniformly on a neighborhood of $\{\delta = 0\}$, meaning there exists $\delta_0 > 0$ such that
        \begin{equation}
            \label{eq:supportdelta0}
            \forall (x,\pi^{\delta,\beta},\eta)\in\Eng\times\Enghat_{\rm gen}\times\R,\ |\delta|<\delta_0 \implies \sigma(x,\pi^{\delta,\beta},\eta) = 0.
        \end{equation}
    \end{itemize} 
\end{definition}

\begin{remark}
    The third point of the previous definition can be reformulated this way: the second-microlocal symbol $\sigma$ is asked to be homogeneous of order $0$ for $\eta$ large enough uniformly. The two observables 
    $\sigma_{\pm 1}$ can be seen as a unique symbol $\sigma_{\infty}$ defined on $\Eng\times\Enghat\times\bS^{0}$ such that, for all $(x,\pi)\in\Eng\times\Enghat$, we have
    \begin{equation*}
        \forall \eta \in \R,\ |\eta| \geq R_0 \implies \sigma(x,\pi,\eta) = \sigma_{\infty}\left(x,\pi,\frac{\eta}{|\eta|}\right).
    \end{equation*}
    We will denote by $\cA_{0}^{2,\infty}(\Eng)$ the space of these symbols $\sigma_{\infty}$.
\end{remark}

Remark that for $\eta\in\R$ fixed, the symbol given by $\sigma(\cdot,\cdot,\eta)$ is in the space of smoothing symbols $\cA_0(\Eng)$.

\subsubsection{Quantization procedure}
The addition of the extra variable $\eta$ becomes clear once we define their quantization: for $\hbar >0$ and a second-microlocal symbol $\sigma\in\mathcal{A}_{0}^{2}(\Eng)$, we associate the symbol $\sigma_{\hbar}^{C_{\nu_0}}$ defined by
\begin{equation}
    \label{eq:symCnu}
    \sigma_{\hbar}^{C_{\nu_0}}(x,\pi^{\delta,\beta}) = \sigma\left(x,\pi^{\delta,\beta},\frac{\beta-\nu_0\delta^{1/3}}{\hbar}\right),
\end{equation}
and we define the quantization of $\sigma$ by
\begin{equation}
    \label{eq:quantization2scale}
    \Op_{\hbar}^{C_{\nu_0}}(\sigma) := \Op_{\hbar}(\sigma_{\hbar}^{C_{\nu_0}}).
\end{equation}

For this definition to make sense, we must make sure that, for $\hbar > 0$ fixed, $\sigma_{\hbar}^{C_{\nu_0}}$ is a symbol in a certain class $S^{m}_{\rho,\delta}(\Eng)$ (see Section \ref{subsubsect:generalclasssym}).

Note first that if \eqref{eq:quantization2scale} makes sense, we could rewrite the quantization of a second-microlocal symbol $\sigma \in \cA_{0}^{2}(\Eng)$ as follows:
\begin{equation*}
    \Op_{\hbar}^{C_{\nu_0}}(\sigma) = \Op_{1}\left(\sigma\left(x,\hbar\cdot\pi^{\delta,\beta},\beta-\nu\delta^{1/3}\right)\right),
\end{equation*}
so, up to a dilation, it is equivalent to work with the symbol in the right-hand side. 

The next proposition allows us to extend the formulae of the difference operators found in Proposition \ref{prop:diffopformula} for symbols in $\cS(\Enghat)$
and to use them to directly test if a given measurable field of operators on $\Enghat$ is in one of our classes of regular symbols. For the sake of clarity, in the following proposition 
and its proof we will denote by $\hat{\Delta}_i$, $i\in\{1,2,3,4\}$, the operators given by the right-hand side formulae of the corresponding difference operators
presented in Proposition \ref{prop:diffopformula}.

\begin{proposition}
    \label{prop:characterisationS0}
    Let $\sigma$ be a smooth application in $C^{\infty}\left(\R\setminus\{0\}\times\R,\cL(\cS(\R_\xi),\cS'(\R_\xi))\right)$ for which there exists $\delta_0>0$ sich that 
    \begin{equation*}
        \forall (\delta,\beta)\in\R\subset\{0\}\times\R,\ |\delta|<\delta_0 \implies \sigma(\delta,\beta) = 0,
    \end{equation*}
    and satisfying the following estimates: for $\alpha\in\N^4$, 
    \begin{equation}
        \label{eq:estimateS0}
        \sup_{(\delta,\beta)\in\R\setminus\{0\}\times\R}\|\hat{\Delta}^{\alpha}\sigma(\delta,\beta)\|_{\cL(L^2(\R_\xi))} < \infty.
    \end{equation}
    Then there exists a symbol $\tau$ in the class $S^{0}_{0}(\Eng)$ such that
    \begin{equation*}
        \forall \alpha\in\N^4,\,\forall \pi^{\delta,\beta}\in\Enghat_{\rm gen},\ \Delta^{\alpha}\tau(\pi^{\delta,\beta}) = \hat{\Delta}^\alpha \sigma(\delta,\beta).
    \end{equation*}
\end{proposition}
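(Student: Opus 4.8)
We want to show: if $\sigma$ is a smooth operator-valued function on $\R\setminus\{0\}\times\R$, vanishing for $|\delta|<\delta_0$, and all the "would-be difference operators" $\hat\Delta^\alpha\sigma$ are uniformly bounded in $\mathcal L(L^2(\R_\xi))$, then $\sigma$ actually arises as the restriction to $\widehat\Eng_{\rm gen}$ of a genuine symbol in the Hörmander class $S^0_{0,0}(\Eng)$ (written $S^0_0$ here), with the difference operators acting as predicted by Proposition \ref{prop:diffopformula}. The point is that the abstract seminorm conditions \eqref{eq:estimateS0} — written purely in terms of the explicit formulas $\hat\Delta_i$ on the $(\delta,\beta)$-parametrization — are \emph{equivalent} to membership in $S^0_0$, so one can check symbol estimates "by hand" on the generic representations.

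**Plan of proof.** The natural strategy is a density/completion argument combined with the fact that $S^0_0(\Eng)$ is a Fréchet space.

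\emph{Step 1: Reduce to the Schwartz case.} First I would show that the class $\mathcal S(\widehat\Eng)$ of Fourier transforms of Schwartz functions (Definition \ref{def:schwartenghat}) is dense in $S^0_0(\Eng)$ for a suitable topology, or rather that any candidate field satisfying \eqref{eq:estimateS0} can be approximated by such Schwartz symbols while controlling the relevant seminorms. Concretely, given $\sigma$ as in the statement, mollify it: choose $\chi\in C_c^\infty(\R)$ with $\chi\equiv 1$ near $0$ and set $\sigma^{(\epsilon)}(\delta,\beta):=\chi(\epsilon\delta)\chi(\epsilon\beta)\,(\sigma * \rho_\epsilon)(\delta,\beta)$ where $\rho_\epsilon$ is a mollifier in $(\delta,\beta)$ supported away from $\{\delta=0\}$ (legitimate since $\sigma$ vanishes near $\{\delta=0\}$). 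Then each $\sigma^{(\epsilon)}$ has compact support in $\widehat\Eng_{\rm gen}$, is smooth, and — because it is a finite-energy element — its inverse Fourier transform $\kappa^{(\epsilon)}:=\mathcal F^{-1}\sigma^{(\epsilon)}$ is Schwartz on $\Eng$; hence $\sigma^{(\epsilon)}\in\mathcal S(\widehat\Eng)\subset S^0_0(\Eng)$, and on this space the actual difference operators $\Delta_i$ are given by the formulas of Proposition \ref{prop:diffopformula}, i.e. $\Delta^\alpha\sigma^{(\epsilon)}(\pi^{\delta,\beta})=\hat\Delta^\alpha\sigma^{(\epsilon)}(\delta,\beta)$.

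\emph{Step 2: Uniform seminorm bounds.} The heart of the argument is to show that the $S^0_0$-seminorms \eqref{eq_def_Smrhodelta} of $\sigma^{(\epsilon)}$ are bounded uniformly in $\epsilon$. For $\rho=\delta=0$, $m=0$ the relevant quantities are $\sup_{x,\pi}\|X_x^\beta \Delta^\alpha\sigma(x,\pi)(\mathrm{Id}+H(\pi))^{-\gamma/2}\cdots\|$; since our $\sigma$ is $x$-independent the $X_x^\beta$ derivatives vanish, and with $\rho=\delta=0$ there are no $(\mathrm{Id}+H)$ weights forced by $\alpha,\beta$ beyond the conjugation by $(\mathrm{Id}+H)^{\pm\gamma/2}$. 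Using the remark after \eqref{eq_def_Smrhodelta} that one may take $c=0$, the seminorms reduce precisely to $\sup_{\pi^{\delta,\beta}}\|\Delta^\alpha\sigma(\pi^{\delta,\beta})\|_{\mathcal L(L^2(\R_\xi))}$ — which is exactly the quantity controlled by hypothesis \eqref{eq:estimateS0}. So one must check that $\hat\Delta^\alpha$ commutes appropriately with the mollification, i.e. $\hat\Delta^\alpha\sigma^{(\epsilon)}\to\hat\Delta^\alpha\sigma$ with $\|\hat\Delta^\alpha\sigma^{(\epsilon)}\|_\infty\le C_\alpha$ uniformly. This requires a careful but routine Leibniz-type expansion of $\hat\Delta_i(\chi(\epsilon\cdot)\,(\sigma*\rho_\epsilon))$: the operators $\hat\Delta_2=\tfrac1i\partial_\beta$, $\hat\Delta_4$ involving $\partial_\delta$, etc., produce extra factors of $\epsilon$ times derivatives of $\chi$, which stay bounded, plus the main terms which converge to $\hat\Delta^\alpha\sigma$. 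One uses here that the cutoff $\chi(\epsilon\delta)$ only activates near $\{\delta=0\}$ where $\sigma\equiv 0$ for $\epsilon$ small, so those error terms vanish identically.

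\emph{Step 3: Pass to the limit.} Since $S^0_0(\Eng)$ is a Fréchet space and $(\sigma^{(\epsilon)})$ is bounded in it, by the Banach–Alaoglu/weak-compactness argument (or directly: $\sigma^{(\epsilon)}\to\sigma$ pointwise on $\widehat\Eng_{\rm gen}$ in $\mathcal L(L^2)$ and the seminorms are uniformly bounded, hence the limit lies in $S^0_0$) we obtain $\tau:=\sigma\in S^0_0(\Eng)$. The identity $\Delta^\alpha\tau(\pi^{\delta,\beta})=\hat\Delta^\alpha\sigma(\delta,\beta)$ then follows by passing to the limit in the corresponding identity for $\sigma^{(\epsilon)}$, using continuity of $\Delta^\alpha$ on $S^0_0$ and of $\hat\Delta^\alpha$ in the appropriate topology.

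**Main obstacle.** The delicate point is Step 2: one must verify that applying the \emph{formulas} $\hat\Delta_i$ (which involve not only $\partial_\beta,\partial_\delta$ but also commutators with the unbounded operators $\pi^{\delta,\beta}(X_1),\pi^{\delta,\beta}(X_3)$ and multiplication by $\xi^2$) to the mollified symbols does not generate unbounded contributions, and that the Leibniz rule for $\hat\Delta^\alpha$ applied to a product $\chi(\epsilon\cdot)\cdot(\sigma*\rho_\epsilon)$ really does reproduce the algebraic Leibniz formula of Proposition \ref{prop:formuleLeibniz} for the genuine $\Delta^\alpha$. In other words one needs to know a priori that the map $\sigma\mapsto(\hat\Delta_i\sigma)_i$ satisfies the same commutation relations ($\hat\Delta_i\hat\Delta_j=\hat\Delta_j\hat\Delta_i$) and Leibniz-type identities as the true difference operators — which can be checked by the same explicit computations as in the proof of Proposition \ref{prop:diffopformula}, but is somewhat tedious. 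Once that bookkeeping is in place, the uniform bounds are immediate from \eqref{eq:estimateS0} together with the fact that $H(\pi^{\delta,\beta})^{-1}$ is bounded on the support of $\sigma$ (away from $\{\delta=0\}$, where $H\ge \mathrm{const}>0$ is \emph{not} automatic — but here one only needs $(\mathrm{Id}+H)^{-\gamma/2}$ which is always bounded — so in fact the $\delta_0$-support condition is used only to make the mollification well-defined and to kill the cutoff error terms).
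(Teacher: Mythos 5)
Your overall architecture (regularize, get uniform bounds from \eqref{eq:estimateS0}, pass to the limit) is the right shape, but Step 1 contains a genuine gap that the rest of the argument leans on. You mollify $\sigma$ in the parameters $(\delta,\beta)$ and claim that the resulting compactly supported, smooth field $\sigma^{(\epsilon)}$ lies in $\cS(\Enghat)$, "because it is a finite-energy element". This is false. Compact support and smoothness in $(\delta,\beta)$ control nothing about the operator-theoretic structure of $\sigma^{(\epsilon)}(\delta,\beta)$ as an operator on $L^2(\R_\xi)$: take $\sigma(\delta,\beta)=g(\delta,\beta)\,{\rm Id}_{L^2(\R_\xi)}$ with $g$ smooth, supported in $\{|\delta|\geq\delta_0\}$ and bounded with all derivatives (this satisfies all your hypotheses, since the commutator parts of the $\hat\Delta_i$ vanish on multiples of the identity). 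Then $\sigma^{(\epsilon)}(\delta,\beta)$ is a scalar multiple of the identity on an infinite-dimensional space, hence not even compact, let alone trace class, so it cannot be the Fourier transform of a Schwartz function; its convolution kernel is a distribution supported on $\{x_1=x_3=0\}$. Consequently you may not assert that the genuine difference operators $\Delta^\alpha$ act on $\sigma^{(\epsilon)}$ by the formulas $\hat\Delta^\alpha$ — but that assertion is precisely what the proposition is about, so the argument is circular at its key step. The same issue undermines Step 3: since $\sigma^{(\epsilon)}\to\sigma$ only pointwise (not in the Fr\'echet topology of $S^0_0$), continuity of $\Delta^\alpha$ on $S^0_0$ does not let you identify $\Delta^\alpha$ of the limit; you need a separate argument pinning down $x^\alpha\kappa$ as an element of $\cK(\Eng)$.

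The paper regularizes on the other side of the Fourier transform, and this is the idea your proposal is missing. One first observes that the $\alpha=0$ bound alone makes $\tau=\cF\kappa$ for some $\kappa\in\cK(\Eng)\subset\cS'(\Eng)$, then sets $\kappa_\epsilon=\kappa*\phi_\epsilon$ with $\phi_\epsilon$ an approximate identity in $\cS(\Eng)$, i.e.\ $\tau_\epsilon=\hat\phi_\epsilon\circ\tau$. Because $\hat\phi_\epsilon$ is the Fourier transform of a Schwartz function, $\tau_\epsilon$ is trace-class with integrable trace norm, the inversion formula converges absolutely, and one can prove $\kappa_\epsilon\in\cS(\Eng)$ by integrating by parts against the identity $\hat\Delta_i\pi^{\delta,\beta}(x)=x_i\pi^{\delta,\beta}(x)$ (here the support condition $|\delta|\geq\delta_0$ kills the boundary terms coming from $\partial_\delta$ in $\hat\Delta_4$ against the Plancherel density $|\delta|\,d\delta$ — not merely "to make the mollification well-defined" as you suggest). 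On Schwartz kernels the true $\Delta_i$ and the formulas $\hat\Delta_i$ provably coincide; the Leibniz rule then gives $\Delta_i\tau_\epsilon=\hat\phi_\epsilon\circ\hat\Delta_i\sigma+O_{L^\infty(\Enghat)}(\epsilon^{\upsilon_i})$, and one concludes by matching the two limits $-x_i\kappa_\epsilon\to -x_i\kappa$ in $\cS'(\Eng)$ and $\Delta_i\tau_\epsilon\to\hat\Delta_i\sigma$ in the von Neumann algebra, which forces $x_i\kappa\in\cK(\Eng)$ and $\Delta_i\tau=\hat\Delta_i\sigma$; higher orders follow by recursion. Your Step 2 observation that the $S^0_{0,0}$ seminorms with $c=0$ reduce to $\sup\|\Delta^\alpha\sigma\|$ for $x$-independent symbols is correct and is implicitly how the paper concludes membership in $S^0_0$, but it only becomes usable once the identification $\Delta^\alpha\tau=\hat\Delta^\alpha\sigma$ has been established by the route above.
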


\begin{remark}
    Recall that a symbol $\tau = \left\{\tau(\pi)\,:\,\pi\in\Enghat\right\}$ is in $S_{0}^{0}(\Eng)$ if and only if,
    for all $\alpha\in\N^{4}$, the field of operators 
    \begin{equation*}
        \left\{\Delta^{\alpha}\tau(\pi)\,:\,\pi\in\Enghat\right\},
    \end{equation*}
    makes sense and is in $L^{\infty}(\Enghat)$, i.e
    \begin{equation*}
        \|\Delta^{\alpha}\tau\|_{L^{\infty}(\Enghat)} < \infty.
    \end{equation*}
\end{remark}

\begin{proof}
    By the estimate \eqref{eq:estimateS0} for $\alpha=0$, the measurable symbol $\tau$ defined by
    \begin{equation*}
        \forall \pi^{\delta,\beta}\in\Enghat_{\rm gen},\ \tau(\pi^{\delta,\beta}) = \sigma(\delta,\beta),
    \end{equation*}
    is in $L^\infty(\Enghat)$. By the discussion in the Section \ref{subsubsec_convop}, there exists a distribution $\kappa\in \cS'(\Eng)$
    such that 
    \begin{equation*}
        \tau = \cF_\Eng \kappa.
    \end{equation*}
    We wish to prove that $\kappa$ is in the domain of the difference operators $\Delta^\alpha$ in the sense of Definition \ref{def:diffop} and
    that their action corresponds to the one of the operators $\hat{\Delta}^\alpha$ on $\sigma$.
    We introduce $\phi\in\cS(\Eng)$ a Schwartz function satisfying $\phi(0)=1$, and for $\eps>0$, the following approximation of identity
    \begin{equation}
        \label{eq:dilatphi}
        \forall x\in\Eng,\ \phi_{\eps}(x) = \eps^{-Q}\phi(\eps^{-1}\cdot x).
    \end{equation}
    As $\kappa$ is a tempered distribution, we can consider the following sequence of smooth functions
    \begin{equation*}
        \kappa_\eps = \kappa * \phi_\eps \in C_{b}^\infty(\Eng),
    \end{equation*}
    satisfying $\kappa_\eps \rightarrow \kappa$ in $\cS'(\Eng)$ as $\eps$ goes to 0. Moreover, the Fourier transform of $\kappa_\eps$, that we
    denote by $\tau_\eps$, makes sense and satisfies 
    \begin{equation*}
        \tau_\eps = \hat{\phi}_\eps \tau.
    \end{equation*}
    We can actually prove that $\kappa_\eps$ is a Schwartz function. First, since $\phi_\eps$ is a Schwartz function, the inverse Fourier formula makes sense and we can write for $x\in\Eng$,
    \begin{equation*}
        \kappa_\eps(x) = \int_{\Enghat_{\rm gen}} {\rm Tr}\left(\pi^{\delta,\beta}(x)\tau_\eps(\pi^{\delta,\beta})\right)d\mu_{\Enghat}(\pi^{\delta,\beta}).
    \end{equation*}
    For $i\in\{1,2,3,4\}$, observe by direct computation that we have for $x\in\Eng$
    \begin{equation*}
        \hat{\Delta}_i \pi^{\delta,\beta}(x) = x_i \pi^{\delta,\beta}(x),
    \end{equation*}
    where the operator $\hat{\Delta}_i$ is applied to the field $\{\pi^{\delta,\beta}(x)\,:\,(\delta,\beta)\in\fg_{3}^*\setminus\{0\}\times\fr_{2}^*\}$ and $(x_1,x_2,x_3,x_4)$ are the semidirect
    coordinates of $x$.
    This allows us to write
    \begin{equation*}
        \begin{aligned}
            x_i \kappa_\eps(x) &= \int_{\Enghat_{\rm gen}} {\rm Tr}\left(x_i \pi^{\delta,\beta}(x)\tau_\eps(\pi^{\delta,\beta})\right)d\mu_{\Enghat}(\pi^{\delta,\beta}) \\
            &= \int_{\Enghat_{\rm gen}} {\rm Tr}\left(\hat{\Delta}_i \pi^{\delta,\beta}(x) \hat{\phi}_\eps(\pi^{\delta,\beta})\sigma(\delta,\beta)\right)d\mu_{\Enghat}(\pi^{\delta,\beta}) \\
            &= -\int_{\Enghat_{\rm gen}} {\rm Tr}\left(\pi^{\delta,\beta}(x) \hat{\Delta}_i(\hat{\phi}_\eps(\pi^{\delta,\beta})\sigma(\delta,\beta))\right)d\mu_{\Enghat}(\pi^{\delta,\beta}),
        \end{aligned}
    \end{equation*}
    the last equation following from the explicit formula given by Proposition \ref{prop:diffopformula} and by integration by parts, the remainder terms being zero by assumption on 
    the support of $\sigma$ and smoothness of $\kappa_\eps$. Now we can write 
    \begin{equation*}
        \hat{\Delta}_i(\hat{\phi}_\eps \sigma) = \hat{\Delta}_i \hat{\phi}_\eps \circ \sigma + \hat{\phi}_\eps \circ \hat{\Delta}_i \sigma,
    \end{equation*}
    and by estimates \eqref{eq:estimateS0}, since $\phi_\eps$ is in $\cS(\Eng)$, we conclude that $x_i \kappa_\eps$ is uniformly bounded. By recurrence, our claim is proved.
    
    Now, as $\kappa_\eps$ is a Schwartz function, we can write for $i\in\{1,2,3,4\}$  
    \begin{equation}
        \label{eq:difftaueps}
        \Delta_i \tau_\eps = \hat{\Delta}_i \tau_\eps = \hat{\Delta}_i \hat{\phi}_\eps\circ \tau + \hat{\phi}_\eps\circ\hat{\Delta}_i \sigma.
    \end{equation}
    As $\hat{\Delta}_i \hat{\phi}_\eps=\Delta_i \hat{\phi}_\eps$, we check that for $\pi\in\Enghat$:
    \begin{equation*}
        \Delta_i \hat{\phi}_\eps(\pi) = \int_\Eng -x_i \phi_{\eps}(x)\pi(x)^* dx
        = \eps^{\upsilon_i}\int_\Eng \psi^{i}_\eps(x)\pi(x)^* dx = \eps^{\upsilon_i} \hat{\psi}^{i}_\eps(\pi),
    \end{equation*}
    where we introduced $\psi^i = -x_i \phi$ and its associated dilated family $(\psi^{i}_\eps)_{\eps>0}$ as in \eqref{eq:dilatphi}.
    Similarly as in the discussion of estimate \eqref{eq:estimateS0} for $\alpha=0$, the measurable symbol $\tau_i$ defined by 
    \begin{equation*}
        \forall \pi^{\delta,\beta}\in\Enghat_{\rm gen},\ \tau_i(\pi^{\delta,\beta}) = \hat{\Delta}_i \sigma(\delta,\beta),
    \end{equation*}
    is in $L^\infty(\Enghat)$. Equation \eqref{eq:difftaueps} becomes 
    \begin{equation*}
        \Delta_i \tau_\eps = \hat{\phi}_\eps\circ \tau_i + \cO_{L^\infty(\Enghat)}(\eps^{\upsilon_i}).
    \end{equation*}
    We deduce that since $(\phi_\eps)_{\eps>0}$ is an approximation of the identity, $\Delta_i \tau_\eps$ converges towards $\tau_i$ in $L^\infty(\Enghat)$ for the strong operator topology as $\eps$ goes to 0,
    meaning seen as operators on $L^2(\Eng)$.
    By the various realization of the group von Neumann algebra discussed in Section \ref{subsubsec_convop}, it implies that $-x_i \kappa_\eps$ converges in the space of tempered distributions towards 
    $\cF_{\Eng}^{-1}\tau_i\in\cK(\Eng)$. Observing that $-x_i \kappa_\eps$ also converges towards $-x_i \kappa$ in $\cS'(\Eng)$, by unicity of the limit 
    we have $-x_i \kappa \in \cK(\Eng)$. It follows that $\Delta_i \tau$ makes sense and satisfies 
    \begin{equation*}
        \forall \pi^{\delta,\beta}\in\Enghat_{\rm gen},\ \Delta_i \tau(\pi^{\delta,\beta}) = \hat{\Delta}_i \sigma(\delta,\beta).
    \end{equation*}
    For difference operators of higher order, we conclude by recurrence.
\end{proof}

The following proposition makes sense of our quantization procedure for second-microlocal symbols as the measurable symbols $\sigma_{\hbar}^{C_{\nu_0}}$, $\hbar >0$, for $\sigma \in \cA_{0}^2(\Eng)$ introduced by 
Equation \eqref{eq:symCnu} are shown to be proper regular symbols.

\begin{proposition}
    \label{prop:2microbounded}
    For $\sigma \in \cA_{0}^2$, the family 
    \begin{equation*}
        \left(\left\{\Tilde{\sigma}_{\hbar}(x,\pi^{\delta,\beta}) = \sigma(x,\hbar\cdot\pi^{\delta,\beta},\beta-\nu_0\delta^{1/3})\,:\, (x,\pi^{\delta,\beta})\in\Eng\times\Enghat_{\rm gen}\right\}\right)_{\hbar>0},
    \end{equation*}
    is a bounded family of symbols in $S_{0,0}^{0}(\Eng\times\Enghat)$. 
    As a consequence, by Calder\`on-Vaillancourt's theorem \ref{thm:CalderonVaillancourt}, the family $\left(\Op_{\hbar}^{C_{\nu_0}}\left(\sigma\right)\right)_{\hbar>0}$ is a bounded family of operators on $L^2(\Eng)$ satisfying
    \begin{equation*}
        \forall \hbar \in (0,1),\ \left\|\Op_{\hbar}^{C_{\nu_0}}\left(\sigma\right)\right\|_{L^2\rightarrow L^2} \leq C \sum_{0\leq i\leq d}\sup_{\eta\in\R} N(\partial_{\eta}^{i}\sigma(\cdot,\cdot,\eta)),
    \end{equation*}
    for some fixed $d\in\N_{>0}$. We will denote by $N_2(\sigma)$ the seminorm defined by the right-hand side of the last inequality.
\end{proposition}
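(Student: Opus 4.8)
The plan is to prove first that $(\tilde\sigma_\hbar)_{\hbar>0}$ is a bounded family in $S^0_{0,0}(\Eng\times\Enghat)$ and then to conclude with the Calder\'on--Vaillancourt theorem~\ref{thm:CalderonVaillancourt}, using $\Op_\hbar^{C_{\nu_0}}(\sigma)=\Op_1(\tilde\sigma_\hbar)$ (the rescaling identity recorded right after~\eqref{eq:quantization2scale}). Since $[\hbar\cdot\pi^{\delta,\beta}]=[\pi^{\hbar^3\delta,\hbar\beta}]$ and $(\hbar^3\delta)^{1/3}=\hbar\,\delta^{1/3}$, one has
\[
\tilde\sigma_\hbar(x,\pi^{\delta,\beta})=\sigma\bigl(x,\pi^{\hbar^3\delta,\hbar\beta},\beta-\nu_0\delta^{1/3}\bigr);
\]
in particular, by the support condition~\eqref{eq:supportdelta0} in Definition~\ref{def:sym2micro}, $\tilde\sigma_\hbar(x,\pi^{\delta,\beta})$ vanishes unless $|\delta|\ge\delta_0\hbar^{-3}\ge\delta_0$. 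As differentiation in the base variable $x$ commutes with everything and maps $\cA_0^2$ into itself (the limit symbols staying in $\cA_0$ by stability of $\cA_0$ under $x$-differentiation), it is enough to bound $\sup_{\hbar\in(0,1)}\sup_{x,\delta,\beta}\|\Delta^\alpha\tilde\sigma_\hbar(x,\pi^{\delta,\beta})\|_{\cL(L^2(\R_\xi))}$ for each multi-index $\alpha$ with $[\alpha]\le a$, $a$ being the order coming from Theorem~\ref{thm:CalderonVaillancourt}.

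For this I would use Proposition~\ref{prop:characterisationS0}: for fixed $\hbar$ and $x$ the field $\pi^{\delta,\beta}\mapsto\tilde\sigma_\hbar(x,\pi^{\delta,\beta})$ is smooth on $\Enghat_{\rm gen}$, vanishes near $\{\delta=0\}$, and $\Delta^\alpha\tilde\sigma_\hbar$ may be computed as $\hat\Delta^\alpha\tilde\sigma_\hbar$ using the explicit operators of Proposition~\ref{prop:diffopformula} together with the chain rule in $(\delta,\beta)$. Each $\hat\Delta_i$ is a sum of terms, each carrying at most one of the derivatives $\partial_\beta,\partial_\delta$. When such a derivative hits the $\eta$--slot $\beta-\nu_0\delta^{1/3}$ it yields the coefficient $1$ (for $\partial_\beta$) or $-\tfrac{\nu_0}{3}\delta^{-2/3}$ (for $\partial_\delta$), uniformly bounded on the support since $|\delta|\ge\delta_0$; when it hits the representation slot $\pi^{\hbar^3\delta,\hbar\beta}$ it yields a factor $\hbar$ or $\hbar^3$ (hence $\le 1$) times a derivative in the $(\delta,\beta)$--slot of the $\eta$--frozen symbol, which — being the Fourier transform of a Schwartz kernel — is a bounded operator controlled by an $\cA_0$--seminorm of $\sigma$. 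The remaining coefficients in the $\hat\Delta_i$, namely $\tfrac{i}{\delta}\pi^{\delta,\beta}(X_3)=-\xi$, multiplications by $\xi,\xi^2$, and $\tfrac{i}{\delta}[\pi^{\delta,\beta}(X_1),\cdot]=\tfrac{i}{\delta}[\partial_\xi,\cdot]$, act boundedly on $L^2(\R_\xi)$: indeed $\xi=\tfrac{1}{i\delta}\pi^{\delta,\beta}(X_3)$, so multiplying a Fourier transform of a Schwartz function by $\xi$ or $\xi^2$ on the left is, up to the factors $\delta^{-1}$, $\delta^{-2}$ (bounded on $|\delta|\ge\delta_0$), again such a Fourier transform; and the same applies, uniformly in $\eta$, to $\partial_\eta^j\sigma$ (which moreover vanishes for $|\eta|>R_0$). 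Consequently $\hat\Delta^\alpha\tilde\sigma_\hbar$ is a finite sum of products of a coefficient bounded uniformly in $\hbar\in(0,1)$ and $(x,\delta,\beta)$ with an operator bounded by an $\cA_0$--seminorm of some $\partial_\eta^j\sigma$, $j\le|\alpha|$; hence $\sup_{x,\delta,\beta}\|\Delta^\alpha\tilde\sigma_\hbar(x,\pi^{\delta,\beta})\|_{\cL(L^2(\R_\xi))}\le C_\alpha\sum_{j\le|\alpha|}\sup_\eta N\bigl(\partial_\eta^j\sigma(\cdot,\cdot,\eta)\bigr)$ uniformly in $\hbar$, and Proposition~\ref{prop:characterisationS0} gives $\tilde\sigma_\hbar\in S^0_0(\Eng)$ with $\Delta^\alpha\tilde\sigma_\hbar=\hat\Delta^\alpha\tilde\sigma_\hbar$. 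Running the same estimate on $x$--derivatives of $\sigma$ yields
\[
\|\tilde\sigma_\hbar\|_{S^0_{0,0},a,b}\ \le\ C\sum_{0\le j\le d}\sup_{\eta\in\R}N\bigl(\partial_\eta^j\sigma(\cdot,\cdot,\eta)\bigr)\ =:\ N_2(\sigma),
\]
with $a,b$ the integers from Theorem~\ref{thm:CalderonVaillancourt} and $d$ the largest $|\alpha|$ with $[\alpha]\le a$ (so $d\le a$ is fixed, independent of $\sigma$).

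Finally, applying Theorem~\ref{thm:CalderonVaillancourt} to $\tilde\sigma_\hbar$ gives $\|\Op_\hbar^{C_{\nu_0}}(\sigma)\|_{L^2\to L^2}=\|\Op_1(\tilde\sigma_\hbar)\|_{L^2\to L^2}\le C\,\|\tilde\sigma_\hbar\|_{S^0_{0,0},a,b}\le C\,N_2(\sigma)$, which is the claimed bound.

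I expect the main obstacle to be the careful bookkeeping in the second paragraph: one has to check that no single $\hat\Delta_i$ generates two independent $\partial_\beta/\partial_\delta$ derivatives, that every coefficient which looks singular — in particular the $\delta^{-2/3}$ produced by $\partial_\delta(\beta-\nu_0\delta^{1/3})$ and the powers of $\delta^{-1}$ coming from $\tfrac{i}{\delta}\pi^{\delta,\beta}(X_1)$ and from $\xi=\tfrac{1}{i\delta}\pi^{\delta,\beta}(X_3)$ — is in fact uniformly bounded thanks to the support condition $|\delta|\ge\delta_0$, and that the unbounded multiplications by $\xi$ and $\xi^2$ are tamed by the smoothing nature of $\cA_0$--symbols (exactly as in the Remark following Proposition~\ref{prop:characterisationS0}).
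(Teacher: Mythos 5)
Your proof is correct and follows essentially the same route as the paper's: identify $\Op_\hbar^{C_{\nu_0}}(\sigma)$ with $\Op_1(\tilde\sigma_\hbar)$, verify uniform $S^0_{0,0}$ seminorm bounds via Proposition~\ref{prop:characterisationS0} and the explicit difference-operator formulae of Proposition~\ref{prop:diffopformula} (representation-slot derivatives contributing powers of $\hbar$, $\eta$-slot derivatives contributing $\partial_\eta^j\sigma$ terms), and conclude with Calder\'on--Vaillancourt. Your extra bookkeeping on the singular-looking coefficients ($\delta^{-2/3}$, $\xi$, $\xi^2$, $\delta^{-1}$) being tamed by the support condition $|\delta|\ge\delta_0$ is a correct elaboration of what the paper leaves implicit.
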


\begin{proof}
    By Proposition \ref{prop:characterisationS0}, to prove that we have defined symbols in the class $S^{0}_{0,0}$, it is enough to bound uniformly in $\hbar$ the seminorms $\|\sigma\|_{S_{0,0}^{0},\alpha,\beta}$
    using the explicit formulae of the difference operators (see Proposition \ref{prop:diffopformula}).
    We remark that all difference operators acts on an element of $\cS(\Enghat)$ in part by commutator and in part by differentiating along the variables $\delta$ and $\beta$. 
    
    For the commutator parts, we treat the case of $\Delta_1$: we have
    \begin{equation*}
        \Delta_1 \Tilde{\sigma}(x,\pi^{\delta,\beta}) = \frac{i}{\delta} \left[\pi^{\delta,\beta}(X_3),\sigma(x,\hbar\cdot\pi^{\delta,\beta},\beta-\nu\delta^{1/3})\right],  
    \end{equation*}
    thus, we have the following estimate:
    \begin{equation*}
        \begin{aligned}
        \sup_{x\in\Eng} \|\Delta_1 \Tilde{\sigma}_{\hbar}(x,\cdot)\|_{L^{\infty}(\Enghat)}
        &\leq \hbar \,\sup_{\eta \in \R} \|\Delta_1 \sigma(\cdot,\cdot,\eta)\|_{L^{\infty}(\Eng\times\Enghat)}.
        \end{aligned} 
    \end{equation*}
    For the difference operators acting by differentiation, we treat the case of $\Delta_2$. We observe that 
    \begin{equation*}
        \Delta_2 \Tilde{\sigma}_{\hbar}(x,\pi^{\delta,\beta}) = \hbar \Delta_2 \sigma(x,\hbar \cdot \pi^{\delta,\beta},\beta-\nu\delta^{1/3}) + \partial_{\eta}\sigma(x,\hbar \cdot \pi^{\delta,\beta},\beta-\nu\delta^{1/3}),
    \end{equation*}
    where in the right-hand side, the difference operator $\Delta_2$ is seen acting on the symbol $\sigma(\cdot,\cdot,\beta-\nu_0 \delta^{1/3})$. We deduce 
    \begin{equation*}
        \sup_{x\in\Eng} \|\Delta_2 \Tilde{\sigma}_{\hbar}(x,\cdot)\|_{L^{\infty}(\Enghat)}
        \leq \hbar \,\sup_{\eta \in \R} \|\Delta_2 \sigma(\cdot,\cdot,\eta)\|_{L^{\infty}(\Eng\times\Enghat)}
        + \sup_{\eta \in \R} \|\partial_{\eta}\sigma(\cdot,\cdot,\eta)\|_{L^{\infty}(\Eng\times\Enghat)}.
    \end{equation*}
    We treat similarly the difference operators $\Delta_3$ and $\Delta_4$, and more generally $\Delta^{\alpha}$ to get a bound
    of each seminorm by a uniform bound on $\eta$ of a seminorm in $S_{0,0}^{0}$ of $\partial_{\eta}^{i}\sigma(\cdot,\cdot,\eta)$, $i\leq |\alpha|$. 
    
    The Calder\`on-Vaillancourt theorem gives that the norm of $\Op_{1,\Eng}\left(\Tilde{\sigma}_{\hbar}\right)$ as operators on $L^2(\Eng)$ is bounded by the seminorm $N(\cdot) = \|\cdot\|_{S_{0,0,a,b}^{0}}$, and thus 
    we find there exists $d\in\N_{>0}$ such that
    \begin{equation*}
        \left\|\Op_{1,\Eng}\left(\Tilde{\sigma}_{\hbar}\right)\right\|_{L^2\rightarrow L^2} \leq C \sum_{0\leq i\leq d}\sup_{\eta\in\R} N(\partial_{\eta}^{i}\sigma(\cdot,\cdot,\eta)).
    \end{equation*}
\end{proof}

Thanks to Proposition \ref{prop:2microbounded}, it now makes sense to study the limiting objects of the linear forms
\begin{equation*}
    \sigma\in\cA_{0}^2(\Eng)\mapsto \left(\Op_{\hbar}^{C_{\nu_0}}(\sigma)\psi_{0}^\hbar,\psi_{0}^\hbar\right)_{L^2(\Eng)},
\end{equation*}
for a bounded family $(\psi_{0}^\hbar)_{\hbar>0}$ in $L^2(\Eng)$, as $\hbar$ goes to 0.

\subsection{Fourier integral operators}

This section, as well as the following one, presents technical but necessary results for the identification of the limiting objects 
given in Section \ref{subsect:secondmicromeas}. It is also here that our analysis establishes the link with the pseudodifferential calculus 
of the group $\Heis^{1,1}$.

\subsubsection{Quasi-contact manifolds and the group $\Heis^{1,1}$}
To continue the study of the operators obtained by the precedent quantization of the second-microlocal symbols, we further 
discuss the relation between Engel manifolds and quasi-contact ones.

As it has been pointed out in the introduction, every Engel manifold $(M,\cD)$ induces a quasi-contact distribution by considering 
$\cE=\cD + [\cD,\cD]$: this is indeed a non-integrable distribution of dimension 3 on a 4-manifold. And as we have already seen,
the characteristic field $L$ of an Engel manifold is actually a feature of the quasi-contact structure $(M,\cE)$ and is defined by a 
condition of stabilization of the distribution $\cE$:
\begin{equation*}
    [L,\cE] \equiv 0, \mod \cE.
\end{equation*}
Similarly to Engel manifolds, quasi-contact ones also have a Darboux-like theorem and are all locally modelled on a particular nilotent Lie group,
that we call in this article the \textit{quasi-Heisenberg group} and we denote by $\Heis^{1,1}$. Its Lie algebra $\fg_{\Heis^{1,1}}$ is spanned by four vector fields
$Y_1,Y_2,Y_3$ and $Y_4$ satisfying the relations 
\begin{equation*}
    [Y_1,Y_3] = Y_4.
\end{equation*}
It is thus trivially isomorphic to $\Heis\times\R_2$ where $\Heis = \Exp_{\Heis^{1,1}}(\R Y_1 \oplus\R Y_3\oplus\R Y_4)$ and $\R_2 = \Exp_{\Heis^{1,1}}(\R Y_2)$. 

These notations are voluntarily similar to the ones introduced in Section \ref{subsect:semidirect} for the semidirect decomposition of $\Eng$
for the following reason. The Engel group $\Eng$ is trivially an Engel manifold for the distribution $\cD_\Eng$ spanned by the vector fields $X_1$ and $X_2$. The
quasi-contact distribution $\cE_\Eng$ it induces is then given by the span of $X_1,X_2$ and $X_3$. It is remarkable that, when working in semidirect coordinates, this distribution, naturally invariant 
by translations for the Engel group law, is also invariant by translations for the quasi-Heisenberg group law when $\Heis^{1,1}$ is realized on the same manifold $\R^4$ through exponential coordinates.
In fact, in these coordinates it is possible to relate the vector fields $Y_1,Y_2,Y_3$ and $Y_4$ to the ones on the Engel group:
\begin{equation*}
    Y_1 = X_1 + x_2 X_2,\ Y_i = X_i,\ i\in\{2,3,4\},
\end{equation*}
and the characteristic direction is then unequivocally given by the vector field $X_2 = Y_2$. Moreover, as submanifolds, $\Heis$ and $\R_2$ 
are well-defined, independently of the choice of exponential maps $\Exp_\Eng$ or $\Exp_{\Heis^{1,1}}$ used, and are subgroups for both group laws.

As the characteristic direction play a primary role in the quantum evolution, the precedent observations lead us to relate the semiclassical calculi of the groups $\Eng$ and $\Heis^{1,1}$. 

To this end, we introduce the relevant element of harmonic analysis on $\Heis^{1,1}$ and fix some notations. 
The following proposition describe the generic part of the dual of $\Heis^{1,1}$.

\begin{proposition}
    The generic dual $\widehat{\Heis}^{1,1}_{\rm gen}$ of $\Heis^{1,1}$ is homeomorphic to $\fg_{3}^*\setminus\{0\}\times\fr_{2}^{*}$ and is made of the following irreducible unitary representations:
    for $(\delta,\beta) \in \fg_{3}^*\setminus\{0\}\times\fr_{2}^{*}$, the representation $\check{\pi}^{\delta,\beta}$ is defined on the Hilbert space $\cH_{\check{\pi}^{\delta,\beta}} := L^{2}(\R_\xi)$ by
    \begin{equation*}
        \forall \varphi \in \cH_{\check{\pi}^{\delta,\beta}},\ \check{\pi}^{\delta,\beta}(x)\varphi(\xi) = 
        {\rm exp}\left[i \delta \left(x_4 + \xi x_3 +\frac{1}{2}x_{1}x_3\right)\right] 
        e^{i\beta x_{2}}\varphi(\xi +x_{1}).
    \end{equation*}
    Moreover, the Plancherel measure is supported in $\widehat{\Heis}^{1,1}_{\rm gen}$ and is given by
    \begin{equation*}
        d\mu_{\Hhat^{1,1}}(\check{\pi}^{\delta,\beta}) = d\mu_{\Hhat}(\Tilde{\pi}^\delta) \frac{d\beta}{\sqrt{2\pi}} = c_{\Heis^{1,1}}|\delta|d\delta d\beta
    \end{equation*}
    with $c_{\Heis^{1,1}}>0$.
\end{proposition}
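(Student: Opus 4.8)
The plan is to compute the dual of $\Heis^{1,1}$ directly by Kirillov's orbit method, in exact parallel with the treatment of the Engel group in Proposition~\ref{prop:dualset}, and then to derive the Plancherel measure from the structure of the generic coadjoint orbits. First I would fix the dual basis $X_1^*,X_2^*,X_3^*,X_4^*$ of $\fg_{\Heis^{1,1}}^*$ and compute the coadjoint action: since the only nontrivial bracket is $[Y_1,Y_3]=Y_4$, the group $\Heis^{1,1}$ is just $\Heis\times\R_2$, so $\R_2$ acts trivially and the coadjoint orbits are products of the Heisenberg coadjoint orbits (parametrized by $\delta=\langle\ell,Y_4\rangle\neq 0$, two-dimensional) with the point $\beta=\langle\ell,Y_2\rangle$. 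Thus the generic orbits are affine planes $\{\ell : \langle\ell,Y_4\rangle=\delta,\ \langle\ell,Y_2\rangle=\beta\}$ indexed by $(\delta,\beta)\in(\fg_3^*\setminus\{0\})\times\fr_2^*$, which already gives the claimed homeomorphism type of $\widehat{\Heis}^{1,1}_{\rm gen}$.

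Next I would construct the representation $\check\pi^{\delta,\beta}$ attached to the orbit of $\ell=\delta X_4^*+\beta X_2^*$ by choosing the polarizing subalgebra $\fm=\R Y_2\oplus\R Y_3\oplus\R Y_4$ (which is abelian, contains $Y_2$, and is subordinate to $\ell$), inducing the character $\chi_\ell(\Exp(tY_j))=e^{i\langle\ell,tY_j\rangle}$, and realizing the induced representation on $L^2(\R_\xi)$ with $\xi$ the coordinate along $Y_1$. A short Baker--Campbell--Hausdorff computation using $[Y_1,Y_3]=Y_4$ (and the fact that $Y_2$ is central once we pass to $\Heis$) yields exactly the formula in the statement:
\begin{equation*}
    \check{\pi}^{\delta,\beta}(x)\varphi(\xi) =
    {\rm exp}\left[i \delta \left(x_4 + \xi x_3 +\tfrac{1}{2}x_{1}x_3\right)\right]
    e^{i\beta x_{2}}\varphi(\xi +x_{1}),
\end{equation*}
where the $\xi x_3$ and $\tfrac12 x_1x_3$ terms come from the Heisenberg cocycle and the factor $e^{i\beta x_2}$ reflects the central (product) direction $Y_2$. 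Irreducibility and inequivalence of distinct $(\delta,\beta)$ follow from the general theory (the $\fm$ are polarizations, Kirillov's theorem), and standard arguments show the non-generic part corresponds to the measure-zero set $\{\delta=0\}$.

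For the Plancherel measure I would invoke the product structure $\Heis^{1,1}=\Heis\times\R$: the Plancherel measure of a direct product is the product of the Plancherel measures, so $d\mu_{\widehat{\Heis}^{1,1}}(\check\pi^{\delta,\beta})=d\mu_{\widehat\Heis}(\tilde\pi^\delta)\otimes\tfrac{d\beta}{\sqrt{2\pi}}$, and since the Heisenberg Plancherel measure is (up to normalization) $|\delta|\,d\delta$, this equals $c_{\Heis^{1,1}}|\delta|\,d\delta\,d\beta$ for a positive constant. Alternatively, one can read off the Pfaffian of the bracket form restricted to the orbit, which is $|\delta|$, the same way Proposition~\ref{prop:plancherelmeasure} is obtained for $\Eng$; either route gives the stated formula. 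The only point requiring a little care is the bookkeeping of normalization constants (the $\sqrt{2\pi}$ and $c_{\Heis}$), but since the statement only asserts existence of some $c_{\Heis^{1,1}}>0$, this is not a genuine obstacle. In fact there is essentially no hard step here: everything reduces to the well-known Heisenberg case via the trivial factor $\R_2$, and the main work is simply writing out the BCH computation to match the displayed formula.
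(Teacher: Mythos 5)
Your proof is correct and follows essentially the same route as the paper: the paper's own justification (given in the remark immediately after the proposition) is precisely the direct-product decomposition $\Heis^{1,1}=\Heis\times\R_2$, with $\check{\pi}^{\delta,\beta}=\Tilde{\pi}^{\delta}\otimes e^{i\beta\cdot}$ and the Plancherel measure obtained as the product of the Heisenberg Plancherel measure $c|\delta|\,d\delta$ with Lebesgue measure $d\beta$. Your additional orbit-method and polarization details are consistent with how the paper computes $\Enghat$ in Proposition \ref{prop:dualset}, so there is nothing to object to.
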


\begin{remark}
    In what follows, we will denote by $\{\Tilde{\pi}^\delta\,:\,\delta\in\fg_{3}^*\setminus\{0\}\}$ the Schrödinger representations of the Heisenberg group 
    $\Heis$ obtained as restriction of the representations $\pi^{\delta,\beta}$, $\beta\in\fr_{2}^*$.
    It is then straightforward to see that the previous proposition follows from the decomposition $\Heis^{1,1}=\Heis \times\R_2$ in a direct product and 
    the representation $\check{\pi}^{\delta,\beta}$ is given by the product of the Schrödinger representation $\Tilde{\pi}^{\delta}$ and the character $e^{i\beta \cdot}$. 
    The same observation holds for the Plancherel measure.
\end{remark}

We will also consider a particular family of dilations, that we call \textit{partial dilations}, on $\Heis^{1,1}$: for $r>0$ and $x = (x_1,x_2,x_3,x_4)\in\Heis^{1,1}$ in exponential coordinates, we set
\begin{equation*}
    r\cdot_{pt} x = (rx_1,x_2,r^2 x_3, r^3 x_4).
\end{equation*}
Extending this family of dilations to $\widehat{\Heis}^{1,1}$, we define the associated partial semiclassical Fourier transform $\cF_{\Heis^{1,1}}^{\hbar}$ on $\Heis^{1,1}$ as follows: for $f\in\cS(\Heis^{1,1})$,
\begin{equation*}
    \forall \check{\pi}\in\widehat{\Heis}^{1,1},\ \cF_{\Heis^{1,1}_{pt}}^{\hbar}f(\check{\pi}) = \int_{\Heis^{1,1}} f(x)\check{\pi}(\hbar^{-1}\cdot_{pt} x)^{*}\,dx.
\end{equation*}

This definition of the partial semiclassical Fourier transform on $\Heis^{1,1}$ leads us to introduce the following associated quantization of a symbol $\tau\in\cA_0(\Heis^{1,1})$:
\begin{equation*}
    \forall f\in\cS(\Heis^{1,1}),\forall x\in\Heis^{1,1},\ \Op_{\hbar,\Heis_{pt}^{1,1}}(\tau)f(x) = \int_{\Hhat^{1,1}}{\rm Tr}\left(\check{\pi}(x)\tau(x,\hbar \cdot_{pt}\check{\pi})\cF_{\Heis^{1,1}}f(\check{\pi})\right)d\mu_{\Hhat^{1,1}}(\check{\pi}),
\end{equation*}
where $(\hbar\cdot_{pt}\check{\pi})(y) = \check{\pi}(\hbar\cdot_{pt}y)$ for $y\in\Heis^{1,1}$.

\subsubsection{Fourier integral operators}

In the rest of this section, we fix $\nu_0\in\R$. We present here the construction of a particular unitary operator between the Hilbert spaces $L^2(\Heis^{1,1})$ and $L^2(\Eng)$.

\begin{definition}
    \label{def:identificationduals}
    We introduce the following map 
    \begin{equation*}
        \begin{array}{lrcl}
            T_{\nu_0} : & \widehat{\Heis}^{1,1}_{\rm gen} & \longrightarrow & \Enghat_{\rm gen}\\
                & [\check{\pi}^{\delta,\beta}] & \longmapsto & [\pi^{\delta,\beta+\nu_0 \delta^{1/3}}]. 
        \end{array}
    \end{equation*}
    For $\delta_0 >0$, we also define the following sets
    \begin{equation*}
        \begin{aligned}
            \cO_{\delta_0}(\widehat{\Heis}^{1,1}) &=\left\{[\check{\pi}^{\delta,\beta}]\in\widehat{\Heis}^{1,1}\,:\, |\delta| >\delta_0, \,\beta\in\R\right\},\\
            \cO_{\delta_0}(\Enghat) &= \left\{[\pi^{\delta,\beta}]\in\Enghat\,:\, |\delta| >\delta_0, \,\beta\in\R\right\},
        \end{aligned}
    \end{equation*}
    as well as the map $T_{\nu_0}^{\delta_0}$ between them as the restriction of $T$:
    \begin{equation*}
        \begin{array}{lrcl}
            T_{\nu_0}^{\delta_0} : & \cO_{\delta_0}(\widehat{\Heis}^{1,1}) & \longrightarrow & \cO_{\delta_0}(\Enghat)\\
                & [\check{\pi}^{\delta,\beta}] & \longmapsto & [\pi^{\delta,\beta+\nu_0 \delta^{1/3}}]. 
        \end{array}
    \end{equation*}
\end{definition}

\begin{proposition}
    \label{prop:identificationduals}
    For $\delta_0>0$ fixed, the sets $\cO_{\delta_0}(\Enghat)$ and $\cO_{\delta_0}(\widehat{\Heis}^{1,1})$ are open in 
    $\Enghat_{\rm gen}$ and $\widehat{\Heis}^{1,1}_{\rm gen}$ respectively. Moreover $T_{\nu_0}$ and $T_{\nu_0}^{\delta_0}$ are homeomorphisms and, up to a certain constant $c>0$,
    we have preservation of the Plancherel measure:
    \begin{equation*}
        T_{\nu_0}^{*} \mu_{\widehat{\Heis}^{1,1}} = c \mu_{\Enghat}.
    \end{equation*}
\end{proposition}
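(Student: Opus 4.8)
The plan is to reduce everything to explicit coordinates on the generic duals. By Proposition~\ref{prop:dualset} the map $(\delta,\beta)\in\fg_{3}^*\setminus\{0\}\times\fr_{2}^*\mapsto[\pi^{\delta,\beta}]$ is a homeomorphism onto $\Enghat_{\rm gen}$ for the Fell topology, and similarly the proposition describing $\widehat{\Heis}^{1,1}_{\rm gen}$ provides a homeomorphism $(\delta,\beta)\mapsto[\check{\pi}^{\delta,\beta}]$; through these, both generic duals are identified with $\R\setminus\{0\}\times\R$. Under this identification $T_{\nu_0}$ becomes the map
\[
\Phi:(\delta,\beta)\longmapsto(\delta,\beta+\nu_0\delta^{1/3}),
\]
where $\delta\mapsto\delta^{1/3}$ is taken to be the real cube root, a homeomorphism of $\R$ that is smooth on $\R\setminus\{0\}$. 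Hence $\Phi$ is well-defined and continuous, with continuous inverse $(\delta,\beta)\mapsto(\delta,\beta-\nu_0\delta^{1/3})$, and the same is true for its restriction to $\{|\delta|>\delta_0\}$.

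First I would record the openness claim: in these coordinates both $\cO_{\delta_0}(\Enghat)$ and $\cO_{\delta_0}(\widehat{\Heis}^{1,1})$ are the set $\{(\delta,\beta):|\delta|>\delta_0\}$, which is open in $\R\setminus\{0\}\times\R$, so these sets are open in the respective generic duals. Since $\Phi$ preserves the first coordinate it maps $\{|\delta|>\delta_0\}$ bijectively onto itself, so $T_{\nu_0}$ and $T_{\nu_0}^{\delta_0}$ are homeomorphisms, being respectively the homeomorphism $\Phi$ and its corestriction to the invariant open subset.

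Finally, for the Plancherel measure I would invoke Proposition~\ref{prop:plancherelmeasure}, which gives $d\mu_{\Enghat}(\pi^{\delta,\beta})=c_{\Eng}|\delta|\,d\delta\,d\beta$, together with its analogue $d\mu_{\widehat{\Heis}^{1,1}}(\check{\pi}^{\delta,\beta})=c_{\Heis^{1,1}}|\delta|\,d\delta\,d\beta$ stated just above. The change of variables $\Phi$ is unipotent ($\delta'=\delta$, $\beta'=\beta+\nu_0\delta^{1/3}$, lower-triangular Jacobian with unit diagonal), hence has Jacobian determinant $1$, and it satisfies $|\delta'|=|\delta|$; transporting $\mu_{\widehat{\Heis}^{1,1}}$ by $T_{\nu_0}$ therefore produces $c_{\Heis^{1,1}}|\delta'|\,d\delta'\,d\beta'=(c_{\Heis^{1,1}}/c_{\Eng})\,\mu_{\Enghat}$, so the stated identity holds with $c=c_{\Heis^{1,1}}/c_{\Eng}$. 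I do not expect a genuine obstacle here: the only points needing care are that the Fell topology on each generic dual coincides with the Euclidean topology on $\R\setminus\{0\}\times\R$ (already part of the cited structural results) and that $\delta^{1/3}$ be consistently read as the real cube root so that $\Phi$ is globally defined and smooth off $\{\delta=0\}$.
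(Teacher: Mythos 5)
Your proof is correct and is exactly the coordinate computation the paper has in mind (the paper in fact omits any explicit proof of this proposition): identify both generic duals with $\R\setminus\{0\}\times\{0\}^{c}$--type coordinates $(\delta,\beta)\in\R\setminus\{0\}\times\R$, observe that $T_{\nu_0}$ becomes the shear $(\delta,\beta)\mapsto(\delta,\beta+\nu_0\delta^{1/3})$ with continuous inverse, unit Jacobian, and $|\delta|$ preserved, so both the homeomorphism claims and the identity $T_{\nu_0}^{*}\mu_{\widehat{\Heis}^{1,1}}=c\,\mu_{\Enghat}$ with $c=c_{\Heis^{1,1}}/c_{\Eng}$ follow from Proposition~\ref{prop:plancherelmeasure} and its quasi-Heisenberg analogue. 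No gaps.
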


\begin{remark}
    In the rest of this paper, we will not keep track of the exact constant appearing in the Plancherel measures of the different groups,
    and we will make the abuse of writing $c=1$ in our computations.
\end{remark}

As $T_{\nu_0}$ preserves the Plancherel measures, for $\tau\in L^2(\Hhat^{1,1})$, we can consider its pushforward by $T_{\nu_0}$ defined by 
\begin{equation*}
    T_{\nu_0}^{*}\tau = \tau \circ T_{\nu_0}^{-1}.
\end{equation*} 
Moreover, we have $\|\tau\|_{L^2(\Hhat^{1,1})}=\|T_{\nu_0}^{*}\tau\|_{L^2(\Enghat)}$.

We introduce now the operator that will play the role of a Fourier integral operator between the quasi-Heisenberg group $\Heis^{1,1}$ and 
the Engel group $\Eng$.

\begin{definition}
\label{def:unitaryUhbar}
    For $\hbar>0$, we introduce the unitary operator $\bU_{\nu_0}^{\hbar}: L^{2}(\Heis^{1,1}) \rightarrow L^2(\Eng)$ defined by 
    \begin{equation*}
        \bU_{\nu_0}^{\hbar}  = \left(\cF_{\Eng}^{\hbar}\right)^{-1} \circ D^{\hbar} \circ T_{\nu_0}^* \circ \cF_{\Heis_{pt}^{1,1}}^{\hbar},
    \end{equation*} 
    where the operator $\tau \in L^2(\Enghat) \rightarrow D^{\hbar} \tau \in L^2(\Enghat)$ is a dilation along the $\beta$ variable, i.e is defined by
    \begin{equation*}
        D^{\hbar}\tau(\pi^{\delta,\beta}) := \tau(\pi^{\delta,\frac{\beta}{\hbar}}).
    \end{equation*}
\end{definition}

We present now a more practical formula for this unitary operator.

\begin{definition}
    For $\hbar > 0$ and $x_2 \in \R_2$, we denote by $U_{\nu_0}^{\hbar}(x_2,\cdot)$ the following field of operators on $\Hhat$:
    \begin{equation}
        U_{\nu_0}^{\hbar}(x_2,\cdot) = \left\{\pi^{\delta,\nu_0\delta^{1/3}}\left(\frac{x_2}{\hbar}\right) \in \cL(\cH_{\Tilde{\pi}^\delta})\,:\,\Tilde{\pi}^\delta \in \Hhat_{\rm gen}\right\}.
    \end{equation} 
\end{definition}

We can rewrite informally the unitary operator $\bU_{\nu_0}^\hbar$ as follows: for $f \in \cS(\Heis^{1,1})$, we have for $x=(h_x,x_2)\in\Eng=\Heis\rtimes\R_2$:
\begin{multline*}
    \bU_{\nu_0}^{\hbar} f(h_x,x_2) = \hbar^{-6}\int_{\Hhat\times\R}\int_{\Heis\times\R} {\rm Tr}_{\cH_{\Tilde{\pi}^\delta}}\left(\Tilde{\pi}^{\delta}\left(\hbar^{-1}\cdot (h_{y}^{-1}h_x)\right) e^{i\beta(x_2-y_2)} U_{\nu_0}^{\hbar}(x_2,\Tilde{\pi}^\delta)\right) \\
    \times f(h_y,y_2)\,d\mu_{\Hhat}(\Tilde{\pi}^\delta) d\beta dh_y dy_2.
\end{multline*}

Note that the dilation used on the Heisenberg group $\Heis$ is the restriction of the one on the Engel group, and as such will be denoted the same.

We can give a meaning to this informal expression interpreting it as an oscillatory integral. To this end, we introduce $\left(\theta_\eps\right)_{\eps>0}$, an approximation of the identity in $\cS(\Heis)$.

\begin{proposition}
    \label{prop:Uhbarintegralformula}
    The operator $\bU_{\nu_0}^\hbar$ considered as an operator $\cS(\Heis^{1,1})\rightarrow C_{b}(\Eng)$ from the Schwartz space to the space of bounded continuous maps, makes sense and is continuous. 
    Moreover, it admits the following integral representation: for all $(h_x,x_2)\in\Eng$, we have: 
    \begin{equation*}
        \bU_{\nu_0}^{\hbar}f(h_x,x_2) = \lim_{\eps \rightarrow 0} \hbar^{-6}\int_{\Hhat}\int_{\Heis} {\rm Tr}\left(\Tilde{\pi}^{\delta}\left(\hbar^{-1}\cdot (h_{y}^{-1}h_x)\right) U_{\nu_0}^{\hbar}(x_2,\Tilde{\pi}^\delta) \cF_{\Heis}^{\hbar}\theta_\eps (\Tilde{\pi}^{\delta})\right)f(h_y,x_2)\,d\mu_{\Hhat}(\Tilde{\pi}^\delta) dh_y,
    \end{equation*}
    and the convergence is uniform as $\eps$ goes to 0.
\end{proposition}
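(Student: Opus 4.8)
The strategy is to unwind the definition of $\bU_{\nu_0}^\hbar$ as a composition of four maps and to track what each does on a Schwartz function $f$, regularizing the middle (potentially divergent) step by the approximate identity $(\theta_\eps)_{\eps>0}$. Concretely, write $f\in\cS(\Heis^{1,1})=\cS(\Heis\times\R_2)$ and first apply the partial semiclassical Fourier transform $\cF_{\Heis^{1,1}_{pt}}^\hbar$. Since $\Heis^{1,1}=\Heis\times\R_2$ splits as a direct product and the representations factorize as $\check\pi^{\delta,\beta}=\Tilde\pi^\delta\otimes e^{i\beta\cdot}$, this Fourier transform splits into the semiclassical Fourier transform on $\Heis$ (with the partial dilation, which on $\Heis$ is the genuine homogeneous dilation) in the $h$-variable and the ordinary $\hbar$-rescaled Fourier transform in the $x_2$-variable. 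Then $T_{\nu_0}^*$ reindexes $[\check\pi^{\delta,\beta}]\mapsto[\pi^{\delta,\beta+\nu_0\delta^{1/3}}]$; here one uses Proposition \ref{prop:identificationduals} so that the pushforward is well-defined and measure-preserving. Next $D^\hbar$ dilates the $\beta$-variable by $\hbar^{-1}$, and finally $(\cF_\Eng^\hbar)^{-1}$ inverts the semiclassical Fourier transform on $\Eng$. Composing and using the inversion formula \eqref{eq:inversionformula} (in the semiclassical form), one collects the Jacobian factors $\hbar^{-6}$ and recognizes, after the change of variables $\beta\mapsto\hbar\beta$ forced by $D^\hbar$ and after writing $\pi^{\delta,\beta+\nu_0\delta^{1/3}}$ in terms of $\Tilde\pi^\delta$ and the character $e^{i(\beta+\nu_0\delta^{1/3})x_2}$, exactly the informal expression displayed before the proposition, with $U_{\nu_0}^\hbar(x_2,\Tilde\pi^\delta)=\pi^{\delta,\nu_0\delta^{1/3}}(x_2/\hbar)$ absorbing the $e^{i\nu_0\delta^{1/3}x_2/\hbar}$ factor.

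The second step is to make this rigorous as an oscillatory integral. The issue is that, after doing the $\beta$- and $y_2$-integrations (which produce the constraint $x_2=y_2$ via a Fourier inversion in one variable), the remaining expression over $\Hhat\times\Heis$ need not be absolutely convergent, because $\cF_\Heis^\hbar f(h_y,\cdot)(\Tilde\pi^\delta)$ is only square-integrable (with Hilbert–Schmidt norm) in $\delta$, not trace-class-integrable in general. This is precisely why we insert $\cF_\Heis^\hbar\theta_\eps(\Tilde\pi^\delta)$: by Hulanicki-type smoothing (or simply because $\theta_\eps\in\cS(\Heis)$, so $\cF_\Heis\theta_\eps\in\cS(\widehat\Heis)$ and its operators are rapidly decaying and trace-class), the regularized integral is absolutely convergent and defines a bona fide integral operator. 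One then shows $\bU_{\nu_0}^\hbar(f*\theta_\eps^{(\Heis)}\text{-type regularization})\to\bU_{\nu_0}^\hbar f$ in $C_b(\Eng)$ as $\eps\to0$, using that $\bU_{\nu_0}^\hbar$ is unitary on $L^2$, hence bounded, combined with a direct $C_b$-estimate: the trace inside can be bounded by $\|\Tilde\pi^\delta(\cdots)\|_{\cL}\,\|U_{\nu_0}^\hbar\|_{\cL}\,\|\cF_\Heis^\hbar\theta_\eps(\Tilde\pi^\delta)\|_{\cL^1}$, and since $\|\Tilde\pi^\delta(\cdots)\|=\|U_{\nu_0}^\hbar\|=1$ (unitarity of the representations) the whole thing is controlled by $\int_{\Hhat}\|\cF_\Heis^\hbar\theta_\eps\|_{\cL^1(\Tilde\pi^\delta)}\,d\mu_{\Hhat}(\Tilde\pi^\delta)\cdot\|f\|_{L^1_{h_y}}$, uniformly in $\eps$ after a harmless rescaling; continuity of $\cS(\Heis^{1,1})\to C_b(\Eng)$ follows from the same estimate with fixed $\eps$ and Schwartz seminorms of $f$.

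Finally, to identify the limit with $\bU_{\nu_0}^\hbar f$, one argues that $\theta_\eps\to\delta_{e_\Heis}$ means $\cF_\Heis^\hbar\theta_\eps(\Tilde\pi^\delta)\to\mathrm{Id}$ in the strong operator topology, boundedly; inserting this into the regularized formula and using dominated convergence (with the Schwartz decay of $f$ in $h_y$ providing the dominating function via the $L^2$-theory, i.e. Plancherel on $\Heis$) recovers the unregularized composition, which we already computed equals $\bU_{\nu_0}^\hbar f$ pointwise on the dense subspace where the inversion formula applies. The main obstacle, and the step deserving the most care, is the interchange of the $(\delta,\beta)$- and $(h_y,y_2)$-integrations and the passage to the limit $\eps\to0$: one must check that after performing the $\beta$-integration the $y_2$-dependence collapses correctly (Fourier inversion in the single real variable $x_2$) and that the remaining operator integral over $\Hhat$, which is only conditionally convergent, genuinely converges once regularized — this is where the Schwartz hypothesis on $f$ and the choice of an $\cS(\Heis)$-approximation of the identity are used essentially, rather than merely an $L^2$-approximation.
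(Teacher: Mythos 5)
Your overall route is the same as the paper's (unwind the composition, factor the representations as $\check\pi^{\delta,\beta}=\Tilde\pi^\delta\otimes e^{i\beta\cdot}$, perform the $\beta$-- and $y_2$--integrations by one-dimensional Fourier inversion, and regularize the remaining conditionally convergent operator integral over $\Hhat$ by $\theta_\eps$), but the quantitative step on which your passage to the limit rests is false. You bound the trace by
$\|\Tilde{\pi}^{\delta}(\cdots)\|_{\cL}\,\|U_{\nu_0}^{\hbar}\|_{\cL}\,\|\cF_{\Heis}^{\hbar}\theta_\eps(\Tilde{\pi}^{\delta})\|_{\cL^1}$ and claim that $\int_{\Hhat}\|\cF_{\Heis}^{\hbar}\theta_\eps(\Tilde\pi^\delta)\|_{\cL^1}\,d\mu_{\Hhat}$ is uniformly bounded in $\eps$. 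It is not: since $\cF_{\Heis}\theta_\eps(\Tilde\pi^\delta)=\cF_{\Heis}\theta(\Tilde\pi^{\eps^{3}\delta})$ up to equivalence and the trace norm is invariant under intertwiners, the change of variables $\delta\mapsto\eps^{-3}\delta$ shows this integral scales like $\eps^{-6}$ (the homogeneous dimension of $\Heis$ here). So the dominating function you propose for the $\eps\to 0$ limit blows up, and with it your dominated-convergence argument and your $C_b$-continuity estimate ("the same estimate with fixed $\eps$") lose their foundation. The assignment of norms must be reversed: one should first integrate in $h_y$ so that the trace-class factor is $\cF_{\Heis}^{\hbar}f(\cdot,x_2)(\Tilde\pi^\delta)$ (trace-norm integrable because $f(\cdot,x_2)\in\cS(\Heis)$) and keep only the operator norm $\|\cF_{\Heis}^{\hbar}\theta_\eps(\Tilde\pi^\delta)\|_{\cL}\leq\|\theta\|_{L^1}$, which \emph{is} uniform in $\eps$.

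Even with that correction, your argument only yields pointwise convergence, whereas the statement asserts uniform convergence on $\Eng$, and you do not address this. The paper's device circumvents both problems: it regularizes on the function side with a \emph{product} approximation of identity $\Tilde\theta_\eps=\theta_\eps\otimes\chi_\eps$ on $\Heis\times\R_2$ (the $\chi_\eps$ factor is what legitimizes the collapse $f(h_y,y_2)\rightsquigarrow f(h_y,x_2)$ with an $\cO_{\cS}(\eps)$ error), identifies the displayed integral with $\bU_{\nu_0}^{\hbar}(f*\Tilde\theta_\eps)$ up to that error, and then controls
$\|\bU_{\nu_0}^{\hbar}(f-f*\Tilde\theta_\eps)\|_{L^\infty(\Eng)}\lesssim\|\cF_{\Heis^{1,1}_{pt}}^{\hbar}(f-f*\Tilde\theta_\eps)\|_{L^1(\Hhat^{1,1})}\lesssim\|(1+\Delta_{\Heis^{1,1}})^{N}(f-f*\Tilde\theta_\eps)\|_{L^1(\Heis^{1,1})}\to 0$,
which gives uniformity and, with $f$ in place of $f-f*\Tilde\theta_\eps$, the asserted continuity $\cS(\Heis^{1,1})\to C_b(\Eng)$ in terms of Schwartz seminorms. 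You should either adopt this comparison with $\bU_{\nu_0}^{\hbar}(f*\Tilde\theta_\eps)$ or supply the corrected domination together with a separate uniformity argument; as written, the proof has a genuine gap at exactly the step you yourself flagged as the delicate one.
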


\begin{proof}
    First, continuity of $\bU_{\nu_0}^{\hbar}f$ follows from the following facts: since $f$ is Schwartz, $\cF_{\Heis^{1,1}_{pt}}f$ is in $L^{1}(\Hhat^{1,1})$ and as $D^{\hbar}$ and $T_{\nu_0}$ preserves the Plancherel measures up to constants,
    $\bU_{\nu_0}^{\hbar}f$ is the inverse Fourier transform of an element of $L^1(\Enghat)$. As such it is continuous on $\Eng$ by dominated convergence. Thus it makes sense to evaluate $\bU_{\nu_0}^{\hbar}f$ at any point.

    Now, let $\chi \in C_{c}^{\infty}(\R)$ a smooth positive compactly supported function satisfying $\chi(0) = 1$.
    We will write for $\eps>0$, $\chi_{\eps}(x_2) = \eps^{-1}\chi\left(\frac{x_2}{\eps}\right)$ and $\Tilde{\theta}_{\eps}(h_x,x_2) = \theta_{\eps}(h_x)\chi_{\eps}(x_2)$ for $(h_x,x_2)\in\Heis\times\R$. 
    Let $f\in\cS(\Heis^{1,1})$: we have $f *_{\Heis^{1,1}} \Tilde{\theta}_{\eps} \in \cS(\Heis^{1,1})$ and 
    for $(h_x,x_2)\in\Heis\times\R$:  
    \begin{equation*}
        \begin{aligned}
        &\bU_{\nu_0}^{\hbar}(f * \Tilde{\theta}_{\eps})(h_x,x_2) = \hbar^{-7} \int_{\Enghat} {\rm Tr}\left(\Tilde{\pi}^{\delta}(\hbar^{-1}\cdot h_x)e^{i\beta \frac{x_2}{\hbar}}U_{\nu_0}^{\hbar}(x,\Tilde{\pi}^\delta) \cF_{\Heis^{1,1}_{pt}}^{\hbar}(\Tilde{\theta}_\eps)(\check{\pi}^{\delta,\hbar^{-1} \beta})\cF_{\Heis^{1,1}_{pt}}^{\hbar}f(\check{\pi}^{\delta,\hbar^{-1}\beta})\right)\,d\mu_{\Enghat}(\pi^{\delta,\beta})\\
        &= \hbar^{-6} \int_{\Hhat \times \R} \int_{\Heis\times\R}{\rm Tr}\left(\Tilde{\pi}^{\delta}\left(\hbar^{-1}\cdot (h_{y}^{-1}h_x)\right)e^{i\beta (x_2-y_2)} U_{\nu_0}^{\hbar}(x,\Tilde{\pi}^\delta)\cF_{\Heis^{1,1}_{pt}}^{\hbar}(\Tilde{\theta}_\eps)(\check{\pi}^{\delta,\beta})\right)f(h_y,y_2) \,d\mu_{\Hhat}(\Tilde{\pi}^\delta) d\beta dh_y dy_2.
        \end{aligned}
    \end{equation*} 
    We almost recognize an inverse Fourier formula in the variables $y_2$ and $\beta$: for this, we write 
    \begin{equation*}
        \cF_{\Heis^{1,1}_{pt}}^{\hbar}(\Tilde{\theta}_\eps)(\check{\pi}^{\delta,\beta}) = \cF_{\R} \chi_{\eps}(\beta) \cF_{\Heis}^{\hbar}\theta_{\eps}(\Tilde{\pi}^{\delta}).
    \end{equation*}
    And as we have the following asymptotic in $\cS(\R_2)$
    \begin{equation*}
        \int_{\R}\int_{\R} e^{i\beta(x_2 -y_2)} \widehat{\chi_\eps}(\beta) f(h_y,y_2) \,d\beta dy_2 \Tend{\eps}{0} f(h_y,x_2),
    \end{equation*} 
    uniformly in $h_y \in \Heis$, we are left with 
    \begin{equation*}
        \bU_{\nu_0}^{\hbar}(f \,*\, \Tilde{\theta}_{\eps})(h_x,x_2) = \hbar^{-6}\int_{\Hhat}\int_{\Heis} {\rm Tr}\left(\Tilde{\pi}^{\delta}\left(\hbar^{-1}\cdot (h_{y}^{-1}h_x)\right) U_{\nu_0}^{\hbar}(x_2,\Tilde{\pi}^\delta) \cF_{\Heis}^{\hbar}\theta_\eps (\Tilde{\pi}^{\delta})\right)f(h_y,x_2)\,d\mu_{\Hhat}(\Tilde{\pi}^\delta) dh_y + \cO_{\cS(\Eng)}(\eps).
    \end{equation*}
    Notice now that we have
    \begin{equation*}
        \begin{aligned}
            \left\|\bU_{\nu_0}^{\hbar}(f - f * \Tilde{\theta}_{\eps})\right\|_{L^{\infty}(\Eng)} 
            &\lesssim \left\|\cF_{\Heis_{pt}^{1,1}}^{\hbar}(f - f * \Tilde{\theta}_{\eps})\right\|_{L^1(\widehat{\Heis}^{1,1})}\\
            &\lesssim \left\|\cF_{\Heis_{pt}^{1,1}}^{\hbar}\left((1+\Delta_{\Heis^{1,1}})^{N}(f - f * \Tilde{\theta}_{\eps})\right)\right\|_{L^\infty(\Hhat^{1,1})}\\
            &\lesssim \left\|(1+\Delta_{\Heis^{1,1}})^{N}(f - f * \Tilde{\theta}_{\eps})\right\|_{L^1(\Heis^{1,1})}.
        \end{aligned}
    \end{equation*}
    where we have introduced the subLaplacian $\Delta_{\Heis^{1,1}} = \sum_{1\leq i \leq 3} Y_{i}^2$ on the quasi-Heisenberg group and we have chosen $N\in\N$ large enough and independently of $\eps$.
    Since $f$ is in $\cS(\Heis^{1,1})$, the last term goes to 0 when $\eps$ tends to 0. We conclude that the integral formula holds true and the convergence is uniform on $\Eng$.
\end{proof}

As a technical tool, we define a similar operator as $\bU_{\nu_0}^\hbar$ where we introduce a cut-off function away from the neighborhood $\{\delta=0\}$.

\begin{definition}
    \label{def:Uhbardelta0}
    Let $g\in C^{\infty}(\R)$ a smooth function equal to 0 on $(-1/2,1/2)$ and to 1 outside $(-1,1)$. For $\delta_0>0$, we introduce the operators $\bU_{\nu_0,\delta_0}^{\hbar}: L^2(\Heis^{1,1})\rightarrow L^2(\Eng)$ defined by 
    \begin{equation*}
        \bU_{\nu_0,\delta_0}^{\hbar}  = \left(\cF_{\Eng}^{\hbar}\right)^{-1} \circ D_{\delta_0}^{\hbar}\circ T_{\nu_0}^* \circ \cF_{\Heis_{pt}^{1,1}}^{\hbar},
    \end{equation*}
    where the operator $D_{\delta_0}^{\hbar}: L^2(\Enghat)\rightarrow L^2(\Enghat)$ is defined by 
    \begin{equation*}
        D_{\delta_0}^{\hbar}\tau(\pi^{\delta,\beta}) := g\left(\frac{\delta}{\delta_0}\right)\tau(\pi^{\delta,\frac{\beta}{\hbar}}).
    \end{equation*}
    The operators $\bU_{\nu_0,\delta_0}^{\hbar}$ also admit an integral representation: for $f\in\cS(\Heis^{1,1})$ and $(h_x,x_2)\in\Eng$,
    \begin{multline*}
        \bU_{\nu_0,\delta_0}^{\hbar}f(h_x,x_2) = \lim_{\eps \rightarrow 0} \hbar^{-6}\int_{\Hhat\times\Heis} {\rm Tr}\left(\Tilde{\pi}^{\delta}\left(\hbar^{-1}\cdot (h_{y}^{-1}h_x)\right) U_{\nu_0}^{\hbar}(x_2,\Tilde{\pi}^\delta) g\left(\frac{\delta}{\delta_0}\right)\cF_{\Heis}^{\hbar}\theta_\eps (\Tilde{\pi}^{\delta})\right) \\
        \times f(h_y,x_2)\,d\mu_{\Hhat}(\Tilde{\pi}^\delta)dh_y,
    \end{multline*}
    with uniform convergence as $\eps$ goes to 0.
\end{definition}

\subsubsection{Intertwining of the two quantizations}
The next Theorem is the fundamental result of this section as it relates the second-microlocal semiclassical analysis of the Engel group $\Eng$ to the partial semiclassical analysis of $\Heis^{1,1}$ (that will be studied in more detail later)
in a straightforward way.

\begin{lemma}
    \label{lem:correspondencesmoothsym}
    Let $\sigma \in \mathcal{A}_{0}^{2}(\Eng)$ a second-microlocal symbol supported in $\Eng\times\cO_{\delta_0}(\Eng)\times[-R,R]$ for some $\delta_0>0$ and $R >0$.
    Then the symbol $\tau$ defined on $\Heis^{1,1}\times\widehat{\Heis}^{1,1}_{\rm gen}$ by 
    \begin{equation*}
        \forall x \in \Eng\sim\Heis^{1,1},\forall (\delta,\beta)\in\fg_{3}^*\setminus\{0\} \times\fr_{2}^*,\ \tau(x,\check{\pi}^{\delta,\beta}) = \sigma(x,\pi^{\delta,\nu_0\delta^{1/3}},\beta),
    \end{equation*}
    is a smoothing symbol, i.e $\tau$ belongs to $\cA_{0}(\Heis^{1,1})$.
\end{lemma}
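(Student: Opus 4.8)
The plan is to unwind the definitions and verify the two defining properties of $\cA_0(\Heis^{1,1})$: that $\tau$ is the Fourier transform (on $\Heis^{1,1}$) of a kernel which is smooth and compactly supported in the first variable with Schwartz dependence in the second, and that this kernel is in fact smoothing (i.e.\ the symbol lies in $S^{-\infty}$). The key observation is that the map $T_{\nu_0}$ of Definition~\ref{def:identificationduals} is exactly the homeomorphism that sends $\check\pi^{\delta,\beta}$ to $\pi^{\delta,\beta+\nu_0\delta^{1/3}}$, so that $\tau(x,\check\pi^{\delta,\beta})=\sigma(x,\pi^{\delta,\nu_0\delta^{1/3}},\beta)$ can be rewritten as the composition of the second-microlocal symbol $\sigma$ (at $\eta=\beta$) with $T_{\nu_0}^{-1}$, up to the shift in the $\beta$-variable. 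Concretely, I would write $\tau(x,\cdot)=T_{\nu_0}^*\bigl(\sigma(x,\cdot,\cdot)|_{\text{evaluated appropriately}}\bigr)$ and use Proposition~\ref{prop:identificationduals} (preservation of the Plancherel measure) to transfer regularity statements between $\widehat{\Heis}^{1,1}_{\rm gen}$ and $\Enghat_{\rm gen}$.

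\textbf{Step 1: producing the kernel.} For each fixed $x\in\Eng$, the field $\delta,\beta\mapsto\sigma(x,\pi^{\delta,\beta},\eta)$ is the Fourier transform of a Schwartz kernel $\kappa_x(\cdot\,;\eta)\in\cS(\Eng)$, smooth and compactly supported in $x$, by Definition~\ref{def:sym2micro}; moreover $\sigma$ is supported in $\{|\delta|>\delta_0\}$ and, since it is supported in $[-R,R]$ in the $\eta$-variable, the homogeneity-at-infinity clause of Definition~\ref{def:sym2micro} is vacuous. I would first establish that the assignment $\delta,\beta\mapsto\sigma(x,\pi^{\delta,\nu_0\delta^{1/3}},\beta)$ is a smooth, compactly supported (in $\beta$, and in $\delta$ away from $0$), operator-valued function — this is immediate from smoothness of $\sigma$ in all its variables together with the support conditions, the only subtlety being that $\delta\mapsto\delta^{1/3}$ is smooth on $\R\setminus\{0\}$, which is harmless because $|\delta|>\delta_0$. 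Then, because the Plancherel measures of $\Heis^{1,1}$ and $\Eng$ both equal (a constant times) $|\delta|\,d\delta\,d\beta$ and $T_{\nu_0}$ intertwines them, the inverse Fourier transform on $\Heis^{1,1}$ of this field is a well-defined distribution $\kappa^{\Heis^{1,1}}_x$; smoothness and compact support in $x$ are inherited from $\sigma$, and I would check Schwartz decay in the $\Heis^{1,1}$-variable by the same argument already used in the proof of Proposition~\ref{prop:characterisationS0} (approximation by $\kappa*\phi_\eps$ and repeated application of the difference operators, which on $\Heis^{1,1}$ have explicit formulae analogous to Proposition~\ref{prop:diffopformula}).

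\textbf{Step 2: smoothing.} To see that $\tau\in S^{-\infty}(\Heis^{1,1})$, hence in $\cA_0(\Heis^{1,1})$, I would show that all the seminorms $\|\tau\|_{S^m_{\rho,\delta},a,b,c}$ of Section~\ref{subsubsect:generalclasssym} are finite for every $m$. The point is that $H(\check\pi^{\delta,\beta})$ for the quasi-Heisenberg subLaplacian is comparable to $H(\pi^{\delta,\nu_0\delta^{1/3}})$ on the support $\{|\delta|>\delta_0,\ |\beta|\le R\}$ — both behave like a harmonic-oscillator-type operator with bounded-below, proper spectral parameter — so that on this support the weights $({\rm Id}+H(\check\pi))^s$ are uniformly controlled, and the difference operators $\Delta^\alpha$ applied to $\tau$ stay in $L^\infty$ for the same reason they do for $\sigma$: they act on the $\Eng$-symbol $\sigma$ partly by commutators with $\pi^{\delta,\nu_0\delta^{1/3}}(X_i)$ and partly by $\delta,\beta$-differentiation (which now also hits the $\delta^{1/3}$ in the argument, producing extra factors of $\delta^{-2/3}$ that are bounded on $\{|\delta|>\delta_0\}$). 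Since $\sigma(x,\cdot,\cdot)$ is smoothing in $\cA_0(\Eng)$ for each $\eta$, compactly supported in $\eta\in[-R,R]$, with all derivatives bounded, these estimates are uniform and yield $\tau\in\bigcap_m S^m = S^{-\infty}(\Heis^{1,1})$.

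\textbf{Main obstacle.} The delicate point is Step~2: carefully matching the order filtration on $\Heis^{1,1}$ (governed by $H_{\Heis^{1,1}}(\check\pi^{\delta,\beta})=-\partial_\xi^2+(\beta+\tfrac\delta2\xi^2)^2$ with $\beta$ a free parameter) with the effective order on $\Eng$ after the substitution $\beta\mapsto\nu_0\delta^{1/3}+\eta$, and verifying that the singular factor $\delta^{1/3}$ and its derivatives never spoil the $L^\infty$-bounds — which is exactly what the support clause \eqref{eq:supportdelta0} in Definition~\ref{def:sym2micro} is there to guarantee. Once one records that on $\{|\delta|>\delta_0\}$ all negative powers of $\delta$ are bounded and that the spectra of the two model operators are uniformly equivalent there, the argument is a routine, if slightly tedious, adaptation of the proof of Proposition~\ref{prop:characterisationS0} to the group $\Heis^{1,1}$.
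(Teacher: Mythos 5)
Your overall route is the same as the paper's: identify the kernel of $\tau$ on $\Heis^{1,1}$ via the Fourier correspondence, exploit the support conditions in $\delta$ and $\eta$, and run the smoothness/decay argument with the explicit difference operators and infinitesimal representations of $\Heis^{1,1}$, using the relations $\check{\pi}^{\delta,\beta}(Y_i)=\pi^{\delta,\nu_0\delta^{1/3}}(X_i)$ for $i=1,3,4$ and $\check{\pi}^{\delta,\beta}(Y_2)=i\beta$. However, there is one concrete missing ingredient. You describe the field $(\delta,\beta)\mapsto\sigma(x,\pi^{\delta,\nu_0\delta^{1/3}},\beta)$ as ``compactly supported (in $\beta$, and in $\delta$ away from $0$)'', but the $\delta$-support is $\{|\delta|>\delta_0\}$, which is \emph{not} compact: $\delta$ ranges to infinity, and the Plancherel measure $|\delta|\,d\delta\,d\beta$ puts infinite mass there. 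To apply the Fourier inversion formula on $\Heis^{1,1}$ and obtain a continuous, bounded kernel $\kappa^\tau_x$ (and then to iterate the argument for smoothness and decay), you must show that $\tau(x,\cdot)$ — and all of $\check{\pi}(Y)^\alpha\Delta^\beta\tau(x,\cdot)$ — is trace-class-integrable over this non-compact region. The paper does this by writing $\sigma(x,\pi^{\delta,\nu_0\delta^{1/3}},\beta)=H(\pi^{\delta,\nu_0\delta^{1/3}})^{-N}\cF_\Eng(\Delta_\Eng^N\kappa_x(\cdot,\beta))$ and invoking the uniform trace-class bound $\|\delta^{2N/3}H(\pi^{\delta,\nu_0\delta^{1/3}})^{-N}\|_{\cL^1}\lesssim 1$ (from the spectral summability results of \cite{BBGL}), so that ${\bold 1}_{|\delta|>\delta_0}\delta^{-2N/3}$ is integrable against $|\delta|\,d\delta$ for $N$ large. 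Without some such estimate, the mollification argument of Proposition~\ref{prop:characterisationS0} that you invoke only yields membership of the difference-operator derivatives in $L^\infty(\Hhat^{1,1})$, i.e.\ that $y^\alpha\kappa^\tau_x$ defines a bounded convolution operator — not that $\kappa^\tau_x$ is an actual Schwartz function.

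A second, more minor point: in your Step 2 the claim that $H(\check{\pi}^{\delta,\beta})$ for the quasi-Heisenberg subLaplacian is ``comparable'' to $H(\pi^{\delta,\nu_0\delta^{1/3}})$ on the support is false as stated — the former has lowest eigenvalue growing like $|\delta|$, the latter like $|\delta|^{2/3}\tilde{\mu}_1(\nu_0)$. The seminorm estimates still go through because the smoothing decay of $\sigma$ in $H(\pi^{\delta,\nu_0\delta^{1/3}})$ beats any polynomial rate in $\delta$, but the justification should be the rapid decay in $\delta$, not an equivalence of the two model operators.
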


This lemma allow us to make sense of our main result of the section.

\begin{theorem}
    \label{thm:sym2microclosetoCnu}
    Let $\sigma \in \mathcal{A}_{0}^{2}(\Eng)$ with compact support on the last variable $\eta$.
    Then if we define the symbol $\tau \in \cA_{0}(\Heis^{1,1})$ as in Lemma \ref{lem:correspondencesmoothsym}, we have
    \begin{equation*}
        \Op_{\hbar,\Eng}^{C_{\nu_0}}(\sigma) = \bU_{\nu_0}^{\hbar} \circ \Op_{\hbar,\Heis_{pt}^{1,1}}(\tau) \circ \bU_{\nu_0}^{\hbar,*} + \cO_{L^2 \rightarrow L^2}(\hbar).
    \end{equation*}
\end{theorem}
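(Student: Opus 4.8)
The plan is to verify the intertwining identity
$\Op_{\hbar,\Eng}^{C_{\nu_0}}(\sigma) = \bU_{\nu_0}^{\hbar} \circ \Op_{\hbar,\Heis_{pt}^{1,1}}(\tau) \circ (\bU_{\nu_0}^{\hbar})^{*} + \cO_{L^2\to L^2}(\hbar)$
by unwinding both sides into their kernel/Fourier representations and matching them. Since $\bU_{\nu_0}^{\hbar} = (\cF_\Eng^\hbar)^{-1}\circ D^\hbar \circ T_{\nu_0}^*\circ \cF_{\Heis_{pt}^{1,1}}^\hbar$ is unitary, the right-hand side is conjugation of $\Op_{\hbar,\Heis_{pt}^{1,1}}(\tau)$ by this composition; so the natural strategy is to push the pseudodifferential operator through each elementary factor ($\cF_{\Heis_{pt}^{1,1}}^\hbar$, $T_{\nu_0}^*$, $D^\hbar$, $(\cF_\Eng^\hbar)^{-1}$) and see what symbol one obtains on the Engel side. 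The key algebraic input is the identity between representations $\check{\pi}^{\delta,\beta}$ and $\pi^{\delta,\beta+\nu_0\delta^{1/3}}$ implicit in $T_{\nu_0}$ together with the partial-dilation structure: applying $\cF_{\Heis_{pt}^{1,1}}^\hbar$ and then $D^\hbar$ rescales the $\beta$ variable so that the symbol $\tau(x,\hbar\cdot_{pt}\check\pi^{\delta,\beta})$ becomes, after transport by $T_{\nu_0}$ and the $\beta$-dilation, precisely $\sigma(x,\pi^{\delta,\nu_0\delta^{1/3}},\beta)$ evaluated with $\beta$ replaced by $\hbar^{-1}(\beta-\nu_0\delta^{1/3})$ — which is exactly $\sigma_\hbar^{C_{\nu_0}}$ up to the difference between $\pi^{\delta,\nu_0\delta^{1/3}}$ and $\pi^{\delta,\beta}$ acting in the representation space. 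This last mismatch is the source of the $\cO(\hbar)$ error.

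Concretely, I would proceed as follows. First, by Lemma \ref{lem:correspondencesmoothsym} we may assume $\sigma$ is supported in $\Eng\times\cO_{\delta_0}(\Eng)\times[-R,R]$ (the general compactly-$\eta$-supported case reduces to this after a cutoff in $\delta$, using that the far-from-$\{\delta=0\}$ part is all that the second-microlocal symbols see, by \eqref{eq:supportdelta0}; the tail in $\eta$ where $\sigma$ is homogeneous is excluded by the compact-support hypothesis). Then I would write out $\bU_{\nu_0}^\hbar \Op_{\hbar,\Heis_{pt}^{1,1}}(\tau) (\bU_{\nu_0}^\hbar)^*\phi$ using the integral representation of $\bU_{\nu_0}^\hbar$ from Proposition \ref{prop:Uhbarintegralformula} and the defining formula for $\Op_{\hbar,\Heis_{pt}^{1,1}}$. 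Using the decomposition $\Eng = \Heis\rtimes\R_2$ and writing points as $(h,x_2)$, one finds the composite kernel factorizes: the $\Heis$-part produces exactly the Engel $\Op_\hbar$-kernel with symbol $\tau(\cdot,\Tilde\pi^\delta\text{-part})$, while the $\R_2$-part produces, via the $\beta$-dilation $D^\hbar$ and the character $e^{i\beta x_2/\hbar}$, the substitution $\eta = \hbar^{-1}(\beta-\nu_0\delta^{1/3})$. The main computation is bookkeeping: track how the partial dilation $\hbar\cdot_{pt}$ on $\widehat\Heis^{1,1}$ interacts with the full dilation $\hbar\cdot$ on $\Enghat$ (they differ precisely in the $\beta$-direction, which is why $D^\hbar$ is inserted), and confirm the Plancherel-measure normalizations match via Proposition \ref{prop:identificationduals}.

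The heart of the matter — and the place where the error term appears — is that in $\tau(x,\check\pi^{\delta,\beta})=\sigma(x,\pi^{\delta,\nu_0\delta^{1/3}},\beta)$ the Engel representation is evaluated at the \emph{frozen} parameter $\nu_0\delta^{1/3}$, whereas $\Op_{\hbar,\Eng}^{C_{\nu_0}}(\sigma)$ uses $\sigma_\hbar^{C_{\nu_0}}(x,\pi^{\delta,\beta})=\sigma(x,\pi^{\delta,\beta},\hbar^{-1}(\beta-\nu_0\delta^{1/3}))$ with the \emph{true} parameter $\beta$ in the first slot. On the support of $\sigma_\hbar^{C_{\nu_0}}$ one has $|\eta|=\hbar^{-1}|\beta-\nu_0\delta^{1/3}|\le R$, i.e. $|\beta-\nu_0\delta^{1/3}|\le R\hbar$, so replacing $\pi^{\delta,\beta}$ by $\pi^{\delta,\nu_0\delta^{1/3}}$ costs $\cO(\hbar)$ at the symbol level once one controls the $\beta$-derivative of $(\delta,\beta)\mapsto\pi^{\delta,\beta}$; then Proposition \ref{prop:2microbounded} (boundedness of the second-microlocal quantization, with the seminorm $N_2$) together with the symbolic calculus / Calderón–Vaillancourt estimate upgrades this $\cO(\hbar)$ symbol bound to an $\cO_{L^2\to L^2}(\hbar)$ operator bound.

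\textbf{The main obstacle} I anticipate is precisely making this last replacement rigorous: one must express the difference $\pi^{\delta,\beta}(y)-\pi^{\delta,\nu_0\delta^{1/3}}(y)$ (say via $\partial_\beta\pi^{\delta,\beta} = i x_2\,\pi^{\delta,\beta}(\cdot)$-type formulas, cf. \eqref{eq_piDB} giving $\pi^{\delta,\beta}(X_2)=i(\beta+\tfrac\delta2\xi^2)$) in a form that, when quantized, lands back in the second-microlocal calculus with a gain of $\hbar$, controlling the extra polynomial weight in $\xi$ (which interacts with the $H(\pi)$-weights in the symbol classes $S^m_{\rho,\delta}$) uniformly over the relevant range of $\delta$ bounded away from $0$. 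This is where the hypothesis $|\delta|>\delta_0$ and the compact $\eta$-support are essential, and where one leans on Proposition \ref{prop:characterisationS0} to certify that the error symbol lies in $S^0_{0,0}$ with controlled seminorms before invoking Theorem \ref{thm:CalderonVaillancourt}.
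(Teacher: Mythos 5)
Your overall strategy is the one the paper follows: unwind $\Op_{\hbar}^{C_{\nu_0}}(\sigma)\circ\bU_{\nu_0}^{\hbar}$ and $\bU_{\nu_0}^{\hbar}\circ\Op_{\hbar,\Heis^{1,1}_{pt}}(\tau)$ through the Fourier representations, and you correctly isolate the first source of the $\cO(\hbar)$ error, namely the replacement of $\pi^{\delta,\nu_0\delta^{1/3}+\hbar\beta}$ by the frozen $\pi^{\delta,\nu_0\delta^{1/3}}$ in the first slot of the symbol. The paper handles exactly this by writing the difference as $\int_{\Eng}\kappa_x(y,\beta)\pi^{\delta,\nu_0\delta^{1/3}+\hbar\beta}(y)^*(1-e^{i\hbar\beta y_2})\,dy$, which is $\hbar$ times a second-microlocal symbol with uniformly bounded seminorm $N_2$, so Proposition \ref{prop:2microbounded} closes that estimate; your sketch of this step via $|\beta-\nu_0\delta^{1/3}|\le R\hbar$ on the support is the right idea (also note that the support condition away from $\{\delta=0\}$ is already built into Definition \ref{def:sym2micro}, so no preliminary cutoff in $\delta$ is needed).

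The genuine gap is in the sentence claiming the composite kernel of $\bU_{\nu_0}^{\hbar}\circ\Op_{\hbar,\Heis^{1,1}_{pt}}(\tau)$ ``factorizes'' so that the rest is bookkeeping. It does not factorize: the symbol $\tau(h_w,x_2,\cdot)$ is evaluated at the intermediate base point $h_w$ over which the kernel of $\bU_{\nu_0}^{\hbar}$ integrates, and this non-commutativity of the spatial dependence with the Fourier integral operator is a second, independent source of error. The paper deals with it by a mean-value expansion $\tau(h_xh_z,\cdot)=\tau(h_x,\cdot)+\int_0^1(z_1X_1+z_3X_3+z_4X_4)\tau(h_x(th_z),\cdot)\,dt$ followed by an integration by parts in the Heisenberg difference operators, writing $z_i\,\Tilde{\pi}^{\delta}(\hbar^{-1}\cdot h_z)^*=\hbar^{\upsilon_i}\Delta_i^{\Heis}\bigl(\Tilde{\pi}^{\delta}(\hbar^{-1}\cdot h_z)^*\bigr)$ and letting $\Delta_i^{\Heis}$ fall on $U_{\nu_0}^{\hbar}(x_2,\cdot)$. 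This is delicate: $\Delta_3^{\Heis}U_{\nu_0}^{\hbar}$ and $\Delta_4^{\Heis}U_{\nu_0}^{\hbar}$ each produce a factor $\hbar^{-1}$, which is only beaten because $\upsilon_3=2$ and $\upsilon_4=3$; and the borderline case $i=1$ (weight $\upsilon_1=1$, which would give $\cO(1)$, not $\cO(\hbar)$) survives only because $\Delta_1^{\Heis}U_{\nu_0}^{\hbar}(x_2,\cdot)=\frac{i}{\delta}[\Tilde{\pi}^{\delta}(X_3),\pi^{\delta,\nu_0\delta^{1/3}}(x_2/\hbar)]$ vanishes identically (both operators are multiplications in $\xi$). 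One also needs the preliminary cutoff $\bU_{\nu_0,\delta_0}^{\hbar}$ of Lemma \ref{lem:calculsymboliqueproof} to keep the negative powers of $\delta$ appearing in these formulas under control. None of this is visible in your outline, and without it the claimed $\cO_{L^2\to L^2}(\hbar)$ for the second composition is unjustified.
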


The rest of the Section is devoted to the proof of Lemma \ref{lem:correspondencesmoothsym} and Theorem \ref{thm:sym2microclosetoCnu}.

\begin{proof}[Proof of Lemma \ref{lem:correspondencesmoothsym}]
    Let $\sigma \in \cA_{0}^2(\Eng)$ as above. By definition \ref{def:sym2micro} and by compacity of $\sigma$ in the variable $\eta$, the associated kernel $\kappa$ is in
    $C_{c}^{\infty}(\R_\eta,C_{c}^\infty(\Eng_x,\cS(\Eng_y)))$. For $x\in\Heis^{1,1}$, the measurable symbol $\tau(x,\cdot)$ is in $L^\infty(\widehat{\Heis}^{1,1})$ and thus it is the Fourier transform 
    of a distribution $\kappa^{\tau}_x(\cdot)\in\cS'(\Heis^{1,1})$: 
    \begin{equation*}
        \tau(x,\cdot) = \cF_{\Heis^{1,1}}\kappa^{\tau}_x(\cdot).
    \end{equation*}
    As smoothness in the variable $x$ is easily checked, we focus on proving smoothness and decay with respect to the variable $y$ of the distribution $\kappa^{\tau}_x$.

    First, we prove boundedness and continuity in $y$. To this end, we show that $\tau(x,\cdot)$ is in $L^1(\widehat{\Heis}^{1,1})$: indeed we can write, for $N\in\N_{>0}$,
    \begin{equation*}
        \begin{aligned}
            \sigma(x,\pi^{\delta,\nu_0\delta^{1/3}},\beta) &= \cF_{\Eng}(\kappa_{x}(\cdot,\beta))(\pi^{\delta,\nu_0\delta^{1/3}})\\
            &= H(\pi^{\delta,\nu_0 \delta^{1/3}})^{-N}H(\pi^{\delta,\nu_0 \delta^{1/3}})^N\cF_{\Eng}(\kappa_{x}(\cdot,\beta))(\pi^{\delta,\nu_0\delta^{1/3}})\\
            &= H(\pi^{\delta,\nu_0 \delta^{1/3}})^{-N}\cF_{\Eng}(\Delta_{\Eng}^N \kappa_{x}(\cdot,\beta))(\pi^{\delta,\nu_0\delta^{1/3}}).
        \end{aligned}
    \end{equation*}
    As $H(\pi^{\delta,\nu_0 \delta^{1/3}})$ is unitarily equivalent to the operator 
    \begin{equation*}
        \delta^{2/3}\left(-\partial_{\xi}^2 + \left(\nu_0+\frac{1}{2}\xi^2\right)^2\right),
    \end{equation*}
    for $N$ large enough the operator $\delta^{2N/3}H(\pi^{\delta,\nu_0 \delta^{1/3}})^{-N}$ is trace-class, with a uniformly bounded trace norm (see \cite{BBGL}). As the support 
    of $\tau(x,\cdot)$ is included in $\cO_{\delta_0}(\widehat{\Heis}^{1,1})$, up to choosing $N$ large enough, ${\bold 1}_{\delta>\delta_0}\delta^{-2N/3}$ is integrable for the $|\delta|d\delta$ part of 
    the Plancherel measure on $\Hhat^{1,1}$ and as $\kappa$ is compactly supported with respect to the variable $\eta$, we obtain the integrability of ${\rm Tr}(|\tau(x,\cdot)|)$ for $d\mu_{\Hhat^{1,1}}$ as desired.

    We are then allowed to use the Fourier inversion formula and write
    \begin{equation}
        \label{eq:prooftau}
            \kappa^{\tau}_x(y) = \int_{\Hhat^{1,1}}{\rm Tr}\left(\check{\pi}(y) \tau(x,\check{\pi})\right)d\mu_{\Hhat^{1,1}}(\check{\pi}).
    \end{equation}
    We see by this equation that $\kappa^{\tau}_x(\cdot)$ is continuous and uniformly bounded.
    
    Now, for a left-invariant vector field $Y$ on $\Heis^{1,1}$, formula \eqref{eq:prooftau} would give 
    \begin{equation*}
        Y_{y}\kappa^{\tau}_x(y) = \int_{\Hhat^{1,1}}{\rm Tr}\left(\check{\pi}(y) \check{\pi}(Y)\tau(x,\check{\pi})\right)d\mu_{\Hhat^{1,1}}(\check{\pi}),
    \end{equation*}
    if we had $\check{\pi}\in\Hhat^{1,1}\mapsto \check{\pi}(Y)\tau(x,\check{\pi})$ also in $L^1(\Hhat^{1,1})$. As we have
    \begin{equation}
        \label{eq:quasiengelpi}
        \check{\pi}^{\delta,\beta}(Y_2) = i\beta,\quad \check{\pi}^{\delta,\beta}(Y_i) = \pi^{\delta,\nu_0\delta^{1/3}}(X_i),\  i=1,3,4,
    \end{equation}
    we can write for $i=1,3,4$,
    \begin{equation*}
        \check{\pi}^{\delta,\beta}(Y_i)\tau(x,\check{\pi}^{\delta,\beta}) = \pi^{\delta,\nu_0\delta^{1/3}}(X_i)\sigma(x,\pi^{\delta,\nu_0\delta^{1/3}},\beta) = \cF_{\Eng}(X_i \kappa_x(\cdot,\beta))(\pi^{\delta,\nu_0\delta^{1/3}}),
    \end{equation*}
    and 
    \begin{equation*}
        \check{\pi}^{\delta,\beta}(Y_2)\tau(x,\check{\pi}^{\delta,\beta}) = \cF_{\Eng}(i\beta \kappa_x(\cdot,\beta))(\pi^{\delta,\nu_0\delta^{1/3}}).
    \end{equation*}
    Recursively and applying the same previous reasoning, we obtain smoothness in $y$.

    For decay, notice that for $j\in\{1,2,3,4\}$, Equation \eqref{eq:prooftau} gives 
    \begin{equation*}
        -y_j\kappa^{\tau}_x(y) = \int_{\R\setminus\{0\}\times\R}{\rm Tr}\left(-\Delta_{j}^{\Heis^{1,1}} \check{\pi}^{\delta,\beta}(y) \tau(x,\check{\pi}^{\delta,\beta})\right)|\delta|d\delta d\beta,
    \end{equation*}
    where the difference operator $\Delta_{j}^{\Heis^{1,1}}$ is applied to the field $\{\check{\pi}^{\delta,\beta}(y)\,:\,\check{\pi}^{\delta,\beta}\in\widehat{\Heis}^{1,1}\}$. By Proposition \ref{prop:formuleLeibniz}, we have 
    \begin{equation*}
        -y_j\kappa^{\tau}_x(y) = \int_{\R\setminus\{0\}\times\R}{\rm Tr}\left(\check{\pi}^{\delta,\beta}(y) \Delta_{j}^{\Heis^{1,1}}\tau(x,\check{\pi}^{\delta,\beta})\right)|\delta|d\delta d\beta,
    \end{equation*}
    if $\check{\pi}\in\Hhat^{1,1}\mapsto \Delta_{j}^{\Heis^{1,1}}\tau(x,\check{\pi})$ makes sense and is also in $L^1(\Hhat^{1,1})$.
    As we can explicit the difference operators on $\Heis^{1,1}$ as follows
    \begin{equation*}
        \begin{aligned}
        &\Delta_{1}^{\Heis^{1,1}} = \frac{i}{\delta} \left[\check{\pi}^{\delta,\beta}(Y_3), \cdot \right],\ 
        \Delta_{2}^{\Heis^{1,1}} = \frac{1}{i}\partial_{\beta},\ 
        \Delta_{3}^{\Heis^{1,1}} = - \frac{i}{\delta}\left[\check{\pi}^{\delta,\beta}(Y_1),\cdot\right],\\
        &\Delta_{4}^{\Heis^{1,1}} = \frac{1}{i}\partial_{\delta} + \frac{1}{2}\Delta_{1}^{\Heis^{1,1}} \Delta_{3}^{\Heis^{1,1}}  + \frac{i}{\delta} \check{\pi}^{\delta,\beta}(Y_3) \Delta_{3}^{\Heis^{1,1}},
        \end{aligned}
    \end{equation*}
    using the relations \eqref{eq:quasiengelpi}, we see that the action of a difference operator for the quasi-Heisenberg group easily translates into 
    a difference operator on the Engel group acting on the symbol $\sigma$ or into derivatives along the variable $\eta$. We conclude this way by recurrence that 
    $\kappa_{x}^\tau$ is in $\cS(\Heis^{1,1})$.
\end{proof}

The following lemma is a necessary tool in the proof of Theorem \ref{thm:sym2microclosetoCnu} as it allows us to microlocalize away from a neighborhood of $\{\delta = 0\}$.

\begin{lemma}
    \label{lem:calculsymboliqueproof}
    Let $\tau \in \cA_{0}(\Heis^{1,1})$ be a symbol with support in $\Heis^{1,1}\times\cO_{\delta_0}(\widehat{\Heis}^{1,1})$ for a certain $\delta_0 > 0$. Then we have 
    \begin{equation*}
        \bU_{\nu_0}^{\hbar}\circ \Op_{\hbar,\Heis_{pt}^{1,1}}(\tau) = \bU_{\nu_0,\delta_0}^{\hbar}\circ \Op_{\hbar,\Heis_{pt}^{1,1}}(\tau) + \cO_{L^2\rightarrow L^2}(\hbar).
    \end{equation*}
\end{lemma}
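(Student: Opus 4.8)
The plan is to compare the two operators $\bU_{\nu_0}^\hbar$ and $\bU_{\nu_0,\delta_0}^\hbar$ directly through their definitions in terms of the Fourier transforms $\cF_\Eng^\hbar$ and $\cF_{\Heis^{1,1}_{pt}}^\hbar$. Recalling Definitions \ref{def:unitaryUhbar} and \ref{def:Uhbardelta0}, the difference $\bU_{\nu_0}^\hbar - \bU_{\nu_0,\delta_0}^\hbar$ equals $(\cF_\Eng^\hbar)^{-1}\circ (D^\hbar - D_{\delta_0}^\hbar)\circ T_{\nu_0}^* \circ \cF_{\Heis_{pt}^{1,1}}^\hbar$, where on the symbol side $D^\hbar \tau(\pi^{\delta,\beta}) - D_{\delta_0}^\hbar\tau(\pi^{\delta,\beta}) = \left(1 - g\left(\tfrac{\delta}{\delta_0}\right)\right)\tau(\pi^{\delta,\beta/\hbar})$. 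Since $1 - g(\cdot/\delta_0)$ is supported in $|\delta| < \delta_0$, composing with $\Op_{\hbar,\Heis_{pt}^{1,1}}(\tau)$ produces a Fourier multiplier localized in $|\delta| < \delta_0$. The key point is that $\tau$ itself is supported in $\Heis^{1,1}\times\cO_{\delta_0}(\widehat{\Heis}^{1,1})$, i.e. in $|\delta| > \delta_0$, so after applying $\cF_{\Heis_{pt}^{1,1}}^\hbar\circ\bU_{\nu_0}^{\hbar,*}$ and reading off where the symbol of the composed operator lives, these two disjoint support conditions should force the error to vanish — but not exactly, because $\Op_{\hbar,\Heis_{pt}^{1,1}}(\tau)$ is a pseudodifferential operator whose symbol depends on the space variable $x$, so composing with the multiplier localized away in $\delta$ leaves a remainder governed by symbolic calculus.

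Concretely, I would first reduce to estimating $\bU_{\nu_0}^{\hbar,*}\circ\bU_{\nu_0}^\hbar\circ\Op_{\hbar,\Heis_{pt}^{1,1}}(\tau) - \bU_{\nu_0}^{\hbar,*}\circ\bU_{\nu_0,\delta_0}^\hbar\circ\Op_{\hbar,\Heis_{pt}^{1,1}}(\tau)$; since $\bU_{\nu_0}^\hbar$ is unitary, it is equivalent to control the operator on $L^2(\Heis^{1,1})$ whose partial-semiclassical symbol is $\left(1 - g\left(\tfrac{\delta}{\delta_0}\right)\right)$ times something, composed with $\tau$. The composition of the multiplier $m_{\delta_0}(\check\pi^{\delta,\beta}) := 1 - g(\delta/\delta_0)$ (supported in $|\delta| < \delta_0$) with $\Op_{\hbar,\Heis_{pt}^{1,1}}(\tau)$ (symbol supported in $|\delta| > \delta_0$, as a function of $x$ as well) is handled by the symbolic calculus on $\Heis^{1,1}$: the principal symbol of $\fM(m_{\delta_0})\circ\Op_{\hbar,\Heis_{pt}^{1,1}}(\tau)$ is $m_{\delta_0}\cdot\tau \equiv 0$ because the two supports in $\delta$ are disjoint, and every term in the asymptotic expansion involves difference operators applied to $\tau$, which do not enlarge the $\delta$-support (difference operators act by commutators with $\check\pi(Y_i)$ or by $\partial_\beta$, $\partial_\delta$; the $\partial_\delta$ terms a priori could touch $\{\delta = \delta_0\}$, but $g$ is chosen identically $1$ on $|\delta| > \delta_0$ away from a fixed annulus, so one picks $\delta_0$ slightly inside the support gap). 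Hence the full symbol of the composition is $\cO(\hbar^\infty)$ in the relevant symbol seminorms, and Calderón–Vaillancourt (Theorem \ref{thm:CalderonVaillancourt}) gives the $\cO_{L^2\to L^2}(\hbar)$ bound — in fact $\cO(\hbar^\infty)$, but $\cO(\hbar)$ suffices.

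The main obstacle is the bookkeeping for the $\partial_\delta$-type difference operators near the hypersurface $\{\delta = \delta_0\}$: one must be careful that the cutoff $g$ and the support of $\tau$ are separated by a genuine $\delta$-gap, not merely adjacent, so that all derivatives of $1 - g(\delta/\delta_0)$ vanish identically on the support of $\tau$. This is arranged by choosing, at the outset, the cutoff parameter in $\bU_{\nu_0,\delta_0}^\hbar$ to be some $\delta_0' < \delta_0$ with $g(\delta/\delta_0')$ equal to $1$ on $\{|\delta| \geq \delta_0\}$; since the statement only asserts existence of such a decomposition with $\cO_{L^2\to L^2}(\hbar)$ error for the given $\delta_0$ coming from $\tau$, we are free to take the cutoff threshold of $\bU_{\nu_0,\delta_0}^\hbar$ anywhere below $\delta_0$, and the support of $\tau$ then lies strictly inside the region where $g(\delta/\delta_0') = 1$, so $m$ and $\tau$ have disjoint supports together with all their $\delta$-derivatives. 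A secondary technical point is that $\Op_{\hbar,\Heis_{pt}^{1,1}}$ uses the partial dilations $\cdot_{pt}$ rather than the homogeneous ones, so one should verify the relevant symbolic-calculus estimates hold in that setting; but the $\delta$-variable is untouched by the partial dilation (it carries weight, but the support condition $|\delta| > \delta_0$ versus $|\delta| < \delta_0'$ is dilation-invariant in $\delta$ up to a fixed constant), so the argument goes through verbatim. I would conclude by invoking Proposition \ref{prop:2microbounded} (or rather its $\Heis^{1,1}$-analogue, which follows the same lines) to bound the error in the operator norm by a finite sum of symbol seminorms, each of which is $\cO(\hbar)$.
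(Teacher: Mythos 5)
Your overall strategy is the paper's: reduce the lemma to showing that the Fourier multiplier $g(\delta/\delta_0)$ (equivalently, that $1-g(\delta/\delta_0)$ kills $\Op_{\hbar,\Heis_{pt}^{1,1}}(\tau)$ up to $\cO(\hbar)$) composes with $\Op_{\hbar,\Heis_{pt}^{1,1}}(\tau)$ to give back $\Op_{\hbar,\Heis_{pt}^{1,1}}(\tau)$, using that the supports in $\delta$ are compatible. Two remarks, one cosmetic and one substantive. Cosmetic: your replacement of $\delta_0$ by a smaller $\delta_0'$ is unnecessary and, taken literally, changes the operator $\bU_{\nu_0,\delta_0}^{\hbar}$ appearing in the statement. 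With the paper's choice of $g$ (equal to $1$ outside $(-1,1)$), one has $g(\delta/\delta_0)\equiv 1$ on $\{|\delta|\geq\delta_0\}\supset{\rm supp}\,\tau$, and every $\delta$-derivative of $1-g(\delta/\delta_0)$ vanishes identically on $\{|\delta|\geq\delta_0\}$, so the product of symbols and of all their difference-operator images already vanishes pointwise; no extra gap is needed.

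Substantive: the step where you invoke ``the symbolic calculus on $\Heis^{1,1}$'' to conclude that the full symbol of $\fM(m_{\delta_0})\circ\Op_{\hbar,\Heis_{pt}^{1,1}}(\tau)$ is $\cO(\hbar^\infty)$ is not available off the shelf. The paper has no composition theorem with remainder estimates for the partial quantization $\Op_{\hbar,\Heis_{pt}^{1,1}}$ (the semiclassical parameter does not act on the $x_2$ variable, so the standard graded calculus of Fischer--Ruzhansky does not apply verbatim), and the vanishing of every term of a formal asymptotic expansion does not by itself control the remainder. This remainder bound is in fact the entire content of the proof: the paper writes the convolution kernel of the composition explicitly, observes that the multiplier's kernel is $\hbar^{-3}\delta_{(y_1,y_2,y_3)=0}\otimes\cF_{\R}^{-1}g_{\delta_0}(\hbar^{-3}y_4)$ so that only the central direction is involved, performs a single Taylor expansion of $u_4\mapsto\kappa^{\tau}_{x(\hbar\cdot_{pt}u_4^{-1})}$ in the base point, identifies the zeroth-order term with $\tau$ itself by the support condition, and bounds the remainder --- which carries an explicit factor $\hbar^{\upsilon_4}=\hbar^{3}$ --- by Schur's lemma, uniformly in $\hbar$. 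You should either carry out this kernel computation or prove the composition estimate you are appealing to; as written, the quantitative heart of the lemma is asserted rather than established.
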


\begin{proof}
    Let $g\in C^{\infty}(\R)$ as in \ref{def:Uhbardelta0}, i.e equal to 0 on $(-1/2,1/2)$ and 1 outside $(-1,1)$. 
    Then the symbol $\sigma_{g_{\delta_0}} = \{g_{\delta_0}(\delta) = g\left(\frac{\delta}{\delta_0}\right){\rm Id}\,:\,\check{\pi}^{\delta,\beta}\in\widehat{\Heis}^{1,1}\}$ is in $S_{0,0}^{0}(\Heis^{1,1})$ and satisfies by definition
    \begin{equation*}
        \bU_{\nu_0,\delta_0}^{\hbar} = \bU_{\nu_0}^{\hbar}\circ \Op_{\hbar,\Heis_{pt}^{1,1}}(\sigma_{g_{\delta_0}}).
    \end{equation*}
    Thus it is enough to show that $\Op_{\hbar,\Heis_{pt}^{1,1}}(\sigma_{g_{\delta_0}})\circ \Op_{\hbar,\Heis_{pt}^{1,1}}(\tau)=\Op_{\hbar,\Heis_{pt}^{1,1}}(\tau)+\cO(\hbar)$.
    Note that the operators $\Op_{\hbar,\Heis_{pt}^{1,1}}(\sigma_{g_{\delta_0}})$ acts by convolution with the following kernel:
    \begin{equation*}
        \kappa^{g_{\delta_0},\hbar}(y) = \hbar^{-3}\delta_{(y_1,y_2,y_3)= 0}\otimes \cF_{\R}^{-1}g_{\delta_0}(\hbar^{-3}y_4),\quad y\in\Heis^{1,1},
    \end{equation*}
    while, if we denoteby $\kappa^{\tau}$ the convolution kernel of $\tau$, the operator $\Op_{\hbar,\Heis_{pt}^{1,1}}(\tau)$ acts by 
    \begin{equation*}
        \Op_{\hbar,\Heis_{pt}^{1,1}}(\tau)f(x) = f * \kappa_{x}^{\tau,\hbar}(x),\quad f\in\cS(\Heis^{1,1}),\ x\in\Heis^{1,1}.
    \end{equation*}
    with $\kappa_{x}^{\tau,\hbar}(y) = \hbar^{-6}\kappa_{x}^{\tau}(\hbar^{-1}\cdot_{pt}y)$.
    Thus the composition of the two operators has a convolution kernel $\kappa_{x}^\hbar(z) = \hbar^{-6}\kappa_{x}(\hbar^{-1}\cdot_{pt} z)$ given by
    \begin{equation*}
        \begin{aligned}
        \kappa_{x}^{\hbar}(y^{-1}x) &= \hbar^{-9}\int_{\R} \cF_{\R}^{-1}g_{\delta_0}(\hbar^{-3}(x_4-v_4))\kappa_{(x_1,x_2,x_3,v_4)}^{\tau}(\hbar^{-1}\cdot_{pt}(y^{-1}(x_1,x_2,x_3,v_4)))\,dv_4\\
        &= \hbar^{-6}\int_{\R} \cF_{\R}^{-1}g_{\delta_0}(u_4)\kappa_{x (\hbar\cdot_{pt}u_{4}^{-1})}^{\tau}((\hbar^{-1}\cdot_{pt}y^{-1}x)u_{4}^{-1})\,du_4,
        \end{aligned}
    \end{equation*}
    and with the change of variable $z = \hbar^{-1}\cdot_{pt}(y^{-1}x)$,
    \begin{equation*}
        \kappa_{x}(z) = \int_{\R} \cF_{\R}^{-1}g_{\delta_0}(u_4)\kappa_{x (\hbar\cdot_{pt}u_{4}^{-1})}^{\tau}(zu_{4}^{-1})\,du_4.
    \end{equation*}
    We can perform a Taylor expansion of the ma
    $u_4 \in\R \mapsto \kappa_{x (\hbar\cdot_{pt}u_{4}^{-1})}^{\tau}(z)$:
    \begin{equation*}
        \kappa_{x (\hbar\cdot_{pt}u_{4}^{-1})}^{\tau}(z) = \kappa_{x}^{\tau}(z) - \hbar^{3}u_4\int_{0}^{1} X_{4,x} \kappa_{x(\hbar\cdot_{pt}(tu_4)^{-1})}(z)\,dt,
    \end{equation*}
    and we deduce 
    \begin{equation*}
        \kappa_{x}(z) = \int_{\R} \cF_{\R}^{-1}g_{\delta_0}(u_4)\kappa_{x}^{\tau}(zu_{4}^{-1})\,du_4 - \hbar^{3} \int_{\R} u_4\cF_{\R}^{-1}g_{\delta_0}(u_4) X_{4,x}\kappa_{x (\hbar\cdot_{pt}u_{4}^{-1})}^{\tau}(zu_{4}^{-1})\,du_4.
    \end{equation*}
    The first term in the right-hand side is smooth and compactly supported in $x$ and Schwartz in $z$, and defines, by taking its Fourier transform, a symbol $\sigma$ in $\cA_{0}(\Heis^{1,1})$ equal to 
    \begin{equation*}
        \sigma = \sigma_{g_{\delta_0}}\circ \tau = \tau,
    \end{equation*}
    by condition on the support of $\tau$. Finally, we see that the operator of convolution kernel equal to 
    \begin{equation*}
        \int_{\R} u_4\cF_{\R}^{-1}g_{\delta_0}(u_4) X_{4,x}\kappa_{x (\hbar\cdot_{pt}u_{4}^{-1})}^{\tau}(zu_{4}^{-1})\,du_4,
    \end{equation*}
    is bounded on $L^2(\Heis^{1,1})$ by Schur's lemma, uniformly in $\hbar$, and we obtain the desired equality.
\end{proof}

\begin{proof}[Proof of Theorem \ref{thm:sym2microclosetoCnu}]
    As $\sigma$ is a symbol in $\mathcal{A}_{0}^{2}(\Eng)$, there exists $\delta_0>0$ such that $\sigma$ is supported in 
    $\Eng\times\cO_{\delta_0}(\Enghat)\times[-R,R]$ for $R>0$ large enough.  
    By Lemma \ref{lem:correspondencesmoothsym} we know that the measurable symbol $\tau$ defines indeed a smoothing symbol on $\Heis^{1,1}$.
    
    First, we observe that, for $f \in \cS(\Heis^{1,1})$ and $x= (h_x,x_2)\in\Eng$, we can write
    \begin{equation*}
        \begin{aligned}
        &\Op_{\hbar}^{C_{\nu_0}}(\sigma)(\bU_{\nu_0}^{\hbar}f)(x) = \hbar^{-7}\int_{\Enghat} {\rm Tr}\left(\pi^{\delta,\beta}(\hbar^{-1}\cdot x)\sigma_{\hbar}^{C_{\nu_0}}(x,\pi^{\delta,\beta})\cF_{\Eng}^{\hbar}(\bU_{\nu_0}^{\hbar}f)(\pi^{\delta,\beta})\right)d\mu_{\Enghat}(\pi^{\delta,\beta})\\
        &= \hbar^{-7}\int_{\Enghat} {\rm Tr}\left(\pi^{\delta,\beta}(\hbar^{-1}\cdot x)\sigma_{\hbar}^{C_{\nu_0}}(x,\pi^{\delta,\beta})(D^{\hbar}\circ T_{\nu_0}^* \circ \cF_{\Heis^{1,1}_{pt}}^{\hbar})f (\pi^{\delta,\beta})\right)d\mu_{\Enghat}(\pi^{\delta,\beta})\\
        &= \hbar^{-6}\int_{\Enghat} {\rm Tr}\left(\Tilde{\pi}^{\delta}(\hbar^{-1}\cdot h_x)e^{i\beta x_2}U_{\nu_0}^{\hbar}(x_2,\Tilde{\pi}^\delta)\sigma_{\hbar}^{C_{\nu_0}}(x,\pi^{\delta,\nu_0\delta^{1/3}+\hbar\beta})\cF_{\Heis^{1,1}_{pt}}^{\hbar}f(\check{\pi}^{\delta,\beta})\right)d\mu_{\Enghat}(\pi^{\delta,\beta})\\
        &= \hbar^{-6}\int_{\Enghat}\int_{\Heis\times\R} {\rm Tr}\left(\Tilde{\pi}^{\delta}(\hbar^{-1}\cdot (h_{y}^{-1}h_x))e^{i\beta (x_2-y_2)}U_{\nu_0}^{\hbar}(x_2,\Tilde{\pi}^\delta)\sigma_{\hbar}^{C_{\nu_0}}(x,\pi^{\delta,\nu_0\delta^{1/3}+\hbar\beta})\right)f(h_y,y_2) \,dh_y dy_2 d\mu_{\Enghat}(\pi^{\delta,\beta})\\
        &= \hbar^{-6}\int_{\Enghat}\int_{\Heis\times\R} {\rm Tr}\left(\check{\pi}^{\delta,\beta}(\hbar^{-1}\cdot_{pt}(y^{-1}x))U_{\nu_0}^{\hbar}(x_2,\Tilde{\pi}^\delta)\sigma(x,\pi^{\delta,\nu_0\delta^{1/3}+\hbar\beta},\beta)\right)f(h_y,y_2) \,dh_y dy_2 d\mu_{\Enghat}(\pi^{\delta,\beta}).   
        \end{aligned}
    \end{equation*}
    If we denote by $\kappa$ the convolution kernel of $\sigma$ (see Definition \ref{def:sym2micro}), notice that we have 
    \begin{equation*}
        \begin{aligned}
            \sigma(x,\pi^{\delta,\nu_0\delta^{1/3}+\hbar\beta},\beta)-\sigma(x,\pi^{\delta,\nu_0\delta^{1/3}},\beta) 
            &= \cF_{\Eng}\kappa_x(\cdot,\beta)(\pi^{\delta,\nu_0\delta^{1/3}+\hbar\beta})-\cF_{\Eng}\kappa_x(\cdot,\beta)(\pi^{\delta,\nu_0\delta^{1/3}})\\
            &=\int_{\Eng} \kappa_x(y,\beta)\left(\pi^{\delta,\nu_0\delta^{1/3}+\hbar\beta}(y)^* - \pi^{\delta,\nu_0\delta^{1/3}}(y)^*\right)\,dy\\
            &= \int_{\Eng} \kappa_x(y,\beta)\pi^{\delta,\nu_0\delta^{1/3}+\hbar\beta}(y)^* (1-e^{i\hbar\beta y_2})\,dy\\
            &= -i\hbar\int_{0}^{1}\int_{\Eng} \beta y_2\kappa_x(y,\beta)e^{it\hbar\beta y_2}\pi^{\delta,\nu_0\delta^{1/3}+\hbar\beta}(y)^* \,dydt.
        \end{aligned}
    \end{equation*}
    For $t\in [0,1]$, thanks to the support condition on the variable $\eta$, we introduce the kernel $\Tilde{\kappa}^{t,\hbar} \in C_{c}^\infty(\R_\eta,C_{c}^{\infty}(\Eng_x,\cS(\Eng_y)))$ defined by 
    \begin{equation*}
        \Tilde{\kappa}^{t,\hbar}_x(y,\eta) = \eta y_2 e^{it\hbar\eta y_2}\kappa_x(y,\eta).
    \end{equation*}
    If we denote by $\sigma_{t,\hbar}$ the associated second-microlocal symbol, the previous equation writes 
    \begin{equation*}
        \sigma(x,\pi^{\delta,\nu_0\delta^{1/3}+\hbar\beta},\beta)-\sigma(x,\pi^{\delta,\nu_0\delta^{1/3}},\beta) = -i\hbar\int_{0}^{1} \sigma_{t,\hbar}(x,\pi^{\delta,\nu_0\delta^{1/3}+\hbar\beta},\beta)\,dt.
    \end{equation*}
    Thus we have 
    \begin{multline*}
        \Op_{\hbar}^{C_{\nu_0}}(\sigma)(\bU_{\nu_0}^{\hbar}f)(x) = \hbar^{-6}\int_{\Enghat}\int_{\Heis\times\R} {\rm Tr}\left(\check{\pi}^{\delta,\beta}(\hbar^{-1}\cdot_{pt}(y^{-1}x))U_{\nu_0}^{\hbar}(x_2,\Tilde{\pi}^\delta)\sigma(x,\pi^{\delta,\nu_0\delta^{1/3}},\beta)\right)\\f(h_y,y_2) \,dh_y dy_2 d\mu_{\Enghat}(\pi^{\delta,\beta})
        - i\hbar\int_{0}^{1}\Op_{\hbar}^{C_{\nu_0}}(\sigma_{t,\hbar})(\bU_{\nu_0}^{\hbar}f)(x)\,dt.
    \end{multline*}
    By looking at the definition of $\kappa^{t,\hbar}$, we see that the seminorm $N_2(\sigma_{t,\hbar})$ is uniformly bounded in $t\in[0,1]$ and $\hbar\in(0,1)$. We have thus proved by Proposition \ref{prop:2microbounded} that we have:
    \begin{multline*}
        \Op_{\hbar}^{C_{\nu_0}}(\sigma)(\bU_{\nu_0}^{\hbar}f)(x) = \hbar^{-6}\int_{\Heis^{1,1}}\int_{\Heis\times\R} {\rm Tr}\left(\check{\pi}^{\delta,\beta}(\hbar^{-1}\cdot_{pt}(y^{-1}x))U_{\nu_0}^{\hbar}(x_2,\Tilde{\pi}^\delta)\tau(x,\check{\pi}^{\delta,\beta})\right)\\ 
        \times f(y) \,dy d\mu_{\Hhat^{1,1}}(\check{\pi}^{\delta,\beta}) +\cO(\hbar)\|f\|_{L^2}.
    \end{multline*}

    We now compute the composition of $\bU_{\nu_0}^{\hbar}$ with $\Op_{\Heis^{1,1}_{pt}}^{\hbar}(\tau)$. Remark that $\tau$ is a symbol in $\cA_{0}(\Heis^{1,1})$ with support included in $\Eng\times\cO_{\delta_0}(\widehat{\Heis}^{1,1})$. 
    By Lemma \ref{lem:calculsymboliqueproof}, we can instead look at $\bU_{\nu_0,\delta_{0}}^{\hbar}\circ \Op_{\hbar,\Heis^{1,1}}(\tau)$.
    Using Proposition \ref{prop:Uhbarintegralformula} with an approximation of identity defined by 
    \begin{equation*}
        \forall h\in\Heis,\ \theta_\eps(h) = \eps^{-6}\theta(\eps^{-1}\cdot h),\ \eps>0,
    \end{equation*}
    with $\theta \in \cS(\Heis)$ satisfying $\theta(0)=1$, we can write
    \begin{multline*}
        \bU_{\nu_0,\delta_0}^{\hbar}\left(\Op_{\Heis^{1,1}_{pt}}^{\hbar}(\tau)f\right)(x) = \lim_{\eps\rightarrow 0} \hbar^{-6}\int_{\Hhat}\int_{\Heis} {\rm Tr}\left(\Tilde{\pi}^{\delta}(\hbar^{-1}\cdot (h_{w}^{-1}h_x))U_{\nu_0}^{\hbar}(x_2,\Tilde{\pi}^\delta)g_{\delta_0}(\delta)\cF_{\Heis}^{\hbar}\theta_\eps (\Tilde{\pi}^{\delta})\right)\\ \Op_{\Heis^{1,1}_{pt}}^{\hbar}(\tau)f(h_w,x_2)\,d\mu_{\Hhat}(\Tilde{\pi}^\delta) dh_w\\
        = \lim_{\eps\rightarrow 0} \hbar^{-12}\int_{\Hhat}\int_{\Heis}\int_{\Hhat^{1,1}}\int_{\Heis\times\R} {\rm Tr}\left(\Tilde{\pi}^{\delta}(\hbar^{-1}\cdot (h_{w}^{-1}h_x))U_{\nu_0}^{\hbar}(x_2,\Tilde{\pi}^\delta)g_{\delta_0}(\delta)\cF_{\Heis}^{\hbar}\theta_\eps (\Tilde{\pi}^{\delta})\right)\\{\rm Tr} \left(\Tilde{\pi}^{\delta'}(\hbar^{-1}\cdot (h_{y}^{-1}h_w))e^{i\beta(x_2-y_2)}\tau(h_w , x_2, \check{\pi}^{\delta',\beta})\right)f(h_y,y_2)\, d\mu_{\Hhat^{1,1}}(\check{\pi}^{\delta',\beta}) dh_y dy_2 d\mu_{\Hhat}(\Tilde{\pi}^\delta) dh_w\\
        = \lim_{\eps\rightarrow 0} \hbar^{-12}\int_{\Hhat}\int_{\Heis}\int_{\Hhat^{1,1}}\int_{\Heis\times\R} {\rm Tr}\left(\Tilde{\pi}^{\delta}(\hbar^{-1}\cdot h_{z})^{*}U_{\nu_0}^{\hbar}(x_2,\Tilde{\pi}^\delta)g_{\delta_0}(\delta)\cF_{\Heis}^{\hbar}\theta_\eps (\Tilde{\pi}^{\delta})\right)\\{\rm Tr} \left(\Tilde{\pi}^{\delta'}(\hbar^{-1}\cdot h_z)e^{i\beta(x_2-y_2)}\tau(h_x h_z, x_2, \check{\pi}^{\delta',\beta})\Tilde{\pi}^{\delta'}(\hbar^{-1}\cdot (h_{y}^{-1}h_x))\right) \\ \times f(h_y,y_2)\, d\mu_{\Hhat^{1,1}}(\check{\pi}^{\delta',\beta}) dh_y dy_2 d\mu_{\Hhat}(\Tilde{\pi}^\delta) dh_z,
    \end{multline*}
    where we have performed the change of variable $h_z = h_{x}^{-1}h_w$. 
    
    We recognize in the last term the composition of a Fourier transform on $\Heis$ (with the variable $h_z$) with an inverse Fourier transform with the variable $\Tilde{\pi}^{\delta'}$. 
    There is however an obstruction present in the dependence in the symbol $\tau$ of the variable $h_z$. 
    
    Noting $h_z = \Exp_{\Heis}(z_1 X_1 + z_3 X_3 + z_4 X_4)$, the mean value theorem on the Heisenberg group gives 
    \begin{equation*}
        \tau(h_x h_z,x_2,\check{\pi}^{\delta',\beta}) = \tau(h_x,x_2,\check{\pi}^{\delta',\beta}) + \int_{0}^{1}(z_1 X_1 + z_3 X_3 + z_4 X_4)\tau(h_x (t h_z),x_2,\check{\pi}^{\delta',\beta})\,dt.
    \end{equation*}

    In the precedent computation of the composition of $\bU_{\nu_0}^{\hbar}$ with $\Op_{\Heis^{1,1}_{pt}}^{\hbar}(\tau)$, the first term in the mean value theorem gives 
    \begin{multline*}
        \lim_{\eps\rightarrow 0} \hbar^{-6}\int_{\Hhat^{1,1}}\int_{\Heis\times\R} {\rm Tr}\left(e^{i\beta(x_2-y_2)}\tau(h_x, x_2, \check{\pi}^{\delta',\beta})\Tilde{\pi}^{\delta'}(\hbar^{-1}\cdot (h_{y}^{-1}h_x))U_{\nu_0}^{\hbar}(x_2,\Tilde{\pi}^{\delta'})g_{\delta_0}(\delta')\cF_{\Heis}^{\hbar}\theta_\eps (\Tilde{\pi}^{\delta'})\right)\\ \times f(h_y,y_2)\,d\mu_{\Hhat^{1,1}}(\check{\pi}^{\delta',\beta}) dh_y dy_2\\
        = \lim_{\eps\rightarrow 0} \hbar^{-6}\int_{\Hhat^{1,1}}\int_{\Heis\times\R} {\rm Tr}\left(\Tilde{\pi}^{\delta'}(\hbar^{-1}\cdot (h_{y}^{-1}h_x))e^{i\beta(x_2-y_2)}U_{\nu_0}^{\hbar}(x_2,\Tilde{\pi}^{\delta'})g_{\delta_0}(\delta')\cF_{\Heis}^{\hbar}\theta_\eps (\Tilde{\pi}^{\delta'})\tau(h_x, x_2, \check{\pi}^{\delta',\beta})\right)\\ \times f(h_y,y_2)\,d\mu_{\Hhat^{1,1}}(\check{\pi}^{\delta',\beta}) dh_y dy_2,
    \end{multline*}
    which is equal, in the limit $\eps \rightarrow 0$, by the previous computations, to
    \begin{equation*}
        \Op_{\hbar}^{C_{\nu_0}}(\sigma)(\bU_{\nu_0}^{\hbar}f)(h_x,x_2) + \cO(\hbar)\|f\|_{L^2}.
    \end{equation*}
    
    We are left with proving that the remainder of the mean value theorem gives rise to an operator of norm $\cO_{L^2\rightarrow L^2}(\hbar)$. 
    To this end, we fix $t\in[0,1]$ and remark that for any $i=1,3,4$, we have 
    \begin{equation*}
        \begin{aligned}
        &\Tilde{\pi}^{\delta}(\hbar^{-1}\cdot h_{z})^{*}(\Tilde{\pi}^{\delta})\cF^{\hbar}_\Heis\theta_\eps (\Tilde{\pi}^{\delta}){\rm Tr} \left(\Tilde{\pi}^{\delta'}(\hbar^{-1}\cdot h_z)e^{i\beta(x_2-y_2)}(z_i X_i\tau)(h_x (t h_z), x_2, \check{\pi}^{\delta',\beta})\Tilde{\pi}^{\delta'}(\hbar^{-1}\cdot (h_{y}^{-1}h_x))\right)\\
        &= z_i \Tilde{\pi}^{\delta}(\hbar^{-1}\cdot h_{z})^{*}(\Tilde{\pi}^{\delta})\cF^{\hbar}_\Heis\theta_\eps (\Tilde{\pi}^{\delta}){\rm Tr} \left(\Tilde{\pi}^{\delta'}(\hbar^{-1}\cdot h_z)e^{i\beta(x_2-y_2)}X_i\tau(h_x (t h_z), x_2, \check{\pi}^{\delta',\beta})\Tilde{\pi}^{\delta'}(\hbar^{-1}\cdot (h_{y}^{-1}h_x))\right).
        \end{aligned}
    \end{equation*}
    We observe that 
    \begin{equation*}
        z_i \Tilde{\pi}^{\delta}(\hbar^{-1}\cdot h_{z})^{*} = \hbar^{\upsilon_i} \Delta_{i}^{\Heis}(\Tilde{\pi}^{\delta}(\hbar^{-1}\cdot h_{z})^{*}),
    \end{equation*}
    where the right-hand side is the application of the difference operator $\Delta_{i}^{\Heis}$ to the field $\{\Tilde{\pi}^{\delta}(\hbar^{-1}\cdot h_{z})^{*}\,:\,\Tilde{\pi}^{\delta}\in\Hhat\}$. 
    Applying an integration by part with respect to $\Delta_{i}^{\Heis}$ (see Propostion \ref{prop:IPP}), we have 
    \begin{multline*}
        \int_{\Hhat} {\rm Tr}\left(z_i \Tilde{\pi}^{\delta}(\hbar^{-1}\cdot h_{z})^{*}U_{\nu_0}^{\hbar}(x_2,\Tilde{\pi}^{\delta})g_{\delta_0}(\delta)\cF_{\Heis}^{\hbar}\theta_\eps (\Tilde{\pi}^{\delta})\right)d\mu_{\Hhat}(\Tilde{\pi}^\delta)\\ 
        = -\hbar^{\upsilon_i} \int_{\Hhat} {\rm Tr}\left(\Tilde{\pi}^{\delta}(\hbar^{-1}\cdot h_{z})^{*}\Delta_{i}^{\Heis}\left(U_{\nu_0}^{\hbar}(x_2,\cdot)g_{\delta_0}\cF_{\Heis}^{\hbar}\theta_\eps \right)(\Tilde{\pi}^{\delta})\right)d\mu_{\Hhat}(\Tilde{\pi}^\delta).
    \end{multline*}
    We need to compute the action of the difference operators on $U_{\nu_0}^{\hbar}(x_2,\cdot) g_{\delta_0}\cF_{\Heis}^{\hbar}\theta_\eps$. 
    First observe that we have 
    \begin{equation*}
        \Delta_{i}^{\Heis} \cF_{\Heis}^{\hbar}\theta_\eps = \cF_{\Heis}^{\hbar}(z_i\theta_\eps) = \eps^{\upsilon_i}\cF_{\Heis}^{\hbar}\theta_{\eps}^{i},
    \end{equation*}
    with $\theta_{\eps}^{i}(h_z) = \eps^{-6}(z_i \theta)(\eps^{-1}\cdot h_z)$, and thus, using the Leibniz formula \ref{prop:formuleLeibniz}, these terms disappear in the limit $\eps\rightarrow 0$.
    Moreover, the action of the difference operators on the field $g_{\delta_0}$ gives 0 for all except $\Delta_{4}^{\Heis}$, and this will lead to a term of order $\cO_{L^2\rightarrow L^2}(\hbar^{\upsilon_4})$.
    We can then focus on the action of the difference operators on $U_{\nu_0}^{\hbar}(x_2,\cdot)$. 

    The difference operators on the Heisenberg group for our particular choice of parametrisation of its dual $\Hhat$ are given by the following formulae:
    \begin{equation*}
        \Delta_{1}^{\Heis} = \frac{i}{\delta} \left[\Tilde{\pi}^{\delta}(X_3), \cdot \right],\ 
        \Delta_{3}^{\Heis} = - \frac{i}{\delta}\left[\Tilde{\pi}^{\delta}(X_1),\cdot\right],\ 
        \Delta_{4}^{\Heis} = \frac{1}{i}\partial_{\delta} + \frac{1}{2}\Delta_{1}^{\Heis} \Delta_{3}^{\Heis}  + \frac{i}{\delta} \Tilde{\pi}^{\delta}(X_3) \Delta_{3}^{\Heis},
    \end{equation*}
    We then easily find
    \begin{equation*}
        \begin{aligned}
        &\Delta_1 U_{\nu_0}^{\hbar}(x_2,\cdot) = 0,\ \Delta_3 U_{\nu_0}^{\hbar}(x_2,\cdot) = \hbar^{-1} x_2 \frac{1}{i\delta}U_{\nu_0}^{\hbar}(x_2,\cdot)\pi^{\delta,\beta}(X_3),\\ 
        &\Delta_4 U_{\nu_0}^{\hbar}(x_2,\cdot) = \hbar^{-1} x_2U_{\nu_0}^{\hbar}(x_2,\cdot)\left(\frac{1}{3}\nu_0\delta^{-2/3}+\frac{1}{2}\delta^{-2}\pi^{\delta,\beta}(X_{3}^2)\right).
        \end{aligned}
    \end{equation*}
    The restriction enforced by the cutoff $g$ gives that $\delta$ is bounded by below and thus negative powers of $\delta$ are dominated by powers of $\delta_0$. 
    Moreover, since $\upsilon_3 = 2$, the term obtained by integration by part with respect to $\Delta_3$ is of order $\cO_{L^2 \rightarrow L^2}(\hbar)$, while the one 
    obtained via $\Delta_4$ is of order $\cO_{L^2 \rightarrow L^2}(\hbar^2)$.
\end{proof}

\subsection{Partial semiclassical analysis on the group $\Heis^{1,1}$}
\label{subsect:partialsemianalysis}
As the Fourier integral operators introduced in the previous section allow us to work on the quasi-Heisenberg group,
we present here how the partial semiclassical analysis on $\Heis^{1,1}$ can be reformulated as a particular instance of the semiclassical calculus introduced 
in Appendix \ref{sect:Hilbertvaluedsemiclass}.

The main observation to make is that we have the following isometric identification:
\begin{equation*}
    L^2(\Heis^{1,1}) = L^{2}(\Heis,L^2(\R_2)).
\end{equation*}
Letting $\cG$ denote the Hilbert space $L^2(\R_2)$, we find ourselves in the setting of Appendix \ref{sect:Hilbertvaluedsemiclass}. 
Indeed, for $(h,\Tilde{\pi}^\delta)\in\Heis\times\Hhat$, we can associate to any symbol $\sigma$ in $\cA_{0}(\Heis^{1,1})$ an operator on $\cH_{\Tilde{\pi}^\delta}\otimes\cG = \cH_{\Tilde{\pi}^\delta}\otimes L^2(\R_2)$ in the following way: 
for $\Phi_{1}, \Phi_{2}\in \cH_{\Tilde{\pi}^\delta}$, we consider the scalar-valued smoothing symbol $\sigma_{\Phi_{1},\Phi_{2}}: (x_2,\beta)\in\R_2 \times\R\mapsto \langle \sigma(h,x_2,\check{\pi}^{\delta,\beta})\Phi_{1},\Phi_{2}\rangle_{\cH_{\Tilde{\pi}^\delta}}$. 
We will write $\sigma(h,x_2,\Tilde{\pi}^{\delta},D_{x_2})$ the operator in $\cL(\cH_{\Tilde{\pi}^\delta})\otimes\cK(\cG)$ defined by
\begin{equation*}
    \forall \Phi_{1}, \Phi_{2}\in \cH_{\Tilde{\pi}^\delta},\ \langle \sigma(h,x_2,\Tilde{\pi}^{\delta},D_{x_2})\Phi_{1}, \Phi_{2}\rangle_{\cH_{\Tilde{\pi}^\delta}} = 
    \Op_{1}(\sigma_{\Phi_{1},\Phi_{2}}) \in \cK(\cG),
\end{equation*}
where $\Op_{1}$ denotes here the usual Kohn-Nirenberg quantization on the real line.

\begin{proposition}
    \label{prop:correspondencesymHxR}
    For $\sigma \in \cA_{0}(\Heis^{1,1})$, $\sigma(\cdot,x_2,\cdot,D_{x_2})$ is a symbol in $\cA_{0}^{\cG}(G)$.
\end{proposition}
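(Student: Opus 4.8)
The plan is to unwind the definitions on both sides. Recall that $\cA_0^{\cG}(G)$ (for $G = \Heis$ here) is, by its definition in Appendix \ref{sect:Hilbertvaluedsemiclass}, the space of fields of operators on $\cH_{\Tilde\pi^\delta}\otimes\cG$ obtained as the Fourier transform (in the $\Heis$-variable) of a map $h\mapsto \kappa_h^\cG(\cdot)$ which is smooth and compactly supported from $\Heis$ to the $\cG$-valued Schwartz space $\cS(\Heis,\cK(\cG))$ (or the appropriate Hilbert-space-valued analogue used in the appendix). So the task is: given $\sigma\in\cA_0(\Heis^{1,1})$ with convolution kernel $\kappa \in C_c^\infty(\Heis^{1,1}, \cS(\Heis^{1,1}))$, produce such a $\cG$-valued kernel $\kappa^\cG$ on $\Heis$ whose Fourier transform yields the field $\sigma(h,x_2,\Tilde\pi^\delta,D_{x_2})$, and check the required smoothness/support/decay.

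First I would use the product structure $\Heis^{1,1} = \Heis\times\R_2$ to split the kernel. Write a point of $\Heis^{1,1}$ as $(h,x_2)$ with $h\in\Heis$, $x_2\in\R_2$, so $\kappa_{(h,x_2)}(h',x_2')$; set $h$ as the ``base'' variable and view the remaining data as a kernel on $\Heis\times\R_2\times\R_2$. The operator $\sigma(h,x_2,\Tilde\pi^\delta,D_{x_2})$ acts on $\cH_{\Tilde\pi^\delta}\otimes L^2(\R_{x_2})$; by the definition given just above the statement, its matrix coefficient $\langle\sigma(h,x_2,\Tilde\pi^\delta,D_{x_2})\Phi_1,\Phi_2\rangle$ is the Kohn–Nirenberg quantization $\Op_1(\sigma_{\Phi_1,\Phi_2})$ of the scalar symbol $(x_2,\beta)\mapsto\langle\sigma(h,x_2,\check\pi^{\delta,\beta})\Phi_1,\Phi_2\rangle$. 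The key elementary fact is that $\check\pi^{\delta,\beta} = \Tilde\pi^\delta\otimes e^{i\beta\cdot}$ (as noted in the remark after the proposition on $\widehat{\Heis}^{1,1}_{\rm gen}$), so the $\Heis^{1,1}$-Fourier transform factors as $\cF_{\Heis^{1,1}} = \cF_\Heis \otimes \cF_{\R_2}$; hence $\sigma(h,x_2,\check\pi^{\delta,\beta}) = \cF_\Heis\big(\cF_{\R_2}[\kappa_{(h,\cdot)}(\cdot,\cdot\,)](x_2,\beta)\big)(\Tilde\pi^\delta)$, where $\cF_{\R_2}$ acts in the third slot. The point is that partial Fourier transform in the $x_2'$-variable followed by Kohn–Nirenberg quantization in $(x_2,\beta)$ is exactly the operator of convolution-type with $\cG$-kernel $(x_2,x_2')\mapsto \kappa_{(h,x_2)}(h', x_2 - x_2')$ in the Euclidean variable — i.e.\ the standard identity relating KN-quantization of a symbol to the kernel of the resulting operator on $L^2(\R)$. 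Thus the $\cG$-valued convolution kernel I want is
\begin{equation*}
    \kappa^\cG_h(h')\big[(x_2,x_2')\big] := \kappa_{(h,x_2)}\big(h', x_2 - x_2'\big),
\end{equation*}
regarded for each $(h,h')$ as (the kernel of) an operator in $\cK(\cG)$, and one checks $\cF_\Heis(\kappa^\cG_h)(\Tilde\pi^\delta) = \sigma(h,x_2,\Tilde\pi^\delta,D_{x_2})$.

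Next I would verify that $\kappa^\cG$ satisfies the three requirements to land in $\cA_0^\cG(\Heis)$: (i) compact support in the base variable $h$ — immediate, since $\kappa$ is compactly supported in $(h,x_2)$ jointly, so in particular in $h$; (ii) smoothness in $h$ — inherited from smoothness of $\kappa$ in its first argument; (iii) Schwartz decay of $h'\mapsto\kappa^\cG_h(h')$ valued in $\cK(\cG)$ — this is where the work is. Because $\kappa$ is Schwartz on $\Heis^{1,1} = \Heis\times\R_2$ in $(h',x_2')$, the partial Fourier transform in $x_2'$ produces a function that is Schwartz in $h'$ with values Schwartz in both $x_2$ and $\beta$; the change of variables $x_2' \mapsto x_2 - x_2'$ and the reinterpretation as a $\cK(\cG)$-valued object (where $\cK(\cG)$ is the operator-norm closure of smoothing pseudodifferential operators on $\R_2$, which contains KN-quantizations of Schwartz symbols by the standard Calderón–Vaillancourt / kernel estimates) turns joint Schwartz decay in $(x_2,\beta)$ into the statement that each $\kappa^\cG_h(h')\in\cK(\cG)$ with all $\cK(\cG)$-seminorms decaying rapidly in $h'$ together with $\Heis$-derivatives. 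I would make this precise by bounding the relevant seminorms of $\kappa^\cG_h(h')$ in $\cK(\cG)$ by finitely many Schwartz seminorms of $\kappa$ on $\Heis^{1,1}$, using that $x_2^a \beta^b \partial_{x_2}^c\partial_\beta^d$ applied to the symbol corresponds to $x_2, x_2', \partial_{x_2'}$-operations on $\kappa$, all controlled since $\kappa\in\cS$. The main obstacle, and the only genuinely non-formal step, is precisely this last translation: matching the topology of $\cA_0(\Heis^{1,1})$ (Schwartz kernels on the nilpotent group) against the topology of $\cA_0^\cG(\Heis)$ (Schwartz, $\cK(\cG)$-valued kernels on $\Heis$) via the partial Fourier transform, i.e.\ checking that partial Fourier transform in the abelian factor $\R_2$ is a topological isomorphism intertwining the two notions of ``Schwartz, operator-valued.'' Everything else is bookkeeping with the product decomposition $\Heis^{1,1}=\Heis\times\R_2$, $\check\pi^{\delta,\beta}=\Tilde\pi^\delta\otimes e^{i\beta\cdot}$, and the elementary identity between KN-quantization on $\R_2$ and Euclidean partial convolution.
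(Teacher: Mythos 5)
Your proposal is correct and follows essentially the same route as the paper: the paper's convolution kernel $\theta(h_x,h_z)$ is defined as the Kohn--Nirenberg quantization of the partial Fourier transform $\cF_{z_2}\kappa(h_x,\cdot,h_z,\cdot)$, which is exactly the operator with integral kernel $\kappa_{(h,x_2)}(h',x_2-x_2')$ that you write down, and the identification $\cF_{\Heis}(\kappa^{\cG}_h)=\sigma(h,x_2,\cdot,D_{x_2})$ is verified in the paper by the same matrix-coefficient/factorization computation you sketch. The only difference is emphasis: the paper asserts the membership $\theta\in C_c^{\infty}(\Heis,\cS(\Heis,\cK(\cG)))$ without detail, whereas you correctly flag the seminorm bookkeeping through the partial Fourier transform as the step requiring care.
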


\begin{proof}
    Let $(h,\Tilde{\pi}^{\delta})\in\Heis\times\Hhat$. We fix $g_1,g_2$ two elements of $\cG$ as well as $\Phi_1,\Phi_2$, two elements of $\cH_{\Tilde{\pi}^{\delta}}$.
    Since $\sigma$ is a smoothing operator on $\Heis^{1,1}$, there exists $\kappa\in C_{c}^{\infty}(\Heis\times\R,\cS(\Heis\times\R))$ such that $\sigma(x,\cdot) = \cF_{\Heis^{1,1}}\kappa_x(\cdot)$, for $x\in\Heis^{1,1}$.
    To show that $\sigma(\cdot,x_2,\cdot,D_{x_2})$ defined as above is in $\cA_{0}^{\cG}(G)$, we need to find the associated convolution kernel $\theta$ and prove that it belongs to $C_{c}^{\infty}(\Heis,\cS(\Heis,\cK(\cG)))$.

    Let consider for that the map defined as follows: for $h_x,h_z \in\Heis$, we write $\cF_{z_2}\kappa(h_x,x_2,h_z,\beta)$ for the Fourier transform of $(h_x,x_2,h_z,z_2)\mapsto\kappa(h_x,x_2,h_z,z_2)$ in the last variable $z_2\in\R$, evaluated at $\beta\in\R$,
    and we write $\theta(h_x,h_z)$ for the operator in $\cK(\cG)$ obtained as the Kohn-Nirenberg quantization of $\cF_{z_2}\kappa(h_x,\cdot,h_z,\cdot)$.

    Following Equation \eqref{eq:BSVfourier}, we compute the Fourier transform on $\Heis$ of $\langle \theta(h_x,h_z)g_1,g_2\rangle_{\cG}$:
    \begin{equation*}
        \begin{aligned}
            \left\langle\cF_{\Heis}\left(\langle\theta(h_x,\cdot)g_1,g_2\rangle_{\cG}\right)(\Tilde{\pi}^{\delta})\,\Phi_1,\Phi_2\right\rangle_{\cH_{\Tilde{\pi}^{\delta}}} 
            &= \int_{\Heis}\langle \theta(h_x,h_z)g_1,g_2\rangle_{\cG}\langle\Tilde{\pi}^{\delta}(h_z)^{*}\Phi_1,\Phi_2\rangle_{\cH_{\Tilde{\pi}^{\delta}}}\,dh_z\\
            &= \left\langle\int_{\Heis}\theta(h_x,h_z)\langle\Tilde{\pi}^{\delta}(h_z)^{*}\Phi_1,\Phi_2\rangle_{\cH_{\Tilde{\pi}^{\delta}}}\,dh_z \,g_1,g_2\right\rangle_{\cG}.
        \end{aligned}
    \end{equation*}
    With the above notations, we are left to show that we have
    \begin{equation*}
        \Op_{1}(\sigma_{\Phi_1,\Phi_2}) = \int_{\Heis}\theta(h_x,h_z)\langle\Tilde{\pi}^{\delta}(h_z)^{*}\Phi_1,\Phi_2\rangle_{\cH_{\Tilde{\pi}^{\delta}}}\,dh_z.
    \end{equation*}
    Indeed, we can write
    \begin{equation*}
        \begin{aligned}
            \int_{\Heis}\theta(h_x,h_z)\langle\Tilde{\pi}^{\delta}(h_z)^{*}\Phi_1,\Phi_2\rangle_{\cH_{\Tilde{\pi}^{\delta}}}\,dh_z &= \int_{\Heis}\Op_{1}(\cF_{z_2}\kappa(h_x,\cdot,h_z,\cdot))\langle\Tilde{\pi}^{\delta}(h_z)^{*}\Phi_1,\Phi_2\rangle_{\cH_{\Tilde{\pi}^{\delta}}}\,dh_z\\
            &= \Op_{1}\left(\int_{\Heis}\cF_{z_2}\kappa(h_x,\cdot,h_z,\cdot)\langle\Tilde{\pi}^{\delta}(h_z)^{*}\Phi_1,\Phi_2\rangle_{\cH_{\Tilde{\pi}^{\delta}}}\,dh_z\right),
        \end{aligned}
    \end{equation*}
    as the Kohn-Nirenberg quantization $\Op_{1}: S^{-\infty}(\R)\rightarrow \cK(L^2(\R))$ is a continuous linear map. By the definition of $\kappa$, we obtain that 
    \begin{equation*}
        \int_{\Heis}\cF_{z_2}\kappa(h_x,x_2,h_z,\beta)\langle\Tilde{\pi}^{\delta}(h_z)^{*}\Phi_1,\Phi_2\rangle_{\cH_{\Tilde{\pi}^{\delta}}}\,dh_z = \langle\sigma(h_x,x_2,\check{\pi}^{\delta,\beta})\Phi_1,\Phi_2\rangle_{\cH_{\Tilde{\pi}^{\delta}}} = \sigma_{\Phi_1,\Phi_2}(h_x,x_2,\check{\pi}^{\delta,\beta}).
    \end{equation*}
    This means that $\theta$ is the associated kernel of $\sigma(\cdot,x_2,\cdot,D_{x_2})$, and as we clearly have $\theta \in C_{c}^{\infty}(\Heis,\cS(\Heis,\cB))$, we obtain that $\sigma(\cdot,x_2,\cdot,D_{x_2})$ is in $\cA_{0}^{\cG}(G)$.
\end{proof}
 
We can now connect the semiclassical calculus with partial dilations on the quasi-Heisenberg group $\Heis^{1,1}$ and the semiclassical calculus of Appendix \ref{sect:Hilbertvaluedsemiclass}
with nilpotent Lie group $G =\Heis$ and Hilbert space $\cG = L^2(\R_2)$.

\begin{proposition}
    \label{prop:correspondenceopHxR}
    For $\sigma \in \cA_{0}(\Heis^{1,1})$, identifying $L^2(\Heis^{1,1})$ with $L^2(\Heis,\cG)$, we have 
    \begin{equation}
        \Op_{\hbar,\Heis_{pt}^{1,1}}(\sigma) = \Op_{\hbar}^{\cG}\left(\sigma(\cdot,x_2,\cdot,D_{x_2})\right).
    \end{equation} 
\end{proposition}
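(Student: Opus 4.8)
The plan is to unwind both quantization procedures and match them term by term, exploiting the product structure $\Heis^{1,1}=\Heis\times\R_2$. Recall from the description of $\widehat{\Heis}^{1,1}_{\rm gen}$ that a generic representation factorizes as $\check\pi^{\delta,\beta}=\Tilde\pi^\delta\otimes e^{i\beta\cdot}$, that the Plancherel measure factorizes as $d\mu_{\Hhat^{1,1}}(\check\pi^{\delta,\beta})=d\mu_{\Hhat}(\Tilde\pi^\delta)\,\tfrac{d\beta}{\sqrt{2\pi}}$, and --- crucially --- that the partial dilation $r\cdot_{pt}$ restricts to the standard Heisenberg dilation $r\cdot$ on the $\Heis$-factor while fixing the $\R_2$-factor pointwise. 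Consequently $\hbar\cdot_{pt}\check\pi^{\delta,\beta}=(\hbar\cdot\Tilde\pi^\delta)\otimes e^{i\beta\cdot}$, so that plugging a generic representation into the symbol only rescales the Heisenberg directions and leaves the $\beta$-variable untouched; this is precisely the asymmetry built into the Hilbert-valued semiclassical calculus of Appendix~\ref{sect:Hilbertvaluedsemiclass} with $G=\Heis$ and $\cG=L^2(\R_2)$.

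First I would write $f\in\cS(\Heis^{1,1})$ as $f=f(h,x_2)$ and observe that the group Fourier transform on $\Heis^{1,1}$ factorizes as $\cF_{\Heis^{1,1}}f(\check\pi^{\delta,\beta})=\cF_{\Heis}\!\big((\cF_{\R_2\to\beta}f)(\cdot,\beta)\big)(\Tilde\pi^\delta)$, where $\cF_{\R_2\to\beta}$ denotes the Euclidean Fourier transform in the variable $x_2$. Substituting this and the factorization of $\hbar\cdot_{pt}\check\pi^{\delta,\beta}$ into the defining formula
\[
\Op_{\hbar,\Heis_{pt}^{1,1}}(\sigma)f(h_x,x_2)=\int_{\Hhat^{1,1}}{\rm Tr}\big(\check\pi^{\delta,\beta}(h_x,x_2)\,\sigma(h_x,x_2,\hbar\cdot_{pt}\check\pi^{\delta,\beta})\,\cF_{\Heis^{1,1}}f(\check\pi^{\delta,\beta})\big)\,d\mu_{\Hhat^{1,1}}(\check\pi^{\delta,\beta}),
\]
the factor $e^{i\beta x_2}$ coming from $\check\pi^{\delta,\beta}(h_x,x_2)$, the factor $e^{-i\beta y_2}$ coming from $\cF_{\R_2\to\beta}$, and the $d\beta$-integration recombine into the Kohn--Nirenberg quantization $\Op_1$ on $L^2(\R_2)$ in the variables $(x_2,\beta)$; after this recombination what is left is a semiclassical Heisenberg quantization $\Op_{\hbar,\Heis}$ of a $\cK(\cG)$-valued symbol, which by the very definition of $\Op_\hbar^\cG$ in Appendix~\ref{sect:Hilbertvaluedsemiclass} is $\Op_\hbar^\cG\big(\sigma(\cdot,x_2,\cdot,D_{x_2})\big)$. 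To make this rigorous I would test against arbitrary smooth vectors $\Phi_1,\Phi_2\in\cH_{\Tilde\pi^\delta}$: the scalar symbol $\sigma_{\Phi_1,\Phi_2}(h,x_2,\check\pi^{\delta,\beta})=\langle\sigma(h,x_2,\check\pi^{\delta,\beta})\Phi_1,\Phi_2\rangle_{\cH_{\Tilde\pi^\delta}}$ has already been treated in the proof of Proposition~\ref{prop:correspondencesymHxR}, where it was shown that $\Op_1(\sigma_{\Phi_1,\Phi_2})=\langle\sigma(h,x_2,\Tilde\pi^\delta,D_{x_2})\Phi_1,\Phi_2\rangle_{\cH_{\Tilde\pi^\delta}}$; the equality of the two operators then follows by polarization and density of finite sums of such pairings.

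The only genuine subtlety, and the step I would treat with care, is the legitimacy of the interchanges of integration (Fubini between the $\Heis$-variable trace integral, the $d\mu_{\Hhat}$-integral, and the Euclidean integrations in $y_2$ and $\beta$) needed to carry out the above recombination. This is where I would invoke that $\sigma$ is a \emph{smoothing} symbol in $\cA_0(\Heis^{1,1})$: its convolution kernel lies in $C_c^\infty(\Heis^{1,1},\cS(\Heis^{1,1}))$, so after factorization it is Schwartz in the $\R_2$-direction and Schwartz-in-$\Heis$ with compact support in $x$, giving absolute convergence of every integral in sight (compare the estimates in Lemma~\ref{lem:correspondencesmoothsym} and Proposition~\ref{prop:correspondencesymHxR}). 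Once Fubini is granted, the proof is the purely formal term-by-term matching of the two quantization formulas described above; I do not expect any conceptual difficulty beyond this bookkeeping, since both $\Op_{\hbar,\Heis_{pt}^{1,1}}$ and $\Op_\hbar^\cG$ are by construction ``dilate only in the $\Heis$-directions'' quantizations and the $\R_2$-direction is in both cases handled by the ordinary one-dimensional Kohn--Nirenberg calculus.
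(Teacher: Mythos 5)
Your proposal is correct and follows essentially the same route as the paper: the paper's proof is exactly the chain of equalities you describe (read from the $\Op_\hbar^{\cG}$ side rather than the $\Op_{\hbar,\Heis^{1,1}_{pt}}$ side), unwinding the Kohn--Nirenberg quantization in $(x_2,\beta)$ into the integral $\int\!\int e^{i\beta(x_2-y_2)}\sigma(h_x,x_2,\check{\pi}^{\delta,\beta})f(h_y,y_2)\,d\beta\,dy_2$ and recombining it with the Heisenberg quantization via the factorizations $\check{\pi}^{\delta,\beta}=\Tilde{\pi}^{\delta}\otimes e^{i\beta\cdot}$ and $d\mu_{\Hhat^{1,1}}=d\mu_{\Hhat}\otimes\frac{d\beta}{\sqrt{2\pi}}$. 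Your additional remarks on Fubini and on reducing to the scalar symbols $\sigma_{\Phi_1,\Phi_2}$ via Proposition~\ref{prop:correspondencesymHxR} are consistent with how the paper justifies the identification.
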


\begin{proof}
    Let $f\in\cS(\Heis^{1,1})$ and $(h_x,x_2)\in\Heis^{1,1}$. Seeing $f$ as an element of $\cS(\Heis,\cG)$, we have
    \begin{multline*}
            \Op_{\hbar}^{\cG}\left(\sigma(\cdot,x_2,\cdot,D_{x_2})\right)f(h_x)(x_2)
            = \int_{\Heis}\int_{\Hhat} {\rm Tr}_{\cH_{\Tilde{\pi}^{\delta}}}\left(\Tilde{\pi}^{\delta}(\hbar^{-1}\cdot(h_{y}^{-1}h_{x}))\sigma(h_x,x_2,\Tilde{\pi}^{\delta},D_{x_2})f(h_y)(x_2)\right) \,d\mu_{\Hhat}(\Tilde{\pi}^{\delta}) dh_y\\
            = \int_{\Heis}\int_{\Hhat} {\rm Tr}_{\cH_{\Tilde{\pi}^{\delta}}}\left(\Tilde{\pi}^{\delta}(\hbar^{-1}\cdot(h_{y}^{-1}h_{x}))\int_{\R}\int_{\R}e^{i\beta(x_2-y_2)}\sigma(h_x,x_2,\check{\pi}^{\delta,\beta})f(h_y,y_2)\,\frac{1}{\sqrt{2\pi}}d\beta dy_2\right) \,d\mu_{\Hhat}(\Tilde{\pi}^{\delta}) dh_y\\
            = \int_{\Heis^{1,1}}\int_{\Hhat^{1,1}} {\rm Tr}_{\cH_{\check{\pi}^{\delta,\beta}}}\left(\check{\pi}^{\delta,\beta}(\hbar^{-1}\cdot_{pt}(h_{y}^{-1}h_{x},x_2-y_2))\sigma(h_x,x_2,\check{\pi}^{\delta,\beta})f(h_y,y_2)\right) \,d\mu_{\Hhat^{1,1}}(\check{\pi}^{\delta,\beta}) dh_y dy_2,
    \end{multline*}
    which means we have 
    \begin{equation*}
        \Op_{\hbar}^{\cG}\left(\sigma(\cdot,x_2,\cdot,D_{x_2})\right)f(h_x) (x_2) = \Op_{\hbar,\Heis^{1,1}}(\sigma)f(h_x,x_2).
    \end{equation*}
\end{proof}

\subsection{Second-microlocal semiclassical measures}
\label{subsect:secondmicromeas}

We associate to a bounded family $(\psi_{0}^\hbar)_{\hbar>0}$ in $L^2(\Eng)$ the quantities 
\begin{equation*}
    \ell_{\hbar}^2(\sigma) = \left(\Op_{\hbar}^{C_{\nu_0}}(\sigma)\psi_{0}^\hbar,\psi_{0}^\hbar\right)_{L^2(\Eng)},\ \sigma\in\cA_{0}^2(\Eng),
\end{equation*}
the limits of which are characterized by the following theorem.

\begin{theorem}
\label{thm:mesures2micro}
    Let $(\psi_{0}^{\hbar})_{\hbar>0}$ be a bounded family in $L^2(\Eng)$, and let $(\hbar_k)_{k\in\N}$ a sequence going to zero
    for which $\Gamma d\gamma$ is a semiclassical measure of $(\psi_{0}^{\hbar})_{\hbar>0}$. Then up to taking a subsequence of $(\hbar_k)_{k\in\N}$, there exist two
    positive operator-valued measures $\Gamma^{\infty} d\gamma^{\infty} \in \cM_{ov}^{+}\left(C_{\nu_0}\times\bS^0, (\cH_\pi)_{(x,\pi,\omega)\in C_{\nu_0}\times\bS^0}\right)$ and 
    $\Gamma^{2}d\gamma^{2} \in \cM_{ov}^{+}\left(\Heis\times\Hhat, (\cH_{\Tilde{\pi}}\otimes L^2(\R_{2}))_{(x,\Tilde{\pi})\in\Heis\times\Hhat}\right)$,
    such that, for every $\sigma \in \cA_{0}^{2}(\Eng)$, we have 
    \begin{multline*}
        \left(\Op_{\hbar_n}^{C_{\nu_0}}(\sigma)\psi_{0}^{\hbar_k},\psi_{0}^{\hbar_k}\right)\Tend{k}{+\infty}\int_{(\Eng\times\Enghat)\setminus C_{\nu_0}} {\rm Tr}_{\cH_{\pi^{\delta,\beta}}}\left(\sigma_{\infty}\left(x,\pi^{\delta,\beta},\frac{\beta-\nu_0\delta^{1/3}}{|\beta-\nu_0\delta^{1/3}|}\right) \Gamma(x,\pi^{\delta,\beta})\right) \,d\gamma(x,\pi^{\delta,\beta})\\
        + \int_{C_{\nu_0} \times\bS^0} {\rm Tr}_{\cH_{\pi^{\delta,\nu_{0}\delta^{1/3}}}}\left(\sigma_{\infty}(x,\pi^{\delta,\nu_0\delta^{1/3}},\omega)\Gamma^{\infty}(x,\pi^{\delta,\nu_0\delta^{1/3}},\omega)\right)\,d\gamma^{\infty}(x,\pi^{\delta,\nu_0\delta^{1/3}},\omega)\\
        + \int_{\Heis \times\Hhat} {\rm Tr}_{\cH_{\Tilde{\pi}^{\delta}}\otimes L^2(\R_2)} \left(\sigma(h_x,x_2,\pi^{\delta,\nu_0\delta^{1/3}},D_{x_2})\Gamma^2(h_x,\Tilde{\pi}^{\delta})\right) d\gamma^{2}(h_x,\Tilde{\pi}^{\delta}),
    \end{multline*}
    where for $(h_x,\Tilde{\pi}^{\delta})\in \Heis\times\Hhat$, $\sigma(h_x,x_2,\pi^{\delta,\nu_0\delta^{1/3}},D_{x_2})$ denotes the operator acting on $\cH_{\Tilde{\pi}^{\delta}} \otimes L^2(\R_2)$ obtained by quantizing the symbol 
    $(y,\eta)\mapsto \sigma(h_x,y,\pi^{\delta,\nu_0\delta^{1/3}},\eta)$.
    
    The two operator-valued measures $(\Gamma^{\infty}d\gamma^\infty, \Gamma^2 d\gamma^2)$ are called second-microlocal semiclassical measures above $C_{\nu_0}$.
    Given the sequence $(\hbar_k)_{k\in\N}$, the second-microlocal semiclassical measures are the only operator-valued measures for which the above convergence holds.
\end{theorem}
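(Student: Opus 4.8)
The plan is to follow the standard three-step scheme for existence of semiclassical measures — extraction, positivity, and normalization — but adapted to the three distinct asymptotic regimes that the second-microlocal scaling \eqref{eq:symCnu} produces: the part of phase space away from the cone $C_{\nu_0}$, the part at the cone but at the "large $\eta$" boundary $\bS^0$, and the part at the cone at "finite $\eta$", where the quasi-Heisenberg group $\Heis^{1,1}$ enters. First I would set up the boundedness: by Proposition \ref{prop:2microbounded}, the family of operators $\left(\Op_{\hbar}^{C_{\nu_0}}(\sigma)\right)_{\hbar>0}$ is uniformly bounded on $L^2(\Eng)$ with norm controlled by the seminorm $N_2(\sigma)$, so for a bounded family $(\psi_0^\hbar)_{\hbar>0}$ the linear forms $\sigma \mapsto \ell_\hbar^2(\sigma)$ are uniformly bounded on $\cA_0^2(\Eng)$. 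Since $\cA_0^2(\Eng)$ equipped with the $N_2$ seminorms is separable, a diagonal extraction argument (refining the sequence $(\hbar_k)$ already fixed for $\Gamma d\gamma$) gives a subsequence along which $\ell_{\hbar_k}^2(\sigma)$ converges for every $\sigma$; call the limit $\ell^2(\sigma)$. This is a bounded linear functional, and one checks it is positive in the appropriate sense by a sharp Gårding-type inequality: if $\sigma \geq 0$ then $\Op_\hbar^{C_{\nu_0}}(\sigma) \geq -C\hbar^{1/2}$ up to lower-order terms, which follows from the symbolic calculus in the class $S_{0,0}^0$ together with the intertwining Theorem \ref{thm:sym2microclosetoCnu} reducing the positive part at the cone to ordinary positivity of pseudodifferential operators on $\Heis^{1,1}$ and on $\R_{x_2}$.

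The core of the argument is then to \emph{identify} $\ell^2$ as a sum of three integrals against operator-valued measures. I would split a general symbol $\sigma \in \cA_0^2(\Eng)$ using cutoffs in the scaled variable: fix $\chi \in C_c^\infty(\R)$ equal to $1$ near $0$ and write $\sigma = \sigma\,\chi(\eta/R) + \sigma\,(1-\chi(\eta/R))$. For $|\eta| \geq R_0$ the symbol is homogeneous and equals $\sigma_\infty(x,\pi,\eta/|\eta|)$; the scaling $\eta = (\beta - \nu_0\delta^{1/3})/\hbar$ shows that, away from the cone $C_{\nu_0}$ (where $\beta \neq \nu_0 \delta^{1/3}$), the argument $\eta/|\eta|$ stabilizes to $\mathrm{sgn}(\beta - \nu_0\delta^{1/3})$ as $\hbar \to 0$, so $\Op_\hbar^{C_{\nu_0}}(\sigma_\infty)$ is, microlocally away from $C_{\nu_0}$, a genuine zeroth-order pseudodifferential operator with symbol $\sigma_\infty(x,\pi,\mathrm{sgn}(\beta - \nu_0\delta^{1/3}))$; its limit is therefore governed by the already-constructed $\Gamma d\gamma$, giving the first integral. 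Near the cone and for $|\eta|$ large, the extra spherical variable $\omega \in \bS^0$ survives in the limit and produces, by the usual second-microlocal construction (as in \cite{FFF}, Appendix \ref{sect:Hilbertvaluedsemiclass}), an operator-valued measure $\Gamma^\infty d\gamma^\infty$ on $C_{\nu_0}\times\bS^0$ — here one uses that two-microlocal symbols homogeneous at infinity in $\eta$ separate points of $C_{\nu_0}\times\bS^0$, together with the $C^*$-algebra duality of the type in Proposition \ref{prop:dualA}. Finally the compactly supported piece $\sigma\,\chi(\eta/R)$ is precisely where Theorem \ref{thm:sym2microclosetoCnu} applies: $\Op_\hbar^{C_{\nu_0}}(\sigma) = \bU_{\nu_0}^\hbar \circ \Op_{\hbar,\Heis_{pt}^{1,1}}(\tau) \circ \bU_{\nu_0}^{\hbar,*} + \cO_{L^2\to L^2}(\hbar)$, so setting $\phi^\hbar = \bU_{\nu_0}^{\hbar,*}\psi_0^\hbar$ (a bounded family in $L^2(\Heis^{1,1}) = L^2(\Heis,L^2(\R_2))$) reduces the limit of $\ell_\hbar^2(\sigma\chi(\eta/R))$ to the limit of $\left(\Op_{\hbar,\Heis_{pt}^{1,1}}(\tau)\phi^\hbar,\phi^\hbar\right)_{L^2(\Heis^{1,1})}$. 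By Proposition \ref{prop:correspondenceopHxR} this is $\left(\Op_\hbar^{\cG}(\sigma(\cdot,x_2,\cdot,D_{x_2}))\phi^\hbar,\phi^\hbar\right)$ with $\cG = L^2(\R_2)$, and the existence theory for Hilbert-space-valued semiclassical measures from Appendix \ref{sect:Hilbertvaluedsemiclass} yields, after a further extraction, a positive operator-valued measure $\Gamma^2 d\gamma^2$ on $\Heis \times \Hhat$ with fibers in $\cL^1(\cH_{\Tilde\pi}\otimes L^2(\R_2))$, giving the third integral.

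For uniqueness, I would argue that the three families of test symbols — zeroth-order symbols factoring through $\Gamma d\gamma$, symbols homogeneous at infinity in $\eta$ supported near $C_{\nu_0}$, and symbols compactly supported in $\eta$ — together span a dense subalgebra of the relevant $C^*$-algebra, so that the pairing of $\ell^2$ against all of $\cA_0^2(\Eng)$ determines $\Gamma d\gamma$ (already fixed), $\Gamma^\infty d\gamma^\infty$, and $\Gamma^2 d\gamma^2$ uniquely; one has to be slightly careful about the overlap between the "large $\eta$ near the cone" regime and the "compact $\eta$" regime, which is handled by letting $R \to \infty$ and checking that the two contributions do not interfere (the compact-$\eta$ piece sees only $C_{\nu_0}$ in the limit because the scaled variable concentrates there, while the cutoff $1-\chi(\eta/R)$ kills the contribution of the finite-$\eta$ region). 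Finally the normalization convention $\mathrm{Tr}\,\Gamma^\infty = 1$ on $\mathrm{supp}\,\gamma^\infty$ and $\mathrm{Tr}\,\Gamma^2 = 1$ on $\mathrm{supp}\,\gamma^2$ is imposed as in Definition \ref{def_gammaGamma}. The main obstacle I anticipate is the careful bookkeeping at the \emph{interface} between the $\bS^0$-regime and the $\Heis^{1,1}$-regime: one must show that the error term $\cO_{L^2\to L^2}(\hbar)$ in Theorem \ref{thm:sym2microclosetoCnu} is uniform in the cutoff scale $R$ (or at least grows slowly enough), and that the sharp Gårding inequality survives the conjugation by the Fourier integral operator $\bU_{\nu_0}^\hbar$ — the operator $\bU_{\nu_0}^\hbar$ being unitary helps, but its interaction with the partial-dilation structure of $\Heis^{1,1}$ and the cutoff $g(\delta/\delta_0)$ (Lemma \ref{lem:calculsymboliqueproof}) requires the estimates on difference operators of $U_{\nu_0}^\hbar(x_2,\cdot)$ computed in the proof of Theorem \ref{thm:sym2microclosetoCnu} to be tracked through the positivity argument as well.
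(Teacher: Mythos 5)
Your proposal follows essentially the same route as the paper: the same cutoff decomposition $\sigma=\sigma\,\chi(\eta/R)+\sigma\,(1-\chi(\eta/R))$, the transfer of the compact-$\eta$ piece to $\Heis^{1,1}$ via Theorem \ref{thm:sym2microclosetoCnu} and the Hilbert-space-valued measures of Appendix \ref{sect:Hilbertvaluedsemiclass}, the identification of the part at infinity as a state on the completion of $\cA_0^{2,\infty}(\Eng)$, and the recovery of $\Gamma d\gamma$ away from $C_{\nu_0}$ via Lemma \ref{lem:sym2microoutofCnu}. The only (harmless) divergences are that the paper obtains positivity of $\Gamma^\infty d\gamma^\infty$ from $\Op_\hbar^{C_{\nu_0}}(\sigma^*\sigma)=\Op_\hbar^{C_{\nu_0}}(\sigma)^*\Op_\hbar^{C_{\nu_0}}(\sigma)+\cO(\hbar+1/R)$ rather than a Gårding inequality, and your worry about $R$-uniformity of the $\cO(\hbar)$ error is moot since the paper takes $\hbar\to0$ at fixed $R$ first and then $R\to\infty$ on the limiting functional.
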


\begin{remark}
    As any symbol $\sigma$ in $\cA_{0}(\Eng)$ can be seen as a symbol in $\cA_{0}^2(\Eng)$ independent of the extra variable $\eta$ and as, in this case, its two quantizations coincide, i.e 
    $\Op_\hbar(\sigma)=\Op_{\hbar}^{C_{\nu_0}}(\sigma)$, we can see Theorem \ref{thm:mesures2micro} as a refinement of Theorem \ref{theo:measures} where the cone $C_{\nu_0}$ has been blown-up.
    In particular, any semiclassical measure of a family $(\psi_{0}^\hbar)_{\hbar>0}$ in $L^2(\Eng)$ can be seen, up to a further extraction of the subsequence defining it, to be above $C_{\nu_0}$ the marginal of 
    a second-microlocal semiclassical measure for second-microlocal symbols independent of $\eta$.
\end{remark}

\subsubsection{Proof of Theorem \ref{thm:mesures2micro}}
We now present the proof of the existence of second-microlocalized semiclassical measures.

\begin{lemma}
    \label{lem:sym2microoutofCnu}
    Let $\sigma \in \mathcal{A}_{0}^{2}(\Eng)$, let $\chi \in C^{\infty}(\R)$ a smooth positive compactly supported function and $\eps>0$, we consider the symbol defined for 
    $(x,\pi^{\delta,\beta},\eta) \in \Eng\times\Enghat\times\Bar{\R}$ by $\tau(x,\pi^{\delta,\beta},\eta) = \sigma(x,\pi^{\delta,\beta},\eta)\left(1-\chi\left(\frac{\beta-\nu_0\delta^{1/3}}{\eps}\right)\right)$:
    there exists $\hbar_0>0$ (depending on $\eps$) such that for $\hbar \in (0,\hbar_0)$ we have 
    \begin{equation*}
        \Op_{\hbar}^{C_{\nu_0}}(\tau) = \Op_{\hbar}\left(\tau_{\infty}\left(x,\pi^{\delta,\beta},\frac{\beta-\nu_0\delta^{1/3}}{|\beta-\nu_0\delta^{1/3}|}\right)\right).
    \end{equation*} 
\end{lemma}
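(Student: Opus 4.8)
The plan is to reduce the statement, by unwinding the two quantization conventions, to an elementary pointwise identity between two symbols, whose validity for small $\hbar$ is governed by the homogeneity-at-infinity condition in Definition~\ref{def:sym2micro} and the compact support of $\chi$.

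First recall from \eqref{eq:symCnu} and \eqref{eq:quantization2scale} that $\Op_{\hbar}^{C_{\nu_0}}(\tau)=\Op_{\hbar}(\tau_{\hbar}^{C_{\nu_0}})$, where
\[
\tau_{\hbar}^{C_{\nu_0}}(x,\pi^{\delta,\beta})=\tau\Bigl(x,\pi^{\delta,\beta},\tfrac{\beta-\nu_0\delta^{1/3}}{\hbar}\Bigr)=\sigma\Bigl(x,\pi^{\delta,\beta},\tfrac{\beta-\nu_0\delta^{1/3}}{\hbar}\Bigr)\Bigl(1-\chi\bigl(\tfrac{\beta-\nu_0\delta^{1/3}}{\eps}\bigr)\Bigr).
\]
Since the cutoff factor does not depend on the variable $\eta$, the homogeneous-at-infinity part of $\tau$, in the sense of the remark following Definition~\ref{def:sym2micro}, is
\[
\tau_{\infty}(x,\pi^{\delta,\beta},\omega)=\sigma_{\infty}(x,\pi^{\delta,\beta},\omega)\Bigl(1-\chi\bigl(\tfrac{\beta-\nu_0\delta^{1/3}}{\eps}\bigr)\Bigr),\qquad \omega\in\bS^0 .
\]

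Now let $R_0>0$ be the threshold attached to $\sigma$ by the third item of Definition~\ref{def:sym2micro}, so that $\sigma(x,\pi,\eta)=\sigma_{\infty}(x,\pi,\eta/|\eta|)$ as soon as $|\eta|>R_0$, and let $a>0$ be such that $1-\chi(s/\eps)=0$ for $|s|\le a\eps$ (this uses that $\chi$ equals $1$ near the origin). Put $\hbar_0:=a\eps/R_0$. For $\hbar\in(0,\hbar_0)$ and any $(x,\pi^{\delta,\beta})$ at which the cutoff $1-\chi((\beta-\nu_0\delta^{1/3})/\eps)$ does not vanish, one has $|\beta-\nu_0\delta^{1/3}|>a\eps>R_0\hbar$, hence $\bigl|(\beta-\nu_0\delta^{1/3})/\hbar\bigr|>R_0$ and therefore, by homogeneity of $\sigma$,
\[
\sigma\Bigl(x,\pi^{\delta,\beta},\tfrac{\beta-\nu_0\delta^{1/3}}{\hbar}\Bigr)=\sigma_{\infty}\Bigl(x,\pi^{\delta,\beta},\tfrac{\beta-\nu_0\delta^{1/3}}{|\beta-\nu_0\delta^{1/3}|}\Bigr).
\]
Multiplying by the cutoff, and using that both sides vanish where it does, yields the identity $\tau_{\hbar}^{C_{\nu_0}}(x,\pi^{\delta,\beta})=\tau_{\infty}\bigl(x,\pi^{\delta,\beta},\tfrac{\beta-\nu_0\delta^{1/3}}{|\beta-\nu_0\delta^{1/3}|}\bigr)$, valid for every $x$ and $\mu_{\Enghat}$-a.e.\ $\pi^{\delta,\beta}$ as soon as $\hbar<\hbar_0$. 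Applying $\Op_{\hbar}$ to both sides gives the asserted equality of operators; it is meaningful because $\tau_{\hbar}^{C_{\nu_0}}$ is a genuine symbol in $S_{0,0}^{0}(\Eng)$ by Proposition~\ref{prop:2microbounded}, and, by the identity just obtained, so is the right-hand symbol $\tau_{\infty}\bigl(x,\pi^{\delta,\beta},\tfrac{\beta-\nu_0\delta^{1/3}}{|\beta-\nu_0\delta^{1/3}|}\bigr)$ whenever $\hbar<\hbar_0$.

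The argument is essentially bookkeeping, and I do not expect a genuine obstacle. The one point requiring care is keeping the two scales apart: the semiclassical substitution $\eta\mapsto(\beta-\nu_0\delta^{1/3})/\hbar$ stretches the transition window $|\eta|\le R_0$ of $\sigma$ to the $\hbar$-small strip $|\beta-\nu_0\delta^{1/3}|\le R_0\hbar$ around $C_{\nu_0}$, whereas the cutoff $1-\chi(\cdot/\eps)$ excises a fixed strip of width $\sim\eps$ about $C_{\nu_0}$; once $\hbar$ is small enough that the former lies inside the latter, i.e.\ $R_0\hbar<a\eps$, the truncated symbol never meets the transition region of $\sigma$, and $\Op_{\hbar}^{C_{\nu_0}}(\tau)$ collapses to $\Op_{\hbar}$ of the $0$-homogeneous symbol $\tau_\infty$. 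The only mildly technical ingredient — that multiplication of a second-microlocal symbol by the smooth $(\delta,\beta)$-multiplier $1-\chi((\beta-\nu_0\delta^{1/3})/\eps)$, which is harmless away from $\{\delta=0\}$, stays within $\cA_0^2(\Eng)$ — is exactly of the type already carried out in Lemma~\ref{lem:correspondencesmoothsym}, so I would simply invoke it here rather than redo it.
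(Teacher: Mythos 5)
Your proposal is correct and follows essentially the same route as the paper: on the support of the cutoff $1-\chi((\beta-\nu_0\delta^{1/3})/\eps)$ one has $|\beta-\nu_0\delta^{1/3}|$ bounded below by a fixed multiple of $\eps$, so for $\hbar$ small the rescaled variable exceeds the threshold $R_0$ of Definition~\ref{def:sym2micro} and $\sigma$ coincides with its homogeneous part, giving the pointwise equality of symbols before applying $\Op_\hbar$. Your version is in fact slightly more careful than the paper's (making explicit that $\chi\equiv 1$ near the origin is needed, and justifying that both sides are genuine symbols), but the argument is the same.
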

    
\begin{proof}
   Let $R_0 >0$ as defined in Definition \ref{def:sym2micro} associated to $\sigma$. Recall that we have 
   \begin{equation*}
    \Op_{\hbar}^{C_{\nu_0}}(\tau)  =\Op_{\hbar}\left(\tau\left(x,\pi^{\delta,\beta},\frac{\beta-\nu_0 \delta^{1/3}}{\hbar}\right)\right),
   \end{equation*}
   and on the support of $\tau$, by definition we have $|\beta-\nu_0 \delta^{1/3}|> \eps$. Thus, for $\hbar$ small enough we have 
   \begin{equation*}
        \frac{|\beta-\nu_0 \delta^{1/3}|}{\hbar} > \eps \hbar^{-1} > R_0,
   \end{equation*}
   on the support of $\tau$, and we obtain 
   \begin{equation*}
    \Op_{\hbar}\left(\tau\left(x,\pi^{\delta,\beta},\frac{\beta-\nu_0 \delta^{1/3}}{\hbar}\right)\right) = \Op_{\hbar}\left(\tau_{\infty}\left(x,\pi^{\delta,\beta},\frac{\beta-\nu_0\delta^{1/3}}{|\beta-\nu_0\delta^{1/3}|}\right)\right).
   \end{equation*}
\end{proof}

\begin{proof}[Proof of Theorem \ref{thm:mesures2micro}]
    Let $\sigma \in \mathcal{A}_{0}^{2}(\Eng)$ a second-microlocal symbol and let $R_0 > 0$ such that 
    \begin{equation*}
        \forall (x,\pi^{\delta,\beta}) \in \Eng\times\Enghat_{gen},\forall \eta \in\R,\,|\eta|>R_0,\ \sigma(x,\pi^{\delta,\beta},\eta) = \sigma_{\infty}\left(x,\pi^{\delta,\beta},\frac{\eta}{|\eta|}\right).
    \end{equation*} 
    Let $\delta_0>0$ such that $\sigma(x,\cdot,\eta)$ is uniformly supported in $\cO_{\delta_0}(\Enghat)$. We fix $R>0$ and $\chi\in\mathcal{C}_{c}^{\infty}(\R,[0,1])$ such that $\chi=1$ in a neighborhood of $0$ and we set
    \begin{equation*}
        \begin{aligned}
            &\sigma^{R}(x,\pi^{\delta,\beta},\eta) := \sigma(x,\pi^{\delta,\beta},\eta)\left(1-\chi\left(\frac{\eta}{R}\right)\right),\\
            &\sigma_{R}(x,\pi^{\delta,\beta},\eta) := \sigma(x,\pi^{\delta,\beta},\eta)\chi\left(\frac{\eta}{R}\right).
        \end{aligned}
    \end{equation*}
    Of course we have
    \begin{equation*}
        \sigma = \sigma_{R} + \sigma^{R}.
    \end{equation*}
    The idea is to consider the limits $\limsup_{R\rightarrow +\infty}\limsup_{\hbar\rightarrow 0}$ of $\ell_{\hbar}^2(\sigma_R)$ and $\ell_{\hbar}^2(\sigma^R)$ respectively, taken in this order.
    
    First we consider the symbol $\sigma_{R}$. Introducing the bounded family $(\Tilde{\psi}_{0}^{\hbar})_{\hbar>0}$ defined by $\Tilde{\psi}_{0}^{\hbar} = \bU_{\nu_0}^{\hbar}\psi_{0}^{\hbar}$, by Theorem \ref{thm:sym2microclosetoCnu}, we can write
    \begin{equation*}
        \left(\Op_{\hbar}^{C_{\nu_0}}(\sigma_{R})\psi_{0}^{\hbar},\psi_{0}^{\hbar}\right) = \left(\Op_{\hbar,\Heis^{1,1}}(\sigma_{R}(x,\pi^{\delta,\nu_0\delta^{1/3}},\beta))\Tilde{\psi}_{0}^{\hbar},\Tilde{\psi}_{0}^{\hbar}\right) + \cO(\hbar).
    \end{equation*}
    Thanks to Proposition \ref{prop:correspondenceopHxR}, viewing the family $(\Tilde{\psi}_{0}^{\hbar})_{\hbar>0}$ as a bounded family $\Tilde{\Psi}_{0}^{\hbar}: h\in\Heis\mapsto \Tilde{\psi}_{0}^{\hbar}(h,\cdot)$ in $L^2(\Heis,\cG)$ where $\cG=L^2(\R_2)$, we have 
    \begin{equation*}
        \left(\Op_{\hbar}^{C_{\nu_0}}(\sigma_{R})\psi_{0}^{\hbar},\psi_{0}^{\hbar}\right) = \left(\Op_{\hbar}^{\cG}(\sigma_{R}(h,x_2,\pi^{\delta,\nu_0\delta^{1/3}},D_{x_2}))\Tilde{\Psi}_{0}^{\hbar},\Tilde{\Psi}_{0}^{\hbar}\right)_{L^2(\Heis,\cG)} + \cO(\hbar).
    \end{equation*}
    By Theorem \ref{thm:mesuresemiopvalued} in Appendix \ref{sect:Hilbertvaluedsemiclass}, up to a further extraction $(\hbar_k)_{k\in\N}$, there exists a semiclassical measure $\Gamma^{2}d\gamma^{2} \in \cM_{ov}^{+}\left(\Heis\times\Hhat, (\cH_{\Tilde{\pi}}\otimes \cG)_{(x,\Tilde{\pi})\in\Heis\times\Hhat}\right)$ 
    associated to the family $(\Tilde{\Psi}_{0}^{\hbar_k})_{k\in\N}$ such that 
    \begin{equation*}
        \begin{aligned}
        &\left(\Op_{\hbar_k}^{\cG}(\sigma_{R}(h,x_2,\pi^{\delta,\nu_0\delta^{1/3}},D_{x_2}))\Tilde{\Psi}_{0}^{\hbar_k},\Tilde{\Psi}_{0}^{\hbar_k}\right) \Tend{k}{+\infty}\\
        &\int_{\Heis\times\Hhat} {\rm Tr}_{\cH_{\Tilde{\pi}^\delta} \otimes \cG}\left(\sigma_{R}(h,x_2,\pi^{\delta,\nu_0\delta^{1/3}},D_{x_2}) \Gamma^{2}(h,\Tilde{\pi}^\delta)\right)\,d\gamma^{2}(h,\Tilde{\pi}^\delta).
        \end{aligned}
    \end{equation*}
    To conclude with this part of the proof, just observe that when $R$ goes to infinity, as the operator $\sigma_{R}(h,x_2,\pi^{\delta,\nu_0\delta^{1/3}},D_{x_2})$ converges in operator norm towards
    $\sigma(h,x_2,\pi^{\delta,\nu_0\delta^{1/3}},D_{x_2})$, the right-hand side term then converges to 
    \begin{equation*}
        \int_{\Heis\times\Hhat} {\rm Tr}_{\cH_{\Tilde{\pi}^\delta} \otimes \cG}\left(\sigma(h,x_2,\pi^{\delta,\nu_0\delta^{1/3}},D_{x_2}) \Gamma^{2}(h,\Tilde{\pi}^\delta)\right)\,d\gamma^{2}(h,\Tilde{\pi}^\delta).
    \end{equation*}

    We consider now the symbol $\sigma^R$ and we introduce the notation
    \begin{equation*}
        I_{\hbar,R}(\sigma) = \left(\Op_{\hbar}^{C_{\nu_0}}(\sigma^{R})\psi_{0}^{\hbar},\psi_{0}^{\hbar}\right).
    \end{equation*}
    We see that choosing $R > R_0$, we have $I_{\hbar,R}(\sigma) = I_{\hbar,R}(\sigma_{\infty})$: this way we consider $I_{\hbar,R}$ as a linear form on the space 
    $\cA_{0}^{2,\infty}(\Eng)$. But by Proposition \ref{prop:2microbounded}, 
    \begin{equation*}
        |I_{\hbar,R}(\sigma_\infty)| \leq N_2(\sigma^R) \|f^{\hbar}\|_{L^2(\Eng)},
    \end{equation*}
    thus $(I_{\hbar,R}(\sigma_\infty))_{\hbar>0}$ is a bounded sequence, and up to a furhter extraction $(\hbar_k)_{k\in\N}$, there exists a limit that we denote by $I_R(\sigma_\infty)$. 
    Using a countable dense subset of $\cA_{0}^{2,\infty}(\Eng)$ and by a diagonal argument, there exists a common extracted subsequence $(\hbar_k)_{k\in\N}$ for which we have the desired convergence. 
    From the previous inequality and from the computations done in the proof of Proposition \ref{prop:2microbounded}, we also obtain
    \begin{equation*}
        \begin{aligned}
        |I_{R}(\sigma_\infty)| &\leq C\left(1+\frac{1}{R}\right)\sup_{(x,\eta)\in\Eng\times\R}\left\|\sigma(x,\cdot,\eta)\left(1-\chi\left(\frac{\eta}{R}\right)\right)\right\|_{L^{\infty}(\Enghat)} \\
        &\leq C\left(1+\frac{1}{R}\right)\sup_{(x,\omega)\in\Eng\times\bS^0}\|\sigma_{\infty}(x,\cdot,\omega)\|_{L^{\infty}(\Enghat)},
        \end{aligned}
    \end{equation*}
    thus $I_R$ is a continuous linear form on the completion of $\cA_{0}^{2,\infty}(\Eng)$, denoted by $\cA^{2,\infty}(\Eng)$, for the norm $\|\cdot\|_{\cA^{2,\infty}}$ defined as 
    \begin{equation*}
        \forall \sigma_\infty \in \cA_{0}^{2,\infty}(\Eng),\ \|\sigma_\infty\|_{\cA^{2,\infty}} = \sup_{(x,\omega)\in\Eng\times\bS^0}\|\sigma_{\infty}(x,\cdot,\omega)\|_{L^{\infty}(\Enghat)}.
    \end{equation*}
    The linear forms on such a space are known as they are described by operator-valued measures $\Gamma^{R,\infty}d\gamma^{\R,\infty} \in \cM_{ov}\left(\Eng\times\Enghat\times\bS^0, (\cH_\pi)_{(x,\pi,\omega)\in\Eng\times\Enghat\times\bS^0}\right)$:
    \begin{equation*}
        \forall \sigma_\infty \in \cA_{0}^{2,\infty}(\Eng),\ I_R(\sigma_\infty) = \int_{\Eng\times\Enghat\times\bS^0} {\rm Tr}_{\cH_{\pi}}\left(\sigma_{\infty}(x,\pi,\omega)\Gamma^{R,\infty}(x,\pi,\omega)\right) d\gamma^{R,\infty}(x,\pi,\omega).
    \end{equation*}
    We are left with understanding the limit of the sequence $(I_R)_{R\in\R^+}$ as $R\rightarrow +\infty$. As a bounded sequence of linear forms on $\cA^{2,\infty}(\Eng)$, there exists a sequence $(R_m)_{m\in\N}$ and a linear form $I = \Gamma^{\infty}d\gamma^{\infty} \in \cA^{2,\infty}(\Eng)^* = \cM_{ov}\left(\Eng\times\Enghat\times\bS^0, (\cH_\pi)_{(x,\pi,\omega)\in\Eng\times\Enghat\times\bS^0}\right)$
    such that
    \begin{equation*}
        \forall \sigma_\infty \in \cA_{0}^{2,\infty}(\Eng),\ I_{R_m}(\sigma_\infty) \Tend{m}{+\infty} I(\sigma_\infty) = \int_{\Eng\times\Enghat\times\bS^0} {\rm Tr}_{\cH_{\pi}}\left(\sigma_{\infty}(x,\pi,\omega)\Gamma^{\infty}(x,\pi,\omega)\right) d\gamma^{\infty}(x,\pi,\omega).
    \end{equation*}
    We are left to show that $\Gamma^{\infty}d\gamma^{\infty}$ is positive, or equivalently defines a state on $\cA^{2,\infty}(\Eng)$. This property follows from the symbolic calculus: as operator on $L^2(\Eng)$, we have 
    \begin{equation*}
        \Op_{\hbar}^{C_{\nu_0}}(\sigma_{\infty}^{R,*}\sigma_{\infty}^{R}) = \Op_{\hbar}^{C_{\nu_0}}(\sigma_{\infty}^{R})^* \circ \Op_{\hbar}^{C_{\nu_0}}(\sigma_{\infty}^{R}) + \cO\left(\hbar + \frac{1}{R}\right),
    \end{equation*}
    and as $I(\sigma_\infty) = \lim_{m\rightarrow +\infty}\lim_{k\rightarrow +\infty} I_{\hbar_k,R_m}(\sigma_\infty)$, we obtain 
    \begin{equation*}
        \forall \sigma_\infty \in \cA_{0}^{2,\infty}(\Eng),\ I(\sigma_{\infty}^* \sigma_{\infty}) \geq 0.
    \end{equation*}
    
    To conclude our proof, we have to further breakdown the operator-valued measure $\Gamma^{\infty}d\gamma^{\infty}$. We introduce a small parameter $\eps>0$ and the folllowing decomposition of $\sigma^R$:
    \begin{equation*}
        \begin{aligned}
            &\sigma^{R}_{\eps}(x,\pi^{\delta,\beta},\eta) := \sigma^R(x,\pi^{\delta,\beta},\eta)\chi\left(\frac{\beta-\nu\delta^{1/3}}{\eps}\right),\\
            &\sigma^{R,\eps}(x,\pi^{\delta,\beta},\eta) := \sigma^R(x,\pi^{\delta,\beta},\eta)\left(1-\chi\left(\frac{\beta-\nu\delta^{1/3}}{\eps}\right)\right).
        \end{aligned}
    \end{equation*}
    By the dominated convergence theorem, we have
    \begin{equation*}
        I(\sigma_{\infty,\eps}) \Tend{\eps}{0} \int_{C_{\nu_0} \times\bS^0} {\rm Tr}_{\cH_{\pi^{\delta,\nu_0\delta^{1/3}}}}\left(\sigma_{\infty}(x,\pi^{\delta,\nu_0\delta^{1/3}},\omega)\Gamma^{\infty}(x,\pi^{\delta,\nu_0\delta^{1/3}},\omega)\right)\,d\gamma^{\infty}(x,\pi^{\delta,\nu_0\delta^{1/3}},\omega),
    \end{equation*}
    and we obtain the second term in Theorem \ref{thm:mesures2micro}.
    
    For the symbol $\sigma_{\infty}^{\eps}$, observe that by Lemma \ref{lem:sym2microoutofCnu}, we have for $\hbar$ small enough,
    \begin{equation*}
        \left(\Op_{\hbar}^{C_{\nu_0}}(\sigma^{R,\eps})\psi_{0}^{\hbar},\psi_{0}^{\hbar}\right)_{L^2(\Eng)} =
        \left(\Op_{\hbar}\left(\sigma^{\eps}_{\infty}\left(x,\pi^{\delta,\beta},\frac{\beta-\nu_0\delta^{1/3}}{|\beta-\nu_0\delta^{1/3}|}\right)\right)\psi_{0}^{\hbar},\psi_{0}^{\hbar}\right)_{L^2(\Eng)}.
    \end{equation*}
    By definition of the semiclassical measure $\Gamma d\gamma \in \mathcal{M}_{ov}^{+}(\Eng\times\Enghat)$ associated to $(\psi_{0}^{\hbar})_{\hbar>0}$, we can write 
    \begin{equation*}
        \left(\Op_{\hbar_k}^{C_{\nu_0}}(\sigma^{R,\eps})\psi_{0}^{\hbar_k},\psi_{0}^{\hbar_k}\right) \Tend{\hbar_k}{0} 
        \int_{\Eng\times\Enghat} {\rm Tr}_{\cH_{\pi^{\delta,\beta}}}\left(\sigma^{\eps}_{\infty}\left(x,\pi^{\delta,\beta},\frac{\beta-\nu_0\delta^{1/3}}{|\beta-\nu_0\delta^{1/3}|}\right)\Gamma(x,\pi^{\delta,\beta})\right) \,d\gamma(x,\pi^{\delta,\beta}).
    \end{equation*}
    Letting $\eps$ go to $0$, we have decomposed our measure $\Gamma^{\infty}d\gamma^{\infty}$ into two contributions, one supported on the subset $C_{\nu_0} \times \bS^0$ and the other as the semiclassical measure $\Gamma d\gamma$
    restricted to the subset $(\Eng\times\Enghat)\setminus C_{\nu_0}$.
\end{proof}

\subsubsection{Example: computation of a second-microlocal semiclassical measure}
\label{subsubsec:example2micro}

We propose here an example of computation of second-microlocal semiclassical measure. Let fix $\nu_0 \in\R$ and $(h_0,\delta_0,\eta_0)\in\Heis\times\R\setminus\{0\}\times\R$, then for $\hbar>0$ we define the profile
\begin{equation*}
    \forall x= (h,x_2)\in\Eng,\ \alpha_\hbar(x) =  \hbar^{-3/2}a(\hbar^{-1/2}\cdot (h_{0}^{-1}h)) \varphi(x_2),
\end{equation*}
where we have fixed $a \in C_{c}^\infty(\Heis)$, $\varphi\in\cS(\R_2)$. Now, choosing a vector $\Phi$ in the Hilbert space associated to the representation $\pi^{\delta,\nu_0\delta^{1/3}}$, we introduce 
the following bounded family in $L^2(\Eng)$:
\begin{equation}
    \label{eq:example2micro}
    \forall x\in\Eng,\ u^{\hbar}(x) = \alpha_\hbar(x) \left(\pi^{\delta_0,\nu_0 \delta_{0}^{1/3} + \hbar\eta_0}(\hbar^{-1}\cdot (x_{0}^{-1}x)) \Phi,\Phi\right),
\end{equation}
where $x_0 = (h_0,0)\in\Eng$. 
This construction is inspired from the wave packets already introduced in Section \ref{subsect:wpevolution}.

\begin{proposition}
    \label{prop:example2micro}
    The family $(u^\hbar)_{\hbar>0}$ is a bounded family of $L^2(\Eng)$ and has unique semiclassical measure $\Gamma d\gamma$ and second-microlocal semiclassical measures $(\Gamma^{\infty}d\gamma^\infty,\Gamma^2 d\gamma^2)$ satisfying:
    \begin{equation*}
        \bm{1}_{(\Eng\times\Enghat)\setminus C_{\nu_0}} \Gamma d\gamma = 0,\quad \Gamma^\infty d\gamma^\infty = 0,
    \end{equation*}
    and
    \begin{equation*}
        d\gamma^2 = \delta_{\{h = h_0\}} \otimes \delta_{\{\pi^\delta = \pi^\delta_0\}},\quad \Gamma^2 = \ket{\Phi}\bra{\Phi}\otimes \ket{\varphi e^{i\eta_0 x_2}}\bra{\varphi e^{i\eta_0 x_2}} \in \cL^{1}\left(\cH_{\pi^{\delta,\nu_0\delta^{1/3}}}\otimes L^2(\R_2)\right).
    \end{equation*}
    From this, we recover the full semiclassical measure $\Gamma d\gamma$:
    \begin{equation*}
        d\gamma = \delta_{\{h = h_0\}}\otimes |\varphi(x_2)|^2 dx_2 \otimes \delta_{\{\pi^{\delta,\beta} = \pi^{\delta_0,\nu_0\delta^{1/3}}\}},\quad \Gamma = \ket{\Phi}\bra{\Phi} \in\cL^{1}\left(\cH_{\pi^{\delta,\nu_0\delta^{1/3}}}\right).
    \end{equation*}
\end{proposition}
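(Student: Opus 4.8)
\textbf{Proof strategy for Proposition \ref{prop:example2micro}.}

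The plan is to compute directly the limit of $\ell_{\hbar}^2(\sigma) = (\Op_{\hbar}^{C_{\nu_0}}(\sigma) u^{\hbar}, u^{\hbar})_{L^2(\Eng)}$ for $\sigma \in \cA_{0}^{2}(\Eng)$, and then read off the three pieces by matching with Theorem \ref{thm:mesures2micro}. First I would record boundedness: using the Plancherel formula and the fact that $\Phi$ has unit norm, $\|u^{\hbar}\|_{L^2(\Eng)}$ is controlled by $\|a\|_{L^2(\Heis)}\|\varphi\|_{L^2(\R_2)}$ uniformly in $\hbar$, so a semiclassical measure exists along a subsequence. The key structural observation is that $u^\hbar$ is exactly of the form one gets by applying the Fourier integral operator $\bU_{\nu_0}^{\hbar}$ to an $\Heis^{1,1}$-wave packet: writing $x = (h,x_2) \in \Heis \rtimes \R_2$, the factor $(\pi^{\delta_0,\nu_0\delta_0^{1/3}+\hbar\eta_0}(\hbar^{-1}\cdot(x_0^{-1}x))\Phi,\Phi)$ splits, via the semidirect product structure of $\pi^{\delta,\beta}$ in Proposition \ref{prop:dualset}, into a $\Heis$-part carried by $\Tilde\pi^{\delta_0}(\hbar^{-1}\cdot(h_0^{-1}h))$ and a scalar oscillation $e^{i\eta_0 x_2}$ (up to $O(\hbar)$ corrections in the exponent coming from the $\hbar\eta_0$ shift and the $\tfrac{\delta}{2}(\xi+x_1)^2 x_2$ term). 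Thus I would show $\bU_{\nu_0}^{\hbar,*} u^{\hbar} = v^{\hbar} + O_{L^2}(\hbar)$ where $v^{\hbar}(h,x_2) = \hbar^{-3/2} a(\hbar^{-1/2}\cdot(h_0^{-1}h))(\Tilde\pi^{\delta_0}(\hbar^{-1}\cdot(h_0^{-1}h))\Phi,\Phi)\,\varphi(x_2)e^{i\eta_0 x_2}$ is a Heisenberg wave packet in the $h$-variable tensored with the fixed function $\varphi e^{i\eta_0 x_2}$ in $L^2(\R_2)$.

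Next I would split $\sigma = \sigma_R + \sigma^R$ as in the proof of Theorem \ref{thm:mesures2micro}. For the $\sigma_R$ contribution (bounded $\eta$): by Theorem \ref{thm:sym2microclosetoCnu} and Proposition \ref{prop:correspondenceopHxR}, $\Op_{\hbar}^{C_{\nu_0}}(\sigma_R)$ transfers through $\bU_{\nu_0}^\hbar$ to $\Op_{\hbar}^{\cG}(\sigma_R(\cdot,x_2,\cdot,D_{x_2}))$ acting on $L^2(\Heis,\cG)$, and the limit is governed by the $\cG$-valued semiclassical measure of the family $v^{\hbar}$. Here I would invoke the known computation of the semiclassical measure of a (Hilbert-valued) wave packet — the $\cG$-valued analogue of the wave packet lemma cited after Definition \ref{def:wp}, established in Appendix \ref{sect:Hilbertvaluedsemiclass} — to get $d\gamma^2 = \delta_{\{h=h_0\}}\otimes\delta_{\{\pi^\delta = \pi^{\delta_0}\}}$ with operator part $\ket{\Phi}\bra{\Phi}\otimes\ket{\varphi e^{i\eta_0 x_2}}\bra{\varphi e^{i\eta_0 x_2}}$, the second factor being the (rank-one) semiclassical-measure operator on $\cG = L^2(\R_2)$ attached to the fixed vector $\varphi e^{i\eta_0 x_2}$ since in the $x_2$-variable there is no $\hbar$-rescaling, only the $e^{i\eta_0 x_2}$ oscillation which the Kohn–Nirenberg quantization with parameter $\hbar$ sees as concentration on frequency $\eta_0$ — wait, more carefully: with the convention of this paper the $x_2$ quantization is at scale $1$, so the limiting operator is simply the orthogonal projection onto $\C\varphi e^{i\eta_0 x_2}$ weighted by $\|\varphi\|^2$; I would present this identification as a short direct computation. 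For the $\sigma^R$ contribution ($|\eta|$ large, i.e. the behaviour at $\eta = \pm\infty$, giving $\Gamma^\infty d\gamma^\infty$): one computes $(\Op_{\hbar}(\sigma_\infty(x,\pi,\tfrac{\beta-\nu_0\delta^{1/3}}{|\beta-\nu_0\delta^{1/3}|}))u^\hbar,u^\hbar)$; since on the support of $u^\hbar$ one has $\beta - \nu_0\delta^{1/3} = \hbar\eta_0 \to 0$ while $\delta \to \delta_0 \neq 0$, the mass concentrates inside $C_{\nu_0}$ (i.e. $|\eta| = |\beta-\nu_0\delta^{1/3}|/\hbar$ stays bounded), so the $\sigma^R$-piece with $R$ fixed contributes nothing in the limit $\hbar \to 0$, whence $\Gamma^\infty d\gamma^\infty = 0$ and also $\bm{1}_{(\Eng\times\Enghat)\setminus C_{\nu_0}}\Gamma d\gamma = 0$. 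Finally, to get the stated full semiclassical measure $\Gamma d\gamma$ I would apply the last remark of Theorem \ref{thm:mesures2micro}: a symbol $\sigma \in \cA_0(\Eng)$ independent of $\eta$ has $\Op_\hbar(\sigma) = \Op_\hbar^{C_{\nu_0}}(\sigma)$, so $\Gamma d\gamma$ restricted above $C_{\nu_0}$ is the marginal of $\Gamma^2 d\gamma^2$; tracing out the $x_2$-partial-Fourier variable turns $\ket{\varphi e^{i\eta_0 x_2}}\bra{\varphi e^{i\eta_0 x_2}}$ into the scalar density $|\varphi(x_2)|^2 dx_2$, giving $d\gamma = \delta_{\{h=h_0\}}\otimes|\varphi(x_2)|^2 dx_2\otimes\delta_{\{\pi^{\delta,\beta}=\pi^{\delta_0,\nu_0\delta_0^{1/3}}\}}$ and $\Gamma = \ket{\Phi}\bra{\Phi}$.

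The main obstacle I anticipate is the careful justification of the approximation $\bU_{\nu_0}^{\hbar,*} u^{\hbar} = v^{\hbar} + O_{L^2}(\hbar)$: one must track the discrepancy between the representation $\pi^{\delta_0,\nu_0\delta_0^{1/3}+\hbar\eta_0}$ appearing in $u^\hbar$ and the "reference" representation $\pi^{\delta,\nu_0\delta^{1/3}}$ built into $\bU_{\nu_0}^\hbar$ (a $\hbar\eta_0$-shift in the $\beta$-parameter, exactly the shift that motivated the definition of $\bU_{\nu_0}^\hbar$), as well as the non-commutativity coming from the $\tfrac{\delta}{2}(\xi+x_1)^2 x_2$ term in the formula for $\pi^{\delta,\beta}(x)$; both produce $O(\hbar)$ errors after the $\hbar^{-1}$-dilation, but making this precise requires a Taylor expansion in the spirit of the proof of Theorem \ref{thm:sym2microclosetoCnu} together with uniform trace-norm bounds of the type used in Lemma \ref{lem:correspondencesmoothsym} (the operator $(\mathrm{Id}+H)^{-N}$ being trace-class). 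A secondary technical point is verifying that the rank-one operator on $L^2(\R_2)$ claimed for $\Gamma^2$ is indeed the semiclassical-measure operator of the constant-in-$\hbar$ family $\varphi e^{i\eta_0 x_2}$ under the (unscaled in $x_2$) calculus of Appendix \ref{sect:Hilbertvaluedsemiclass}; this is essentially a one-line Wigner-transform computation once the conventions are pinned down. Everything else is bookkeeping: assembling the three limits, checking uniqueness via density of $\cA_0^2(\Eng)$, and performing the partial trace at the end.
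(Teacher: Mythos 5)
Your proposal is correct and follows essentially the same route as the paper: identify $u^\hbar$ (up to a small $L^2$ error) as the image under the Fourier integral operator of a Heisenberg wave packet tensored with $\varphi e^{i\eta_0 x_2}$, observe that the oscillation sits at $\beta-\nu_0\delta^{1/3}=\hbar\eta_0$ so that the second-microlocal variable stays at the finite value $\eta_0$ (killing both the part outside $C_{\nu_0}$ and $\Gamma^\infty d\gamma^\infty$), and read off $\Gamma^2 d\gamma^2$ from the known measure of Heisenberg wave packets together with the rank-one projector on $L^2(\R_2)$. The only minor discrepancy is that the Taylor expansion of the $\hbar^{-1/2}$-scaled profile $a$ naturally yields an $O_{L^2}(\sqrt{\hbar})$ remainder rather than the $O_{L^2}(\hbar)$ you state, which does not affect any conclusion.
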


We compute the action of a pseudodifferential operator on the family $(u^\hbar)_{\hbar>0}$. Such a comutation can already been found in \cite[Lemma 4.3]{FL}.

\begin{lemma}
    \label{lem:actionpseudowp}
    Let $\sigma\in S_{0,0}^0(\Eng)$, we have for all $x\in\Eng$
    \begin{equation*}
        \Op_{\hbar}(\sigma)u^{\hbar}(x) = \alpha_{\hbar}(x) \left(\pi^{\delta_0,\nu_0 \delta_{0}^{1/3} + \hbar\eta_0}(\hbar^{-1}\cdot (x_{0}^{-1}x)) \sigma(x,\pi^{\delta_0,\nu_0\delta_{0}^{1/3} + \hbar\eta_0})\Phi,\Phi\right) + \cO_{L^2(\Eng)}(\sqrt{\hbar}).
    \end{equation*}
\end{lemma}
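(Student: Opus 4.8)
\textbf{Proof proposal for Lemma \ref{lem:actionpseudowp}.}

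The plan is to compute $\Op_\hbar(\sigma) u^\hbar$ directly from the definition of the quantization \eqref{eq_quantization}, exploiting the special wave-packet structure of $u^\hbar$. First I would write $u^\hbar$ as a left-translate: setting $v^\hbar(x) = \alpha_\hbar(x_0 x)\,(\pi^{\delta_0,\nu_0\delta_0^{1/3}+\hbar\eta_0}(\hbar^{-1}\cdot x)\Phi,\Phi)$, we have $u^\hbar = L_{x_0} v^\hbar$ where $L_{x_0}$ is left-translation by $x_0$; since $\Op_\hbar(\sigma)$ is \emph{not} left-invariant (the symbol depends on $x$), I would instead keep track of the $x$-dependence and reduce to the case $x_0 = e$ only for the phase/oscillatory part, carrying $\alpha_\hbar$ along as a slowly-varying amplitude. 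The key computational input is the quasi-eigenfunction property of the exponential term: writing $\Theta^\hbar_{x_0}(x) := (\pi^{\delta_0,\nu_0\delta_0^{1/3}+\hbar\eta_0}(\hbar^{-1}\cdot(x_0^{-1}x))\Phi,\Phi)$, one checks that $\mathcal F_\Eng^\hbar$ applied to $\Theta^\hbar_{x_0}$ (as a distribution) concentrates, as $\hbar\to 0$, on the single representation $\pi^{\delta_0,\nu_0\delta_0^{1/3}+\hbar\eta_0}$ with matrix coefficient $\ket{\Phi}\bra{\Phi}$, up to a phase, which is precisely what makes the symbol get frozen at that representation. This is the wave-packet principle already used in \cite{FL} and in the proof of Theorem \ref{thm:prop_wave_packet}.

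Concretely, the main steps are: (1) Insert the integral kernel representation of $\Op_\hbar(\sigma)$, namely $\Op_\hbar(\sigma)f(x) = f * \kappa_x^\hbar(x)$ with $\kappa_x^\hbar(z) = \hbar^{-Q}\kappa_x(\hbar^{-1}\cdot z)$, and write $u^\hbar(y) = \alpha_\hbar(y)\Theta^\hbar_{x_0}(y)$. (2) Change variables $y = x(\hbar\cdot w)$ in the convolution integral, so the rescaled kernel $\kappa_x(w)$ (Schwartz in $w$, smooth compactly supported in $x$) appears unscaled; Taylor-expand $\alpha_\hbar(x(\hbar\cdot w))$ around $\alpha_\hbar(x)$, noting that one derivative of $\alpha_\hbar$ costs $\hbar^{1/2}$ (because of the $\hbar^{-1/2}$-concentration of $a$) times $\hbar$ from the $\hbar\cdot w$ dilation in the first-stratum directions — in fact the leading correction is $O(\hbar^{1/2})$ in $L^2$. (3) For the oscillatory factor, use the (quasi-)multiplicativity $\Theta^\hbar_{x_0}(x(\hbar\cdot w)) = (\pi^{\delta_0,\cdots}(\hbar^{-1}\cdot(x_0^{-1}x))\pi^{\delta_0,\cdots}(w)\Phi,\Phi)$ and recognize that integrating $\kappa_x(w)$ against $\pi^{\delta_0,\cdots}(w)^{*}$ in $w$ reconstructs exactly $\sigma(x,\pi^{\delta_0,\nu_0\delta_0^{1/3}+\hbar\eta_0})$ acting on $\Phi$; here I would be careful that the group law in semidirect coordinates and the dilation $\hbar\cdot$ interact correctly, which is where \eqref{eq:decompositionsemidirect} and the explicit dilation weights are used. (4) Collect the remainders and estimate them in $L^2(\Eng)$ by $C\sqrt\hbar$, using the Calderón--Vaillancourt bound of Theorem \ref{thm:CalderonVaillancourt} to control the operator built from the Taylor remainder symbol (which still lies in a bounded subset of $S_{0,0}^0$ after the $\hbar^{1/2}$ gain), together with the fact that $\|u^\hbar\|_{L^2}$ is bounded uniformly.

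The main obstacle I anticipate is step (3): tracking precisely how the non-abelian group law and the anisotropic dilations distort the "plane-wave" factor $\pi^{\delta_0,\nu_0\delta_0^{1/3}+\hbar\eta_0}(\hbar^{-1}\cdot(\cdot))$ under the substitution $y \mapsto x(\hbar\cdot w)$, and checking that the lower-order terms in the Baker--Campbell--Hausdorff expansion really do produce only $O(\hbar^{1/2})$ errors rather than $O(1)$ ones — this requires using that $\Phi$ is a smooth vector so that $\pi(w)\Phi$ depends smoothly on $w$ with Schwartz-controlled growth, exactly as in the smooth-vector hypotheses of Definition \ref{def:wp}. A secondary technical point is justifying the interchange of the $w$-integral with the operator $\sigma(x,\cdot)$ acting on $\Phi$, which follows from $\kappa_x \in \cS(\Eng)$ and the continuity of $\pi \mapsto \sigma(x,\pi)$ on smooth vectors; once these are in place the rest is a routine remainder estimate. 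Since the statement references \cite[Lemma 4.3]{FL}, I would organize the write-up so that the Engel-specific modifications (the extra $\beta$-variable shifted to $\nu_0\delta_0^{1/3}+\hbar\eta_0$, the step-3 dilation weights) are highlighted while the general wave-packet mechanism is cited.
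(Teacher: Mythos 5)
Your proposal is correct and follows essentially the same route as the paper's proof: kernel representation $\Op_{\hbar}(\sigma)u^{\hbar}(x)=u^{\hbar}*\kappa_{x}^{\hbar}(x)$, rescaling of the convolution variable, exact factorization of the oscillatory factor, Taylor expansion of $\alpha_{\hbar}$ producing the $\cO_{L^2}(\sqrt{\hbar})$ remainder, and recognition of $\int_{\Eng}\kappa_{x}(y)\,\pi^{\delta_0,\nu_0\delta_0^{1/3}+\hbar\eta_0}(y)^{*}\,dy=\sigma(x,\pi^{\delta_0,\nu_0\delta_0^{1/3}+\hbar\eta_0})$. Your anticipated obstacle in step (3) is in fact moot: because the dilations are group automorphisms and $\pi$ is a homomorphism, one has exactly $\pi(\hbar^{-1}\cdot(x_0^{-1}x(\hbar\cdot y^{-1})))=\pi(\hbar^{-1}\cdot(x_0^{-1}x))\,\pi(y)^{*}$ with no Baker--Campbell--Hausdorff remainder to control, so the only expansion needed is the one on the slowly varying profile $\alpha_{\hbar}$.
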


\begin{proof}
    We denote by $\kappa$ the convolution kernel associated to $\sigma$. We have
    \begin{equation*}
        \begin{aligned}
        \Op_{\hbar}(\sigma)u^{\hbar}(x) &= u^\hbar * \kappa^{\hbar}_x (x) = \int_{\Eng} u^\hbar(xy^{-1})\kappa^{\hbar}_x (y)\,dy\\
        &= \hbar^{-7}\int_{\Eng} u^\hbar(xy^{-1})\kappa_x (\hbar^{-1}\cdot y)\,dy\\
        &= \int_{\Eng} \alpha_{\hbar}(x (\hbar\cdot y^{-1})) \left(\pi^{\delta_0,\nu_0 \delta_{0}^{1/3} + \hbar\eta_0}(\hbar^{-1}\cdot (x_{0}^{-1}x))\pi^{\delta_0,\nu_0 \delta_{0}^{1/3} + \hbar\eta_0}(y^{-1})\Phi,\Phi\right)\kappa_x(y)\,dy\\
        &= \int_{\Eng} \alpha_{\hbar}(x (\hbar\cdot y^{-1})) \left(\pi^{\delta_0,\nu_0 \delta_{0}^{1/3} + \hbar\eta_0}(\hbar^{-1}\cdot (x_{0}^{-1}x)) \kappa_x(y)\pi^{\delta_0,\nu_0 \delta_{0}^{1/3} + \hbar\eta_0}(y)^{*}\Phi,\Phi\right)\,dy.
        \end{aligned}
    \end{equation*}
    We would recognize in the last equation the Fourier transform of $\kappa_x$, and thus the symbol $\sigma$, evaluated on the representation $\pi^{\delta_0,\nu_0 \delta_{0}^{1/3} + \hbar\eta_0}$ if the profile $\alpha_\hbar$ had no dependency on the variable $y$.
    To this end, we perform a Taylor expansion, and by the defintion of the profile $\alpha_\hbar$ we obtain a remainder of order $\cO(\sqrt{\hbar})$:
    \begin{equation*}
        \Op_{\hbar}(\sigma)u^{\hbar}(x) = \alpha_{\hbar}(x) \left(\pi^{\delta_0,\nu_0 \delta_{0}^{1/3} + \hbar\eta_0}(\hbar^{-1}\cdot (x_{0}^{-1}x)) \int_{\Eng}\kappa_x(y)\pi^{\delta_0,\nu_0 \delta_{0}^{1/3} + \hbar\eta_0}(y)^{*}\,dy\, \Phi,\Phi\right) + \cO(\sqrt{\hbar}),
    \end{equation*}
    and we conclude.
\end{proof}

\begin{remark}
    As for the wave packets of Section \ref{subsect:wpevolution}, the previous proof can give rise to a full asymptotic expansion of the action of pseudodifferential operators on our bounded family. Moreover, such a result stays true
    on other nilpotent Lie groups, with a similar proof.
\end{remark}

\begin{proof}[Proof of Proposition \ref{prop:example2micro}]
    First we prove that the semiclassical measures of $(u^\hbar)_{\hbar>0}$ are necessarily concentrated on $C_{\nu_0}$. For $\sigma\in\cA_{0}(\Eng)$,
    consider, for $\eps>0$, we define the symbol $\sigma^{\eps}$ as follows:
    \begin{equation*}
        \forall (x,\pi^{\delta,\beta})\in\Eng\times\Enghat,\ \sigma^{\eps}(x,\pi^{\delta,\beta}) = \sigma(x,\pi^{\delta,\beta})\left(1-\chi\left(\frac{\beta-\nu_0 \delta^{1/3}}{\eps}\right)\right).
    \end{equation*}
    Then by Lemma \ref{lem:actionpseudowp}, we have for $x\in\Eng$
    \begin{equation*}
        \Op_{\hbar}(\sigma^\eps)u^{\hbar}(x) = \alpha_{\hbar}(x) \left(\pi^{\delta_0,\nu_0 \delta_{0}^{1/3} + \hbar\eta_0}(\hbar^{-1}\cdot (x_{0}^{-1}x)) \sigma^{\eps}(x,\pi^{\delta_0,\nu_0 \delta_{0}^{1/3} + \hbar\eta_0}) \Phi,\Phi\right) + \cO(\sqrt{\hbar}),
    \end{equation*}
    and easily observe that 
    \begin{equation*}
        \sigma^\eps(x,\pi^{\delta_0,\nu_0 \delta_{0}^{1/3} + \hbar\eta_0}) = \cO(\hbar),
    \end{equation*}
    for $\hbar$ small engouh as $\sigma^{\eps}$ vanishes on a neighborhood of $C_{\nu_0}$.
    Thus, any semiclassical measure $\Gamma d\gamma$ satisfies:
    \begin{equation*}
        \int_{\Eng\times\Enghat} {\rm Tr}\left(\sigma^\eps(x,\pi^{\delta,\beta})\Gamma(x,\pi^{\delta,\beta})\right)d\gamma(x,\pi^{\delta,\beta}) = 0,
    \end{equation*}
    and by dominating convergence, letting $\eps \rightarrow 0$, we get $\bm{1}_{(\Eng\times\Enghat)\setminus C_{\nu_0}} \Gamma d\gamma = 0$.
    
    For the part at infinity $\Gamma^\infty d\gamma^\infty$ of any second-microlocal semiclassical measures, recall that for $\sigma \in\cA_{0}^2(\Eng)$, we have
    \begin{equation*}
        \int_{C_{\nu_0}\times\bS^0} {\rm Tr}\left(\sigma(x,\pi,\omega)\Gamma(x,\pi,\omega)\right)d\gamma(x,\pi^{\delta,\beta},\omega) = \lim_{\eps\rightarrow 0}\lim_{R\rightarrow +\infty}\lim_{\hbar\rightarrow 0} \left(\Op_{\hbar}^{C_{\nu_0}}(\sigma_{\eps}^{R})u^\hbar,u^\hbar\right),
    \end{equation*}
    where for all $(x,\pi^{\delta,\beta},\eta)\in\Eng\times\Enghat\times\R$,
    \begin{equation*}
        \sigma_{\eps}^R(x,\pi^{\delta,\beta},\eta) = \sigma(x,\pi^{\delta,\beta},\eta)\left(1-\chi\left(\frac{\eta}{R}\right)\right)\chi\left(\frac{\beta-\nu_0 \delta^{1/3}}{\eps}\right).
    \end{equation*}
    Using Lemma \ref{lem:actionpseudowp} with symbol $\left(\sigma_{\eps}^R\right)_{\hbar}^{C_{\nu_0}}$, we need to understand the action on the vector $\Phi$ of the following operator
    \begin{equation*}
        \left(\sigma_{\eps}^R\right)_{\hbar}^{C_{\nu_0}}(x,\pi^{\delta_0,\nu_0 \delta_{0}^{1/3}+\hbar\eta_0}) = \sigma(x,\pi^{\delta_0,\nu_0 \delta_{0}^{1/3}+\hbar\eta_0},\eta_0)\left(1-\chi\left(\frac{\eta_0}{R}\right)\right)\chi\left(\frac{\hbar \eta_0}{\eps}\right).
    \end{equation*}
    This way, when $|R|>|\eta_0|$, we get that 
    \begin{equation*}
        \Op_{\hbar}^{C_{\nu_0}}(\sigma_{\eps}^{R})u^{\hbar} = \cO_{L^2(\Eng)}(\sqrt{\hbar}),
    \end{equation*}
    giving at the limit $\Gamma^{\infty}d\gamma^{\infty} = 0$.

    Finally for the remaining part $\Gamma^2 d\gamma^2$, recall that this part of this measure can be computed as the operator-valued semiclassical measure of $\Tilde{u}_\hbar = \bU_{\nu_0}^{\hbar} u^{\hbar}$ in $L^2(\Heis^{1,1})$ seen as $L^2(\Heis,L^2(\R_2))$.
    We claim that we have, for $(h,x_2)\in\Heis^{1,1}$,
    \begin{equation}
        \label{eq:calculUwp}
        \Tilde{u}^\hbar(h,x_2) = \hbar^{-3/2} a(\hbar^{-1/2}\cdot (h_{0}^{-1}h)) \varphi(x_2) e^{i\eta_0 x_2}\left(\Tilde{\pi}^{\delta_0}(\hbar^{-1}\cdot (h_{0}^{-1}h)) \Phi,\Phi\right) + \cO_{L^2(\Heis^{1,1})}(\sqrt{\hbar}).
    \end{equation}
    We denote the right-hand side term (without the remainder $\cO(\sqrt{\hbar})$) by $\check{u}^{\hbar}$. If Equation \eqref{eq:calculUwp} holds true, then the computation of any operator-valued measure for $(\check{u}^\hbar)_{\hbar>0}$ is straightforward as we identify a wave packet on the Heisenberg variables, 
    while on the real variable $x_2$, we identify the profile $\varphi e^{i\eta_0 x_2}$. Indeed, the semiclassical measures of wave packets on $\Heis$ being simply the Dirac measures on the point of concentration $(h_0,\Tilde{\pi}^{\delta_0})$ (for proofs see for example \cite{FF2}), we obtain the announced result.
    
    We are left with proving \eqref{eq:calculUwp}: observe that for $(h_x,x_2)\in\Eng$, we have 
    \begin{equation*}
        \bU_{\nu_0}^{\hbar}\check{u}^\hbar(h_x,x_2) = \lim_{\eps\rightarrow 0} \hbar^{-6}\int_{\Heis\times\Hhat}{\rm Tr}\left(\Tilde{\pi}^{\delta}(\hbar^{-1}\cdot (h_{y}^{-1}h_x)) U_{\nu_0}^{\hbar}(x_2,\Tilde{\pi}^\delta)\cF_{\Heis}^{\hbar}\theta_\eps(\Tilde{\pi}^\delta)\right) \check{u}^\hbar(h_y, x_2)\,d\mu_{\Hhat}(\Tilde{\pi}^\delta) dh_y,
    \end{equation*}
    where $\theta_\eps$ is an approximation of the identity in $\cS(\Heis)$. From the structure of $\check{u}^\hbar$, the previous operator only acts on the Heisenberg variables: focusing on these variables, we have 
    \begin{multline*}
        \hbar^{-6}\int_{\Heis\times\Hhat}{\rm Tr}\left(\Tilde{\pi}^{\delta}(\hbar^{-1}\cdot (h_{y}^{-1}h_x)) U_{\nu_0}^{\hbar}(x_2,\Tilde{\pi}^\delta)\cF_{\Heis}^{\hbar}\theta_\eps(\Tilde{\pi}^\delta)\right) WP_{(h_0,\Tilde{\pi}^{\delta_0})}^{\hbar}(a,\Phi,\Phi) (h_y)\,d\mu_{\Hhat}(\Tilde{\pi}^\delta) dh_y\\
        = \Op_{\hbar,\Heis}\left(U_{\nu_0}^{\hbar}(x_2,\cdot)\cF_{\Heis}^{\hbar}\theta_\eps(\cdot)\right) WP_{(h_0,\Tilde{\pi}^{\delta_0})}^{\hbar}(a,\Phi,\Phi) (h_x),
    \end{multline*}
    where we write $WP_{(h_0,\Tilde{\pi}^{\delta_0})}^{\hbar}(a,\Phi,\Phi)(h_x) = \hbar^{-3/2}a(\hbar^{-1/2}\cdot(h_{0}^{-1}h_x))\left(\Tilde{\pi}^{\delta_0}(\hbar^{-1}\cdot(h_{0}^{-1}h_x))\Phi,\Phi\right)$. By an entirely similar proof as in Lemma \ref{lem:actionpseudowp}, we have 
    \begin{multline*}
        \Op_{\hbar,\Heis}\left(U_{\nu_0}^{\hbar}(x_2,\cdot)\cF_{\Heis}^{\hbar}\theta_\eps(\cdot)\right) WP_{(h_0,\Tilde{\pi}^{\delta_0})}^{\hbar}(a,\Phi,\Phi) (h_x) \\
        = \hbar^{-3/2}a(\hbar^{-1/2}\cdot(h_{0}^{-1}h_x))\left(\Tilde{\pi}^{\delta_0}(\hbar^{-1}\cdot(h_{0}^{-1}h_x))U_{\nu_0}^{\hbar}(x_2,\Tilde{\pi}^{\delta_0})\cF_{\Heis}^{\hbar}\theta_\eps(\Tilde{\pi}^{\delta_0})\Phi,\Phi\right) + \cO(\sqrt{\hbar}),
    \end{multline*}
    and in the right-hand side we recognize $\pi^{\delta_0,\nu_0\delta_{0}^{1/3}}(\hbar^{-1}\cdot(x_{0}^{-1}x))\cF_{\Heis}^{\hbar}\theta_\eps(\Tilde{\pi}^{\delta_0})$ acting on the vector $\Phi$. Remark that the remainder does not depend on $\eps$ and thus we can take the limit $\eps \rightarrow 0$ and our claim is proved.
\end{proof}

\section{Schrödinger equation and second-microlocal semiclassical measures}
\label{sect:sch2micro}

This section is devoted to the semiclassical analysis of the following Schrödinger equation 
\begin{equation*}
    \begin{cases}
        & i\partial_{t}\psi^{\hbar} = -\Delta_{\Eng}\psi^{\hbar},\\
        & \psi_{|t=0}^{\hbar} = \psi_{0}^{\hbar}\in L^2(\Eng),
    \end{cases}
\end{equation*}
where $(\psi_{0}^\hbar)_{\hbar>0}$ is a bounded family in $L^2(\Eng)$. More precisely we wish to apply the second-microlocal analysis developed in the previous section to analyze in further details than in 
Proposition \ref{prop:dispersionsupp} the support of the time-averaged semiclassical measures associated to bounded families of solutions to the previous Schrödinger equation.

\subsection{Time-averaged second-microlocal semiclassical measures and their propagation}

For a bounded family $(\psi^\hbar(t))_{\hbar>0}$ in $L^\infty(\R_t, L^2(\Eng))$, we associate the quantities 
\begin{equation}
    \label{def:leps2}
    \ell_{\hbar}^2(\theta, \sigma)=\int_{\R} \theta(t)  \left(\Op_{\hbar}^{C_{\nu_0}} (\sigma) \psi^\hbar(t),\psi^\hbar(t)\right)_{L^2(\Eng)} dt,\;\;\sigma\in \cA_{0}^2(\Eng),\;\;\theta\in L^1(\R_t),
\end{equation}
the limits of which are characterized in the following proposition, which generalizes Theorem \ref{theo:timemeasures}.

\begin{proposition}
    \label{prop:mesures2microtimeaveraged}
    Let $(\psi^\hbar(t))_{\hbar>0}$ be a bounded family in $L^\infty(\R_t,L^2(\Eng))$ and a sequence $(\hbar_k)_{k\in \N}$ in $(0,+\infty)$ with  $\hbar_k\Tend{k}{+\infty}0$
    for which we have a time-averaged semiclassical measure $t\mapsto \Gamma_t d\gamma_t $ in
    $L^\infty(\R_t, {\mathcal M}_{ov}^+(\Eng\times \Enghat))$. Then up to an extraction of $(\hbar_k)_{k\in \N}$, there exist two maps, called the \textit{time-averaged second-microlocal semiclassical measures},
    \begin{equation*}
        \begin{aligned}
        &t\mapsto \Gamma_{t}^\infty d\gamma_{t}^\infty\in L^\infty\left(\R_t, \cM_{ov}^{+}\left(C_{\nu_0}\times\bS^0, (\cH_\pi)_{(x,\pi,\omega)\in C_{\nu_0}\times\bS^0}\right)\right),\\
        &t\mapsto \Gamma_{t}^2 d\gamma_{t}^2\in L^\infty\left(\R_t, \cM_{ov}^{+}\left(\Heis\times\Hhat, (\cH_{\Tilde{\pi}}\otimes L^2(\R_{2}))_{(x,\Tilde{\pi})\in\Heis\times\Hhat}\right)\right),
        \end{aligned}
    \end{equation*} 
    such that for all $ \theta\in L^1(\R_t)$ and $\sigma\in \cA_{0}^2(\Eng)$, we have
    \begin{equation*}
        \begin{aligned}
        \ell_{\hbar}^2(\theta,\sigma)
        &\Tend {k}{+\infty} 
        \int_{\R\times \Eng\times \Enghat} \theta(t){\rm Tr}\left(\sigma(x,\pi) \Gamma_t(x,\pi)\right)d\gamma_t(x,\pi) dt\\
        &+ \int_{\R\times C_{\nu_0} \times\bS^0} \theta(t){\rm Tr}_{\cH_{\pi^{\delta,\nu_{0}\delta^{1/3}}}}\left(\sigma_{\infty}(x,\pi^{\delta,\nu_0\delta^{1/3}},\omega)\Gamma_{t}^{\infty}(x,\pi^{\delta,\nu_0\delta^{1/3}},\omega)\right)\,d\gamma_{t}^{\infty}(x,\pi^{\delta,\nu_0\delta^{1/3}},\omega)dt\\
        &+ \int_{\R\times\Heis \times\Hhat} \theta(t){\rm Tr}_{\cH_{\Tilde{\pi}^{\delta}}\otimes L^2(\R_2)} \left(\sigma(h_x,x_2,\pi^{\delta,\nu_0\delta^{1/3}},D_{x_2})\Gamma_{t}^2(h_x,\Tilde{\pi}^{\delta})\right) d\gamma_{t}^{2}(h_x,\Tilde{\pi}^{\delta})dt.
        \end{aligned}
    \end{equation*}
    Given the sequence $(\hbar_k)_{k\in\N}$, these measures are the only operator-valued measures for which the above convergence holds.
\end{proposition}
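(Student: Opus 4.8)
The plan is to follow the now-standard extraction argument for time-averaged operator-valued measures, combining the time-independent statement of Theorem~\ref{thm:mesures2micro} with the uniform-in-$\hbar$ boundedness of the second-microlocal quantization supplied by Proposition~\ref{prop:2microbounded}, and then carry out a diagonal extraction over a countable dense family of test symbols and of time-averaging weights. Concretely, first I would fix a countable $\mathbb{Q}$-vector subspace $\mathcal{D}\subset\cA_{0}^{2}(\Eng)$ which is dense for the seminorm $N_2(\cdot)$, and a countable dense subset $\Theta\subset L^1(\R_t)$. For each $\sigma\in\mathcal{D}$ and $\theta\in\Theta$, Proposition~\ref{prop:2microbounded} gives
\begin{equation*}
    |\ell_{\hbar}^2(\theta,\sigma)| \leq N_2(\sigma)\,\|\theta\|_{L^1(\R_t)}\,\sup_{t\in\R}\|\psi^\hbar(t)\|_{L^2(\Eng)}^2,
\end{equation*}
so the sequence $(\ell_{\hbar_k}^2(\theta,\sigma))_{k}$ is bounded; a diagonal extraction of $(\hbar_k)$ then produces a subsequence along which $\ell_{\hbar_k}^2(\theta,\sigma)$ converges for every pair $(\theta,\sigma)\in\Theta\times\mathcal{D}$, and the uniform bound lets one pass to all $(\theta,\sigma)\in L^1(\R_t)\times\cA_{0}^2(\Eng)$ by density, defining a limiting linear functional $\ell_\infty^2(\theta,\sigma)$.

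Next I would identify the structure of this limit. By linearity in $\theta$, for fixed $\sigma$ the map $\theta\mapsto\ell_\infty^2(\theta,\sigma)$ is a bounded linear functional on $L^1(\R_t)$, hence given by integration against an $L^\infty(\R_t)$ function of $t$; this yields, for a.e.\ $t$, a linear functional $\sigma\mapsto L_t(\sigma)$ on $\cA_0^2(\Eng)$. The content of Theorem~\ref{thm:mesures2micro}, applied pointwise in $t$ (to the family $(\psi^{\hbar}(t))_{\hbar>0}$, whose plain semiclassical measure at time $t$ is the $\Gamma_t d\gamma_t$ already given by Theorem~\ref{theo:timemeasures}), shows that each such $L_t$ decomposes as the sum of three terms: the marginal $\int_{(\Eng\times\Enghat)\setminus C_{\nu_0}}$ of $\Gamma_t d\gamma_t$, a contribution at infinity given by a positive operator-valued measure on $C_{\nu_0}\times\bS^0$, and a contribution given by a positive operator-valued measure on $\Heis\times\Hhat$ with fibers $\cH_{\Tilde\pi}\otimes L^2(\R_2)$. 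Positivity of these fiberwise limit measures is inherited, as in the proof of Theorem~\ref{thm:mesures2micro}, from the symbolic calculus identity $\Op_\hbar^{C_{\nu_0}}(\sigma^*\sigma)=\Op_\hbar^{C_{\nu_0}}(\sigma)^*\Op_\hbar^{C_{\nu_0}}(\sigma)+\cO(\hbar+1/R)$ together with the positivity of $\theta\geq 0$ in the time average. The essential-supremum bounds on the norms $\|\Gamma_t^\infty d\gamma_t^\infty\|_{\cM}$ and $\|\Gamma_t^2 d\gamma_t^2\|_{\cM}$, integrated in $t$, follow from the mass inequality in Theorem~\ref{theo:timemeasures} exactly as for the plain time-averaged measure.

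The remaining point is measurability of $t\mapsto\Gamma_t^\infty d\gamma_t^\infty$ and $t\mapsto\Gamma_t^2 d\gamma_t^2$ as maps into the respective spaces $\cM_{ov}^+$, so that they indeed define elements of $L^\infty(\R_t,\cM_{ov}^+(\cdot))$; this is handled exactly as in the proof of Theorem~\ref{theo:timemeasures} (see \cite{FF2}), using that for each fixed $(\theta,\sigma)$ the quantity $\int\theta(t)\ell^2_{\cdot}(t,\sigma)\,dt$ is a limit of measurable functions and that the spaces in question are separable dual spaces, so weak-$*$ measurability suffices. Uniqueness given $(\hbar_k)_{k\in\N}$ follows because the three families of test symbols — plain symbols vanishing near $C_{\nu_0}$, homogeneous symbols $\sigma_\infty$ on $\Eng\times\Enghat\times\bS^0$, and symbols compactly supported in $\eta$ — together separate the three components (a plain $\cA_0$ symbol is an $\eta$-independent element of $\cA_0^2$, recovering $\Gamma_t d\gamma_t$; sending the $\eta$-support to infinity isolates $\Gamma_t^\infty d\gamma_t^\infty$; and the correspondence of Theorem~\ref{thm:sym2microclosetoCnu} with Proposition~\ref{prop:correspondenceopHxR} pins down $\Gamma_t^2 d\gamma_t^2$), so any two limiting triples must coincide. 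I expect the only genuinely delicate step to be the careful bookkeeping in the pointwise-in-$t$ application of Theorem~\ref{thm:mesures2micro} — one must check that the null set of $t$ on which the decomposition fails can be chosen independently of $\sigma$, which is why the countable dense family $\mathcal{D}$ is introduced before disintegrating in $t$ rather than after.
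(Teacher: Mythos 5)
Your proposal follows essentially the same route the paper intends: the paper's own "proof" is a one-line reference to the arguments of Theorem \ref{thm:mesures2micro} combined with the time-averaging scheme of \cite{FF3}, and your diagonal extraction over countable dense families of $(\theta,\sigma)$, the uniform bound from Proposition \ref{prop:2microbounded}, the disintegration of $\theta\mapsto\ell^2_\infty(\theta,\sigma)$ via $(L^1)^*=L^\infty$, and the separation of the three components for uniqueness are exactly the standard ingredients. One caution on the middle step: Theorem \ref{thm:mesures2micro} cannot literally be "applied pointwise in $t$" to $(\psi^\hbar(t))_{\hbar>0}$, since its conclusion holds only along a further extraction that would depend on $t$; the correct mechanism is to rerun the splittings $\sigma=\sigma_R+\sigma^R$ and the limits $\lim_{\eps\to0}\lim_{R\to\infty}\lim_{\hbar_k\to0}$ of that proof at the level of the time-averaged functionals themselves (using a time-averaged version of Theorem \ref{thm:mesuresemiopvalued} for the $\sigma_R$ part and linear functionals on the $C^*$-algebra of homogeneous symbols, now with the time variable adjoined, for the $\sigma^R$ part), and only then disintegrate in $t$. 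You flag precisely this point in your closing sentence, so the argument is sound once that paragraph is rephrased accordingly.
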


This proposition can be proved with the same arguments as in the proof of Theorem \ref{thm:mesures2micro} and in \cite{FF3}[Section 3.2]. The following results generalize 
Theorem \ref{thm:propag2microintro} presented in Section \ref{subsect:secondmircointro} and show that our second-microlocal analysis allow to uncover the propagation laws underlying the Schrödinger equation.

\begin{theorem}
\label{thm:propagmesures2micro}
Let  $(\psi_0^\hbar)_{\hbar>0}$ be a bounded family in $L^2(\Eng)$ 
and $\psi^\hbar(t) = {\rm e}^{i t \Delta_{\Eng}} \psi^\hbar_0$ be the solution to  \eqref{eq:SchEngel} for $\tau = 2$. Let $\nu_0\in\R$,
then any time-averaged second-microlocal semiclassical measures $t\mapsto (\Gamma_{t}^{\infty}d\gamma_{t}^{\infty}, \Gamma_{t}^2 d\gamma_{t}^2)$ 
as in Proposition \ref{prop:mesures2microtimeaveraged} for the family $(\psi^\hbar(t))_{\hbar>0}$, 
satisfies the following additional properties:
\begin{enumerate}
\item[(i)] For $d\gamma_{t}^2 dt$-almost all $(t,h,\Tilde{\pi}^\delta)\in \R_t\times\Heis\times\Hhat$, the operator $\Gamma_{t}^{2}(h,\Tilde{\pi}^\delta)$ commutes with $H(\pi^{\delta,\nu_0 \delta^{1/3}})$
and admits the following decomposition
\begin{equation}
\Gamma_{t}^2(h,\Tilde{\pi}^\delta)=\sum_{n\in\N } \Gamma_{n,t}^2(h,\Tilde{\pi}^\delta),
\end{equation}
where
\begin{equation*}
    \Gamma_{n,t}(h,\Tilde{\pi}^\delta):= \left(\Pi_n(\pi^{\delta,\nu_0 \delta^{1/3}})\otimes {\rm Id}_{L^2(\R_2)}\right)\Gamma_{t}^2(h,\Tilde{\pi}^\delta) \left(\Pi_n(\pi^{\delta,\nu_0 \delta^{1/3}})\otimes {\rm Id}_{L^2(\R_2)}\right)\in\cL^{1}(L^2(\R_2)).
\end{equation*}
\item[(ii)] Similarly, for $d\gamma_{t}^\infty dt$-almost all $(t,(x,\pi),\omega)\in \R_t\times C_{\nu_0}\times\bS^0$, $\Gamma_{t}^{\infty}(x,\pi,\omega)$ commutes with $H(\pi^{\delta,\nu_0\delta^{1/3}})$ and decomposes as follow
\begin{equation}
    \Gamma_{t}^\infty(x,\pi^{\delta,\nu_0\delta^{1/3}},\omega)=\sum_{n\in\N } \Gamma_{n,t}^\infty(x,\pi^{\delta,\nu_0\delta^{1/3}},\omega),
\end{equation}
with
\begin{equation*}
    \Gamma_{n,t}^\infty(x,\pi^{\delta,\nu_0\delta^{1/3}},\omega):= \Pi_n(\pi^{\delta,\nu_0 \delta^{1/3}})\Gamma_{t}^\infty(x,\pi^{\delta,\nu_0\delta^{1/3}},\omega) \Pi_n(\pi^{\delta,\nu_0 \delta^{1/3}}).
\end{equation*}
\suspend{enumerate}
    Let $m\in\N_{>0}$ and suppose that $\nu_0$ is a non-degenerate critical point of the $m$-th curve of dispersion $\nu\mapsto\Tilde{\mu}(\nu)$ of the Montgomery operators $(\Tilde{H}(\nu))_{\nu\in\R}$.
\resume{enumerate}
\item[(iii)] For almost all time $t\in\R_t$, the measure $d\gamma_{m,t}^{\infty} = \Gamma_{m,t}^\infty d\gamma_{t}^\infty$ is zero: 
\begin{equation*}
    d\gamma_{m,t}^{\infty} = 0.
\end{equation*}
\item[(iv)] The $m$-th mode $d\gamma_{m,t}^2= \Gamma_{m,t}^{2}d\gamma_{t}^2$ satisfies the following Heisenberg equation in the weak sense:
\begin{equation}
    i\partial_t \gamma_{m,t}^2 = \left[-\frac{\Tilde{\mu}_{m}''(\nu_0)}{2}\partial_{x_2}^2,\gamma_{m,t}^2\right],
\end{equation}
where the operator $\partial_{x_2}$ is seen as an operator on $L^2(\R_2)$.
\end{enumerate}
\end{theorem}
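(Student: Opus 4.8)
The plan is to mimic the proof scheme already used for Theorem~\ref{thm:prop_measures} and Proposition~\ref{prop:dispersionsupp}, but now working with the \emph{second-microlocal} quantization $\Op_{\hbar}^{C_{\nu_0}}$ and its time-averaged measures from Proposition~\ref{prop:mesures2microtimeaveraged}. First I would write down the evolution identity: for $\sigma\in\cA_0^2(\Eng)$ and $\theta\in C_c^\infty(\R_t)$, using that $\psi^\hbar(t)=e^{it\Delta_\Eng}\psi_0^\hbar$ solves \eqref{eq:SchEngel} for $\tau=2$, one has
\begin{equation*}
-i\hbar^2\,\ell_\hbar^2(\theta',\sigma)=\ell_\hbar^2\!\left(\theta,[\sigma_\hbar^{C_{\nu_0}},H]\right)+2\hbar\,\ell_\hbar^2\!\left(\theta,(\pi(V)\cdot V)\sigma_\hbar^{C_{\nu_0}}\right)+\hbar^2\,\ell_\hbar^2\!\left(\theta,\Delta_\Eng\sigma_\hbar^{C_{\nu_0}}\right),
\end{equation*}
which is just \eqref{eq:commutatorDelta} applied to the symbol $\sigma_\hbar^{C_{\nu_0}}$. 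The first thing to extract from this is, at order $O(1)$, the relation $\ell_\infty^2(\theta,[\sigma,H])=0$ for all $\sigma\in\cA_0^2$; applying this to the cut-off projector symbols $\sigma_\eps^{(n,n')}$ (as in Section~\ref{subsubsect:thmpropmeasuresi}, now with the extra $\eta$-variable carried along, which is harmless since the cut-offs $\chi_\eps(H)\Pi_n$ commute with $H$) gives that $\Gamma_t^\infty$ and $\Gamma_t^2$ commute with $H(\pi^{\delta,\nu_0\delta^{1/3}})$, hence the decompositions in (i)–(ii). One has to be slightly careful for $\Gamma_t^2$: since it is an operator on $\cH_{\Tilde\pi^\delta}\otimes L^2(\R_2)$ and $H$ acts only on the first factor, the commutation and spectral decomposition are meant with $\Pi_n\otimes\mathrm{Id}_{L^2(\R_2)}$, and one invokes the correspondence of Propositions~\ref{prop:correspondencesymHxR}, \ref{prop:correspondenceopHxR} together with Lemma~\ref{lem:FH1} to identify $H(\pi^{\delta,\nu_0\delta^{1/3}})$ with the fiber operator appearing in Theorem~\ref{thm:sym2microclosetoCnu}.

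Next, for the subprincipal ($O(\hbar)$) term, I would take $\sigma\in\cA_0^2$ concentrated on the $m$-th eigenspace, i.e.\ $\sigma=\Pi_m\sigma\Pi_m$ (with $\Pi_m=\Pi_m(\pi^{\delta,\nu_0\delta^{1/3}})$), vanishing near $\{\delta=0\}$ as required by Definition~\ref{def:sym2micro}. The key point is to compute the $H$-diagonal part of $(\pi(V)\cdot V)\sigma_\hbar^{C_{\nu_0}}$. Here I would reuse Lemma~\ref{lem:first_diag_part}: $\pi^{\delta,\beta}(X_1)$ is purely $H$-off-diagonal, while the diagonal part of $\pi^{\delta,\beta}(X_2)$ on the $m$-th mode is $\tfrac{i}{2}\partial_\beta\mu_m(\delta,\beta)\,\Pi_m$. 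But now $\beta=\nu_0\delta^{1/3}+\hbar\eta$ on the cone-blow-up, so I would Taylor-expand $\partial_\beta\mu_m$ around $\beta=\nu_0\delta^{1/3}$:
\begin{equation*}
\partial_\beta\mu_m(\delta,\nu_0\delta^{1/3}+\hbar\eta)=\partial_\beta\mu_m(\delta,\nu_0\delta^{1/3})+\hbar\eta\,\partial_\beta^2\mu_m(\delta,\nu_0\delta^{1/3})+O(\hbar^2).
\end{equation*}
Since $\nu_0$ is a critical point of $\Tilde\mu_m$ and $\partial_\beta\mu_m(\delta,\beta)=\delta^{1/3}\partial_\nu\Tilde\mu_m(\beta\delta^{-1/3})$, the zeroth-order term vanishes on $C_{\nu_0}$, and $\partial_\beta^2\mu_m(\delta,\nu_0\delta^{1/3})=\Tilde\mu_m''(\nu_0)$. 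Thus, modulo $O(\hbar^2)$ and a commutator with $H$ (which disappears in the limit because $\ell_\infty^2(\theta,[\tilde\sigma,H])=0$), the leading contribution of $2\hbar(\pi(V)\cdot V)\sigma_\hbar^{C_{\nu_0}}$ is $\hbar^2$ times (something proportional to) $\Tilde\mu_m''(\nu_0)\,\eta\,X_2\sigma$. The factor $X_2$, acting on the wave packet after conjugation by $\bU_{\nu_0}^\hbar$ and passage to $L^2(\Heis,L^2(\R_2))$, becomes $\partial_{x_2}$ and the extra $\eta$ variable becomes $D_{x_2}=\tfrac1i\partial_{x_2}$ on the $\R_2$ factor; I would need the conjugation formula for $X_2$ under $\bU_{\nu_0}^\hbar$, which should follow from the explicit integral representation in Proposition~\ref{prop:Uhbarintegralformula} and the relation $Y_2=X_2$ noted in Section~\ref{subsect:partialsemianalysis}.

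Plugging into the evolution identity and dividing by $\hbar^2$, taking $\hbar\to0$ along the extracting sequence, I would obtain, for all $\sigma$ with $\sigma=\Pi_m\sigma\Pi_m$,
\begin{equation*}
-\int_\R\theta'(t)\,\langle\sigma,\gamma_{m,t}^2\rangle\,dt=\int_\R\theta(t)\,\left\langle \tfrac{\Tilde\mu_m''(\nu_0)}{2}\big[\partial_{x_2}^2,\sigma\big]\,,\,\gamma_{m,t}^2\right\rangle dt
\end{equation*}
(the sign/constant to be fixed by careful bookkeeping of the factors $i$ and $1/2$ from \eqref{eq:diag_partX2} and the commutator symbolic calculus), which is exactly the weak formulation of the Heisenberg equation (iv). For (iii), the part at infinity $\Gamma_{m,t}^\infty$ is carried by $\bS^0$-valued symbols with $\eta$ effectively at scale $|\eta|\to\infty$; there the off-diagonal/commutator analysis does not produce a cancellation of $\partial_\beta\mu_m$, so the same reasoning as in Proposition~\ref{prop:dispersionsupp} applies: the transport along the flow $\eta\,X_2$ (with $\eta\to\pm\infty$, hence escaping to spatial infinity) forces a finite-mass measure to vanish wherever the flow is non-trivial, and on the $m$-th mode the relevant flow is driven by $\Tilde\mu_m''(\nu_0)\neq0$, so $d\gamma_{m,t}^\infty=0$ for a.e.\ $t$. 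The main obstacle I anticipate is purely technical: justifying the Taylor expansion of $\partial_\beta\mu_m$ \emph{uniformly} in $(\delta,\beta)$ on the support of the symbols (this uses the smoothness of $\mu_m$ and the uniform bound away from $\{\delta=0\}$, so that $\delta^{1/3}$ is bounded below and $\Tilde\mu_m''$ is evaluated on a compact $\nu$-range, cf.\ Proposition~\ref{prop:Pin}'s support argument), and controlling the various remainder terms so that they are genuinely $o(1)$ after division by $\hbar^2$ — this is where the seminorm estimate $N_2$ of Proposition~\ref{prop:2microbounded} and the intertwining Theorem~\ref{thm:sym2microclosetoCnu} do the heavy lifting. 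I would also need to check that the operations ``restrict to $m$-th mode'' and ``pass to the limit'' commute, which is handled exactly as in Section~\ref{subsec:preliminaries} via the approximating symbols $\sigma_\eps^{(m,m)}$.
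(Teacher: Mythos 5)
Your overall strategy is the same as the paper's (evolution identity for $\Op_\hbar^{C_{\nu_0}}$, commutation with $H$ at leading order, Taylor expansion of $\partial_\beta\mu_m$ on the cone, diagonal/off-diagonal splitting via Lemma \ref{lem:first_diag_part}), and parts (i)--(ii) are handled correctly. However, there is a genuine gap in your derivation of (iv). After dividing the evolution identity by $\hbar^2$, the term $2\hbar\,\ell_\hbar^2(\theta,(\pi(V)\cdot V)\sigma_\hbar^{C_{\nu_0}})$ contains, besides the diagonal piece $\hbar^2\,\Tilde\mu_m''(\nu_0)\,i\eta X_2\sigma$ that you extract, the off-diagonal piece $\hbar\,\ell_\hbar^2(\theta,[\sigma_1,H])$. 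You dismiss this as ``a commutator with $H$, which disappears in the limit because $\ell_\infty^2(\theta,[\tilde\sigma,H])=0$''; but that fact only gives $\ell_\hbar^2(\theta,[\sigma_1,H])=o(1)$, whereas after the division by $\hbar^2$ you need its expansion to order $O(\hbar)$. That expansion is obtained by applying the evolution identity \emph{a second time} to $\sigma_1$, which produces $-2\hbar\,\ell_\hbar^2(\theta,(\pi(V)\cdot V)\sigma_1)+O(\hbar^2)$; together with the $\hbar^2\Delta_\Eng\sigma$ term (which you write down but never use), its $H$-diagonal part is $-\tfrac{\partial_\beta^2\mu_m}{2}X_2^2\sigma$ on the cone (this is the content of the paper's Lemma \ref{lem:sec_diag_part}, built from Lemmata \ref{lem:diagpartsigma1_1} and \ref{lem:diagpartsigma1_2}).

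This missing contribution is not a lower-order correction: in the Kohn--Nirenberg calculus on $\R_2$ one has $[\Op_1(a),D_{x_2}^2]=\Op_1\!\left(2i\eta\,\partial_{x_2}a+\partial_{x_2}^2a\right)$, so the commutator in the Heisenberg equation of (iv) is assembled from \emph{both} the $i\eta X_2\sigma$ term you found \emph{and} the $X_2^2\sigma$ term you dropped. With only the first term your limiting equation would be a first-order transport in $x_2$, not the stated Heisenberg equation. A secondary, smaller issue: for (iii) your appeal to ``the same reasoning as Proposition \ref{prop:dispersionsupp}'' skips the point that at infinity in $\eta$ the symbol is $0$-homogeneous, so $\eta X_2\sigma$ is not an admissible symbol; the invariance under the flow $\tfrac12\Tilde\mu_m''(\nu_0)\,\omega X_2$ has to be extracted through the renormalized symbols $\tau_\hbar^{\eps,R}=\hbar^{-1}(\eta^{-1}\sigma_\eps^R)_\hbar^{C_{\nu_0}}$ and a further iteration of the evolution identity, as in the paper.
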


\begin{remark}
    The equation satisfied by $d\gamma_{m,t}^2$ must be understood in the following sense: for $\sigma\in\cA_{0}^{L^2(\R_{2})}(\Heis)$, we have
    \begin{multline*}
        i\frac{d}{dt}\int_{\Heis\times\Hhat} {\rm Tr}_{L^2(\R_{2})}\left(\sigma(x,\Tilde{\pi}^\delta)\Gamma_{m,t}^2(x,\Tilde{\pi}^\delta)\right)d\gamma_{t}^{2}(x,\Tilde{\pi}^\delta) = \\
        \int_{\Heis\times\Hhat} {\rm Tr}_{L^2(\R_{2})}\left(\left[\sigma(x,\Tilde{\pi}^\delta),\frac{\Tilde{\mu}_{m}''(\nu_0)}{2}\partial_{x_2}^2\right]\Gamma_{m,t}^2(x,\Tilde{\pi}^\delta)\right)d\gamma_{t}^{2}(x,\Tilde{\pi}^\delta).
    \end{multline*}
\end{remark}

In order to be able to integrate these propagations laws from initial conditions corresponding to second-microlocal semiclassical measures of the family of initial datum $(\psi_{0}^\hbar)_{\hbar>0}$, we need 
a result concerning the regularity of these measures with respect to time.

\begin{proposition}
    \label{prop:continuityGamma2}
    Consider, as in Theorem \ref{thm:propagmesures2micro}, the $m$-th mode $t\mapsto d\gamma_{m,t}^2 = \Gamma_{m,t}^2 d\gamma_{t}^2$ of a time-averaged second-microlocal 
    semiclassical measures corresponding to the family of solutions to the Schrödinger equation \eqref{eq:SchEngel} for $\tau=2$. Then for any symbol $\sigma\in \cA_{0}^{L^2(\R_2)}(\Heis)$ commuting
    with the field of projectors $\Tilde{\pi}^{\delta}\in\Hhat\mapsto \Pi_{m}(\pi^{\delta,\nu_0 \delta^{1/3}})\otimes {\rm Id}_{L^2(\R_2)}$, the map
    \begin{equation*}
        t\in\R \mapsto \int_{\Heis\times \Hhat}{\rm Tr}\left(\sigma(x,\Tilde{\pi})\Gamma_{m,t}^2(x,\Tilde{\pi}) \right)d\gamma_{t}^2(x,\Tilde{\pi}),
    \end{equation*}
    is locally Lipshitz on $\R$.
\end{proposition}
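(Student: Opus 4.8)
The proposition asserts local Lipschitz regularity in time of the map $t\mapsto \int_{\Heis\times\Hhat}{\rm Tr}(\sigma\,\Gamma_{m,t}^2)\,d\gamma_t^2$ for $\sigma$ a symbol in $\cA_0^{L^2(\R_2)}(\Heis)$ commuting with the projector field $\Tilde\pi^\delta\mapsto\Pi_m(\pi^{\delta,\nu_0\delta^{1/3}})\otimes{\rm Id}_{L^2(\R_2)}$. The strategy mirrors the proof of Proposition \ref{prop:continuitytimesm}: produce a uniform-in-$\hbar$ bound on the time derivative of the corresponding quantum quantities, then pass to the limit via Arzelà--Ascoli. Concretely, for such a $\sigma$ I would lift it back to a second-microlocal symbol $\tilde\sigma\in\cA_0^2(\Eng)$ concentrated near the cone $C_{\nu_0}$ (using the correspondence of Lemma \ref{lem:correspondencesmoothsym} and Theorem \ref{thm:sym2microclosetoCnu}, so that $\Op_{\hbar,\Heis_{pt}^{1,1}}(\sigma)$, transported by $\bU_{\nu_0}^\hbar$, is $\Op_\hbar^{C_{\nu_0}}(\tilde\sigma)$ up to $\cO(\hbar)$), and set
\begin{equation*}
    \ell_{\hbar,t}^2(\tilde\sigma) = \left(\Op_\hbar^{C_{\nu_0}}(\tilde\sigma)\psi^\hbar(t),\psi^\hbar(t)\right)_{L^2(\Eng)}.
\end{equation*}

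First I would compute $\frac{d}{dt}\ell_{\hbar,t}^2(\tilde\sigma)$ using the Schrödinger equation for $\tau=2$ and the symbolic calculus \eqref{eq:commutatorDelta}: as in the proof of Proposition \ref{prop:dispersionsupp}, this gives
\begin{equation*}
    i\hbar^2\frac{d}{dt}\ell_{\hbar,t}^2(\tilde\sigma) = \ell_{\hbar,t}^2([\tilde\sigma,H]) + 2\hbar\,\ell_{\hbar,t}^2((\pi(V)\cdot V)\tilde\sigma) + \hbar^2\ell_{\hbar,t}^2(\Delta_\Eng\tilde\sigma).
\end{equation*}
The delicate point is the first term: since $\sigma$ commutes with the relevant projector, $[\tilde\sigma,H]$ is $\cO(\hbar)$ in a suitable operator-norm sense after the second-microlocal quantization — indeed the second-microlocal symbol $[\tilde\sigma,H]$, when quantized via $\Op_\hbar^{C_{\nu_0}}$, picks up a factor measuring the distance $\beta-\nu_0\delta^{1/3}=\hbar\eta$ to the cone, exactly as the difference $\sigma(x,\pi^{\delta,\nu_0\delta^{1/3}+\hbar\beta},\beta)-\sigma(x,\pi^{\delta,\nu_0\delta^{1/3}},\beta)$ was shown to be $\cO(\hbar)$ in the proof of Theorem \ref{thm:sym2microclosetoCnu}. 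More precisely, $H(\pi^{\delta,\nu_0\delta^{1/3}+\hbar\eta})$ restricted to the $m$-th eigenspace is $\mu_m(\delta,\nu_0\delta^{1/3}) + \cO(\hbar)$ because $\partial_\beta\mu_m(\delta,\nu_0\delta^{1/3})=\delta^{1/3}\partial_\nu\tilde\mu_m(\nu_0)=0$ at the critical point $\nu_0$ — this is the mechanism that makes the commutator term genuinely of order $\hbar$ and not $\hbar^2$ worse than needed. Dividing by $\hbar^2$, all three terms on the right contribute $\cO_{\tilde\sigma}(1)$, uniformly in $\hbar\in(0,1)$ and $t$ in any bounded interval.

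Having established $\frac{d}{dt}\ell_{\hbar,t}^2(\sigma) = \cO_\sigma(1)$ uniformly, the family $(\ell_{\hbar,\cdot}^2(\sigma))_{\hbar>0}$ is bounded and equicontinuous in $C_b(\R)$; by Arzelà--Ascoli and a diagonal extraction over a countable dense subset of commuting symbols in $\cA_0^{L^2(\R_2)}(\Heis)$, I extract a subsequence along which $\ell_{\hbar_{j_k},\cdot}^2(\sigma)$ converges locally uniformly to a locally Lipschitz function $t\mapsto \ell_t(\sigma)$. For each fixed $t$, $\ell_t$ is a positive linear functional on the relevant $C^*$-algebra (the closure of commuting second-microlocal symbols near $C_{\nu_0}$), hence is represented by an operator-valued measure; by uniqueness in Proposition \ref{prop:mesures2microtimeaveraged} (applied to the commuting-symbol sub-$C^*$-algebra, the second-microlocal analogue of $\cB$ and $\cB^*$), this measure must coincide with $\Gamma_{m,t}^2 d\gamma_t^2$. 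Therefore $t\mapsto\int{\rm Tr}(\sigma\,\Gamma_{m,t}^2)\,d\gamma_t^2$ equals the locally Lipschitz function $\ell_t(\sigma)$, which is the claim.

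\textbf{Main obstacle.} The crux is making rigorous the claim that the commutator term $\ell_{\hbar,t}^2([\tilde\sigma,H])$ is $\cO(\hbar^2)$ (so that after dividing by $\hbar^2$ it remains bounded): one has to combine the vanishing of $\partial_\beta\mu_m$ on the cone $C_{\nu_0}$ with the Taylor-expansion estimate of Theorem \ref{thm:sym2microclosetoCnu} and with the Calderón--Vaillancourt-type bound of Proposition \ref{prop:2microbounded} for the second-microlocal quantization, taking care that the symbol $\eta\,[\,\cdot\,,\cdot]$ still has bounded seminorms $N_2(\cdot)$ uniformly — this requires controlling derivatives in $\eta$ of the resolvent/projector, which is where the analyticity of the Montgomery family and the non-degeneracy of $\nu_0$ enter. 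The remaining steps (the $C^*$-algebra representation of $\ell_t$, the uniqueness identification, the Arzelà--Ascoli extraction) are routine adaptations of the arguments already present for Proposition \ref{prop:continuitytimesm} and in \cite{FF3}.
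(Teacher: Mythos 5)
Your overall architecture --- a uniform-in-$\hbar$ bound on $\frac{d}{dt}\ell^2_{\hbar,t}(\sigma)$, Arzelà--Ascoli with a diagonal extraction over a countable dense family of commuting symbols, then identification of the locally Lipschitz limit as a state on $\cA_0^{L^2(\R_2)}(\Heis)$ represented by $\Gamma^2_{m,t}d\gamma^2_t$ --- is exactly the paper's. The gap is in the crux you yourself flag: your order counting in the evolution identity does not close. You need the right-hand side of
\begin{equation*}
    i\hbar^2\frac{d}{dt}\ell^2_{\hbar,t}(\sigma)=\ell^2_{\hbar,t}([\sigma,H])+2\hbar\,\ell^2_{\hbar,t}((\pi(V)\cdot V)\sigma)+\hbar^2\,\ell^2_{\hbar,t}(\Delta_\Eng\sigma)
\end{equation*}
to be $\cO(\hbar^2)$. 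For the first term, your claim that $[\tilde\sigma,H]=\cO(\hbar)$ is both insufficient ($\cO(\hbar)/\hbar^2$ diverges) and avoidable: the lifted test symbol should satisfy $\Pi_m\sigma\Pi_m=\sigma$ as a field over all of $\Enghat_{\rm gen}$, not merely commute with $\Pi_m(\pi^{\delta,\nu_0\delta^{1/3}})$ on the cone; then $H\sigma=\mu_m\sigma=\sigma H$ and the commutator vanishes identically. (Relatedly, your statement that $H(\pi^{\delta,\nu_0\delta^{1/3}+\hbar\eta})$ restricted to the $m$-th eigenspace equals $\mu_m(\delta,\nu_0\delta^{1/3})+\cO(\hbar)$ does not use the criticality of $\nu_0$; with it one gets $\cO(\hbar^2)$, and without the improvement the mechanism you describe cannot produce the bound you need.)

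The more serious omission is the second term. A priori $2\hbar\,\ell^2_{\hbar,t}((\pi(V)\cdot V)\sigma)=\cO(\hbar)$, which after division by $\hbar^2$ is $\cO(\hbar^{-1})$. To upgrade it one must split $(\pi(V)\cdot V)\sigma$ via Lemma \ref{lem:first_diag_part} into its $H$-diagonal part $\frac i2\partial_\beta\mu_m\,X_2\sigma$ --- which is $\cO(\hbar)$ after the $\Op_\hbar^{C_{\nu_0}}$ quantization because $\partial_\beta\mu_m$ vanishes on the cone and the Taylor remainder is $\hbar\,\Tilde\mu_m''(\nu_0)\eta+\cO(\hbar^2\eta^2)$; this part you do have --- and its off-diagonal part $\frac12[\sigma_1,H]$, which is only $\cO(1)$ by Calderón--Vaillancourt. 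The extra power of $\hbar$ for the off-diagonal part does not come from criticality or from regularity of the projectors in $\eta$: it comes from applying the evolution identity \eqref{eq:ell2} a second time, now to the corrector $\sigma_1$, so that (after integration against $\theta$ and an integration by parts in $t$) $\ell^2_\hbar(\theta,[\sigma_1,H])=-i\hbar^2\ell^2_\hbar(\theta',\sigma_1)-2\hbar\,\ell^2_\hbar(\theta,(\pi(V)\cdot V)\sigma_1)-\hbar^2\ell^2_\hbar(\theta,\Delta_\Eng\sigma_1)=\cO(\hbar)$. This iteration is precisely how the paper derives \eqref{eq:propagGamma2}, which its proof of the proposition then quotes; your proposal never performs it, and without it the uniform equicontinuity --- and hence the whole Arzelà--Ascoli step --- is not established.
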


\begin{remark}
    The proof of Proposition \ref{prop:continuityGamma2} implies that, under the assumptions of the proposition, if $(\hbar_k)_{k\in\N}$ is the subsequence realizing
    the time-averaged semiclassical measure $t\mapsto \Gamma_{t}^2 d\gamma_{t}^2$, then we have for all $t\in\R$ that $d\gamma_{m,t}^2$ is the $m$-th mode of the second-microlocal measure 
    of the family $(\psi^{\hbar_k}(t))_{k\in\N}$, meaning we can pass to the limit $t$-by-$t$ and not only when averaged in time.
\end{remark}

\subsection{Proof of Theorem \ref{thm:propagmesures2micro} and Proposition \ref{prop:continuityGamma2}}

Let  $(\psi_0^\hbar)_{\hbar>0}$ be a bounded family in $L^2(\Eng)$ and $\psi^\hbar(t) = {\rm e}^{i t \Delta_{\Eng}} \psi^\hbar_0$ be the solution to \eqref{eq:SchEngel} for $\tau = 2$. Let $\nu_0\in\R$,
and consider a time-averaged second-microlocal semiclassical measures $t\mapsto (\Gamma_{t}^{\infty}d\gamma_{t}^{\infty}, \Gamma_{t}^2 d\gamma_{t}^2)$ for the family $(\psi^\hbar(t))_{\hbar>0}$ for the sequence $(\hbar_k)_{k\in\N}$.

To begin with, it follows from the commutator formula \eqref{eq:commutatorDelta} that we have for all $\theta \in C_{c}^{\infty}(\R_t)$ and $\sigma \in \cA_{0}^2(\Eng)$:
\begin{equation}
    \label{eq:ell2}
    -i\hbar^2 \ell_{\hbar}^{2}(\theta',\sigma) = \ell_{\hbar}^{2}(\theta,\left[\sigma,H\right]) + 2\hbar\ell_{\hbar}^{2}(\theta,(\pi(V)\cdot V)\sigma) + \hbar^2 \ell_{\hbar}^{2}(\theta,\Delta_{\Eng}\sigma).
\end{equation}

\subsubsection{Propagation of $\Gamma_{t}^2 d\gamma_{t}^2$}
We focus on the case where our symbol $\sigma\in \cA_{0}^2(\Eng)$ has compact support on its last variable $\eta\in\R$.
As we have seen in the proof of Theorem \ref{thm:mesures2micro}, such a second-microlocal symbol only see in the limit the operator-valued measures $\Gamma_{t}^2 d\gamma_{t}^2$, in the sense that:
\begin{equation*}
    \ell_{\hbar_k}^2(\theta,\sigma) \Tend{k}{+\infty} \int_{\R\times\Heis\times\Hhat} \theta(t){\rm Tr}_{\cH_{\Tilde{\pi}^\delta} \otimes L^2(\R_2)} \left(\sigma(h,x_2,\pi^{\delta,\nu_0 \delta^{1/3}},D_{x_2}) \Gamma^{2}_t(h,\Tilde{\pi}^\delta)\right) \,d\gamma^{2}_t (h,\Tilde{\pi}^\delta)dt.
\end{equation*}
We denote by $\ell_{\infty}^{2,c}(\theta,\sigma)$ the right-hand side term. Using Equation \eqref{eq:ell2}, we readily obtain that 
\begin{equation*}
    \forall \sigma \in \cA_{0}^2(\Eng),\ \ell_{\hbar}^2 \left(\theta,\left[\sigma(x,\pi^{\delta,\beta},\eta),H(\pi^{\delta,\beta})\right]\right) = \cO(\hbar),
\end{equation*}
thus we obtain that for a.e $t\in \R$ and for all $\sigma \in \cA_{0}^2(\Eng)$,
\begin{equation*}
    \int_{\Heis\times\Hhat}{\rm Tr}_{\cH_{\Tilde{\pi}^\delta} \otimes L^2(\R)}\left(\left[\sigma(h,x_2,\pi^{\delta,\nu_0 \delta^{1/3}},D_{x_2}),H(\pi^{\delta,\nu_0\delta^{1/3}})\right] \Gamma^{2}_t(h,\Tilde{\pi}^{\delta})\right)\,d\gamma^{2}_t (h,\Tilde{\pi}^\delta) =  0.
\end{equation*}
As in Section \ref{subsubsect:thmpropmeasuresi}, we deduce from the previous equation that the operator $\Gamma_{t}^2 (h,\Tilde{\pi}^\delta)$ is $d\gamma_{t}^2$-a.e diagonal with respect to the operator $H(\pi^{\delta,\nu_0 \delta^{1/3}})\otimes {\rm Id}_{L^2(\R_2)}$. Being compact, 
we can decompose $\Gamma_{t}^2$ as the following sum
\begin{equation*}
    \Gamma_{t}^2 = \sum_{n\in\N^*} \Gamma_{n,t}^2,
\end{equation*}
where
\begin{equation*}
    \forall (h,\Tilde{\pi}^\delta)\in\Heis\times\Hhat,\ \Gamma_{n,t}^2 (h,\Tilde{\pi}^\delta)= (\Pi_{n}(\pi^{\delta,\nu_0 \delta^{1/3}})\otimes {\rm Id})\Gamma_{t}^2(h,\Tilde{\pi}^\delta)(\Pi_{n}(\pi^{\delta,\nu_0 \delta^{1/3}})\otimes {\rm Id}). 
\end{equation*}

We focus now on the $m$-th mode $\Gamma_{m,t}^{2}d\gamma^{2}_t$, where we assume that $\nu_0$ is a non-degenerate critical point of $\nu \mapsto \Tilde{\mu}_m(\nu)$.

We take $\sigma$ a second-microlocal symbol commuting with the symbol of the suplaplacian $H$, and in particular satisfying $\Pi_m \sigma \Pi_m = \sigma$. 
Equation \eqref{eq:ell2} simply becomes 
\begin{equation*}
    -i\hbar \,\ell_{\hbar}^2(\theta',\sigma) = 2\ell_{\hbar}^{2}(\theta,(\pi(V)\cdot V)\sigma) + \hbar \ell_{\hbar}^2 (\theta,\Delta_{\Eng}\sigma).
\end{equation*}

As previously, we write the decomposition of $(\pi(V)\cdot V)\sigma$ as its $H$-diagonal part and its $H$-off-diagonal one: by Lemma \ref{lem:first_diag_part} we have
\begin{equation*}
    (\pi(V)\cdot V)\sigma(x,\pi^{\delta,\beta},\eta) = \frac{i}{2}\partial_{\beta}\mu_m(\delta,\beta) X_2 \sigma(x,\pi^{\delta,\beta},\eta) + \frac{1}{2}\left[\sigma_{1}(x,\pi^{\delta,\beta},\eta),H(\pi^{\delta,\beta})\right],
\end{equation*}
with 
\begin{equation*}
    \sigma_{1}(x,\pi^{\delta,\beta},\eta) = -\frac{i}{\delta} \pi^{\delta,\beta}(X_3) X_1\sigma(x,\pi^{\delta,\beta},\eta) + i\partial_\beta \Pi_m(\pi^{\delta,\beta}) X_2\sigma(x,\pi^{\delta,\beta},\eta).
\end{equation*}
We perform now the following Taylor expansion:
\begin{equation*}
    \partial_\beta \mu_{m}(\delta,\beta) = \delta^{1/3} \Tilde{\mu}_m '(\beta\delta^{-1/3}) = \delta^{1/3} \Tilde{\mu}_m ' (\nu_0) + (\beta - \nu_0 \delta^{1/3}) \Tilde{\mu}_m ''(\nu_0) + (\beta-\nu_0\delta^{1/3})^2 \delta^{-2/3}r(\delta,\beta),
\end{equation*}
where we have introduced the map $r$ as remainder. As $\nu_0$ is a critical point for $\Tilde{\mu}_m$, we have 
\begin{multline*}
    \partial_{\beta}\mu_m(\delta,\beta) X_2 \sigma_{\hbar}^{C_{\nu_0}}(x,\pi^{\delta,\beta}) =  \hbar \,\Tilde{\mu}_m ''(\nu_0) \frac{\beta - \nu_0 \delta^{1/3}}{\hbar} X_2 \sigma_{\hbar}^{C_{\nu_0}}(x,\pi^{\delta,\beta})\\ + \hbar^{2} \delta^{-2/3}r(\delta,\beta)\left(\frac{\beta - \nu_0 \delta^{1/3}}{\hbar}\right)^2 X_2 \sigma_{\hbar}^{C_{\nu_0}}(x,\pi^{\delta,\beta})\\
    = \hbar \,\Tilde{\mu}_m ''(\nu_0) (\eta X_2 \sigma)_{\hbar}^{C_{\nu_0}}(x,\pi^{\delta,\beta}) + \hbar^{2} \delta^{-2/3}r(\delta,\beta) X_2 (\eta^2 \sigma)_{\hbar}^{C_{\nu_0}}(x,\pi^{\delta,\beta}).
\end{multline*}
Thus we obtain 
\begin{equation*}
    -i\hbar\,\ell_{\hbar}^{2}(\theta',\sigma) = i\hbar\, \ell_{\hbar}^{2}(\theta,\Tilde{\mu}_m ''(\nu_0) \eta X_2\sigma) + \ell_{\hbar}^{2}(\theta,\left[\sigma_{1},H\right]) + \hbar\, \ell_{\hbar}^{2}(\theta,\Delta_{\Eng}\sigma) + \cO(\hbar^2).
\end{equation*}
Using once again Equation \eqref{eq:ell2} with the symbol $\sigma_{1}$, we finally obtain 
\begin{equation}
    \label{eq:propagGamma2}
    -i\ell_{\hbar}^{2}(\theta',\sigma) =  i\ell_{\hbar}^{2}(\theta,\Tilde{\mu}_m ''(\nu_0) \eta X_2\sigma) -\ell_{\hbar}^{2}(\theta,2(\pi(V)\cdot V)\sigma_{1} - \Delta_{\Eng}\sigma) + \cO(\hbar).
\end{equation}

We are left with computing the $H$-diagonal part of the symbol $2(\pi(V)\cdot V)\sigma_1 - \Delta_{\Eng}\sigma$.

\begin{lemma}
    \label{lem:sec_diag_part}
    There exists $\sigma_2\in\cA_{0}^2(\Eng)$ such that
    \begin{equation*}
        2(\pi(V)\cdot V)\sigma_1 - \Delta_{\Eng}\sigma = \frac{1}{\delta} \partial_\beta \mu_m\Pi_m \pi(X_3)\Pi_m X_1 X_2\sigma - \frac{\partial_{\beta}^{2}\mu_m}{2} X_{2}^{2}\sigma
        + \frac{1}{2}[\sigma_2,H].
    \end{equation*}
\end{lemma}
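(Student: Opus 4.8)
\textbf{Proof strategy for Lemma \ref{lem:sec_diag_part}.} The plan is to mimic, at the level of second-microlocal symbols, the computation of the $\cO(\hbar)$-term already performed in Section \ref{subsubsect:wpOhbar} for wave packets (see in particular Lemmas \ref{lem:diagpartsigma1_1} and \ref{lem:diagpartsigma1_2}). Recall that $\sigma$ is assumed to commute with $H$ and to satisfy $\Pi_m\sigma\Pi_m = \sigma$; moreover it vanishes uniformly near $\{\delta=0\}$, so dividing by $\delta$ and by powers of $\delta$ is harmless throughout. Since $(\pi(V)\cdot V)\sigma = \pi(X_1)X_1\sigma + \pi(X_2)X_2\sigma$ and by Equation \eqref{eq:antidiagT} we have $\sigma_1 = -\tfrac1\delta\pi(X_3)X_1\sigma + \partial_\beta\Pi_m\, X_2\sigma$, every term of $2(\pi(V)\cdot V)\sigma_1 - \Delta_\Eng\sigma$ is a sum of an operator acting on $\cH_{\pi^{\delta,\beta}}$ times a left-invariant differential operator applied to $\sigma$ in the $x$-variable (these derivatives $X^\aleph\sigma$ remain in $\cA_0^2(\Eng)$ by stability of the symbol class, just as in $\cA_0$). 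The key point is that the whole computation is \emph{identical} to the wave-packet one: replacing the scalar profile $a(t,y)$ there by the operator-valued symbol $\sigma(x,\pi^{\delta,\beta},\eta)$ changes nothing in the algebra of the representation operators $\pi(X_i)$ and the spectral projectors $\Pi_m$.

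First I would expand $2\pi(X_2)X_2\sigma_1$ and $2\pi(X_1)X_1\sigma_1$ exactly as in the two displayed computations following Lemma \ref{lem:diagpartsigma1_1} and before Lemma \ref{lem:diagpartsigma1_2}. Using Lemma \ref{lem:diagpartsigma1_1}, the $H$-diagonal part of $2\pi(X_2)X_2\sigma_1$ is $\bigl(\tfrac12\partial_\beta^2\mu_m - 1\bigr)X_2^2\sigma\,\Pi_m$, the contribution $\pi^0(X_2X_3)$ being a commutator with $H$. Using Lemma \ref{lem:diagpartsigma1_2}, the $H$-diagonal part of $2\pi(X_1)X_1\sigma_1$ is $-X_1^2\sigma\,\Pi_m - \tfrac1\delta\partial_\beta\mu_m\, X_1X_2\sigma\,\Pi_m\pi(X_3)\Pi_m$, again up to commutators with $H$ (coming from $\pi^0(X_1X_3)$ and from $\Pi_m\pi(X_3X_2)\Pi_m$). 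Here one must be slightly careful: in the wave-packet section $\widetilde X_2$ appeared and was replaced by $X_2$ using $\partial_3 a = 0$; in the present setting there is no $\widetilde X_2$ because $\sigma_1$ and $(\pi(V)\cdot V)$ are genuinely left-invariant, so the terms involving $\widetilde X_2 - X_2 = -x_1\partial_3$ are simply absent. Next, subtracting $\Delta_\Eng\sigma = X_1^2\sigma + X_2^2\sigma$ cancels the $-X_1^2\sigma\,\Pi_m$ term and combines the $X_2^2\sigma$ contributions into $-\tfrac{\partial_\beta^2\mu_m}{2}X_2^2\sigma\,\Pi_m$, while the $-\tfrac1\delta\partial_\beta\mu_m X_1X_2\sigma\,\Pi_m\pi(X_3)\Pi_m$ term is kept — note the sign is fixed by the statement. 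Collecting the off-diagonal remainders: each is of the form $[C(\pi^{\delta,\beta}),H(\pi^{\delta,\beta})]$ times a differential operator applied to $\sigma$, with $C$ built out of $\pi(X_1)$, $\pi(X_3)$, $\pi(X_3^2)$, $\pi(X_1X_3)$ and $\partial_\beta\Pi_m$; bundling these into a single field of operators defines $\sigma_2\in\cA_0^2(\Eng)$ (it lies in $\cA_0^2$ because all these operations — multiplication by $\pi(X_i)^\aleph$, the difference operators hidden in $\partial_\beta\Pi_m$ restricted to the support away from $\{\delta=0\}$, and differentiation in $x$ — preserve the second-microlocal symbol class, the $\eta$-dependence being inherited unchanged from $\sigma$).

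The main obstacle, as in the wave-packet computation, is the bookkeeping of the $H$-diagonal versus $H$-off-diagonal decomposition for the composite operators $\pi(X_1X_3)$, $\pi(X_2X_3)$ and $\pi(X_3^2)$, together with the Feynman--Hellmann identities (Lemma \ref{lem:FH1} and its $\beta$-derivative) needed to evaluate $\Pi_m\pi(X_2)\partial_\beta\Pi_m\Pi_m$ and $\Pi_m\pi(X_1)\partial_\beta\Pi_m\Pi_m$. These are precisely Lemmas \ref{lem:diagpartsigma1_1} and \ref{lem:diagpartsigma1_2}, which I would invoke verbatim at $\pi^{\delta,\beta}$ instead of at the fixed $\pi^0$; nothing new is required beyond checking that the manipulations are valid pointwise on the support of $\sigma$, which is legitimate since $\sigma$ vanishes near $\{\delta=0\}$ and is compactly supported in $x$. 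A secondary but routine point is verifying that the error symbol $\sigma_2$ genuinely belongs to $\cA_0^2(\Eng)$ rather than merely to some Hörmander class: this follows because the only potentially singular operations involve negative powers of $\delta$, which are bounded on the support of $\sigma$, and because $\partial_\beta\Pi_m$ is smooth in $(\delta,\beta)$ away from $\{\delta=0\}$ by the analyticity of the Montgomery family noted in the introduction. Once $\sigma_2$ is identified, plugging the decomposition of Lemma \ref{lem:sec_diag_part} back into Equation \eqref{eq:propagGamma2} and using once more that $\ell_\hbar^2(\theta,[\sigma_2,H]) = \cO(\hbar)$ yields the announced Heisenberg equation for $\gamma_{m,t}^2$.
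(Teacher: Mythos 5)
Your overall strategy is exactly the paper's: Lemma \ref{lem:sec_diag_part} is obtained by transposing the wave-packet computations of Lemmata \ref{lem:diagpartsigma1_1} and \ref{lem:diagpartsigma1_2} to the operator-valued symbol $\sigma$, and your structural remarks are all sound — the disappearance of $\Tilde{X}_2$ because everything here is left-invariant, the fact that $X^{\aleph}\sigma$, $\pi(X_i)$-multiplications and $\partial_\beta\Pi_m$ preserve $\cA_0^2(\Eng)$ on the support of $\sigma$ away from $\{\delta=0\}$, and the role of the Feynman--Hellmann identities.

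There is, however, a concrete sign problem that breaks your arithmetic. The $\sigma_1$ in this lemma is the one from the second-microlocal propagation argument, $\sigma_1=-\tfrac{i}{\delta}\pi^{\delta,\beta}(X_3)X_1\sigma+i\,\partial_\beta\Pi_m\,X_2\sigma$, which is the \emph{negative} of (the operator-valued analogue of) the wave-packet corrector \eqref{def:sigma_1}, and which moreover carries factors of $i$ absent from the $\tau=1$ version \eqref{eq:antidiagT} that you quote. You nonetheless invoke Lemmata \ref{lem:diagpartsigma1_1} and \ref{lem:diagpartsigma1_2} verbatim and record the wave-packet diagonal parts $\bigl(\tfrac12\partial_\beta^2\mu_m-1\bigr)X_2^2\sigma$ and $-X_1^2\sigma-\tfrac1\delta\partial_\beta\mu_m\,\Pi_m\pi(X_3)\Pi_m X_1X_2\sigma$. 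With those signs the next step fails: subtracting $\Delta_\Eng\sigma=X_1^2\sigma+X_2^2\sigma$ produces $-2X_1^2\sigma$ rather than a cancellation, the $X_2^2$ coefficient becomes $\tfrac12\partial_\beta^2\mu_m-2$ rather than $-\tfrac12\partial_\beta^2\mu_m$, and your parenthetical ``the sign is fixed by the statement'' for the $X_1X_2$ term is a symptom of the same mismatch rather than a justification. This is precisely the point the paper's own proof warns about (``one must pay attention to the definition of $\sigma_1$ as there is a sign difference''). With the correct $\sigma_1$ the diagonal parts flip: $2\Pi_m\pi(X_1)X_1\sigma_1\Pi_m=X_1^2\sigma+\tfrac1\delta\partial_\beta\mu_m\,\Pi_m\pi(X_3)\Pi_m X_1X_2\sigma$ (the $\pi(X_1X_3)$ contribution $-\tfrac{2i}{\delta}\cdot\tfrac{i\delta}{2}=+1$) and $2\Pi_m\pi(X_2)X_2\sigma_1\Pi_m=-\bigl(\tfrac12\partial_\beta^2\mu_m-1\bigr)X_2^2\sigma$, after which subtracting $\Delta_\Eng\sigma$ does cancel the $X_1^2$ terms and yields exactly the right-hand side of the lemma. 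Once this sign is corrected the rest of your argument goes through.
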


\begin{proof}
    The diagonal part follows easily from Lemmata \ref{lem:diagpartsigma1_1} and \ref{lem:diagpartsigma1_2}, where one must pay attention to the definition of $\sigma_1$ as there is a sign difference. 
    The existence of the symbol $\sigma_2$ is proved using the same arguments as in Lemma \ref{lem:first_diag_part} or as in Section \ref{subsubsect:wpOhbar} in the proof of Theorem \ref{thm:prop_wave_packet}.
\end{proof}

Hence, as $\partial_\beta \mu_m(\delta,\nu_0 \delta^{1/3})=0$ and $\partial_{\beta}^{2}\mu_m(\delta,\nu_0 \delta^{1/3})= \Tilde{\mu}_{m}''(\nu_0)$ for all $\delta\in\fg_{3}^*\setminus\{0\}$, taking the limit $\hbar_k \rightarrow 0$ we obtain 
\begin{multline*}
    -i\ell_{\infty}^{2,c}(\theta',\sigma(h,x_2,\pi^{\delta,\nu_0 \delta^{1/3}},D_{x_2})) =\\ \ell_{\infty}^{2,c}\left(\theta, \frac{\mu''(\nu_0)}{2}\left(2(i\eta X_2\sigma)(h,x_2,\pi^{\delta,\nu_0 \delta^{1/3}},D_{x_2}) + (X_{2}^{2}\sigma)(h,x_2,\pi^{\delta,\nu_0 \delta^{1/3}},D_{x_2})\right)\right).
\end{multline*}

By simple Kohn-Nirenberg symbolic calculus on $\R$, as the vector field $X_2$ simply acts as $\partial_2$, this equation is equivalent to
\begin{equation*}
    -i\ell_{\infty}^{2,c}(\theta',\sigma(h,x_2,\pi^{\delta,\nu_0 \delta^{1/3}},D_{x_2})) = \ell_{\infty}^{2,c}\left(\theta, \left[\sigma(h,x_2,\pi^{\delta,\nu_0 \delta^{1/3}},D_{x_2}), \frac{\mu''(\nu_0)}{2}D_{x_2}^{2}\right]\right),
\end{equation*}
from which we deduce the weak Heisenberg equation satisfied by $t\mapsto d\gamma_{m,t}^2$.

\subsubsection{Proof of Proposition \ref{prop:continuityGamma2}}

For any $\sigma \in \cA_{0}^2(\Eng)$ with compact support in $\eta$ and satisfying $\Pi_m \sigma \Pi_m = \sigma$, we write
\begin{equation*}
    \ell_{\hbar,t}^{2,c}(\sigma) = \left(\Op_{\hbar}^{C_{\nu_0}}(\sigma)\psi^{\hbar}(t),\psi^{\hbar}(t)\right)_{L^2(\Eng)}.
\end{equation*}
Observe that by Equation \eqref{eq:propagGamma2}, we have 
\begin{equation*}
    \frac{d}{dt} \ell_{\hbar,t}^{2,c}(\sigma) = \cO_{\sigma}(1),
\end{equation*}
in the sense that the map $t\mapsto \frac{d}{dt}\ell_{\hbar,t}(\sigma)$ is uniformly bounded in a given bounded interval of $\R$ and this uniformly for $\hbar\in (0,1)$.
Thus the family of maps $(\ell_{\hbar,\cdot}^{2,c}(\sigma))_{\hbar>0}$ is bounded and equicontinuous in $C_{b}(\R)$, so that by Arzelà-Ascoli theorem we can extract 
a subsequence $(\hbar_{j_k})_{k\in\N}$ such that, as $k\rightarrow +\infty$, $\hbar_{j_k} \rightarrow 0$ and $(\ell_{\hbar_{j_k},\cdot}^{2,c}(\sigma))_{k\in\N}$ converges uniformly 
on any bounded interval to a locally Lipshitz function $t\mapsto \ell_{t}^{2,c}(\sigma)$ for all $\sigma \in \cA_{0}^2(\Eng)$ with compact support in $\eta$ and satisfying $\Pi_m \sigma \Pi_m = \sigma$
(this last assertion follows from considering a dense subset of such symbols and a diagonal extraction procedure).

To conclude to the equality
\begin{equation*}
    \forall t\in\R,\ \ell_{t}^{2,c}(\sigma) = \int_{\Heis\times\Hhat} {\rm Tr}\left(\sigma(h,x_2,\pi^{\delta,\nu_0\delta^{1/3}},D_{x_2})\Gamma_{t}^2(x,\Tilde{\pi}^\delta)\right)d\gamma_{t}^2(x,\Tilde{\pi}^\delta),
\end{equation*}
we can proceed as in the proof of Theorem \ref{thm:mesures2micro}, observing that actually the linear form $\ell_{t}^{2,c}(\sigma)$ depends of the symbol 
$\sigma(\cdot,x_2,\cdot,D_{x_2})\in \cA_{0}^{L^2(\R_2)}(\Heis)$ and thus define a state on $\cA_{0}^{L^2(\R_2)}(\Heis)$. As their characterization has been given 
in Theorem \ref{thm:mesuresemiopvalued} in Appendix \ref{sect:Hilbertvaluedsemiclass}, it is given by an operator-valued measure in $\cM_{ov}^+\left(\Heis\times\Hhat, \left(\cH_{\Tilde{\pi}}\otimes L^2(\R_2)\right)_{(x,\Tilde{\pi})\in \Heis\times\Hhat}\right)$.
Finally, by construction of $\ell_{\cdot}^{2,c}$, these measures necessarily coincide with the map $t \mapsto \Gamma_{m,t}^2 d\gamma_{t}^2$, and 
we obtain the desired result.

\subsubsection{Invariance of the spherical semiclassical measures}
We recall that the spherical part of the semiclassical measure, noted $\Gamma_{t}^\infty d\gamma_{t}^\infty$, is obtained as follows: for $\sigma \in \cA_{0}^2(\Eng)$,
\begin{equation*}
    \int_{\R} \theta(t)\int_{C_{\nu_0}\times\bS^{0}} {\rm Tr}\left(\sigma_\infty(x,\pi,\omega)\Gamma_{t}^{\infty}(x,\pi,\omega)\right) d\gamma_{t}^{\infty}(x,\pi,\omega)dt
    = \lim_{\eps\rightarrow 0}\lim_{R\rightarrow +\infty}\lim_{\hbar_k \rightarrow 0} \ell_{\hbar}^2(\theta,\sigma_{\eps}^{R}),
\end{equation*}
where we have introduced the notation
\begin{equation*}
    \forall (x,\pi^{\delta,\beta},\eta)\in\Eng\times\Enghat\times\R,\ \sigma_{\eps}^R(x,\pi^{\delta,\beta},\eta) = \sigma(x,\pi^{\delta,\beta},\eta)\chi\left(\frac{\beta-\nu_0 \delta^{1/3}}{\eps}\right)\left(1-\chi\left(\frac{\eta}{R}\right)\right).
\end{equation*}
Similarly as previously, Equation \eqref{eq:ell2} gives straightforwardly that 
\begin{equation*}
    \ell_{\hbar}^2(\theta,\left[\sigma_{\eps}^R,H\right]) = \cO(\hbar),
\end{equation*}
and thus, for almost every $t\in\R$,
\begin{equation*}
    \int_{C_{\nu_0}\times\bS^{0}} {\rm Tr}\left(\left[\sigma_\infty(x,\pi,\omega),H(\pi)\right]\Gamma_{t}^{\infty}(x,\pi,\omega)\right) d\gamma_{t}^{\infty}(x,\pi,\omega)dt = 0.
\end{equation*}
And we deduce that similarly as before, the operator $\Gamma_{t}^{\infty}$ decomposes for almost every $t\in\R$ as follows
\begin{equation*}
    \forall (x,\pi)\in C_{\nu_0},\ \Gamma_{t}^{\infty}(x,\pi,\omega) = \sum_{n\in\N^*} \Gamma_{n,t}^{\infty}(x,\pi,\omega),
\end{equation*}
where for $n\in\N_{>0}$,
\begin{equation*}
    \Gamma_{n,t}^{\infty}(x,\pi^{\delta,\nu_0 \delta^{1/3}},\omega) = \Pi_{n}(\pi^{\delta,\nu_0 \delta^{1/3}})\Gamma_{t}^{\infty}(x,\pi^{\delta,\nu_0 \delta^{1/3}},\omega)\Pi_{n}(\pi^{\delta,\nu_0 \delta^{1/3}}).
\end{equation*}

We focus now on the $m$-th mode $\Gamma_{m,t}^{\infty} d\gamma_{m,t}^{\infty}$ of the spherical semiclassical measure.
Let $\sigma\in\cA_{0}^2(\Eng)$ commuting with the symbol of the subLaplacian $H$. 

We introduce the following family of symbols: for $\hbar>0$,
\begin{equation*}
    \tau_{\hbar}^{\eps,R} = \hbar^{-1}\left(\eta^{-1}\sigma_{\eps}^{R}\right)_{\hbar}^{C_{\nu_0}}.
\end{equation*}
By Proposition \ref{prop:2microbounded}, seeing that $\eta^{-1}\sigma_{\eps}^{R}$ is a second-microlocal symbol thanks to the cut-off in the variable $\eta$, we have 
that $\tau_{\hbar}^{\eps,R}$ belongs to the class of symbol $S_{0,0}^{0}$ and we have the following estimate on the norm of its quantization:
\begin{equation}
    \label{eq:estim_norm_tau}
    \Op_{\hbar}(\tau_{\hbar}^{\eps,R}) = \cO_{L^2(\Eng)\rightarrow L^2(\Eng)}(1/(\hbar R)).
\end{equation}

By symbolic calculus, we have the following equality:
\begin{equation*}
    [\Op_{\hbar}(\tau_{\hbar}^{\eps,R}),-\hbar^2 \Delta_\Eng] = 2\hbar\Op_{\hbar}((\pi(V)\cdot V)\tau_{\hbar}^{\eps,R}) + \hbar^2 \Op_{\hbar}(\Delta_\Eng \tau_{\hbar}^{\eps,R}).
\end{equation*}

By Lemma \ref{lem:first_diag_part} and Equation \eqref{eq:antidiagT}, we have the following decomposition 
\begin{equation*}
    (\pi(V)\cdot V)\tau_{\hbar}^{\eps,R}(x,\pi^{\delta,\beta}) = \frac{i}{2} \partial_{\beta} \mu_m(\delta,\beta) X_2 \tau_{\hbar}^{\eps,R}(x,\pi^{\delta,\beta}) + \frac{1}{2}\left[\tau_{1,\hbar}^{\eps,R}(x,\pi^{\delta,\beta}),H(\pi^{\delta,\beta})\right],
\end{equation*}
with 
\begin{equation*}
    \tau_{1,\hbar}^{\eps,R} = \frac{1}{i\delta} \pi^{\delta,\beta}(X_3) X_1 \tau_{\hbar}^{\eps,R} +  i\partial_{\beta}\Pi_m(\pi^{\delta,\beta}) X_2 \tau_{\hbar}^{\eps,R} \in S_{0,0}^0.
\end{equation*}

Once again, the $H$-diagonal part of $(\pi(V)\cdot V)\tau_{\hbar}^{\eps,R}$ is the one we are interesting about. We perform a Taylor expansion on $\mu_m$:
\begin{equation*}
    \frac{1}{2}\partial_{\beta}\mu_{m}(\delta,\beta) X_2 \tau_{\hbar}^{\eps,R}(x,\pi^{\delta,\beta}) = \frac{1}{2} \Tilde{\mu}_{m}''(\nu_0) \frac{\beta-\nu_0\delta^{1/3}}{|\beta-\nu_0\delta^{1/3}|} X_2 \sigma_{\eps}^{R}\left(x,\pi^{\delta,\beta},\frac{\beta-\nu_0\delta^{1/3}}{\hbar}\right) + \cO(\eps),
\end{equation*}
where the remainder is caused by the cut-off at scale $\eps$ close to the cone $C_{\nu_0}$. 

Observe that this equality implies
\begin{equation*}
    \begin{aligned}
    &\lim_{\eps\rightarrow 0}\lim_{R\rightarrow +\infty}\lim_{\hbar_k \rightarrow 0} \int_{\R}\theta(t)\left(\Op_\hbar\left(\frac{1}{2}\partial_{\beta}\mu_{m} X_2\tau_{\hbar}^{\eps,R}\right)\psi^\hbar(t),\psi^\hbar(t)\right)_{L^2(\Eng)}dt =\\
    &\int_{\R\times C_{\nu_0}\times \bS^0} \theta(t){\rm Tr}\left(\frac{1}{2} \Tilde{\mu}_{m}''(\nu_0)X_2 \sigma_\infty(x,\pi,\omega)\Gamma_{t}^\infty(x,\pi,\omega)\right) d\gamma_{t}^\infty(x,\pi,\omega)dt.
    \end{aligned}
\end{equation*}

Thus, writing
\begin{equation*}
    \ell_\hbar(\theta,\tau_{\hbar}^{\eps,R}) = \int_{\R}\theta(t)\left(\Op_\hbar(\tau_{\hbar}^{\eps,R})\psi^\hbar(t),\psi^\hbar(t)\right)_{L^2(\Eng)}dt,
\end{equation*}
in order to show the invariance of the measure $d\gamma_{m,t}^{\infty} = \Gamma_{m,t}^{\infty}d\gamma_{t}^{\infty}$ for the flow 
\begin{equation*}
    (h,x_2,\pi^{\delta,\nu_0 \delta^{1/3}},\omega) \mapsto \left(h,x_2 + \frac{s}{2}\Tilde{\mu}_{m}''(\nu_0)\omega,\pi^{\delta,\nu_0 \delta^{1/3}},\omega\right),
\end{equation*}
we are left to prove that
\begin{equation}
    \label{eq:invariance_infty}
    \lim_{\eps\rightarrow 0}\lim_{R\rightarrow +\infty}\lim_{\hbar_k \rightarrow 0} \ell_{\hbar_k}\left(\theta,\frac{1}{2}\partial_{\beta}\mu_{m} X_2 \tau_{\hbar_k}^{\eps,R}\right) = 0.
\end{equation}

To this end, observe that
\begin{equation*}
    \begin{aligned}
    \Op_{\hbar}(i\partial_{\beta}\mu_{m} X_2 \tau_{\hbar}^{\eps,R}) &= 2\Op_{\hbar}((\pi(V)\cdot V)\tau_{\hbar}^{\eps,R}) - \Op_{\hbar}\left(\left[\tau_{1,\hbar}^{\eps,R},H\right]\right)\\
    &= \frac{1}{\hbar}\left[\Op_{\hbar}(\tau_{\hbar}^{\eps,R}),-\hbar^2 \Delta_\Eng\right] -\Op_{\hbar}\left(\left[\tau_{1,\hbar}^{\eps,R},H\right]\right) -\hbar\Op_{\hbar}(\Delta_\Eng \tau_{\hbar}^{\eps,R})
    \end{aligned}
\end{equation*}
Now, doing the same analysis for the symbol $\tau_{1,\hbar}^{\eps,R}$, one can also write that
\begin{equation*}
    -\ell_\hbar\left(\theta,\left[\tau_{1,\hbar}^{\eps,R},H\right]\right) = \hbar^2\ell_{\hbar}(\theta',\tau_{1,\hbar}^{\eps,R}) + \hbar \ell_\hbar(\theta,2(\pi(V)\cdot V)\tau_{1,\hbar}^{\eps,R}) + \hbar^2  \ell_\hbar(\theta,\Delta_\Eng \tau_{1,\hbar}^{\eps,R}),
\end{equation*}
and thus using the operator norm estimate \eqref{eq:estim_norm_tau},
\begin{equation*}
    -\ell_\hbar\left(\theta,\left[\tau_{1,\hbar}^{\eps,R},H\right]\right) - \hbar \ell_\hbar(\theta,\Delta_\Eng \tau_{\hbar}^{\eps,R}) = \hbar \ell_\hbar(\theta,2(\pi(V)\cdot V)\tau_{1,\hbar}^{\eps,R}-\Delta_\Eng\tau_{\hbar}^{\eps,R}) + \cO(\hbar)
\end{equation*}
Using Lemma \ref{lem:sec_diag_part}, we decompose the symbol $2(\pi(V)\cdot V)\tau_{1,\hbar}^{\eps,R}-\Delta_\Eng \tau_{\hbar}^{\eps,R}$ as its $H$-diagonal and $H$-off-diagonal part:
\begin{equation*}
    2(\pi(V)\cdot V)\tau_{1,\hbar}^{\eps,R} - \Delta_{\Eng}\tau_{\hbar}^{\eps,R} = \frac{1}{\delta} \partial_\beta \mu_m\Pi_m \pi^{\delta,\beta}(X_3)\Pi_m\tau_{\hbar}^{\eps,R} + \frac{\partial_{\beta}^{2}\mu_m}{2} X_{2}^{2}\tau_{\hbar}^{\eps,R}
        + \frac{1}{2}[\tau_{2,\hbar}^{\eps,R},H],
\end{equation*}
for some computable symbol $\tau_{2,\hbar}^{\eps,R}$ in $S_{0,0}^0$.
Using once again the symbolic calculus, the off-diagonal part contributes to a $\cO(\hbar)$, while for the diagonal part, we make use of similar estimation as \eqref{eq:estim_norm_tau}
and we obtain a contribution of order $\cO(1/R)$. 

At the end, we obtain
\begin{equation*}
    \Op_{\hbar}(i\partial_{\beta}\mu_{m} X_2 \tau_{\hbar}^{\eps,R}) 
    = \frac{1}{\hbar}\left[\Op_{\hbar}(\tau_{\hbar}^{\eps,R}),-\hbar^2 \Delta_\Eng\right] + \cO(1/R) + \cO(\hbar).
\end{equation*}
Therefore,
\begin{equation*}
    \begin{aligned}
        \ell_{\hbar}\left(\theta,\partial_{\beta}\mu_{m} X_2 \tau_{\hbar}^{\eps,R}\right) 
        &= \int_{\R}\theta(t)\left(\Op_{\hbar}(\partial_{\beta}\mu_{m} X_2\tau_{\hbar}^{\eps,R})\psi^{\hbar}(t),\psi^{\hbar}(t)\right)dt\\
        &= -\frac{i}{\hbar} \int_{\R}\theta(t)\left(\left[\Op_{\hbar}(\tau_{\hbar}^{\eps,R}),-\hbar^2 \Delta_\Eng\right]\psi^{\hbar}(t),\psi^{\hbar}(t)\right)dt + \cO(1/R) + \cO(\hbar)\\
        &= \hbar\int_{\R}\theta(t)\frac{d}{dt}\left(\Op_{\hbar}(\tau_{\hbar}^{\eps,R})\psi^{\hbar}(t),\psi^{\hbar}(t)\right)dt+ \cO(1/R) + \cO(\hbar)\\
        &= -\hbar \int_{\R}\theta'(t)\left(\Op_{\hbar}(\tau_{\hbar}^{\eps,R})\psi^{\hbar}(t),\psi^{\hbar}(t)\right)dt+ \cO(1/R) + \cO(\hbar)\\
        &= \cO(1/R) + \cO(\hbar),
    \end{aligned}
\end{equation*}
which gives \eqref{eq:invariance_infty}. We conclude knowing that $d\gamma_{m,t}^\infty$ has finite mass and the flow for which we have proved invariance is non-zero as $\nu_0$ is a non-degenerate 
critical point for $\Tilde{\mu}_m$.

\subsection{Application: obstruction to Strichartz estimates and local smoothing effect}
\label{subsect:obsdisproofs}

We apply here the results of the previous section to the problem of the dispersion of the subLaplcian $\Delta_\Eng$.

\subsubsection{Proof of Theorem \ref{thm:obstructionsmoothing}}
    We argue by contradiction and suppose that the estimate \eqref{eq:smoothingEng} holds for some choice of $\eps>0$, $s>0$, $C>0$ and for some bounded open set $\Omega$. Let consider the 
    initial datum $(u_{0}^\hbar)_{\hbar>0}$ given by Equation \eqref{eq:example2micro} in Section \ref{subsubsec:example2micro} where we choose $h_0 = 0$, $\eta_0 = 0$ and $\nu_0\in\R$ to be the unique critical point of 
    the first eigenvalue $\nu\mapsto\Tilde{\mu}_{1}(\nu)$. We also choose $\Phi$ to be in the eigenspace of $H(\pi^{\delta_0,\nu_0\delta_{0}^{1/3}})$ corresponding to the eigenvalue $\mu_1(\delta_0,\nu_0\delta_{0}^{1/3})$.
    As already seen, it is a bounded family in $L^2(\Eng)$ converging weakly to zero as $\hbar$ goes to zero. Indeed, we check that for $\psi\in C_{c}(\Eng)$, we have 
    \begin{equation*}
        \begin{aligned}
            \int_{\Eng}u_{0}^{\hbar}(x)\psi(x)\,dx &= \hbar^{-3/2} \int_{\Eng}a(\hbar^{-1/2}\cdot h_x)\varphi(x_2)\left(\pi^{\delta_0,\nu_0\delta_{0}^{1/3}}(\hbar^{-1}\cdot x)\Phi,\Phi\right)\psi(x)\,dx \\
            &= \hbar^{3/2}\int_{\Eng}a(h_x)\varphi(x_2)\left(\pi^{\delta_0,\nu_0\delta_{0}^{1/3}}(\hbar^{-1/2}\cdot h_x, \hbar^{-1}\cdot x_2)\Phi,\Phi\right)\psi(\hbar^{1/2}\cdot h_x, x_2)\,dx,
        \end{aligned}
    \end{equation*}
    and, as $\hbar \rightarrow 0$, the last term converges to zero.
    
    Estimate \eqref{eq:smoothingEng} then implies that the solution to the Schrödinger equation $(u^\hbar(\cdot))_{\hbar >0}$ is a bounded family in $L^2((0,\eps),\Tilde{H}^s(\Omega))$, where the Hilbert spaces $\left(\Tilde{H}^s(\Omega)\right)_{s\in\R}$ are
    the Sobolev spaces adapted to the Engel group $\Eng$, see \cite{RS} and \cite{FR}.
    Moreover, since $u^\hbar$ is a solution of \eqref{eq:SchEngel} with $\tau=2$, i.e we have 
    \begin{equation*}
        i\partial_t u^\hbar = -\Delta_{\Eng}u^{\hbar},
    \end{equation*}
    thus we can write that $\partial_t u^{\hbar} \in L^2((0,\eps),\Tilde{H}^{-2}(\Omega))$. Since $\Omega$ is a bounded domain, by Rellich's theorem, the injection of $\Tilde{H}^{s}(\Omega)$ in $L^2(\Omega)$ is compact
    and by the Aubin-Lions lemma, there exists a subsequence $(u^{\hbar_k})_{k\in\N}$, with $\hbar_k \Tend{k}{+\infty} 0$,
    converging strongly in $L^2((0,\eps)\times\Omega)$: the limit is necessarily zero as it is already converging weakly to zero. 
    Indeed, for $\theta \in \C_{c}(\R)$ and $\psi\in C_{c}(\Eng)$ we have 
    \begin{equation*}
        \begin{aligned}
        \int_{0}^{\eps}\theta(t)\left(u^{\hbar}(t),\psi\right)_{L^2(\Omega)}dt &= \int_{0}^{\eps}\theta(t)\left(u_{0}^{\hbar},e^{-it\Delta_{\Eng}}({\bold 1}_{\Omega}\psi)\right)_{L^2(\Eng)}dt\\
        &= \left(u_{0}^{\hbar},\int_{0}^{\eps}\theta(t)e^{-it\Delta_{\Eng}}({\bold 1}_{\Omega}\psi)\,dt\right)_{L^2(\Eng)},
        \end{aligned}
    \end{equation*}
    and we conclude by weak convergence of $(u_{0}^\hbar)_{\hbar>0}$ to zero.

    As a consequence, we obtain that any time-averaged semiclassical measure $t\mapsto \Gamma_t d\gamma_t$ must necessarily be equal to zero for almost every $t\in\R$.
    To compute $\Gamma_t d\gamma_t$ for all time $t\in\R_t$, we use the existence, up to a further extraction, of a time-averaged second microlocal semiclassical measures 
    $(t\mapsto \Gamma_{t}^2 d\gamma_{t}^2,t\mapsto \Gamma^{\infty}_t d\gamma^{\infty}_t)$. 
    By Theorem \ref{thm:propagmesures2micro}, $\Gamma^{\infty}_t d\gamma^{\infty}_t=0$ for almost all time $t$ and $\Gamma_{t}^2 d\gamma_{t}^2$ satisfies a weak Heisenberg equation. 
    Using Proposition \ref{prop:continuityGamma2} and Proposition \ref{prop:example2micro} for the computation of the second-microlocal measures of $(u_{0}^\hbar)_{\hbar>0}$, the Heisenberg equation can be integrated and we find that, for all $t\in\R$, we have 
    \begin{equation*}
        d\gamma_{t}^2 = \delta_{\{h = 0\}} \otimes \delta_{\{\Tilde{\pi}^\delta = \Tilde{\pi}^{\delta_0}\}},\quad \Gamma_{t}^2 = \ket{\Phi}\bra{\Phi}\otimes \ket{e^{it\frac{\Tilde{\mu}_{1}''(\nu_0)}{2}\partial_{x_2}^2}\varphi}\bra{e^{it\frac{\Tilde{\mu}_{1}''(\nu_0)}{2}\partial_{x_2}^2}\varphi}.
    \end{equation*}
    Thus the full semiclassical measure $\Gamma_t d\gamma_t$ is equal to
    \begin{equation}
        \label{eq:fullsemimeasuhbar}
        d\gamma_t = \delta_{\{h = 0\}}\otimes |e^{it\frac{\Tilde{\mu}_{1}''(\nu_0)}{2}\partial_{x_2}^2}\varphi|^2 dx_2 \otimes \delta_{\{(\delta,\beta) = (\delta_0,\nu_0\delta_{0}^{1/3})\}},\quad \Gamma_t = \ket{\Phi}\bra{\Phi},
    \end{equation}
    which is not uniformly equal to zero and we obtain a contradiction.

\subsubsection{Proof ot Theorem \ref{thm:refinedobstructionsmoothing}}
    We argue once again by contradiction and suppose that the estimate \eqref{eq:smoothingEng} holds for some choice of $\eps>0$, $s>0$, $C>0$ and for some bounded open set $\Omega$, which we suppose to contain 0. Let consider the 
    initial datum $(u_{0}^\hbar)_{\hbar>0}$ as in the proof of Theorem \ref{thm:obstructionsmoothing} and write $u^\hbar(t)$ the associated solution of the Schrödinger equation. Choose $\chi_{\Omega}\in C_{c}^{\infty}(\Eng)$ a smooth cutoff with support in $\Omega$ and satisfying $\chi_\Omega(0)=1$, and 
    a smooth function $\theta\in C_{c}^{\infty}(\R)$ uniformly equal to zero in a neighborhood of 0. We observe that 
    \begin{equation*}
        \begin{aligned}
        \int_{0}^{\eps}\left(\chi_\Omega \theta(i\hbar X_2)u^{\hbar}(t),u^{\hbar}(t)\right)\,dt &= \int_{0}^{\eps}\left(\chi_\Omega \theta(i\hbar X_2)|\hbar X_2|^{-s}|\hbar X_2|^{s}u^{\hbar}(t),u^{\hbar}(t)\right)\,dt\\
        &= \int_{0}^{\eps}\left(\Op_{\hbar}\left(\chi_\Omega \Tilde{\theta}(i\pi(X_2))\right)|\hbar X_2|^{s}u^{\hbar}(t),u^{\hbar}(t)\right)\,dt,
        \end{aligned}
    \end{equation*}
    where $\Tilde{\theta}(\beta) = |\beta|^{-s}\theta(\beta)$, and thus $\Tilde{\theta}\in C_{c}^{\infty}(\R)$. Introducing the symbol $\sigma = \chi_\Omega \Tilde{\theta}(i\pi(X_2)) \in \cA_{0}(\Eng)$, we have 
    \begin{equation*}
        \begin{aligned}
        \left|\int_{0}^{\eps}\left(\chi_\Omega \theta(i\hbar X_2)u^{\hbar}(t),u^{\hbar}(t)\right)\,dt \right|
        &= \hbar^{s}\left|\int_{0}^{\eps}\left(\Op_{\hbar}(\sigma)|X_2|^{s}u^{\hbar}(t),u^{\hbar}(t)\right)\,dt\right|\\
        &\leq C \hbar^{s} \int_{0}^{\eps}\||X_2|^{s}\left(e^{it\Delta_\Eng}u_0\right)\|_{L^2(\Omega)}^2\,dt \leq C\hbar^{s} \|u_{0}^{\hbar}\|_{L^2(\Eng)},
        \end{aligned}
    \end{equation*}
    for some constant $C>0$. As $\hbar \rightarrow 0$, if we denote again by $t\mapsto \Gamma_t d\gamma_t$ the time-averaged semiclassical measures of $(u^\hbar)_{\hbar>0}$, we obtain 
    \begin{equation*}
        \int_{0}^{\eps}\int_{\Eng\times\Enghat}{\rm Tr}\left(\chi_\Omega(x)\theta(i\pi(X_2))\Gamma_{t}(x,\pi)\right)d\gamma_{t}(x,\pi)dt = 0.
    \end{equation*}
    As $t\mapsto \Gamma_t d\gamma_t$ has been computed in the previous proof, we find 
    \begin{equation}
        \label{eq:proofsmoothing}
        \langle\theta(i\pi^{\delta_0,\nu_0\delta_{0}^{1/3}}(X_2))\Phi,\Phi\rangle\int_{0}^{\eps} \int_{\R} \chi_\Omega(0,x_2)\left|e^{it\frac{\Tilde{\mu}_{1}''(\nu_0)}{2}\partial_{x_2}^2}\varphi(x_2)\right|^{2}\,dx_2 dt = 0.
    \end{equation}
    Since we can write 
    \begin{equation*}
        \langle\theta(i\pi^{\delta_0,\nu_0\delta_{0}^{1/3}}(X_2))\Phi,\Phi\rangle = \int_\R \theta\left(-\left(\nu_0\delta_{0}^{1/3} + \frac{\delta_0}{2}\xi^2\right)\right)\left|\Phi(\xi)\right|^2\,d\xi,
    \end{equation*}
    it possible to choose $\theta$ such that this quantity is not equal to zero. Equation \eqref{eq:proofsmoothing} gives then a contradiction. 

\subsubsection{Proof of Proposition \ref{prop:obsstrichartz}}

The proof follows once again by arguing by contradiction and by the use of the particular example $(u_{0}^\hbar)_{\hbar>0}$ and the computation of the time-averaged semiclassical measure of the associated solution 
to the Schrödinger equation. Indeed, suppose that the estimate \ref{eq:strichartz} holds for some couple of exponents $(q,p)$ with $q>2$ and $p>2$, then by a duality argument, any weak limit of time-averaged density 
probabilities $(|u^{\hbar}(t,x)|^2 dx dt)_{\hbar>0}$ is in $L^{q/2}(\R_t,L^{p/2}(\Eng))$ and as such cannot be a measure concentrated on a lower dimensional manifold.

However, since the family $(u^\hbar)_{\hbar>0}$ satisfies at all time $t$ the assumption \eqref{eq:assumptionoscillation} of $\hbar$-oscillation, we deduce by Proposition \ref{prop:hbaroscillsemimeas} that any weak limit
of $(|u^{\hbar}(t,x)|^2 dx dt)_{\hbar>0}$ is a marginal of a semiclassical measure. By Equation \eqref{eq:fullsemimeasuhbar}, we obtain a contradiction as we have exhibited 
a concentration phenomenon on the subgroup $\R_2$.

%%%%%%%%%%%%%%%%%%%%%%%%%%%%%%%%%%%%%%%%%%%%%%%%%%%%%%%%%%%%%%%%%%%%%%%%%%%%%%%%%%%%%%%%%%%%%%%%%%%%%%%%%%%%%%%%%%%%%%%%%%%%%%

\appendix

\section{Analysis of square-integrable families valued in a Hilbert space}
\label{sect:Hilbertvaluedsemiclass}

Let $G$ be a graded nilpotent Lie group of dimension $d$ and homogeneous dimension $Q$. Let $\cG$ be a separable Hilbert space. In this appendix, we consider the semiclassical 
pseudodifferential operators acting on $L^2(G,\cG)$, the set of square-integrable families valued in $\cG$, i.e measurable functions from 
$G$ into $\cG$ for which one has
\begin{equation*}
    \Vert f \Vert_{L^{2}(G, \cG)} := \int_{G} \Vert f(x) \Vert_{\cG}^{2} \,dx.
\end{equation*}
We shall also consider $\mathcal{S}(G, \cG)$ the space of smooth functions $f$ defined on $G$ and valued in $\cG$ such that
\begin{equation*}
    \forall \alpha,\beta \in \N^{d},\ \exists C_{\alpha,\beta} > 0,\ \sup_{x\in G} \Vert x^{\alpha} \partial_{x}^{\beta} f(x) \Vert_{\cG} \leq C_{\alpha,\beta}.
\end{equation*}

In a first time, we extend the Fourier transform $\cF_G$ to function valued in the Banach algebra $\cK(\cG)$ of compact operators on $\cG$. In a second time, we develop the notion of symbols as well as present their quantization,
and finally we discuss the associated notion of semiclassical measure.

\subsection{Fourier transform}

\subsubsection{Tensor product}

As to make the presentaion clear, we recall and define the two notions of tensor products we will encounter. In what follows we fix a representation $\pi\in\widehat{G}$ and we
write $\cH_\pi$ for the (separable) Hilbert space associated to $\pi$. 

First, the tensor product $\cH_\pi \otimes \cG$ is the Hilbert space defined as the completion of the algebraic tensor product $\cH_\pi \otimes \cG$ 
for the following hermitian product
\begin{equation*}
       \forall (\Phi_1,g_1),(\Phi_2,g_2) \in \cH_\pi \times \cG,\ \langle \Phi_1 \otimes g_1,\Phi_2 \otimes g_2 \rangle_{\cH_\pi \otimes \cG} = \langle \Phi_1,\Phi_2 \rangle_{\cH_\pi} \langle g_1,g_2\rangle_{\cG}.
\end{equation*}

For a matter of notation, we keep the same one for either the algebraic one and its completion. This definition is straightforward as we are dealing here with Hilbert spaces.  However, when one wish to take the tensor product of Banach spaces, choices need to be made.
As it will be our main concern, we only consider here the case of the Banach spaces $\cL(\cH_\pi)$ of bounded operators on $\cH_\pi$ and $\cK(\cG)$ the space of 
compact operators on $\cG$. 

Every element of the algebraic tensor product $\mathcal{L}(\cH_\pi)\otimes \cK(\cG)$ can be identified with a bounded operator on the Hilbert space $\cH_\pi \otimes \cG$.
We denote by $\iota_\pi: \mathcal{L}(\cH_\pi)\otimes \cK(\cG) \rightarrow \mathcal{L}(\cH_\pi \otimes \cG)$ this injection.

\begin{definition}
    We can define a norm on $\mathcal{L}(\cH_\pi)\otimes \cK(\cG)$ by the following formula:
    \begin{equation*}
        \forall x \in \mathcal{L}(\cH_\pi)\otimes \cK(\cG),\ \Vert x\Vert_{\mathcal{L}(\cH_\pi)\otimes \cK(\cG)} = \Vert \iota_\pi(x)\Vert_{\mathcal{L}(\cH_\pi\otimes \cG)}.
    \end{equation*}
    We still denote by $\mathcal{L}(\cH_\pi)\otimes \cK(\cG)$ the completion of the algebraic tensor product by this norm and by $\iota_\pi$ the extended injection.
\end{definition}

We easily check that $\iota_\pi(\mathcal{L}(\cH_\pi)\otimes \cK(\cG))$ is a closed subspace of $\mathcal{L}(\cH_\pi \otimes \cB)$ stable by composition.

\begin{remark}
    Our choice of norm on the algebraic tensor product $\mathcal{L}(\cH_\pi)\otimes \cK(\cG)$ coincide with the projective cross norm. Other cross norms might be possible, but they are all dominated 
    by the one introduced in the previous definition.
\end{remark}

\subsubsection{Extension of $\cF_G$}
Let $f \in \mathcal{S}(\Heis, \cK(\cG))$. For $g_1, g_2 \in \cG$, the function $x \in G\mapsto f^{g_1,g_2}(x) = \langle f(x)g_1, g_2\rangle_{\cG}$ is in the space of Schwartz functions $\mathcal{S}(G)$.
We can consider its Fourier transform $\cF(f^{g_1,g_2})$ and for $\pi \in \widehat{G}$, we have 
\begin{equation*}
    \Vert \mathcal{F}_G(f^{g_1,g_2})(\pi) \Vert_{\mathcal{L}(\cH_\pi)} \leq \int_{G} |\langle f(x)g_1, g_2\rangle_{\cG}|\,dx.
\end{equation*}
We choose a Hermitian basis $(g_l)_{l\in\N}$ of $\cG$. We define the Fourier tranform of $f$ at $\pi\in\Hhat$ by 
\begin{equation}
    \label{eq:BSVfourier}
    \mathcal{F}_Gf(\pi) := \sum_{i,j\in\N} \mathcal{F}(f^{g_i,g_j})(\pi) \otimes \ket{g_i}\bra{g_j}.
\end{equation}

\begin{proposition}
    \label{prop:BSVfourier}
    For $f \in \mathcal{S}(\Heis, \cK(\cG))$, Equation \eqref{eq:BSVfourier} defines the Fourier transform of $f$ as
    a bounded field of operators $\{\mathcal{F}_Gf(\pi)\in \mathcal{L}(\cH_\pi)\otimes \cK(\cG)\,:\,\pi\in\widehat{G}\}$. Moreover, it does not depend on the choice
    of the Hermitian basis $(g_l)_{l\in\N}$.
\end{proposition}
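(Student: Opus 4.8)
\textbf{Plan of proof for Proposition \ref{prop:BSVfourier}.}

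The plan is to check three things in turn: (a) that the series \eqref{eq:BSVfourier} converges in the Banach space $\mathcal L(\cH_\pi)\otimes\cK(\cG)$ for every $\pi$; (b) that the resulting field $\{\mathcal F_Gf(\pi)\}_{\pi\in\widehat G}$ is bounded, i.e. $\sup_{\pi}\|\mathcal F_Gf(\pi)\|_{\mathcal L(\cH_\pi)\otimes\cK(\cG)}<\infty$; and (c) that the limit is independent of the Hermitian basis $(g_l)_{l\in\N}$. The crucial point throughout is to exploit the Schwartz decay of $f$ valued in $\cK(\cG)$, and the fact that $\cK(\cG)$ is the closure of the finite-rank operators, so that $f(x)$ is for every $x$ approximable (but not uniformly in $x$ a priori) by finite-rank operators; the decay in $x$ is what lets us control tails uniformly.

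First I would make the decay explicit. Since $f\in\mathcal S(G,\cK(\cG))$, the function $x\mapsto (1+|x|)^{N}\|f(x)\|_{\cK(\cG)}$ is bounded for every $N$, where $|\cdot|$ is a homogeneous (quasi-)norm; hence $x\mapsto\|f(x)\|_{\cK(\cG)}$ lies in $L^1(G)$. Now fix $\pi$ and write $K_{ij}(x)=f^{g_i,g_j}(x)=\langle f(x)g_i,g_j\rangle_\cG$. For any finite subsets $I,J\subset\N$, the partial sum $S_{I,J}(x):=\sum_{i\in I,j\in J}K_{ij}(x)\,\ket{g_i}\bra{g_j}$ equals $P_J\,f(x)\,P_I$, where $P_I,P_J$ are the orthogonal projections onto $\mathrm{span}(g_i)_{i\in I}$, $\mathrm{span}(g_j)_{j\in J}$; consequently $\|S_{I,J}(x)\|_{\cK(\cG)}\le\|f(x)\|_{\cK(\cG)}$ uniformly in $I,J$, and $S_{I,J}(x)\to f(x)$ in $\cK(\cG)$ as $I,J\uparrow\N$ (by the defining property of a Hermitian basis and compactness of $f(x)$). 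Applying $\mathcal F_{G}$ and the tensor identification $\iota_\pi$, the corresponding partial sum of \eqref{eq:BSVfourier} is $\iota_\pi\big(\sum_{i\in I,j\in J}\mathcal F(K_{ij})(\pi)\otimes\ket{g_i}\bra{g_j}\big)=\mathcal F_{G}(S_{I,J})(\pi)$, viewed as the operator $\varphi\otimes g\mapsto \int_G S_{I,J}(x)^{*}$-type convolution on $\cH_\pi\otimes\cG$; the key estimate is the operator-norm bound
\begin{equation*}
\big\|\mathcal F_{G}(S_{I,J})(\pi)\big\|_{\mathcal L(\cH_\pi\otimes\cG)}\le\int_G\|S_{I,J}(x)\|_{\cK(\cG)}\,\|\pi(x)^*\|_{\mathcal L(\cH_\pi)}\,dx\le\int_G\|f(x)\|_{\cK(\cG)}\,dx=\|f\|_{L^1(G,\cK(\cG))},
\end{equation*}
using $\|\pi(x)^*\|_{\mathcal L(\cH_\pi)}=1$ and the fact that for a $\cK(\cG)$-valued integrable kernel the associated ``vector-valued Fourier transform'' is bounded by the $L^1$-norm. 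The same estimate applied to differences $S_{I',J'}-S_{I,J}$ together with dominated convergence (dominating function $\|f(x)\|_{\cK(\cG)}\in L^1$, pointwise convergence $S_{I',J'}(x)-S_{I,J}(x)\to0$ in $\cK(\cG)$) shows the net $(\mathcal F_G(S_{I,J})(\pi))_{I,J}$ is Cauchy and hence converges in $\mathcal L(\cH_\pi)\otimes\cK(\cG)$ — this is exactly the convergence of \eqref{eq:BSVfourier}. The uniform bound $\|\mathcal F_Gf(\pi)\|\le\|f\|_{L^1(G,\cK(\cG))}$ for all $\pi$ follows by passing to the limit, giving (b).

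Finally, for basis-independence (c): if $(g'_l)_l$ is another Hermitian basis, let $U\colon\cG\to\cG$ be the unitary sending $g_l\mapsto g'_l$. A direct computation shows the partial sums with respect to $(g'_l)$ are $\iota_\pi(\mathrm{Id}_{\cH_\pi}\otimes U)^{*}\,\mathcal F_G(S_{I,J})(\pi)\,\iota_\pi(\mathrm{Id}_{\cH_\pi}\otimes U)$ composed with the analogous reindexing — more cleanly, both nets converge, in $\mathcal L(\cH_\pi\otimes\cG)$, to the operator $T_\pi$ characterized intrinsically by $\langle T_\pi(\varphi\otimes g_1),\psi\otimes g_2\rangle = \langle \mathcal F_G(\langle f(\cdot)g_1,g_2\rangle_\cG)(\pi)\varphi,\psi\rangle$ for all $\varphi,\psi\in\cH_\pi$, $g_1,g_2\in\cG$; this bilinear characterization makes no reference to a basis, so the limit is the same. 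I expect the only genuine subtlety — the ``hard part'' — to be part (a): one must be careful that although $f(x)$ is compact for each $x$, the convergence $S_{I,J}(x)\to f(x)$ is not uniform in $x$, so one cannot simply pull the limit inside the $x$-integral naively; the resolution is precisely the uniform domination $\|S_{I,J}(x)\|_{\cK(\cG)}\le\|f(x)\|_{\cK(\cG)}\in L^1(G)$ together with the Banach-space dominated convergence theorem applied after estimating the vector-valued Fourier transform by the $L^1$-norm, as above. Everything else (the tensor-norm estimates, the identification $S_{I,J}=P_Jf P_I$, the reindexing for (c)) is routine.
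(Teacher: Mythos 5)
Your argument for convergence and boundedness is essentially the paper's own: you identify the partial sums with the compressions $P_J f(\cdot)P_I$ of the kernel, bound the vector-valued Fourier transform by the $L^1(G,\cK(\cG))$-norm, and run a dominated-convergence/Cauchy argument using $\|P_J f(x)P_I\|_{\cK(\cG)}\le\|f(x)\|_{\cK(\cG)}\in L^1(G)$ together with the pointwise norm convergence $P_Jf(x)P_I\to f(x)$ (valid because $f(x)$ is compact). The paper does exactly this with the diagonal sequence $\bP_l f\bP_l$; your use of general finite index sets $I,J$ changes nothing of substance, and the resulting uniform bound $\sup_\pi\|\cF_Gf(\pi)\|\le\|f\|_{L^1(G,\cK(\cG))}$ is the same.

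Where you genuinely diverge is basis-independence. The paper expands $g_i=\sum_j a_{i,j}g'_j$ and verifies by an explicit computation, using the orthogonality relations of the change-of-basis coefficients, that the two series coincide term by term. You instead observe that the norm limit is determined by its matrix elements on elementary tensors $\varphi\otimes g$, and that these matrix elements are given by a basis-free expression $\langle\cF_G(f^{g_1,g_2})(\pi)\varphi,\psi\rangle$ (up to the usual bra--ket/conjugation conventions, which you should pin down if you write this out, but which do not affect the argument). Since $\iota_\pi$ is isometric and elementary tensors are total in $\cH_\pi\otimes\cG$, this determines the limit in $\cL(\cH_\pi)\otimes\cK(\cG)$ uniquely. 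Your route is shorter and avoids the double-series manipulation; the paper's computation is more concrete and simultaneously exhibits how the coefficients transform. Both are correct. One small point worth making explicit if you use nets $(I,J)$ rather than the diagonal sequence: the quantities $\|(\mathrm{Id}-P_J)f(x)\|$ and $\|f(x)(\mathrm{Id}-P_I)\|$ are monotone decreasing in $J$ and $I$, which is what justifies dominated convergence along the net; this is automatic in the paper's sequential setup.
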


\begin{proof}
    For $l\in\N$ we write $\bP_l$ for the orthogonal projector on $(g_1,\dots,g_l)$ and $f^{l} = \bP_{l}f \bP_{l}$ the point-wise restriction of $f$ to these spaces. 
    As $f$ is valued in $\cK(\cG)$ we have the pointwise convergence $f^{l}(x) \Tend{l}{\infty} f(x)$ for all $x\in G$: by dominated convergence we easily deduce 
    \begin{equation*}
        \int_{G} \Vert f- f^{l}\Vert_{\cK(\cG)}\,dx \Tend{l}{\infty} 0.
    \end{equation*}
    Observe that for $l,l'\in\N$,
    \begin{equation*}
        \left\lVert \sum_{|i|,|j|\leq l} \mathcal{F}_G(f^{g_i,g_j})(\pi) \otimes \ket{g_i}\bra{g_j}-\sum_{|i|,|j|\leq l'} \mathcal{F}_G(f^{g_i,g_j})(\pi) \otimes \ket{g_i}\bra{g_j} \right\rVert_{\mathcal{L}(\cH_\pi)\otimes \cK(\cG)}
        \leq \int_{\Heis} \Vert f^{l}- f^{l'}\Vert_{\cK(\cG)}\,dx,
    \end{equation*}
    hence, by Cauchy criterion we get the convergence of the serie and the definition of $\cF_G f$ makes sense. Moreover, we get the estimate
    \begin{equation*}
        \sup_{\pi\in\widehat{G}}\lVert \mathcal{F}_Gf(\pi) \rVert_{\mathcal{L}(\cH_\pi)\otimes \cK(\cG)} \leq \lVert f\rVert_{L^1(G,\cK(\cG))}.
    \end{equation*}
    Finally, let $(g'_{k})_{k\in\N}$ a second Hermitian basis of $\cG$. For $i\in\N$, write 
    \begin{equation*}
        g_i = \sum_{j\in\N} a_{i,j} g'_{j},
    \end{equation*}
    with $(a_{i,j})_{j\in\N}$ in $\ell^2(\N)$. We compute that
    \begin{equation*}
        \begin{aligned}
        \sum_{k,l\in\N} \mathcal{F}_G(f^{g_k,g_l})(\pi) \otimes \ket{g_k}\bra{g_l}  
        &= \sum_{k,l\in\N} \left(\sum_{i,j\in\N} a_{k,i}\Bar{a}_{l,j}\mathcal{F}_G(f^{g'_i,g'_j})(\pi)\right) \otimes \left(\sum_{i',j'\in\N} \Bar{a}_{k,i'}a_{l,j'}\ket{g'_{i'}}\bra{g'_{j'}}\right)\\
        &= \sum_{i,j\in\N}\sum_{i',j'\in\N} \left(\sum_{k,l\in\N} a_{k,i}\Bar{a}_{k,i'} a_{l,j'}\Bar{a}_{l,j}\right) \mathcal{F}_G(f^{g'_i,g'_j})(\pi) \otimes \ket{g'_{i'}}\bra{g'_{j'}} \\
        &= \sum_{i,j\in\N} \mathcal{F}_G(f^{g'_i,g'_j})(\pi) \otimes \ket{g'_i}\bra{g'_j}.
        \end{aligned}
    \end{equation*}
\end{proof}

As we do not need a Fourier inversion formula, we will simply mention here that the Fourier transform defined on $\mathcal{S}(\Heis, \cK(\cG))$ is injective.

\subsubsection{Convolution and Fourier transform}

\begin{definition}
    Let $f_1,f_2 \in\mathcal{S}(\Heis, \cK(\cG))$. Define the convolution product of $f_1$ and $f_2$ by 
    \begin{equation*}
        (f_{1} * f_2)(x) = \int_{\Heis} f_{1}(y) \circ f_{2}(y^{-1}x)\,dy.
    \end{equation*}
    Oberve that $f_1 * f_2$ is in $\mathcal{S}(\Heis, \cK(\cG))$.
\end{definition}

\begin{proposition}
    \label{prop:BSVconvol}
    For $f_1,f_2 \in\mathcal{S}(\Heis, \cK(\cG))$, we have for a representation $\pi\in\widehat{G}$:
    \begin{equation*}
        \mathcal{F}_G(f_1 * f_2)(\pi) = \mathcal{F}_G f_2(\pi) \circ \mathcal{F}_G f_1(\pi).
    \end{equation*}
\end{proposition}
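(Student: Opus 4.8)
The plan is to mimic the classical proof that the group Fourier transform intertwines convolution with reversed composition, reducing everything to the scalar-valued identity \eqref{fourconv} via the Hermitian basis used to define $\cF_G$ on operator-valued functions. First I would fix $f_1,f_2\in\mathcal{S}(\Heis,\cK(\cG))$ and a Hermitian basis $(g_l)_{l\in\N}$ of $\cG$, and observe that the algebraic identity
\begin{equation*}
    (f_1*f_2)^{g_i,g_j}(x) = \langle (f_1*f_2)(x)g_i,g_j\rangle_\cG = \sum_{k\in\N} (f_1^{g_i,g_k} * f_2^{g_k,g_j})(x),
\end{equation*}
holds pointwise, where I expand $f_1(y)\circ f_2(y^{-1}x)$ by inserting the resolution of identity $\sum_k \ket{g_k}\bra{g_k}$ between the two operators. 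The convergence of this sum in $\mathcal{S}(G)$ (uniformly in $x$, and after applying any fixed differential operator) follows from the Schwartz decay of $f_1,f_2$ valued in $\cK(\cG)$ together with the fact that compact operators are approximated in norm by their finite-rank truncations $\bP_l(\cdot)\bP_l$, exactly as in the proof of Proposition \ref{prop:BSVfourier}; this lets me pass the (finite-at-each-stage, then limiting) sum through $\cF_G$ and through \eqref{fourconv}.

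Next I would apply the scalar convolution formula \eqref{fourconv} to each term, getting
\begin{equation*}
    \cF_G\big( (f_1*f_2)^{g_i,g_j}\big)(\pi) = \sum_{k\in\N} \cF_G(f_2^{g_k,g_j})(\pi)\circ \cF_G(f_1^{g_i,g_k})(\pi),
\end{equation*}
and then reassemble using the definition \eqref{eq:BSVfourier}:
\begin{equation*}
    \cF_G(f_1*f_2)(\pi) = \sum_{i,j}\cF_G\big((f_1*f_2)^{g_i,g_j}\big)(\pi)\otimes\ket{g_i}\bra{g_j} = \sum_{i,j,k}\big(\cF_G(f_2^{g_k,g_j})(\pi)\circ\cF_G(f_1^{g_i,g_k})(\pi)\big)\otimes\ket{g_i}\bra{g_j}.
\end{equation*}
On the other side, writing $\cF_G f_2(\pi) = \sum_{k,j}\cF_G(f_2^{g_k,g_j})(\pi)\otimes\ket{g_k}\bra{g_j}$ and $\cF_G f_1(\pi) = \sum_{i,k'}\cF_G(f_1^{g_i,g_{k'}})(\pi)\otimes\ket{g_i}\bra{g_{k'}}$, the composition $\cF_G f_2(\pi)\circ\cF_G f_1(\pi)$ inside $\cL(\cH_\pi)\otimes\cK(\cG)$ is computed via $\iota_\pi$ on $\cH_\pi\otimes\cG$: the $\cH_\pi$-factors compose in the written order while the rank-one operators multiply as $\ket{g_k}\bra{g_j}\cdot\ket{g_i}\bra{g_{k'}} = \delta_{ji}\ket{g_k}\bra{g_{k'}}$, so the $\cG$-side collapses the index $j$ against $i$, yielding precisely the triple sum above after relabeling. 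Hence the two expressions coincide.

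The main obstacle is the analytic bookkeeping of the infinite sums: one must justify that $\sum_k f_1^{g_i,g_k}*f_2^{g_k,g_j}$ converges in a topology strong enough that $\cF_G$ (which is continuous on $L^1(G)$, but here I also want the Schwartz-space structure) and \eqref{fourconv} apply term-by-term, and that the double/triple series defining $\cF_G(f_1*f_2)(\pi)$ and $\cF_G f_2(\pi)\circ\cF_G f_1(\pi)$ converge in $\cL(\cH_\pi)\otimes\cK(\cG)$ uniformly in $\pi$. For the first point I would bound $\|f_1^{g_i,g_k}*f_2^{g_k,g_j}\|_{L^1(G)}\le \|f_1^{g_i,g_k}\|_{L^1}\|f_2^{g_k,g_j}\|_{L^1}$ and use that $\sum_k\|f_1(\cdot)\ket{g_k}\|$ and $\sum_k\|\bra{g_k}f_2(\cdot)\|$ are controlled by the Hilbert–Schmidt (hence trace-class, after another truncation) norms of $f_1(x),f_2(x)$, which are integrable by Schwartz decay; for the second point I would reuse verbatim the Cauchy-criterion estimate from the proof of Proposition \ref{prop:BSVfourier} applied to the finite-rank truncations $f_i^l=\bP_l f_i\bP_l$, noting $f_1^l*f_2^l\to f_1*f_2$ in $L^1(G,\cK(\cG))$ and that $\cF_G$ and composition are continuous for the relevant norms. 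Everything else is the routine algebra of rank-one operators sketched above.
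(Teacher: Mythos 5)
Your strategy is exactly the paper's (reduce to the scalar identity \eqref{fourconv} via matrix coefficients in a Hermitian basis, then reassemble), but there is an index error in the very first step that breaks the final matching. With the paper's conventions, $(f_1*f_2)(x)g_i=\int_G f_1(y)\bigl[f_2(y^{-1}x)g_i\bigr]\,dy$, so $f_2$ acts \emph{first} on $g_i$; inserting $\sum_k\ket{g_k}\bra{g_k}$ between the two operators therefore gives
\begin{equation*}
\langle f_1(y)f_2(y^{-1}x)g_i,g_j\rangle_\cG=\sum_k f_2^{g_i,g_k}(y^{-1}x)\,f_1^{g_k,g_j}(y),
\qquad\text{i.e.}\qquad
(f_1*f_2)^{g_i,g_j}=\sum_k f_1^{g_k,g_j}*f_2^{g_i,g_k},
\end{equation*}
not $\sum_k f_1^{g_i,g_k}*f_2^{g_k,g_j}$ as you wrote (your version is the matrix coefficient of $\int f_2(y^{-1}x)\circ f_1(y)\,dy$). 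This is not cosmetic: carrying your decomposition through, the coefficient of $\ket{g_i}\bra{g_j}$ in your expression for $\cF_G(f_1*f_2)(\pi)$ is $\sum_k\cF_G(f_2^{g_k,g_j})\circ\cF_G(f_1^{g_i,g_k})$, whereas the composition $\cF_G f_2(\pi)\circ\cF_G f_1(\pi)$ — which you compute correctly — has coefficient $\sum_k\cF_G(f_2^{g_i,g_k})\circ\cF_G(f_1^{g_k,g_j})$. No relabeling of the summation index identifies these two, so the claimed ``precisely the triple sum above after relabeling'' is false as written. Replacing your first display by the correct one restores the match and the proof closes; this corrected version is exactly the paper's argument.

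On the analytic side you are more careful than the paper (which interchanges the sum, the integral and $\cF_G$ without comment), and your concern is legitimate; but the route through Hilbert--Schmidt or trace-class norms is unnecessary and slightly off, since $f_1(x),f_2(x)$ are only assumed compact. A direct Cauchy--Schwarz bound $\sum_k\bigl|f_2^{g_i,g_k}(z)\bigr|\,\bigl|f_1^{g_k,g_j}(y)\bigr|\le\|f_2(z)g_i\|_\cG\,\|f_1(y)^*g_j\|_\cG\le\|f_2(z)\|_{\cL(\cG)}\|f_1(y)\|_{\cL(\cG)}$, integrable in $y$ by Schwartz decay, already justifies the interchange and the term-by-term application of \eqref{fourconv}; the convergence of the reassembled series in $\cL(\cH_\pi)\otimes\cK(\cG)$ then follows from the truncation argument of Proposition \ref{prop:BSVfourier} as you indicate.
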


\begin{proof}
    Let $(g_l)_{l\in\N}$ be a Hermitian basis of $\cG$. For $i,j\in\N$, we have for $x\in G$,
    \begin{equation*}
        \begin{aligned}
        (f_1 * f_2)^{g_i,g_j}(x) &= \int_{\Heis} \langle f_{1}(y) f_{2}(y^{-1}x) g_i, g_j \rangle_{\cG} \,dy\\
        &= \int_{\Heis} \sum_{l\in\N} \langle f_{2}(y^{-1}x) g_i, g_l\rangle_\cG \langle f_{1}(y) g_l, g_j \rangle_{\cG} \,dy\\
        &= \sum_{l\in\N} \left(f_{1}^{g_l,g_j} * f_{2}^{g_i,g_l}\right)(x).
        \end{aligned}
    \end{equation*}
    Thus, taking the Fourier transform, we have
    \begin{equation*}
        \cF_G (f_1 * f_2)^{g_i,g_j}(\pi) = \sum_{l\in\N} \cF_G (f_{2}^{g_i,g_l})(\pi) \circ \cF_G (f_{1}^{g_l,g_j})(\pi),
    \end{equation*}
    from which we deduce the result of the proposition.
\end{proof}

\subsection{Semiclassical pseudodifferential operators}

\subsubsection{Algebra of smoothing symbols}
We first define the space of symbols we want to quantize. 

\begin{definition}
    \label{def:symbolHilbertvalued}
    We denote by $\cA_{0}^{\cG}(G)$ the space of field of operators $\sigma$ of the form
    \begin{equation*}
       \sigma = \{\sigma(x,\pi) \in \mathcal{L}(\cH_{\pi}) \otimes \cK(\cG)\,:\,(x,\pi) \in G \times \widehat{G} \},
    \end{equation*}
    for which there exists a smooth and compactly supported map $\kappa: x\in G \mapsto \kappa_{x} \in \mathcal{S}(G,\cK(\cG))$ such that
    \begin{equation*}
        \sigma(x,\pi) = \cF_G \kappa_{x}(\pi).
    \end{equation*}
\end{definition}

As the extended Fourier transform is still injective, it yields a one-to-one correspondence between the symbol $\sigma$ and the map $\kappa$. 
As before, we call $\kappa$ the associated convolution kernel to $\sigma$.
Once again, by Proposition \ref{prop:BSVconvol} the set $\cA_{0}^{\cG}(G)$ is an algebra for the composition of symbols.

We can also define the following submultiplicative norm on $\cA_{0}^{\cG}(G)$: for a symbol $\sigma$, we define
\begin{equation*}
    \Vert \sigma\Vert_{\cA^{\cG}(G)} := \sup_{(x,\pi)\in G\times\widehat{G}} \Vert \sigma(x,\pi) \Vert_{\mathcal{L}(\cH_{\pi}) \otimes \cK(\cG)}.
\end{equation*}

\subsubsection{Quantization}
We now define the quantization of these symbols by expliciting their action on Schwartz functions. 
\begin{definition}
    \label{def:quantizationHilbertvalued}
    Let $\sigma \in \cA_{0}^{\cG}(G)$. For $f\in\mathcal{S}(G,\cG)$, we set, for $x\in G$,
\begin{equation*}
    \Op_{\hbar}^{\cG}(\sigma)f(x) = \hbar^{-Q}\int_{G\times\widehat{G}} {\rm Tr}_{\cH_{\pi}}(\pi(\hbar^{-1}\cdot (y^{-1}x))\sigma(x,\pi)f(y)) \,d\mu_{\widehat{G}}(\pi) dy.
\end{equation*}
Then $\Op_{\hbar}^{\cG}(\sigma)f$ is again in $\mathcal{S}(G,\cG)$.
\end{definition}

These operators are called \textit{semiclassical pseudodifferential operators} on $\mathcal{S}(G,\cG)$.

\begin{remark}
    In the precedent quantization formula, the operator $\sigma(x,\pi) \in \mathcal{L}(\cH_{\pi}) \otimes \cK(\cG)$ must be understood as 
    acting on on $0_{\cH_\pi}\otimes f(y)\in \cG$ for $y\in G$. Meanwhile, the trace operator ${\rm Tr}_{\cH_\pi}: \cL^1(\cH_\pi) \rightarrow \C$ 
    is extended to the alegbraic tensor product of $\mathcal{L}^1(\cH_{\pi})$ and $\cK(\cG)$ and into $\cK(\cG)$ in the obvious way, and then extended to 
    the cross-norm completion of $\mathcal{L}^1(\cH_{\pi})\otimes\cK(\cG)$ by continuity.
\end{remark}

Following \cite{FF2},\cite{FF3}, these operators extend their action on the space $L^2(G,\cG)$ as bounded operators:
\begin{equation*}
    \exists C > 0,\ \forall \sigma \in\cA_{0}^{\cG}(G),\ \forall \hbar >0,\ \|\Op_{\hbar}^{\cG}(\sigma)\|_{L^2(G,\cG)} \leq C \int_{G}\sup_{x\in G}\|\kappa_{x}(z)\|_{\cL(\cG)}\,dz.
\end{equation*}

We denote by $\Vert \sigma \Vert_{\cA_{0}^{\cG}(G)}$ the right-hand side of this inequality.

\subsection{Semiclassical measures of a bounded square-integrable family}

As presented in Section \ref{subsect:semiclassicalmeasures}, we describe the structure of the limiting objects of the quadratic quantities 
\begin{equation*}
    \left(\Op_{\hbar}^{\cG}(\sigma)f^{\hbar},f^{\hbar}\right)_{L^2(G,\cG)},
\end{equation*}
where $\sigma \in \cA_{0}^{\cG}(G)$ and $(f^\hbar)_{\hbar>0}$ is a bounded family of $L^2(G,\cG)$.

\begin{theorem}
    \label{thm:mesuresemiopvalued}
    Let $(f^{\hbar})_{\hbar>0}$ be a bounded family in $L^2(G,\cG)$. There exists a sequence $(\hbar_k)_{k\in\N}$ in $(0,+\infty)$ with $\hbar_k\Tend{k}{+\infty}0$
    and a positive operator-valued measure $\Gamma^{\cG} d\gamma^\cG$ in the space 
    \begin{equation*}
        \mathcal M_{ov}^{+} \left(G \times \widehat{G},({\mathcal H}_{\pi} \otimes \cG)_{(x,\pi)\in G \times \widehat{G}}\right),
    \end{equation*}
    such that for all $\sigma \in \cA_{0}^{\cG}(G)$, we have
    \begin{equation*}
        \left(\Op_{\hbar_k}(\sigma)f^{\hbar_k}, f^{\hbar_k}\right)\Tend{k}{+\infty} \int_{\Heis \times \Hhat} {\rm Tr}_{\cH_{\pi} \otimes \cG}(\sigma(x,\pi)\Gamma^{\cG}(x,\pi))\,d\gamma^\cG(x,\pi).
    \end{equation*}
    Given the subsequence $(\hbar_k)_{k\in\N}$, $\Gamma^{\cG} d\gamma^{\cG}$ is the only operator-valued measure for which the above convergence holds.
\end{theorem}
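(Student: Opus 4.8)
\textbf{Proof plan for Theorem \ref{thm:mesuresemiopvalued}.}
The plan is to follow the standard argument for the existence of operator-valued semiclassical measures, as in \cite{FF2}, adapted to the Hilbert-space-valued setting developed in this appendix. First I would package the data into a suitable $C^*$-algebra: let $\cA^{\cG}(G)$ denote the closure of $\cA_0^{\cG}(G)$ for the norm $\|\cdot\|_{\cA^{\cG}(G)} = \sup_{(x,\pi)}\|\sigma(x,\pi)\|_{\cL(\cH_\pi)\otimes\cK(\cG)}$. Since $\cA_0^{\cG}(G)$ is closed under composition (by Proposition \ref{prop:BSVconvol}) and the norm is submultiplicative, and since the adjoint $\sigma(x,\pi)^*$ of a symbol is again a symbol whose kernel is $\kappa_x^*(z) = \kappa_x(z^{-1})^*$, the space $\cA^{\cG}(G)$ is a $C^*$-algebra. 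The key abstract input is then an analogue of Proposition \ref{prop:dualA}: the topological dual of $\cA^{\cG}(G)$ is isometrically isomorphic to the space $\cM_{ov}(G\times\widehat G, (\cH_\pi\otimes\cG)_{(x,\pi)})$ via $\Gamma^\cG d\gamma^\cG \mapsto (\sigma \mapsto \int_{G\times\widehat G}\mathrm{Tr}_{\cH_\pi\otimes\cG}(\sigma(x,\pi)\Gamma^\cG(x,\pi))\,d\gamma^\cG(x,\pi))$, with positive functionals corresponding to positive operator-valued measures. This is proved exactly as in \cite{FF2}, the only change being that the fibers $\cH_\pi$ are replaced by $\cH_\pi\otimes\cG$ and $\cK(\cG)$-valued kernels replace scalar ones; the measurable-field-of-Hilbert-spaces machinery of Definition \ref{def_gammaGamma} applies verbatim with $Z = G\times\widehat G$.

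Second, I would set up the compactness argument. For a bounded family $(f^\hbar)_{\hbar>0}$ in $L^2(G,\cG)$, say $\|f^\hbar\|_{L^2(G,\cG)}\le M$, consider the linear functionals
\begin{equation*}
    \ell_\hbar(\sigma) = \left(\Op_\hbar^{\cG}(\sigma)f^\hbar, f^\hbar\right)_{L^2(G,\cG)}, \qquad \sigma\in\cA_0^{\cG}(G).
\end{equation*}
The $L^2$-boundedness estimate stated just after Definition \ref{def:quantizationHilbertvalued} gives $|\ell_\hbar(\sigma)| \le C M^2 \|\sigma\|_{\cA_0^{\cG}(G)}$; one checks that $\|\cdot\|_{\cA_0^{\cG}(G)}$ (the $L^1$-type kernel norm) dominates $\|\cdot\|_{\cA^{\cG}(G)}$ on $\cA_0^{\cG}(G)$, so each $\ell_\hbar$ extends to a bounded functional on $\cA^{\cG}(G)$ with norm $\le CM^2$, uniformly in $\hbar$. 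Since $\cA^{\cG}(G)$ is a separable $C^*$-algebra, the closed ball of radius $CM^2$ in its dual is weak-$*$ sequentially compact, so there is a sequence $\hbar_k\to 0$ and a functional $\ell_\infty$ with $\ell_{\hbar_k}(\sigma)\to\ell_\infty(\sigma)$ for all $\sigma\in\cA^{\cG}(G)$.

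Third, I would verify positivity of $\ell_\infty$, which then identifies it with a positive operator-valued measure $\Gamma^\cG d\gamma^\cG$ via the duality of the first paragraph. For $\sigma\in\cA_0^{\cG}(G)$, the Hilbert-valued symbolic calculus gives $\Op_\hbar^{\cG}(\sigma)^*\Op_\hbar^{\cG}(\sigma) = \Op_\hbar^{\cG}(\sigma^*\sigma) + \cO_{L^2\to L^2}(\hbar)$ — the remainder estimate is the $\cG$-valued version of the composition formula, proved as in \cite{FF,FF2} using the expansion of $\kappa_{2,x}*\kappa_{1,x}$ and the fact that differentiation in $x$ and difference operators preserve $\cA_0^{\cG}(G)$. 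Hence
\begin{equation*}
    \ell_\infty(\sigma^*\sigma) = \lim_{k\to\infty}\left(\Op_{\hbar_k}^{\cG}(\sigma)f^{\hbar_k},\Op_{\hbar_k}^{\cG}(\sigma)f^{\hbar_k}\right)_{L^2(G,\cG)} \ge 0,
\end{equation*}
and by density and continuity this persists on $\cA^{\cG}(G)$, so $\ell_\infty$ is a positive functional. Uniqueness given $(\hbar_k)$ is immediate since the limit of $\ell_{\hbar_k}(\sigma)$ determines $\ell_\infty$ on the dense subalgebra $\cA_0^{\cG}(G)$, hence on $\cA^{\cG}(G)$, hence the measure.

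\textbf{Main obstacle.} The genuinely new point compared to the scalar case is the duality statement $\big(\cA^{\cG}(G)\big)^* \cong \cM_{ov}(G\times\widehat G,(\cH_\pi\otimes\cG)_{(x,\pi)})$, i.e.\ carrying over Proposition \ref{prop:dualA} to fibers of the form $\cH_\pi\otimes\cG$ with the projective cross-norm completion $\cL(\cH_\pi)\otimes\cK(\cG)$. One must check that $\cA^{\cG}(G)$ is, fiberwise and globally, a $C^*$-algebra of the right type (a continuous field of $C^*$-algebras over $G\times\widehat G$ with fibers contained in the compact operators on $\cH_\pi\otimes\cG$, up to the usual care that $\cH_\pi$ may be infinite-dimensional), so that its states are represented by trace-class-operator-valued measures; the subtlety is purely functional-analytic (separability, the structure of the cross-norm, measurability of the field of operators $\Gamma^\cG$) and involves no new ideas beyond \cite{FF2}, but it is where the bookkeeping lives. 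Everything else — the uniform bound on $\ell_\hbar$, weak-$*$ compactness, positivity via the symbolic calculus — is routine once that identification is in place.
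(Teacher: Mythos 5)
Your plan is viable, but it takes a genuinely different route from the paper. The paper never introduces the $C^*$-algebra $\cA^{\cG}(G)$ or its dual: instead it fixes a Hermitian basis $(g_l)_{l}$ of $\cG$, decomposes $f^\hbar=\sum_l f^\hbar_l\, g_l$ into scalar components bounded in $L^2(G)$, and applies the already-established scalar theory (Theorem \ref{theo:measures}, with polarization and a diagonal extraction) to obtain joint operator-valued measures $\Gamma_{k,l}\,d\gamma_{k,l}$ for the quantities $(\Op_{\hbar}(\sigma)f^\hbar_k,f^\hbar_l)$. It then sets $\gamma^{\cG}=\sum_l\gamma_{l,l}$, writes $\gamma_{k,l}=a_{k,l}\gamma^{\cG}$ by Radon--Nikodym, and assembles $\Gamma^{\cG}=\sum_{k,l}\Gamma_{k,l}\otimes a_{k,l}\ket{g_k}\bra{g_l}$ by hand; the convergence for a general $\sigma\in\cA_0^{\cG}(G)$ is obtained by first truncating to $\sigma_l=\bP_l\sigma\bP_l$ (finite rank in the $\cG$ direction, where the computation reduces to finitely many scalar limits) and then using that $\|\sigma-\sigma_l\|_{\cA_0^{\cG}(G)}\to 0$, which exploits the compactness of the kernel values in $\cK(\cG)$. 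This construction buys two things your plan defers: it never needs the duality $\bigl(\cA^{\cG}(G)\bigr)^*\cong\cM_{ov}\bigl(G\times\widehat G,(\cH_\pi\otimes\cG)\bigr)$, and it never needs a $\cG$-valued symbolic calculus (positivity of $\Gamma^{\cG}$ is read off from the positivity of the diagonal blocks and the explicit formula, not from $\Op^*\Op=\Op(\sigma^*\sigma)+\cO(\hbar)$). Conversely, your route is cleaner at the top level and avoids the bookkeeping with infinitely many joint measures and densities $a_{k,l}$ --- but be aware that the two ingredients you label as ``routine'' (the tensor-product duality statement and the composition/adjoint formula for $\cK(\cG)$-valued symbols) are each nontrivial extensions of \cite{FF2} that are nowhere established in this appendix, and together they represent at least as much work as the paper's entire argument. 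If you pursue your route, the duality step should be handled carefully: note that $\sigma(x,\pi)$ lands in $\cK(\cH_\pi\otimes\cG)$ (a norm limit of finite sums of compacts), so the fiberwise duals are indeed the trace-class operators on $\cH_\pi\otimes\cG$, which is what makes the identification of states with positive operator-valued measures plausible.
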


\begin{proof}
    Let $(g_l)_{l\in\N}$ be a Hermitian basis of $\cG$. We write 
    \begin{equation*}
        f^{\hbar}(x) = \sum_{l\in\N} f^{\hbar}_{l}(x) g_{l},\ x\in \Heis,
    \end{equation*}
    where $f^{\hbar}_{l}(x) = \langle f^{\hbar}(x),g_l \rangle_{\cG}$. As we have 
    \begin{equation}
        \label{eq:total_mass}
        \sum_{l\in\N} \Vert f^{\hbar}_l \Vert_{L^2(G)}^2 = \Vert f^{\hbar}\Vert_{L^2(G,\cG)}^2 < +\infty,
    \end{equation}
    each family $(f^{\hbar}_l)_{\hbar>0}$ is bounded in $L^2(G)$. By a diagonal extraction process, we can find a common sequence $(\hbar_k)_{k\in\N}$ and a family 
    of operator-valued measures $(\Gamma_{k,l}d\gamma_{k,l})_{(k,l)\in\N^2}$ in ${\mathcal M}_{ov}(G\times \widehat{G})$ 
    such that for all $\sigma\in\cA_0(G)$ and for all $(k,l)\in\N^2$, we have 
    \begin{equation*}
        \left(\Op_{\hbar}(\sigma)f^{\hbar}_{k},f^{\hbar}_l\right) \Tend{k}{+\infty} \int_{G \times \widehat{G}} {\rm Tr}_{\cH_{\pi}}(\sigma(x,\pi)\Gamma_{k,l}(x,\pi))\,d\gamma_{k,l}(x,\pi),
    \end{equation*}
    with $\Gamma_{l,l}d\gamma_{l,l}$ being positive and satisfying $\Vert\Gamma_{l,l}d\gamma_{l,l}\Vert_{\mathcal{M}} \leq \limsup_{k\rightarrow+\infty} \Vert f^{\hbar_n}_l \Vert_{L^2(G)}$ for $l\in\N$.
    Moreover, by polarization identities one can show that for $k,l\in\N$, $\gamma_{k,l}$ is absolutely continuous with respect to $\gamma_{k,k}+\gamma_{l,l}$.
    
    We construct now our positive measure in $\mathcal M_{ov}^{+} \left(G \times \widehat{G},({\mathcal H}_{\pi} \otimes \cG)_{(x,\pi)\in G \times \widehat{G}}\right)$.
    By Equation \eqref{eq:total_mass}, we know that 
    \begin{equation*}
        \limsup_{k\rightarrow+\infty} \sum_{l\in\N} \Vert f^{\hbar_k}_l\Vert_{L^2(G)}^2 < +\infty,
    \end{equation*}
    and we deduce that we can define a positive Radon measure of finite mass by the formula 
    \begin{equation}
        \gamma^{\cG} = \sum_{l\in\N} \gamma_{l,l}.
    \end{equation}
    By the Radon-Nikodym theorem, there exists for every $(k,l)\in\N^2$ a $\gamma^\cG$-measurable density $a_{k,l}$ such that
    \begin{equation*}
        \gamma_{k,l} = a_{k,l}\gamma^{\cG},
    \end{equation*}
    with $0\leq \|a_{l,l}\| \leq 1$, $d\gamma^{\cG}$-almost everywhere. 
    
    We now define the field of trace class operators $\Gamma^{\cG} = \{\Gamma^{\cG}(x,\pi) \in \mathcal{L}(\cH_{\pi})\otimes\cK(\cG)\,:\, (x,\pi) \in G \times\widehat{G} \}$ by 
    \begin{equation*}
        \Gamma^{\cG}(x,\pi) = \sum_{k,l\in\N} \Gamma_{k,l}(x,\pi) \otimes a_{k,l}\ket{g_k}\bra{g_l}.
    \end{equation*}
    We can check that, for all $(x,\pi)\in G\times\widehat{G}$, $\Gamma^{\cG}(x,\pi)$ is a positive self-adjoint operator
    and satisfies
    \begin{equation*}
        \int_{G\times\widehat{G}} {\rm Tr}_{\cH_\pi \otimes \cG}(\Gamma^{\cG}(x,\pi))\,d\gamma^{\cG}(x,\pi) = 
        \sum_{l\in\N} \int_{G\times\widehat{G}}{\rm Tr}_{\cH_\pi}(\Gamma_{l,l}(x,\pi))a_{l,l}\,d\gamma^{\cG} =
        \sum_{l\in\N} \gamma_{l,l}(G\times\widehat{G}) < +\infty.
    \end{equation*}
    We have thus constructed an element $\Gamma^{\cG}d\gamma^{\cG}$ of $\mathcal M_{ov}^{+} \left(G \times \widehat{G},({\mathcal H}_{\pi} \otimes \cG)_{(x,\pi)\in G \times \widehat{G}}\right)$.
    We need now to prove that this operator-valued has the desired property. 
    
    Let $\sigma \in \mathcal{A}_{0}^{\cG}(G)$. For $l\in\N$, we denote by $\bP_l$ the projector 
    on the finite dimensional subspace of $\cG$ generated by $(g_{k})_{0\leq k \leq l}$. We define the symbol $\sigma_{l} \in \mathcal{A}_{0}^{\cG}(G)$ by 
    \begin{equation*}
        \forall (x,\pi)\in G\times\widehat{G},\ \sigma_l(x,\pi) = \bP_{l}\sigma(x,\pi)\bP_{l}.
    \end{equation*}
    Indeed, if we write $\sigma = \mathcal{F}_G\kappa_{x}$ with $\kappa\in\mathcal{C}^{\infty}_{c}(G,\mathcal{S}(G,\cK(\cG)))$, we simply have $\sigma_l = \mathcal{F}_G\kappa_{x}^{l}$ with 
    \begin{equation*}
        \forall (x,y)\in G^2,\ \kappa_{x}^{l}(y) = \bP_l \kappa_{x}(y) \bP_l.
    \end{equation*}
    Since $\kappa_{x}(y)$ is a compact operator for all $(x,y)\in G^2$ and since $\kappa$ is compact in its first variable and Schwartz in its second one, we easily get that 
    \begin{equation}
        \label{eq:approx_compact}
        \Vert \sigma - \sigma_{l}\Vert_{\mathcal{A}_{0}^{\cG}(G)} \Tend{l}{+\infty} 0.
    \end{equation}
    We observe that 
    \begin{equation*}
        \begin{aligned}
            \left(\Op_{\hbar}(\sigma_l)f^{\hbar},f^{\hbar}\right)_{L^2(G,\cG)} &= \int_{G\times G\times\widehat{G}} {\rm Tr}_{\cH_\pi}\left(\pi(\hbar^{-1}\cdot (y^{-1}x))\langle \sigma_{l}(x,\pi)f^{\hbar}(y),f^{\hbar}(x) \rangle_{\cG}\right) \,d\mu_{\widehat{G}}(\pi) dydx \\
            &= \sum_{k,k' \leq l} \int_{G\times G\times\widehat{G}} {\rm Tr}_{\cH_\pi}\left(\Tilde{\pi}^{\lambda}(\hbar^{-1}\cdot (y^{-1}x))\langle \sigma_{l}(x,\pi)g_k,g_k' \rangle_{\cG}\right)f^{\hbar}_{k}(y)f^{\hbar}_{k'}(x) \,d\mu_{\widehat{G}}(\pi) dydx \\
            &\Tend{\hbar}{0} \sum_{k,k' \leq l} \int_{G \times \widehat{G}} {\rm Tr}_{\cH_{\pi}}\left(\langle \sigma_{l}(x,\pi)g_k,g_k' \rangle_{\cG}\Gamma_{k,k'}(x,\pi)\right)d\gamma_{k,k'}(x,\pi) \\
            &= \sum_{k,k' \leq l} \int_{G \times \widehat{G}} {\rm Tr}_{\cH_{\pi}}\left(a_{k,k'}(x,\pi)\langle \sigma_{l}(x,\pi)g_k,g_k' \rangle_{\cG}\Gamma_{k,k'}(x,\pi)\right)d\gamma^\cG(x,\pi) \\
            &= \int_{G \times \widehat{G}} {\rm Tr}_{\cH_{\pi}\otimes\cG}\left(\sigma_{l}(x,\pi)\Gamma^{\cG}(x,\pi)\right)d\gamma^\cG(x,\pi).
        \end{aligned}
    \end{equation*}
    The conclusion follows by letting $l$ go to $+\infty$: we observe that by the approximation \eqref{eq:approx_compact} and Lebesgue dominated convergence theorem, using that 
    the inequality $\Vert\cdot\Vert_{\mathcal{A}^{\cG}} \leq \Vert\cdot\Vert_{\mathcal{A}^{\cG}_{0}}$, we have
    \begin{equation*}
        \int_{G \times\widehat{G}} {\rm Tr}_{\cH_{\pi}\otimes\cG}\left((\sigma(x,\pi)-\sigma_{l}(x,\pi))\Gamma^{\cG}(x,\pi)\right)d\gamma^\cG(x,\pi) \Tend{l}{+\infty} 0,
    \end{equation*}
    and we deduce the desired convergence.
\end{proof}

\end{document}